\documentclass[12pt]{amsart}
\usepackage{amssymb}
\usepackage[noadjust]{cite}
\usepackage{array}
\usepackage{booktabs}
\usepackage{mdwtab}
\usepackage{mathtools}
\usepackage{lmodern}
\usepackage{microtype}
\usepackage{ifpdf}
\usepackage[T1]{fontenc}
\usepackage[utf8]{inputenc}
\usepackage{hyphenat}
\usepackage{enumitem}
\usepackage{mathabx}
\usepackage{color}
\usepackage[pdftex]{graphicx}
\usepackage{float}
\usepackage[pdftex,margin=1in]{geometry}
\usepackage{xr-hyper}
\usepackage[bookmarks=true, bookmarksopen=true,%
    bookmarksdepth=3,bookmarksopenlevel=2,%
    colorlinks=true,%
    linkcolor=blue,%
    citecolor=blue,%
    filecolor=blue,%
    menucolor=blue,%
    urlcolor=blue]{hyperref}
\hypersetup{pdftitle={Ahlfors-regular conformal dimension and energies of graph maps}}
\hypersetup{pdfauthor={Kevin M. Pilgrim and Dylan P. Thurston}}
\usepackage{url}
\usepackage[normalem]{ulem} 
\usepackage{tikz}
\usetikzlibrary{arrows.meta}
\usetikzlibrary{decorations,decorations.pathmorphing,decorations.markings}
\usetikzlibrary{hobby}
\usetikzlibrary{cd}
\tikzstyle{every picture}=[> = To]
\tikzset{cdlabel/.style={execute at begin node=$\scriptstyle,execute at end node=$}}
\tikzset{implication/.style={double equal sign distance, -implies}}
\tikzset{biimplication/.style={double equal sign distance, implies-implies}}
\tikzset{markdir/.style={postaction={decorate,decoration={markings,mark=at position 0.55 with {\arrow{>[scale=1.4]}}}}}}


\makeatletter
\newcommand\mi@kern[1]{%
  \settowidth\@tempdima{$\mi@obj^{#1}$}
  \kern-\@tempdima
  #1
  \settowidth\@tempdima{$\mi@obj$}
  \kern\@tempdima
}

\newtoks\mi@toksp
\newtoks\mi@toksb
\DeclareRobustCommand{\manyindices}[5]{
  \def\mi@obj{#5}
  \mi@toksp\expandafter{\mi@kern{#2}}
  \mi@toksb\expandafter{\mi@kern{#1}}
  \@mathmeasure4\textstyle{#5_{#1}^{#2}}
  \@mathmeasure6\textstyle{#5_{#3}^{#4}}
  \dimen0-\wd6 \advance\dimen0\wd4
  \@mathmeasure8\textstyle{\hphantom{{}_{#1}^{#2}}#5^{\the\mi@toksp#4}_{\the\mi@toksb#3}}
  \hbox to \dimen0{}{\kern-\dimen0\box8}
}
\makeatother 


\newread\testin

\def\mathcenter#1{\vcenter{\hbox{$#1$}}}
\def\grapha#1{\includegraphics{#1}}

\def\mfig#1{\mathcenter{\grapha{#1}}}

\renewcommand{\colon}{\nobreak\mskip2mu\mathpunct{}\nonscript
  \mkern-\thinmuskip{:}\allowbreak\mskip6muplus1mu\relax}

\ifpdf
  \let\textalt\texorpdfstring
\else
  \newcommand{\textalt}[2]{#1}
\fi


\newcommand{\RR}{\mathbb R}
\newcommand{\CC}{\mathbb C}

\newcommand{\ZZ}{\mathbb Z}

\newcommand{\cS}{\mathcal{S}}
\newcommand{\cT}{\mathcal{T}}
\newcommand{\cH}{\mathcal{H}}

\newcommand{\cE}{\mathcal{E}}
\newcommand{\cG}{\mathcal{G}}
\newcommand{\cJ}{\mathcal{J}}

\newcommand{\cU}{\mathcal{U}}
\newcommand{\cV}{\mathcal{V}}

\newcommand{\whe}{\widehat{e}}
\newcommand{\wte}{{\widetilde{e}}}

\newcommand{\wtU}{\widetilde{U}}
\newcommand{\wtV}{\widetilde{V}}

\newcommand{\wtY}{\widetilde{Y}}

\newcommand{\co}{\colon}

\renewcommand{\epsilon}{\varepsilon}
\newcommand{\abs}[1]{\lvert #1 \rvert}
\newcommand{\norm}[1]{\lVert #1 \rVert}




\DeclareMathOperator*{\esssup}{ess\,sup}



\theoremstyle{plain}
\numberwithin{equation}{section}
\newtheorem{proposition}[equation]{Proposition}
\newtheorem{lemma}[equation]{Lemma}
\newtheorem{corollary}[equation]{Corollary}
\newtheorem{conjecture}[equation]{Conjecture}

\newtheorem{theorem}{Theorem}

\newtheorem{citethm}[equation]{Theorem}

\theoremstyle{definition}
\newtheorem{definition}[equation]{Definition}
\newtheorem{convention}[equation]{Convention}

\newtheorem{question}[equation]{Question}

\newtheorem{claim}[equation]{Claim}

\theoremstyle{remark}

\newtheorem{remark}[equation]{Remark}

\theoremstyle{plain}

\newenvironment{taggedthm}[1]
 {\taggedthmx}
 {\endtaggedthmx}

\hyphenation{Thurs-ton}


\newcommand{\C}{\CC}

\newcommand{\rs}{\widehat{\C}}

\renewcommand{\th}{^\mathrm{th}}

\DeclareMathOperator{\modulus}{mod}
\DeclareMathOperator{\EL}{EL} 
\DeclareMathOperator{\SF}{SF} 
\DeclareMathOperator{\Fill}{Fill} 

\DeclareMathOperator{\Edge}{Edge}
\newcommand{\Edges}{\Edge}


\newcommand{\CCa}{\widehat{\CC}}

\newcommand{\id}{\mathrm{id}}
\DeclareMathOperator{\ARCdim}{ARCdim}
\DeclareMathOperator{\hdim}{hdim}

\DeclareMathOperator{\diam}{diam}
\DeclareMathOperator{\size}{size}
\DeclareMathOperator{\tracesize}{tracesize}
\DeclareMathOperator{\IMG}{IMG}
\DeclareMathOperator{\systole}{systole}

\newcommand{\SFto}{\overleftarrow{\SF}}

\newcommand{\shortseq}[5]{#1 \overset{#2}{\longrightarrow} #3 \overset{#4}{\longrightarrow} #5}

\newcommand{\wt}[1]{\widetilde{#1}}

\newcommand{\oE}{\overline{E}{}}
\newcommand{\oN}{\overline{N}{}}

\newcommand{\qdual}{q^\vee}


\colorlet{mygreen}{green!90!blue}
\colorlet{dark-green}{black!40!mygreen}
\colorlet{dark-red}{black!30!red}
\colorlet{dark-blue}{black!20!blue}



\graphicspath{{draws/}{figs/}}

\begin{document}
\title{Ahlfors-regular conformal dimension and energies of graph maps}

\author[Pilgrim]{Kevin M.~Pilgrim}
\address{Indiana University\\
         831 E. Third St.,
         Bloomington, Indiana 47405\\
         USA}
\email{pilgrim@iu.edu}

\author[Thurston]{Dylan P.~Thurston}
\address{Indiana University\\
         831 E. Third St.,
         Bloomington, Indiana 47405\\
         USA}
\email{dpthurst@iu.edu}
\date{June 1, 2025}

\begin{abstract}
  For a hyperbolic rational map~$f$ with connected Julia set, we give
  upper and lower bounds on 
  the Ahlfors-regular conformal dimension of its Julia set~$J_f$ from
  a family of energies of associated graph maps. Concretely, the
  dynamics of $f$ is faithfully encoded by a pair of maps
  $\pi, \phi\co G_1 \rightrightarrows G_0$ between finite graphs that satisfies a
  natural expanding condition. Associated to this combinatorial data,
  for each $q \geq 1$, is a numerical invariant $\oE^q[\pi,\phi]$, its
  asymptotic $q$-conformal energy. We
  show that the Ahlfors-regular conformal dimension of $J_f$ is
  contained in the interval where $\oE^q=1$.

  Among other applications, we give two families of quartic rational
  maps with Ahlfors-regular conformal dimension approaching~$1$
  and~$2$, respectively.
\end{abstract}

\subjclass[2020]{Primary 37F10; Secondary 30L10, 20E08}

\maketitle

\setcounter{tocdepth}{1}

\tableofcontents

\section{Introduction}
\label{sec:intro}

\subsection{Motivation}
\label{subsec:motivation}
The iterates of a rational function~$f$ define a holomorphic dynamical
system on the Riemann sphere $\widehat{\mathbb{C}}$.
Its Julia set, typically fractal, may be defined as the smallest set
$J_f$ satisfying $J_f=f^{-1}(J_f)=f(J_f)$ and $\#J_f \geq 3$.

For instance, figure~\ref{fig:barycentric_carpet} shows the Julia set of the rational function $f(z)=\frac{4}{27}\frac{(z^2-z+1)^3}{(z(z-1))^2}$.
It turns out that as a topological space, this $J_f$ is a \emph{Sierpiński
  carpet}---the complement in the sphere of a countable collection of
Jordan domains whose closures are disjoint and whose diameters tend to
zero.
\begin{figure}
\centerline{\includegraphics[width=2.75in]{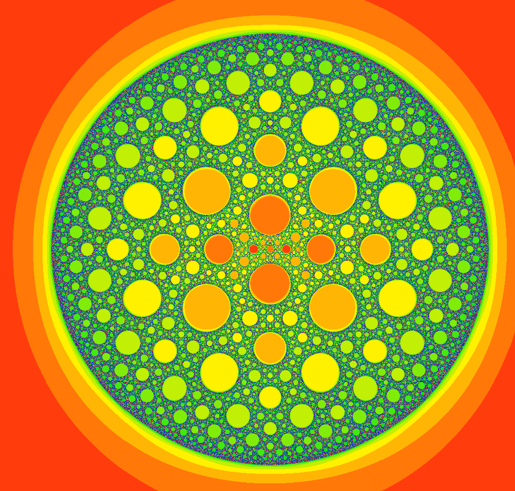}}
\caption{The Julia set of $f$.}
\label{fig:barycentric_carpet}
\end{figure}
With the spherical metric inherited from the round metric on $\widehat{\mathbb{C}}$, the Julia set $J_f$ becomes a metric space. As a dynamical system, the map $f$ is \emph{hyperbolic}---each critical point converges to an
attracting cycle---and \emph{critically finite}---the orbits of the critical points are finite.  Hyperbolicity is equivalent to the
condition that the restriction $f\co J_f \to J_f$ is an expanding self-covering map.  In this setting, hyperbolicity is a dynamical regularity condition that leads to strong metric consequences. As is visually evident, $J_f$ is
 \emph{approximately-self-similar} (see Definition \ref{defn:approx_self_similar}). 
A key invariant of $J_f$ is its Hausdorff dimension $\hdim(J_f)$.

D. Sullivan \cite[Thm. 4]{MR730296} showed that for any hyperbolic rational map $f$, upon setting
$q\coloneqq\hdim(J_f)$ and
$\cH^q$ the corresponding Hausdorff measure on $J_f$, one has 
$0 < q < 2$ and $0<\cH^q(J_f)<\infty$; see also \cite[Theorem 9.1.6., Corollary
9.1.7]{MR2656475} for more general statements. In particular, for any ball $B(x,r)$ with $x \in
J_f$, and any $r\leq \diam(J_f)$ we have $\cH^q(B(x,r)) \asymp r^q$,
with implicit constants independent of $x$ and $r$. This latter
condition is known as \emph{Ahlfors $q$-regularity}. 
If in addition $J(f)$ is a carpet, we have $1 <  \hdim(J_f) < 2$; see \cite{MR1005606, MR1879234} for these lower and upper bounds,
 respectively.

A homeomorphism between metric spaces is \emph{quasi-symmetric} if it
does not distort the roundness of balls too much; see
\S\ref{sec:ahlfors-regular}.  The \emph{Ahlfors-regular
  conformal gauge} $\mathcal{G}$ of $J_f$ is
the set of all metric measure spaces $(X, d, \mu)$ such that there
exists a quasi-symmetric homeomorphism $(J_f, d_{Spherical}) \to (X,d)$ and, for some $q>0$,
the measure $\mu$ is $q$-Ahlfors regular with respect to $d$; see
\cite{heinonen:analysis,kmp:ph:cxci}.   The regularity
assumption on $\mu$ implies that $\mu$ is comparable to the $q$-dimensional
Hausdorff measure $\mathcal{H}^q$ on $X$. The \emph{Ahlfors-regular conformal
  dimension} of $J_f$ is the infimum over such exponents $q$, i.e.,
\[
\ARCdim(J_f)\coloneqq\inf\{ \hdim(X) \mid (X,d,\mu) \in \mathcal{G}(J_f)\};
\]
see \cite{MT10:ConfDim} for an introduction.
For approximately self-similar carpets $X$, such as the Julia
set~$J_f$ of Figure~\ref{fig:barycentric_carpet}, we know
$1 < \ARCdim(X)<2$; see \cite{MR2667133} and \cite[Corollary 9.17]{MR2656475}.
Interest in conformal dimension stems in part from the
following. Limit sets of Kleinian groups acting on the Riemann sphere
and, more generally, boundaries of hyperbolic groups are another
source of approximately self-similar spaces. In that setting, the
conformal dimension, analogously defined, carries significant
information about the group; see \cite{Kleiner06:ICM}.

Hyperbolicity and the critically finite
property imply, by a rigidity result of W.~Thurston~\cite{DH1}, that
the geometry and dynamics on $J_f$ is
determined by topological data: the
conjugacy-up-to-isotopy class of~$f$, relative to its post-critical
set.  More precisely: if $g$ is another rational map, and if there are
orientation-preserving homeomorphisms $\phi_0, \phi_1\co
(\widehat{\mathbb{C}},P_f) \to (\widehat{\mathbb{C}},P_g)$ such that
$\phi_0\circ f=g\circ \phi_1$ on $P_f$ and $\phi_0$ is isotopic to
$\phi_1$ through homeomorphisms agreeing on $P_f$, then we may take
$\phi_0=\phi_1$ to be a Möbius transformation.  Hence the invariant
$\ARCdim(J_f)$ is determined from discrete data. 

For general hyperbolic critically finite rational maps with connected
Julia set, our main result, Theorem \ref{thm:crit-sandwich}, implies
an estimate for $\ARCdim(J_f)$ in terms of combinatorial data. In
concrete cases, by-hand computations with this data can yield
nontrivial rigorous upper and lower bounds. For the carpet Julia set
of Figure \ref{fig:barycentric_carpet}, such computations yield
$1.6309 \approx \frac{1}{1-\log_6 2} \le \ARCdim(J_f) \le
\frac{2}{1-\log_6 (10/13)} \approx 1.7445$.  See
\S\ref{sec:applications} for details.

\subsection{Combinatorial encoding} 
Our methods rely on a particular method of combinatorial encoding of
rational maps~\cite{Thurston20:Characterize}.  We choose a finite graph
$G_0$, called a \emph{spine}, onto which $\rs - P_f$
deformation retracts.  The homotopy type of~$G_0$ depends only on $\#P_f$.
Letting $G_1=f^{-1}(G_0) \subset \rs - P_f$, we obtain two graph maps $\pi,
\phi\co G_1 \rightrightarrows G_0$, where $\pi$ and $\phi$ are
respectively the
restrictions of $f$ and of the deformation retraction. The data $(\pi,
\phi)$ is a \emph{virtual endomorphism of
  graphs} (see Definition \ref{def:ve}) and is well-defined up to a notion of
homotopy equivalence; see \cite[Definition 2.2]{Thurston20:Characterize}.  We denote by $[\pi, \phi]$
the homotopy class
of $(\pi, \phi)$.  Figure~\ref{fig:barycentric_ve} illustrates the
data for the above map~$f$.
\begin{figure}
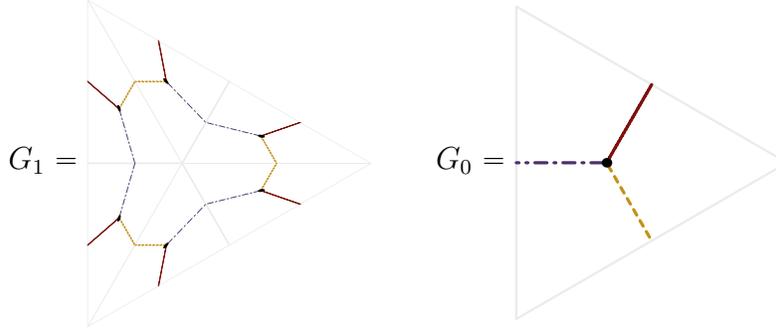

\[
G_1 = \mathcenter{\includegraphics[width=1.5in]{barycenter-tri-11}}
\qquad
G_0 = \mathcenter{\includegraphics[width=1.45in]{barycenter-tri-10}}
\]
\caption{Spines for the map~$f$, up to conjugacy-up-to-isotopy.
  Doubling the large Euclidean
  equilateral triangles over their boundaries gives two Riemann
  surfaces, each isomorphic to $\rs$.  The map $f$ sends each small
  triangle at left conformally to one of the two large triangles at right and implements
  barycentric subdivision. The set of vertices of the large triangles
  is~$P_f$.  Half of a spine~$G_0$ of $\rs - P_f$ is shown at right, and half its
  preimage $G_1$ under the piecewise-affine map homotopic to $f$
  is shown at left. The map $\pi$ is the covering map
  preserving colors, while the map $\phi$ (defined up to homotopy) is
  induced by the deformation retraction.}
\label{fig:barycentric_ve}
\end{figure}

For any iterate $n \in \mathbb{N}$, the Julia set of $f$ is the same
as that of $f^n$.  It follows from the  expanding nature of the
dynamics of $f$ that upon replacing $f$ by a suitable iterate, we may
assume the virtual endomorphism
$(\pi, \phi)$ constructed in the previous paragraph is
\emph{forward expanding} or, synonymously, \emph{backward contracting};
see Definition~\ref{def:contracting}. The critically finite
property implies that $G_1$ and $G_0$ are connected and $\phi$ is
surjective on fundamental group. This is a property we call
\emph{recurrence}; see Definition~\ref{def:recurrent}.
To summarize: to the dynamics of a critically finite hyperbolic rational map, we associate a forward-expanding recurrent virtual graph endomorphism $(\pi, \phi)$.  

It turns out (see \S \ref{sec:ve_and_dynamics}) that any
forward-expanding recurrent virtual graph endomorphism $(\pi,\phi)$
determines, via now-standard constructions, a dynamical system given by an expanding topological self-cover on a compact metrizable locally connected space $f\co \cJ \to \cJ$ (Theorem \ref{thm:lc}).  The topological conjugacy class of $f$ 
depends only on the homotopy class $[\pi,\phi]$; see \cite[Theorem 4.2]{ishii:smillie:homotopy-shadowing}.

Via other now-standard constructions,
there is an associated conformal gauge of Ahlfors-regular metrics
$\cG(\cJ[\pi,\phi])$ associated to $[\pi,\phi]$; see \S
\ref{sec:gauge}. If $[\pi,\phi]$ arises from a hyperbolic rational map
$f$, then the spherical metric on $J_f$ belongs to
$\cG(\cJ[\pi,\phi])$ (Proposition~\ref{prop:elevator}).

\subsection{Asymptotic \textalt{$q$}{q}-conformal energies}  
A virtual  endomorphism of graphs $(\pi,\phi)$ has, for each $q \in
[1,\infty]$, an associated \emph{asymptotic $q$-conformal graph energy} $\oE^q(\pi,
\phi)$, introduced by the second author
\cite{Thurston19:Elastic}.
We summarize some key points; see \S\ref{sec:energies} or the
references for more.  First,
$\oE^q(\pi, \phi)$ depends only on the homotopy class $[\pi, \phi]$,
so that these analytic quantities depend only on
combinatorial data. If $(\pi, \phi)$ arises from a rational map $f$,
different choices of spine lead to homotopic graph endomorphisms, so
that we may write unambiguously $\oE^q(f)$. In the general case, we
will also write
$\oE^q[\pi,\phi]$ to indicate the asymptotic energy depends only on
the homotopy class.
The inequality $\oE^\infty[\pi,\phi]<1$ holds if and only if some
iterate of $(\pi, \phi)$ is homotopic to a backward-contracting
virtual graph endomorphism. If $(\pi,\phi)$ arises from a hyperbolic
critically-finite rational map $f$, then $\oE^2[\pi,\phi]<1$, and this
property characterizes such rational maps among the wider class of their
topological counterparts, specifically, critically finite
self-branched-coverings of the sphere for which each cycle in the
post-critical set contains a critical point \cite{Thurston20:Characterize}.
As a function of~$q$, the asymptotic energy $\oE^q[\pi,\phi]$ is
continuous and non-increasing, so that the level set
$\{q\mid \oE^q[\pi,\phi]=1\}$ is an interval
$[q_*[\pi,\phi], q^*[\pi, \phi]]$.

\subsection{Main result}
Our main result relates Ahlfors-regular conformal dimension to these
critical exponents, and implies that the invariant $\oE^q[\pi,\phi]$
contains useful information for other values of $q$.

\begin{theorem}\label{thm:crit-sandwich}
  For any recurrent, forward-expanding virtual graph endomorphism $(\pi,\phi)$,
  \[
    q_*[\pi,\phi] \le \ARCdim(\cJ[\pi,\phi]) \le q^*[\pi,\phi].
  \]
  Equivalently, for $q=\ARCdim(\cJ[\pi,\phi])$, we have
  $\oE^q[\pi,\phi]=1$.
  \end{theorem}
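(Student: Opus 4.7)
\medskip

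\noindent\textbf{Proof proposal.}
The plan is to prove the two inequalities separately, each by producing a one-way comparison between metrics in the Ahlfors-regular conformal gauge $\cG(\cJ[\pi,\phi])$ and combinatorial length/energy data on the graph maps $(\pi,\phi)\co G_1\rightrightarrows G_0$.

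For the upper bound $\ARCdim(\cJ[\pi,\phi])\le q^*[\pi,\phi]$, I would fix any $q>q^*[\pi,\phi]$, so that $\oE^q[\pi,\phi]<1$. By sub-multiplicativity of the $q$-conformal energy under iteration and the characterization of $\oE^q$ as the exponential growth rate of the energies $\cE^q(\pi^n,\phi^n)$, some iterate $(\pi^n,\phi^n)$ has strict $q$-energy less than~$1$; equivalently, there is a length function $\ell$ on the edges of $G_0$ and an admissible embedding/weight system exhibiting a uniform contraction factor $\lambda<1$ when pulled back through the iterate. I would use this length function to build a self-similar metric on $\cJ[\pi,\phi]$ by declaring the diameter of an $n$-th level tile (corresponding to an edge, or complementary component, of the $n$-th preimage graph $G_n$) to be proportional to the $\ell$-weight of the corresponding object at level~$0$ times $\lambda^n$, and then passing to the limit via the standard nested-tile construction associated with expanding self-covers on compact metrizable spaces. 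Forward expansion and recurrence ensure that this metric generates the correct topology and belongs to $\cG(\cJ[\pi,\phi])$. The $q$-energy-minimizing property of $\ell$ translates into a packing estimate bounding the number of level-$n$ tiles meeting any ball by a geometric series, from which $q$-Ahlfors regularity of the resulting Hausdorff measure follows. Letting $q\downarrow q^*[\pi,\phi]$ completes the argument.

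For the lower bound $q_*[\pi,\phi]\le \ARCdim(\cJ[\pi,\phi])$, I would argue contrapositively: assume there is a metric measure space $(X,d,\mu)\in\cG(\cJ[\pi,\phi])$ that is $q$-Ahlfors regular for some $q<q_*[\pi,\phi]$, and deduce $\oE^q[\pi,\phi]\le 1$, contradicting the definition of $q_*[\pi,\phi]$. Concretely, I would pull the metric $d$ and the measure $\mu$ through the homeomorphism type of $\cJ$ and use the Ahlfors $q$-regularity to control the $q$-modulus of the natural curve families in $X$ consisting of arcs dual to each edge of $G_n$ for large $n$. Assigning to each edge of $G_0$ a length obtained by averaging $d$-diameters over its level-$n$ preimages (or, in the dual formulation, assigning edge weights derived from $\mu$-masses of small neighborhoods), I would show that the resulting test configuration exhibits $q$-energy $\cE^q(\pi^n,\phi^n)$ bounded by a constant independent of $n$. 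Taking $n$-th roots and passing to the limit yields $\oE^q[\pi,\phi]\le 1$, the desired contradiction. The equivalence statement $\oE^q=1$ for $q=\ARCdim$ then follows from continuity and monotonicity of $q\mapsto \oE^q[\pi,\phi]$.

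The main obstacle will be the rigorous dictionary between metric data on $\cJ[\pi,\phi]$ and combinatorial energy data on $(\pi,\phi)$, in both directions. In the upper-bound direction, this involves showing that a length function witnessing $\oE^q<1$ genuinely yields an Ahlfors-regular metric with Hausdorff dimension close to $q$, which requires verifying the packing/covering estimates for the nested tile system and controlling overlaps between tiles sharing boundary. In the lower-bound direction, the harder step is the quantitative pairing between $q$-Ahlfors regularity on $X$ and $q$-energies of graph maps; this should go through an intermediary notion of combinatorial $q$-modulus of curve families in approximations of $X$ by the $G_n$, of the type developed in \cite{Thurston19:Elastic}, together with Ahlfors-regular comparison between metric and combinatorial $q$-modulus. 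Once that dictionary is established, both inequalities reduce to the appropriate limit theorems for moduli and energies under the expanding dynamics guaranteed by forward expansion and recurrence.
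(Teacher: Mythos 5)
Your high-level plan (relate metrics in the gauge to graph energies, one inequality in each direction) is in the right spirit, but both of your central steps are exactly where the substance of the theorem lies, and as written they contain genuine gaps. The paper does not construct any new metric and does not transfer Ahlfors regularity to energies directly; it fixes a visual metric, checks it is approximately self-similar, builds the snapshot covers $\cV_n$ from stars of edges of $G_n$, and then invokes the Carrasco/Keith--Kleiner characterization (Theorem~\ref{thm:keith-kleiner}, Corollary~\ref{cor:critexp_J}) that $\ARCdim$ is the critical exponent of combinatorial modulus of $\Gamma^w_\delta$ with respect to such snapshots. Your upper-bound construction tries to bypass this by building a $q$-Ahlfors-regular metric from a length function with $E^q_q<1$, declaring level-$n$ tile diameters proportional to $\ell\cdot\lambda^n$. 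That uniform scaling is not what the hypothesis gives you: $E^q_q(\phi^n)<1$ is an $L^\infty$ bound on the filling function, not a per-edge uniform contraction, so the correct ``diameters'' vary from tile to tile along orbits; and even granting a correct assignment, verifying that the resulting gauge function yields a genuine metric, that it is quasi-symmetric to the visual metric (so it lies in $\cG(\cJ)$), and that the Hausdorff measure is $q$-regular is essentially a proof of Carrasco's theorem, which you assert via an unproved ``packing estimate.'' The paper's route is much lighter precisely because it only has to exhibit an admissible test metric $\rho(s)=\ell_{G_0}(\phi^n(s))$ and bound $V_q$ by H\"older's inequality against $\Fill^q(\phi^n)$ (Proposition~\ref{proposition:to_zero}); the hard metric-construction step is outsourced to the cited theorem.

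The lower bound has the same structural problem plus missing ingredients. Your contrapositive needs the implication ``$q$-Ahlfors-regular metric in the gauge $\Rightarrow$ $E^q_q[\phi^n]$ bounded,'' but the dictionary between metric modulus in an arbitrary metric of the gauge and combinatorial data on $G_n$ is not automatic: quasi-symmetries do not preserve snapshot sequences (the paper remarks on this), so this passage again requires the Keith--Kleiner/Carrasco machinery rather than a direct averaging of diameters or $\mu$-masses over preimages; it is not clear that your averaged edge lengths produce any competitor controlling $E^q_q[\phi^n]$ from above. The paper instead works forward from $\oE^q>1$: it takes the extremal weighted multi-curve realizing the stretch factor (Theorem~\ref{thm:sf}), converts graph energy to combinatorial extremal length with uniform constants (Proposition~\ref{prop:energies-comp}, which needs the bounded edge-multiplicity conclusion of Theorem~\ref{thm:sf}), transports curves between $G_n$ and $\cJ$ using homotopy sections with trace size bounded independently of~$n$ (Corollary~\ref{cor:recurrrent_graph_ve}, Lemma~\ref{lemma:eg_vs_ej}), and then subdivides the extremal curve into strands of definite diameter with weights summing to a bounded quantity (Lemma~\ref{lem:subdivide} plus the endpoint-separation estimate) so that the resulting family lies in $\Gamma^w_\delta$ and has modulus growing like $\mu^{nq}$ (Proposition~\ref{proposition:to_infinity}). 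None of these steps --- the extremal curve, the uniform homotopy-section transfer, the subdivision with weight control, or the weighted-curve version of the critical-exponent theorem --- appear in your sketch, and they are not routine; you correctly identify the ``dictionary'' as the main obstacle, but the proposal leaves it unresolved, so the argument as proposed does not yet constitute a proof.
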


  In fact we expect that $q_*=q^*$: 
  \begin{conjecture}\label{conj:crit-equality}
  For any recurrent virtual graph endomorphism $[\pi,\phi]$, the
  function $q \mapsto \oE^q[\pi,\phi]$ is
  either constant or strictly
  decreasing.
\end{conjecture}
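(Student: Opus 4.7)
The plan is to establish that $q \mapsto \oE^q[\pi,\phi]$ is a real-analytic function of $q$ on $(1,\infty)$; combined with the continuity and monotonicity stated in the excerpt, this gives the dichotomy, since a real-analytic non-increasing function on a connected interval that is constant on any non-degenerate sub-interval must be constant everywhere, and otherwise must be strictly decreasing (its value at $b$ cannot equal its value at $a < b$, else it would be constant on $[a,b]$ by the monotonicity).

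First I would unpack the asymptotic energy as a Perron-type eigenvalue. By Fekete's lemma applied to an appropriate submultiplicative functional (extending the subadditive/submultiplicative limit already used to define $\oE^q$ in \S\ref{sec:energies}), the quantity $\oE^q[\pi,\phi]^n$ should be identifiable with the leading eigenvalue of an operator $L_q$ acting on the finite-dimensional space of edge-weightings on the spine $G_0$. Heuristically, on an edge $e_0 \in G_0$, the operator acts as
\[
(L_q w)(e_0) = \sum_{e_1 \in \pi^{-1}(e_0)} \bigl(\text{length-factor of }\phi(e_1)\bigr)^{?}\, w(\phi(e_1)),
\]
with exponents arranged so that the spectral radius of $L_q$ equals $\oE^q[\pi,\phi]$. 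The precise form of $L_q$ will need to be extracted from the variational description of elastic/conformal energy, possibly via the dual formulation relating $q$-energy to $p$-moduli of curve families with $1/p + 1/q = 1$, so that the extremal problem is replaced by a linear eigenvalue problem on a well-chosen cone.

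Next I would use recurrence to apply Perron--Frobenius. Since $G_0$ and $G_1$ are connected and $\phi$ is surjective on $\pi_1$, the non-negative operator $L_q$ should be irreducible on the cone of positive edge-weights (no proper invariant subset of edges is preserved under backward iteration). Perron--Frobenius then yields a simple, positive leading eigenvalue $\lambda(q) = \oE^q[\pi,\phi]$ with a strictly positive eigenvector. The entries of $L_q$ are of the form $a_e^q$ for positive combinatorial constants $a_e$, hence are jointly real-analytic in $q$. Simplicity of $\lambda(q)$ together with Kato's analytic perturbation theory (or equivalently the implicit function theorem applied to the characteristic polynomial $\det(L_q - \lambda) = 0$ at a simple root) then implies that $\lambda$, and hence $\oE^q[\pi,\phi]$, is real-analytic in $q$.

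The main obstacle is Step 1: the asymptotic $q$-conformal energy is defined variationally rather than spectrally, so one must show that the optimization collapses to a single eigenvalue problem with analytic data. A serious subtlety is that the extremal weighting for $E^q$ on finite iterates can be supported on a strict subgraph (a critically slow "core"), and only the limit sees a uniform combinatorial structure; recurrence should prevent such degeneracies from persisting in the $n\to\infty$ limit, but verifying this uniformly in $q$ is the crux. If a clean spectral reformulation is not available, a fallback strategy is to work directly with the variational problem: assume $\oE^{q_1} = \oE^{q_2}$ for $q_1 < q_2$, and use the equality case of Hölder's inequality applied to the extremal metric to conclude that the local stretch factors of (some iterate of) $\phi$ are everywhere equal, which should force $q \mapsto \oE^q$ to be constant on all of $[1,\infty]$.
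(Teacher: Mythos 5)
You are addressing Conjecture~\ref{conj:crit-equality}, which the paper states as an open problem and does not prove; the authors even remark that the stronger expected statement (convexity of $q \mapsto \log \oE^q[\pi,\phi]$) is open and point to thermodynamic formalism as a possible direction. So there is no proof of record to compare against; the question is whether your proposal closes the gap, and it does not.

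The decisive gap is your Step 1, which you yourself flag as ``the crux.'' The asymptotic energy is $\lim_n E^q_q[\phi^n]^{1/n}$, where $E^q_q[\phi^n]$ is an infimum over a homotopy class of the sup-norm quantity $\norm{\Fill^q(\cdot)}_\infty^{1/q}$, equivalently (the stretch-factor characterization, Theorem~\ref{thm:sf}) a supremum over homotopy classes of weighted multi-curves of ratios of $E^1_q$-energies. This is a min--max problem whose extremizers (both the energy-minimizing map and the extremal multi-curve) vary with $n$ and with $q$; there is no known linear transfer operator $L_q$ on a fixed finite-dimensional space of edge-weightings of $G_0$ whose spectral radius equals $\oE^q[\pi,\phi]$. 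Fekete/submultiplicativity gives existence of the limit as an infimum over $n$, not a spectral quantity, and the heuristic formula you write for $(L_q w)(e_0)$ is not derived from the variational definition; the extremal data lives on $G_n$ with $n \to \infty$ and need not be governed by any fixed finite matrix. Producing such a spectral reformulation with entries analytic in $q$, together with irreducibility from recurrence, would essentially be a solution of the conjecture (indeed it would prove more---real-analyticity and log-convexity---which the authors also leave open). The fallback argument is likewise not a proof: the hypothesis $\oE^{q_1}[\pi,\phi]=\oE^{q_2}[\pi,\phi]$ is an equality of limits of infima, so it yields no equality case of H\"older at any finite stage $n$, and nothing forces the extremal metrics or maps for $E^q_q[\phi^n]$ to stabilize or to have constant local stretch factors. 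As it stands your proposal is a plausible research program, close in spirit to the thermodynamic-formalism connection the authors raise, but it is not a proof.
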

 In particular if $[\pi,\phi]$ is forward-expanding then (since
 $\oE^1[\pi,\phi] \ge 1$ and $\oE^\infty[\pi,\phi] < 1$), the
 conjecture would imply that
 \[
   q_*[\pi,\phi] = q^*[\pi,\phi]
 \]
 and Theorem~\ref{thm:crit-sandwich} characterizes $\ARCdim$.

 One might expect more to be true in
Conjecture~\ref{conj:crit-equality}, in particular some type of
convexity of the function $q \mapsto \oE^q[\pi,\phi]$ (after some
reparametrization of the domain and/or range).
More generally, it would
be interesting to know the relationship between our constructions and
more classical constructions in thermodynamic formalism. In this
vein, we remark that  Das, Przytycki, Tiozzo, and Urbański
\cite{DPTZ19:CoarseExpanding} have developed the thermodynamic
formalism in a setting which includes the topologically coarse
expanding conformal maps $f\co \cJ \to \cJ$ considered here.
 
\subsection{Outline} The bulk of the paper is devoted to developing
the technology to prove Theorem~\ref{thm:crit-sandwich}.

\subsubsection*{Topological dynamics} (\S \ref{sec:ve_and_dynamics})
We begin with an arbitrary forward-expanding recurrent virtual
endomorphism of graphs $(\pi, \phi\co G_1 \rightrightarrows G_0)$.
Iteration, suitably defined, gives rise to
\begin{enumerate}
\item a sequence of virtual endomorphisms $(\pi^n_{n-1}, \phi^n_{n-1}\co G_n \rightrightarrows G_{n-1}), n \in \mathbb{N}$;
\item a connected, locally connected, compact space $\cJ$, the inverse
  limit of
\[
 \cdots \overset{\phi^{n+1}_n}{\longrightarrow}
  G_n \overset{\phi^n_{n-1}}{\longrightarrow} G_{n-1}
  \overset{\phi^{n-1}_{n-2}}{\longrightarrow} \cdots
  \overset{\phi^2_1}{\longrightarrow} G_1
  \overset{\phi^1_0}{\longrightarrow} G_0;
\]
\item a positively expansive self-covering map $f\co \mathcal{J} \to
  \mathcal{J}$ of degree $d\coloneqq\deg(\pi)$;
\item the topological conjugacy class of $f\co \cJ \to \cJ$ depends only on the homotopy class $[\pi,\phi]$.  
\end{enumerate}
Our development in this section is quite general. We consider pairs of
maps $\pi, \phi\co X_1 \rightrightarrows X_0$ between finite CW
complexes equipped with length metrics and satisfying natural
expansion conditions, and establish properties of the dynamics on the
limit space. The main result, Theorem~\ref{thm:lc}, shows that under
these conditions, the construction of the conformal gauges given in
the next section applies.  
We also give a general result, Theorem
\ref{thm:Ashadowing}, which promotes a family of homotopy
pseudo-orbits to a map to the limit space in a natural way. This
allows us to relate features of the pair $(\pi, \phi)$ to features of
the limit space.

\subsubsection*{The conformal gauge} (\S \ref{sec:gauge}) We apply a
construction in \cite{kmp:ph:cxci} to put a nice metric $d_\epsilon$
on $\mathcal{J}$, called a \emph{visual metric}. It depends on a
suitably small but arbitrary parameter~$\epsilon$, and on the data of
a finite cover $\mathcal{U}_0$ of $\mathcal{J}$ by open, connected
sets. Changing this data changes the metric by a special type of
quasi-symmetric map called a \emph{snowflake map}. Equipped with a
visual metric, $f$ is positively expansive. Even better, any ball of
sufficiently small radius is sent homeomorphically and homothetically onto
its image, with expansion constant~$e^\epsilon$. The metric space
$(\mathcal{J}, d_\epsilon, \mathcal{H}^q)$ is Ahlfors regular of
exponent $q\coloneqq \log(d)/\epsilon=\hdim(\mathcal{J}, d_\epsilon)$.
Therefore, the invariant $\ARCdim(\cJ[\pi, \phi])$ is well-defined. See
Theorem~\ref{thm:visual}.

\subsubsection*{Energies of graph maps} (\S \ref{sec:energies}) When
equipped with natural length metrics, the maps $\phi^n\coloneqq
\phi^n_0\co G_n \to G_0$ have, for each $q \in [1,\infty]$, a
$q$-conformal energy $E^q_q[\phi^n]$ in the sense introduced by the
second author \cite{Thurston19:Elastic}.  The growth rate of this
energy as $n$ tends to infinity, namely
$\oE^q[\pi, \phi]\coloneqq  \lim E^q_q[\phi^n]^{1/n}$,
is called the \emph{asymptotic $q$-conformal energy}.  It 
depends only on the homotopy class $[\pi, \phi]$, and is continuous and 
non-increasing in~$q$ (Proposition~\ref{prop:energy}).  The expansion
hypothesis implies
$\oE^\infty[\pi,\phi]<1$, giving the interval in the statement of
Theorem~\ref{thm:crit-sandwich}.

\subsubsection*{Combinatorial modulus} (\S \ref{sec:modulus}) In this
section we recall (and extend slightly) results on a combinatorial
version of modulus in a fairly general setting, and how it is related
to Ahlfors-regular conformal dimension. Although the limit space~$\cJ$
hardly appears in this section, the ultimate motivation is of course
to estimate its conformal dimension. In more detail, fix
$n \in \mathbb{N}$. There is a natural projection
$\phi^\infty_n\co \cJ \to G_n$. The collection $\cV_n$ of fibers
$(\phi^\infty_n)^{-1}(e)$ of closed edges $e \in E(G_n)$ gives a
covering of $\cJ$.%
\footnote{For technical reasons, we actually work with a slightly
  different cover $\cV_n$ given by inverse images of a slightly larger
  set~$\widehat{e}$ called the \emph{star} of~$e$; see Definition
  \ref{def:star}.}
Given a family of curves $\Gamma$ in $\cJ$ and
an exponent $q \geq 1$, we get a numerical invariant,
$\modulus_q(\Gamma, \cV_n)$, the combinatorial modulus of this
family.

\S\S \ref{subsec:comb-mod}--\ref{sec:regularity} develop general
properties of combinatorial modulus. We need to
consider a mild generalization: we define combinatorial modulus for
families of \emph{weighted} curves.

\S \ref{sec:comb-mod-graph} continues by relating combinatorial
modulus to energies of graph maps.  For this, it is technically
convenient to work with the reciprocal of modulus, namely extremal
length. The relation arises via the characterization of graph map
energy in terms of maximum distortion of extremal length.  See
Theorem~\ref{thm:sf}, which requires a formulation of extremal length
in terms of weighted curves.

\S \ref{subsecn:conf-dim-comb-mod}  recalls a result of Carrasco
\cite[Theorem~1.3]{C13:Gauge}, which was independently proved by
Keith-Kleiner (unpublished). This
result states that for a suitably self-similar space $Z$ and a
suitable family of coverings $(\cS_n)_n$ indexed by $\mathbb{N}$, there is a
\emph{critical exponent} $q$. For a reasonably natural curve
family~$\Gamma$ in the space, this critical exponent distinguishes between
$\modulus_p(\Gamma, \cS_n) \to \infty$ if $p<q$ and
$\modulus_p(\Gamma, \cS_n) \to 0$ if $p>q$; see
Theorem~\ref{thm:keith-kleiner}.

\subsubsection*{Sandwiching the dimension} (\S \ref{sec:sandwich})
The proof of Theorem \ref{thm:crit-sandwich} applies the developments
in \S \ref{sec:modulus} and \S \ref{sec:energies}. We relate
combinatorial modulus of curve families in the limit space to
combinatorial modulus of curve families on the graphs $G_n$, and
then to energies of graph maps.

Here is a brief summary of the proof. The collection $\{\cV_n\}_n$
above forms a family of \emph{snapshots} of the limit
space, equipped with a visual metric;  see \S \ref{subsec:snap-stars}.   We
show a curve $\gamma\co C \to G_n$ can be approximately lifted under
$\phi^\infty_n$ to a curve $\gamma'\co C \to \cJ$ such that the composition
$\gamma''\coloneqq \phi^\infty_n \circ \gamma'\co C \to G_n$ is homotopic to
$\gamma$, with traces of size uniformly bounded independent of $n$. This
implies that combinatorial moduli for $\gamma$ and $\gamma''$ on $G_n$ are
comparable to that of $\gamma'$ on $\cJ$ (Lemma \ref{lemma:eg_vs_ej}).

With this setup, the upper bound on conformal dimension is
straightforward to verify; see \S \ref{subsec:leq}. The lower bound is
more involved and uses in an essential way the existence of a curve
$\gamma\co C \to G_n$ which is extremal for the distortion of extremal
length in the homotopy class of $\phi^n_0\co G_n \to G_0$. When
projected to $G_0$, the strands of $\phi^n_0\circ \gamma$ are very long and
cross edges of $G_0$ many times. We decompose $\gamma$ into a family of
subcurves $\zeta$, each of which projects to an edge of $G_0$, and make
the needed estimates. See \S \ref{subsec:geq}.

\subsection{Applications} In \S \ref{sec:applications}, we give
several applications of our methods.

\subsubsection{Techniques for estimates}
Theorem~\ref{thm:crit-sandwich} yields practical methods for
estimating the Ahlfors-regular conformal dimension.  If specific
$\phi$ and $q$ are given with
$E^q_q(\phi)<1$, the submultiplicativity of energy under composition
yields $\oE^q[\pi, \phi]<1$ and thus, by
Theorem~\ref{thm:crit-sandwich}, $\ARCdim(\cJ[\pi,\phi]) < q$.
Furthermore, there are bounds on how quickly $\oE^q[\pi,\phi]$ can
decrease as a function of~$q$ \cite[Proposition
6.11]{Thurston20:Characterize}, so if we
know $\oE^q[\pi,\phi]< 1$, we get an upper bound on~$q^*$ that is
smaller than~$q$.  See Proposition~\ref{prop:Eq-lower-bound}.  

For lower bounds, we have the following. Set
$\overline{N}[\pi, \phi]=\oE^1[\pi,\phi]$.
One way to view this quantity is as the
asymptotic growth rate of the maximal number of
edge-disjoint curves
in~$G_n$, each representing a non-trivial loop in~$G_0$.
The bounds mentioned above on how fast $\oE^q$ decreases as a function
of~$q$ yield the following.
\begin{theorem}
\label{thm:Nbar}
For any recurrent forward-expanding virtual graph automorphism $[\pi,\phi]$ where
$\deg(\pi) = d$, we have
\[ \ARCdim(\cJ[\pi, \phi]) \geq \frac{1}{1-\log_d \overline{N}[\pi, \phi]}.\]
\end{theorem}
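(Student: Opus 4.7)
The plan is to combine Theorem~\ref{thm:crit-sandwich} with the quantitative rate estimate for how fast $q \mapsto \oE^q[\pi,\phi]$ can decrease, namely \cite[Proposition~6.11]{Thurston20:Characterize}. By Theorem~\ref{thm:crit-sandwich},
\[
  \ARCdim(\cJ[\pi,\phi]) \;\ge\; q_*[\pi,\phi],
\]
so it suffices to produce a lower bound on $q_*[\pi,\phi]$ in terms of $\overline{N}[\pi,\phi]=\oE^1[\pi,\phi]$ and the covering degree~$d$.

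The workhorse is a Hölder-type inequality between the $q=1$ and general $q\ge1$ energies. Applied to the $n$-th iterate $\phi^n\co G_n\to G_0$---whose associated covering $\pi^n$ has degree $d^n$---one obtains an estimate of the form
\[
  E^q_q[\phi^n] \;\ge\; E^1_1[\phi^n]\cdot(d^n)^{1/q-1}.
\]
Taking $n$-th roots and letting $n\to\infty$, using that $\oE^1[\pi,\phi]=\overline{N}[\pi,\phi]$, yields the asymptotic inequality
\[
  \oE^q[\pi,\phi] \;\ge\; \overline{N}[\pi,\phi]\cdot d^{\,1/q-1}
\]
for every $q\ge1$.

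Finally, I would evaluate this at $q=q_*[\pi,\phi]$, where the left-hand side equals~$1$ by definition of~$q_*$, obtaining $d^{\,1-1/q_*}\ge\overline{N}[\pi,\phi]$; taking $\log_d$ and solving for $q_*$ gives
\[
  q_*[\pi,\phi] \;\ge\; \frac{1}{1-\log_d \overline{N}[\pi,\phi]},
\]
which combined with Theorem~\ref{thm:crit-sandwich} completes the proof. (The hypothesis $\overline{N}[\pi,\phi]<d$ needed for the denominator to be positive holds because $\overline{N}\le d$, with equality only in degenerate cases ruled out by forward-expansion.)

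The place where I expect the real work to sit is the Hölder step. The pointwise Hölder inequality over the $d^n$-element fibers of $\pi^n$ is immediate, but one must verify that it interacts correctly with the infimum over metrics defining $E^q_q[\phi^n]$, and that the global exponent is governed by the total degree $d^n$ rather than by a smaller power tracking local edge-multiplicities of $\pi^n$. This verification is precisely the content of \cite[Proposition~6.11]{Thurston20:Characterize}; the remainder of the argument is reducing to its hypotheses in the present recurrent forward-expanding graph-endomorphism setting and then performing the one-line algebraic manipulation above.
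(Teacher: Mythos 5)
Your proposal is correct and follows essentially the same route as the paper: the paper deduces $\ARCdim(\cJ[\pi,\phi]) \ge q_*[\pi,\phi]$ from Theorem~\ref{thm:crit-sandwich} and then applies Proposition~\ref{prop:Eq-lower-bound} (i.e.\ the cited Proposition~6.11) with $p=1$ to get $\oE^q[\pi,\phi] \ge d^{\frac{1}{q}-1}\,\oN[\pi,\phi]$, exactly your Hölder step, with the real content delegated to that citation just as you do. The only cosmetic difference is that you evaluate the inequality at $q=q_*$ (using $\oE^{q_*}=1$), whereas the paper observes that for $q$ below the claimed bound the right-hand side exceeds~$1$; these are algebraically equivalent.
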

For the virtual endomorphism of $f$ from
Figures~\ref{fig:barycentric_carpet} and~\ref{fig:barycentric_ve}, we
find by hand that $\overline{N}(f)=2$ and
$\oE^2[\pi,\phi] < \sqrt{10/13}$, so
$1.6309 \approx \frac{1}{1-\log_6 2} < \ARCdim(J_f) <
\frac{2}{1-\log_6 (10/13)} \approx 1.7445$; see
\S\ref{sec:barycentric_carpet} for details.

If $f$ is a hyperbolic rational map and $[\pi, \phi]$ an associated
virtual graph endomorphism, the quantity $\overline{N}(f)$
seems to be closely related to the topological and metric structure of
the Julia set. For example, we show the following.
\begin{theorem}
\label{thm:Nbar_ge_1}
Suppose $f$ is a critically finite hyperbolic rational map. If $J_f$ is a Sierpiński carpet, then $\oN(f)>1$. 
\end{theorem}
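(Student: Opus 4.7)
The plan is to exhibit, for each sufficiently large~$n$, at least $c^n$ edge-disjoint loops in $G_n$ each projecting under $\phi_n$ to a fixed essential loop $\gamma \subset G_0$, where $c>1$ depends only on~$f$. Since $\oN(f)=\oE^1[\pi,\phi]$ is the asymptotic growth rate of the maximum number of such edge-disjoint lifts over nontrivial loops in~$G_0$, this would give $\oN(f)\ge c>1$.

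First I would fix a periodic Fatou component $U$ containing a point $p\in P_f$ and take $\gamma\subset G_0$ to be a peripheral loop around~$p$ representing a nontrivial element of $\pi_1(\rs\setminus P_f)$. The preimages under $f^n$ of the periodic cycle of~$U$ are Jordan domains $U^{(n)}_j\subset\rs\setminus J_f$; because $J_f$ is a Sierpi\'nski carpet, these have pairwise disjoint closures and diameters tending to zero. Expansion of $f|_{J_f}$ together with the bounded geometry of~$J_f$ forces the total number of such $U^{(n)}_j$ to grow at exponential rate $d=\deg f$, with a positive proportion lying in a small neighborhood of~$p$ whose deformation retraction onto~$G_0$ sends peripheral circles there to the class of~$\gamma$. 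Applying the retraction---approximating each boundary $\partial U^{(n)}_j$ by a cycle in~$G_n$---produces combinatorial cycles $\beta^{(n)}_j\subset G_n$ with $\phi_n\circ\beta^{(n)}_j$ homotopic to~$\gamma$ in~$G_0$.

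The main obstacle is verifying that the cycles $\{\beta^{(n)}_j\}_j$ can be realized pairwise edge-disjoint in~$G_n$. Intuitively this is immediate from the carpet's disjoint-closures property: each $\partial U^{(n)}_j$ sits in its own tubular annular region of~$\rs$, separated from the others by a definite (if small) distance. Rigorously, one needs $G_n$ to be chosen adapted to the Markov partition induced by the iterated preimages of the Fatou component boundaries, so that edges of $G_n$ respect this combinatorial structure; the cycles $\beta^{(n)}_j$ then lie in disjoint subgraphs. The carpet hypothesis is essential: without pairwise disjoint closures, nested or tangent preimage components could share boundary arcs and force the cycles to share edges of~$G_n$, collapsing the count. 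Combined with the exponential count of suitable $U^{(n)}_j$ in a neighborhood of~$p$, this gives $\oN(f)\ge c>1$.
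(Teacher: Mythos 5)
There is a genuine gap, and it is at the heart of your count. The small iterated-preimage Fatou components $U^{(n)}_j$ that you want to use contain no post-critical points (only finitely many Fatou components meet $P_f$, independently of $n$), so each boundary $\partial U^{(n)}_j$ bounds a disk disjoint from $P_f$ and is therefore \emph{null-homotopic} in $\rs - P_f$. Lying "in a small neighborhood of $p$" does not help: the free homotopy class of such a loop in $\rs - P_f$ is determined by how it separates $P_f$, not by its location, so the retraction to $G_0$ sends these cycles to trivial loops, not to the class of $\gamma$. Trivial loops contribute nothing to $N[\phi^n]$, since a representative $\psi \in [\phi^n]$ may collapse them; only curves in $G_n$ whose $\phi^n$-image is homotopically nontrivial force preimages of points of $\gamma$. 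The exponentially many components you exhibit therefore do not produce an exponential lower bound, and the edge-disjointness discussion (which occupies your last paragraph) is not the real difficulty.

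The paper's proof gets essential curves in a different way. One first collapses the closures of the Fatou components (Moore's theorem, using the carpet hypothesis) to obtain an expanding Thurston map $g$ on $S^2$ with $\oN(g)=\oN(f)$, then invokes Bonk--Meyer: an $f^k$-invariant Jordan curve through $P_g$, the associated tilings $\mathcal{T}_n$, a combinatorial expansion factor $\Lambda_0(g)>1$, and visual metrics in which level-$n$ tiles have diameter $\asymp \theta^n$ for any $\theta^{-1}<\Lambda_0(g)$. Around a fixed post-critical point $p$ one then builds, at level $n$, on the order of $\theta^{-n}$ pairwise disjoint \emph{nested} cycles in the dual graph $G_n$, each separating $p$ from a fixed loop $C_0\subset G_0$; every one of these is freely homotopic to the peripheral loop about $p$, hence essential, and any $\psi\in[\phi^n]$ must map each of them over $C_0$, giving $N[\phi^n]\gtrsim\theta^{-n}$ and $\oN(f)\ge\theta^{-1}>1$. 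Note that this expansion argument must take place in the quotient sphere: the original map $f$ is not expanding near $p$ (which lies in the Fatou set), so counting concentric essential loops directly in the dynamical plane of $f$, as your "expansion of $f|_{J_f}$" heuristic suggests, does not work either. If you want to salvage your outline, you would need to replace the boundaries of the small Fatou preimages by such nested separating cycles and supply the quantitative expansion input that makes their number grow exponentially.
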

Examples show that the converse need not hold; see
\S\ref{sec:Nbar_gtr_1}.

\subsubsection{When the conformal dimension equals \textalt{$1$}{1}}  M. Carrasco \cite[Theorem 1.2]{C14:ConfDim} gives
a metric condition, \emph{uniformly well-spread cut points} (UWSCP; Definition \ref{def:uwscp} below),
on a compact doubling metric space $X$ which guarantees
$\ARCdim(X)=1$.  All hyperbolic polynomials and rational maps with
``gasket-type'' Julia sets satisfy the UWSCP  condition.  Combining
his observation with our Theorem~\ref{thm:Nbar}, we obtain the following.
\begin{theorem}
\label{thm:uwscp_implies_Nbar1}
Suppose $f$ is a hyperbolic rational map. If $J_f$ satisfies the UWSCP condition, then $\overline{N}(f)=1$.
\end{theorem}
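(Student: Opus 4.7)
The plan is to obtain Theorem~\ref{thm:uwscp_implies_Nbar1} as a direct combination of Carrasco's result on UWSCP and our Theorem~\ref{thm:Nbar}. There is essentially no new work needed beyond assembling these two pieces together with the general lower bound $\oE^1[\pi,\phi] \geq 1$ for forward-expanding recurrent virtual graph endomorphisms, which is already noted in the discussion following Conjecture~\ref{conj:crit-equality}.

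First I would invoke Carrasco's theorem (\cite[Theorem~1.2]{C14:ConfDim}) to conclude that, since $J_f$ satisfies the UWSCP condition and is a compact doubling metric space (as an approximately self-similar subset of the sphere), we have $\ARCdim(J_f) = 1$. Next, since $f$ is a hyperbolic (critically finite) rational map, the associated virtual graph endomorphism $[\pi,\phi]$ is recurrent and forward-expanding, and its limit space $\cJ[\pi,\phi]$ is homeomorphic to $J_f$ in a way that identifies the spherical metric with an element of the canonical gauge (Proposition~\ref{prop:elevator}). Hence $\ARCdim(\cJ[\pi,\phi]) = \ARCdim(J_f) = 1$.

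Now I would apply Theorem~\ref{thm:Nbar}, which gives
\[
  1 = \ARCdim(\cJ[\pi,\phi]) \;\geq\; \frac{1}{1 - \log_d \oN(f)}.
\]
Rearranging forces $\log_d \oN(f) \leq 0$, that is, $\oN(f) \leq 1$. On the other hand, $\oN(f) = \oE^1[\pi,\phi] \geq 1$ holds for any forward-expanding recurrent virtual graph endomorphism (as recorded just after Conjecture~\ref{conj:crit-equality}). Combining the two inequalities yields $\oN(f) = 1$.

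There is no substantive obstacle here; the content is entirely contained in the two cited theorems. The only point requiring minimal care is checking that the hypotheses of Theorem~\ref{thm:Nbar} genuinely apply to the virtual graph endomorphism arising from a hyperbolic rational map, and that the identification of $J_f$ with $\cJ[\pi,\phi]$ preserves the Ahlfors-regular conformal gauge so that the two notions of $\ARCdim$ coincide; both are already established in \S\ref{sec:ve_and_dynamics} and \S\ref{sec:gauge}.
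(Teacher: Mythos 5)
Your proposal is correct and is essentially the paper's own argument: the paper obtains Theorem~\ref{thm:uwscp_implies_Nbar1} precisely by combining Carrasco's criterion (Theorem~\ref{thm:confdim1}, with $J_f$ compact, doubling, connected, and linearly connected via the cxc property) with Theorem~\ref{thm:Nbar} and the trivial bound $\oN(f)\geq 1$. No further comment is needed.
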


We also show that Carrasco's criterion
 for $\ARCdim = 1$ is not necessary.
 
\begin{proposition}
  \label{prop:rmb}
  Let $R$ be the rational map obtained by mating the Douady rabbit quadratic polynomial with the basilica polynomial $z^2-1$. Then $\ARCdim(J_R)=1$ but $J_R$ does not satisfy UWSCP.
\end{proposition}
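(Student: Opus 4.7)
The two assertions are largely independent and can be tackled separately.

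First, for $\ARCdim(J_R) = 1$, the lower bound is immediate since $J_R$ is a connected compact metric space with more than one point. For the upper bound, the strategy is to apply Theorem~\ref{thm:crit-sandwich} to an explicit virtual graph endomorphism $(\pi, \phi)$ for~$R$. Take a spine $G_0$ of $\rs - P_R$ (with $\#P_R = 5$: three post-critical points coming from the rabbit's attracting 3-cycle and two from the basilica's attracting 2-cycle), set $G_1 = R^{-1}(G_0)$, and record the induced covering~$\pi$ and deformation retraction~$\phi$. It then suffices to show $q^*[\pi, \phi] \le 1$, i.e.\ $\oE^q[\pi, \phi] < 1$ for every $q > 1$. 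My plan is first to establish $\oN(R) = \oE^1[\pi, \phi] = 1$ by direct combinatorial analysis of the mating; this is consistent with the fact that both constituent polynomials have gasket-type Julia sets satisfying UWSCP (and hence $\oN = 1$ each by Theorem~\ref{thm:uwscp_implies_Nbar1}), and one should be able to push this through the mating by showing the asymptotic count of edge-disjoint loop families in the $G_n$ remains subexponential. Combining this with a direct energy estimate $E^{q_0}_{q_0}[\phi^k] < 1$ for some $q_0 > 1$ and $k$ large (giving $\oE^{q_0}[\pi, \phi] < 1$ by submultiplicativity), Proposition~\ref{prop:Eq-lower-bound} then forces $q^*$ strictly below~$q_0$; iterating or using the proposition's quantitative form then yields $q^* \le 1$.

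Second, for the failure of UWSCP, the plan is to exhibit a concrete local obstruction. The mating construction embeds a topological circle (the equator) into~$J_R$, and iterated preimages of the equator accumulate densely throughout~$J_R$. I would identify a point $p \in J_R$ on an orbit where infinitely many preimage-equators cluster at varying scales---for instance a preimage of a touching point between periodic Fatou components of the basilica's 2-cycle---such that small neighborhoods of $p$ contain nested ``crossings'' requiring a diverging number of cut points to separate. Concretely: produce sequences $x_n, y_n \to p$ with $d(x_n, y_n) \asymp r_n \to 0$ such that any subset of $B(p, C r_n) \cap J_R$ separating $x_n$ from $y_n$ has cardinality unbounded in~$n$, contradicting UWSCP. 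Using the approximate self-similarity of $(J_R, d_\epsilon)$ furnished by Theorem~\ref{thm:visual}, this claim reduces to a tangent-space identification of the local topology at~$p$.

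The principal anticipated obstacle in the first part is the quantitative energy estimate: showing $\oE^{q_0}[\pi, \phi] < 1$ with a margin large enough (relative to Proposition~\ref{prop:Eq-lower-bound}) to push $q^*$ down to $1$. While qualitatively plausible from the behavior of each constituent polynomial, the mating may introduce cross-interactions in $(\pi, \phi)$ that complicate a by-hand computation. In the second part, the subtlety is to rule out clever bounded-cardinality cut systems that exploit the full topology of the mating near~$p$; this requires a careful analysis of the nested circle structure at~$p$ rather than just producing one failing configuration.
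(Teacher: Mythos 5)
Both halves of your plan have genuine gaps. For $\ARCdim(J_R)=1$, the step ``combine $\oN(R)=1$ with one energy estimate $E^{q_0}_{q_0}[\phi^k]<1$ and use Proposition~\ref{prop:Eq-lower-bound} to push $q^*$ down to $1$'' does not work. That proposition only bounds how fast $\oE^q$ can decrease: from $\oE^{q_0}\le c<1$ it gives $q^*\le \bigl(1/q_0-\log_d c\bigr)^{-1}$, which is strictly greater than $1$ unless $c$ equals the extreme value $d^{1/q_0-1}$, and iterating does not improve this (the bound is already stated for the asymptotic energy). Likewise $\oN=\oE^1=1$ gives no upper bound on $q^*$ unless you already know Conjecture~\ref{conj:crit-equality} (or Park's later theorem), neither of which is available here. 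What you actually need is $\oE^q<1$ for \emph{every} $q>1$, and the paper gets this by a $q$-dependent construction: on the explicit spine of the mating (equator plus two ``rays''), choose the lengths of the ray edges very large depending on~$q$ and take $\phi$ to push the extra edge toward the equator; then $\Fill^q(\phi)$ is $\approx 2^{1-q}<1$ on the equator and can be made $<1$ on the ray edges by a small further homotopy, so $E^q_q(\phi)<1$ and hence $q^*\le 1$, giving $\ARCdim=1$ by Theorem~\ref{thm:crit-sandwich}. (Your computation $\oN=1$ is true and is verified in the paper by the direct count $N[\phi^n]=2n+1$ on this spine, but ``pushing UWSCP of the factors through the mating'' is not an argument.)

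For the failure of UWSCP, your local ``nested crossings at a special point $p$'' obstruction is speculative and misses the mechanism that makes the proof work. The paper's argument is global and topological: (a) the ray-equivalence classes of the mating are uniformly finite (Proposition~\ref{prop:reqreln}, via Nekrashevych's nucleus); (b) because $2$ and $3$ are coprime and the landing-angle intervals $[1/3,2/3]$ and $[1-2/7,1-1/7]$ are disjoint, the rabbit basins and the basilica basins never touch in $J_R$ (Proposition~\ref{prop:not_touch}); (c) hence if UWSCP held, a Jordan curve through the finite cut set $A$ would lie in the closure of basins of one polynomial only, while the two complementary components each contain basins of the other polynomial that can be joined by a path meeting $J_R$ finitely often; the forced crossing would be a touching point of the two families of basins, a contradiction. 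Note this shows \emph{no} finite separating set exists, at any point and scale, so the real obstruction is not a divergent count of cut points at one carefully chosen $p$. Without (a)---which rules out a priori unbounded ray classes---and (b), your proposed tangent/self-similarity analysis has no way to exclude bounded-cardinality cut systems, as you yourself anticipate; these two inputs are the missing ideas.
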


This example is shown in Figure~\ref{fig:mate-rabbit-basilica}. More
generally, applying our methods, InSung Park
\cite{Park21:Obstructions} has proved the following generalization:
\emph{A hyperbolic critically finite rational map $f$ satisfies
  $\ARCdim(J_f)=1$ if and only if $f$ is a crochet map, if and only
  if $\oN(f) = 1$.} Here a \emph{crochet
map} is one in which any pair of points in the Fatou set is joined by a
path which meets the Julia set in a countable set of points.

\begin{figure}
  \centerline{\includegraphics[height=3in]{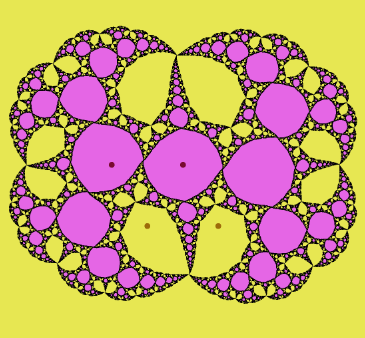}}
  \caption{The mating of the rabbit (yellow) and the basilica (pink)
    polynomials.}
  \label{fig:mate-rabbit-basilica}
\end{figure}

\subsubsection{Variation in a family} R. Devaney \emph{et al.} studied
the family $f_\lambda(z)=z^2+\lambda/z^2$ for $\lambda \in \C-\{0\}$
\cite{devaney:look:uminsky:IUMJ05}. Figure \ref{fig:devaney-2-2-param}
shows the bifurcation locus in the parameter plane for this
family. (The four critical points at fourth roots of~$\lambda$ end up
in the same orbit for this
family, making it a critical orbit variety \cite{BdM13:SpecialCurves}
and simplifying the
analysis; see Eq.~\eqref{eq:devaney-orbits}.)
\begin{figure}
\includegraphics[width=3in]{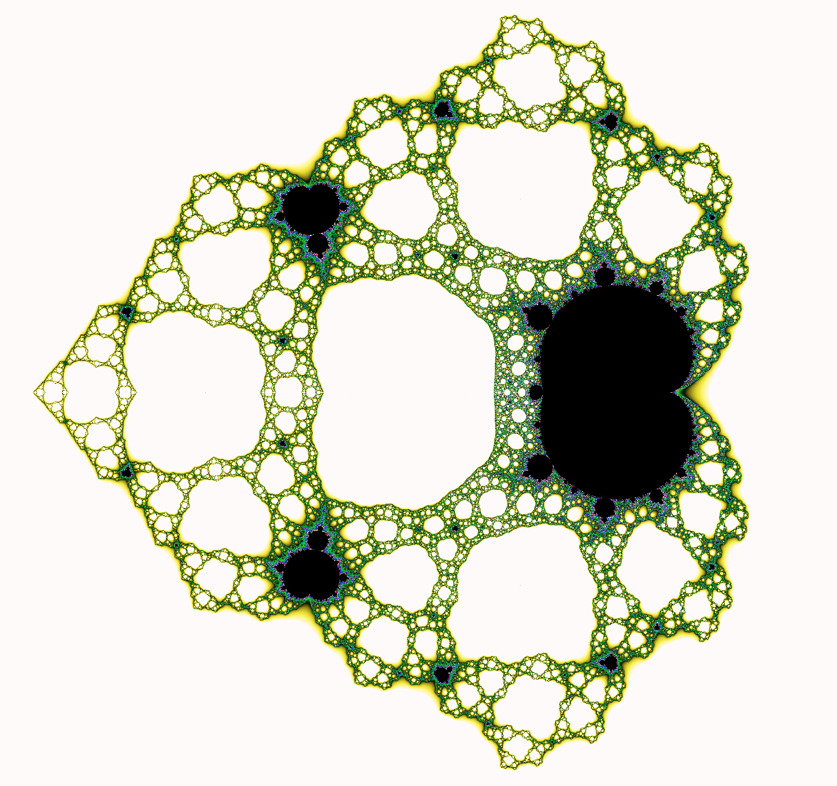}
\caption{Parameter plane for $f_\lambda(z)=z^2+\lambda/z^2$. The white
regions are where the orbits of the critical points $\lambda^{1/4} \mapsto \pm 2\sqrt{\lambda} \mapsto 4\lambda + 1/4 \mapsto \ldots$ escape to the
attracting fixed point at~$\infty$.  The small
black Mandelbrot sets correspond to cases where that
orbit is attracted to a cycle other than the fixed-point at infinity.}
\label{fig:devaney-2-2-param}
\end{figure}
Parameters taken from the prominent ``holes'' along the real axis have carpet Julia sets. 
In \S\S \ref{sec:fat-devaney} and \ref{sec:skinny-devaney}, we present
two sequences $\lambda_n^{\mathrm{skinny}},
\lambda_n^{\mathrm{fat}}$, $n \in \mathbb{N}$, whose values converge, respectively, to
the left-most real parameter (at
$\lambda=-\frac{3}{16}-\frac{\sqrt{2}}{8}$) and to the origin. Figure
\ref{fig:two_devaney_exs} illustrates two
examples.
 \begin{figure}
\centerline{\includegraphics[width=2.5in]{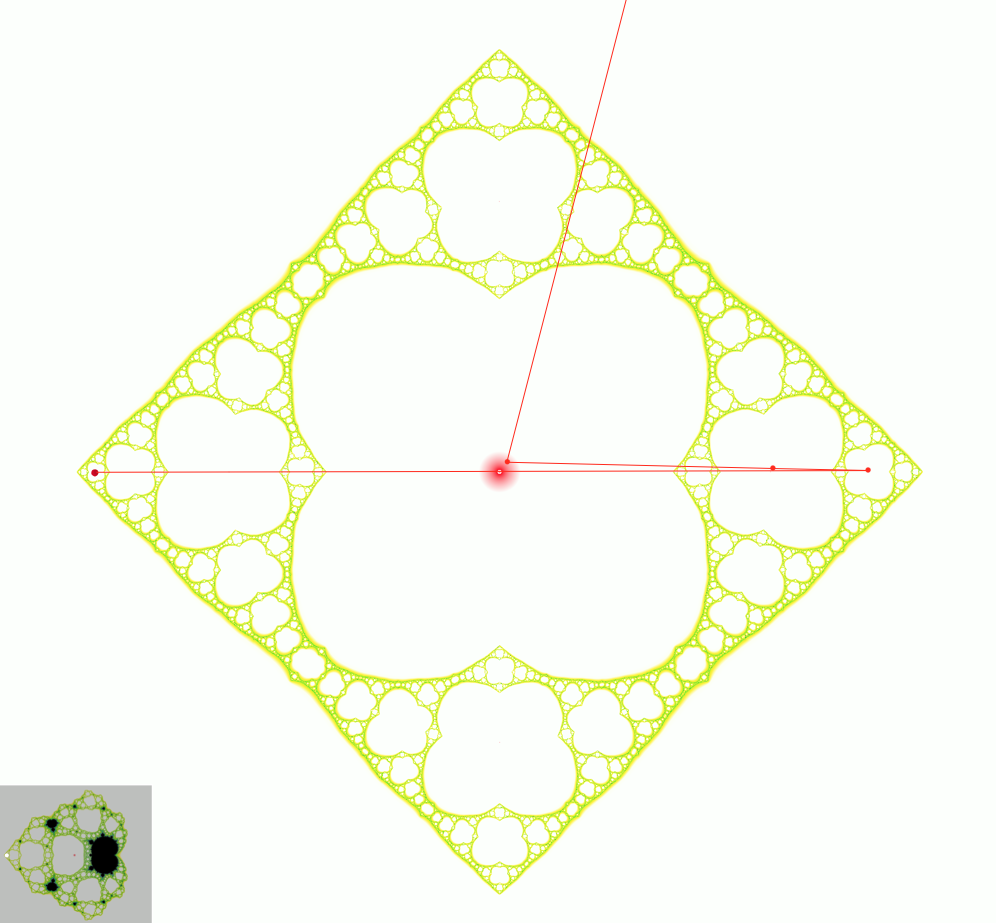}\quad
\includegraphics[width=2.5in]{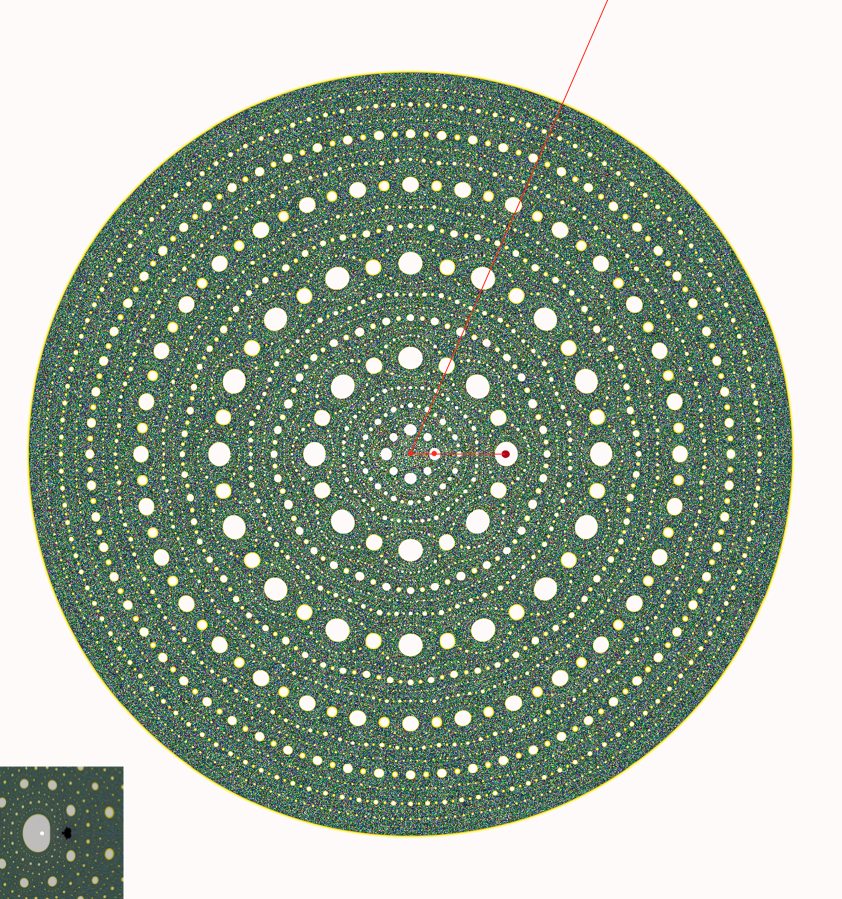}}
 \caption{Two Sierpiński carpet Julia sets in the Devaney family
   corresponding to parameters $\lambda_2^{\mathrm{skinny}} \approx
   -.35891$ (left) and  $\lambda_2^{\mathrm{fat}} \approx -1.4996769
   \times 10^{-5}$ (right). The red points are the common second iterates $x_\lambda$ of the finite nonzero critical points; the white points show the orbit of $x_\lambda$. The insets at lower left
   are  local pictures in the parameter plane. Figures created with
   FractalStream \cite{Noonan:FractalStream}.}
  \label{fig:two_devaney_exs}
 \end{figure}
 Note the difference in the apparent ``thickness'' of the carpets. Though each Julia set is a Sierpiński carpet, that of $\lambda_n^{\mathrm{skinny}}$ visually becomes ``skinnier'' as $n \to \infty$, while that of $\lambda_n^{\mathrm{fat}}$ visually becomes ``fatter'' as $n \to \infty$. The former rate seems to be rather gradual, while the latter rate seems to be very fast. 
 
 M. Bonk, M. Lyubich, and S. Merenkov showed that a
 quasi-symmetric map between hyperbolic carpet Julia sets extends to a
 Möbius transformation \cite{bonk:lyubich:merenkov:2016}. This
 easily implies that the three carpets shown in Figures
 \ref{fig:barycentric_carpet} and \ref{fig:two_devaney_exs} are
 pairwise quasi-symmetrically inequivalent. Our techniques allow us to
 quantify this distinction.
 \begin{theorem}
 \label{thm:1_and_2}
 We have
 \[ 
   1 < \ARCdim(J_{\lambda_n^{\mathrm{skinny}}}) <  1+ \frac{1}{\log_2(2n+3)},\]
 and
 \[
   \frac{2}{1+2^{-n}} \le \ARCdim(J_{\lambda_n^{\mathrm{fat}}}) <  2.
 \]
 \end{theorem}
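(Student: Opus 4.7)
The plan is to realize each family dynamically by an explicit recurrent forward-expanding virtual graph endomorphism $(\pi_n^{\mathrm{skinny}},\phi_n^{\mathrm{skinny}})$ and $(\pi_n^{\mathrm{fat}},\phi_n^{\mathrm{fat}})$, both with $\deg(\pi_n)=\deg(f_\lambda)=4$, and then apply Theorem~\ref{thm:crit-sandwich} in combination with Theorem~\ref{thm:Nbar}. The first task is to choose a spine $G_0$ in $\rs - P_{f_{\lambda_n}}$ adapted to the combinatorics of the attracting cycle; since the four finite critical points of $f_\lambda$ have the common second iterate $x_\lambda$, the postcritical set has a uniform combinatorial structure, and the preimage graph $G_1 = f_{\lambda_n}^{-1}(G_0)$ can be drawn explicitly. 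The parameters $\lambda_n^{\mathrm{skinny}}$ and $\lambda_n^{\mathrm{fat}}$ are selected so that as $n$ increases the combinatorics of the ``hole'' around the $\lambda$ critical orbit becomes deeper in a controlled way—stretching thinner in one family and enlarging in the other—which is reflected in the virtual endomorphism by the presence of either very many parallel edges in $G_1$ over a single edge of $G_0$, or very few.

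For the lower bounds, the skinny bound $\ARCdim(J_{\lambda_n^{\mathrm{skinny}}})>1$ follows immediately from Theorem~\ref{thm:Nbar_ge_1} once one verifies that the corresponding Julia set is a Sierpiński carpet, which can be read off from the structure of the Fatou components near the parameters in question. For the fat family, one computes $\oN[\pi_n^{\mathrm{fat}},\phi_n^{\mathrm{fat}}]$ directly: by the interpretation of $\oN$ as the asymptotic growth rate of the minimum fiber size of $\phi_n \co G_n \to G_0$, it suffices to exhibit a large collection of edge-disjoint lifts. The geometry of the fat configuration should force $\oN^{\mathrm{fat}}_n \ge 4^{(1-2^{-n})/2}$, so that $\log_4 \oN \ge \tfrac{1}{2}(1-2^{-n})$, and Theorem~\ref{thm:Nbar} then gives
\[
   \ARCdim(J_{\lambda_n^{\mathrm{fat}}}) \ge \frac{1}{1-\tfrac{1}{2}(1-2^{-n})} = \frac{2}{1+2^{-n}}.
\]

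For the upper bounds, we exploit Theorem~\ref{thm:crit-sandwich}: it suffices to assign edge weights to $G_0$ and $G_1$ exhibiting $E_q^q[\phi_n^{\mathrm{skinny}}] < 1$ at $q = 1 + 1/\log_2(2n+3)$, and $E_q^q[\phi_n^{\mathrm{fat}}] < 1$ at some $q < 2$. In the skinny case, the thin stripe structure yields a natural weighting in which most edges of $G_1$ lift a single edge of $G_0$ with multiplicity growing like $2n+3$, and a direct calculation of the conformal energy of a single graph map, combined with Proposition~\ref{prop:Eq-lower-bound} to convert the bound $E^q_q<1$ into a bound on $q^*$, yields the stated $1 + 1/\log_2(2n+3)$ exponent. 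In the fat case, the upper bound $\ARCdim < 2$ follows from the general fact that a Sierpiński carpet in $\rs$ with round complementary Jordan domains has Hausdorff dimension strictly less than~$2$ \cite{MR1879234}, together with the fact that the spherical metric lies in the gauge $\cG(\cJ[\pi,\phi])$ by Proposition~\ref{prop:elevator}.

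The principal obstacle is the upper bound in the skinny case, where the quantitative rate $1/\log_2(2n+3)$ must be extracted from an explicit optimization over edge weights on a graph whose complexity grows with $n$. This requires identifying the extremal weight assignment, bounding the contribution of the remaining edges, and confirming that the submultiplicativity of $E^q_q$ under composition (which underlies $\oE^q \le E^q_q[\phi]$) is tight enough at the critical $q$. The analogous fiber-counting for $\oN^{\mathrm{fat}}$ is the second delicate step, since one must produce not merely one lift but an edge-disjoint family whose cardinality grows at the sharp exponential rate $4^{(1-2^{-n})/2}$; this will be done by induction on $n$, propagating the fiber configuration through the self-similar nesting of the fat holes.
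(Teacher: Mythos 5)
Your skeleton matches the paper's: both families are encoded by degree-$4$ virtual graph endomorphisms, the lower bounds are to come from $\oN$ via Theorem~\ref{thm:Nbar} (with Theorem~\ref{thm:Nbar_ge_1} giving $\oN>1$, hence $\ARCdim>1$, in the skinny case), and the fat upper bound $<2$ from the spherical metric lying in the gauge (Proposition~\ref{prop:elevator}) together with $\hdim(J)<2$ is legitimate. The genuine gap is the fat lower bound. You assert $\oN(f_{\lambda_n^{\mathrm{fat}}})\ge 4^{(1-2^{-n})/2}=2^{1-2^{-n}}$ because the geometry ``should force'' it, to be established ``by induction on $n$''; but $n$ is the family parameter, not the iterate, and no mechanism is offered that produces edge-disjoint lifts at that rate. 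What the dynamics actually hands you is an invariant Cantor-type multicurve $C$ with $n$ components, each component of $f^{-1}(C)$ double-covering a component of~$C$; the paper converts this into $\oN\ge r_n$, the Perron--Frobenius eigenvalue of the $n\times n$ transition matrix (the largest root of $\lambda^{n+1}-2\lambda^n+1$), by building an invariant weighted multicurve and using the stretch-factor interpretation of $\oE^1$ (Lemma~\ref{lem:fat-Nbar}), and then estimates $r_n$ (Lemma~\ref{lem:fat-root}) before feeding it into Theorem~\ref{thm:Nbar}. Any honest count of edge-disjoint curves is governed by this transition matrix, so the rate you can certify is $r_n$, not the clean exponential you obtained by solving backwards from the desired conclusion; verifying that the certified rate clears the threshold $\log_2\oN\ge 1-2^{-n}$ is precisely the delicate numerical step, and it is the one you skip.

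For the skinny upper bound your plan is the paper's in outline---produce a $q$-conformal structure on $G_0$ and a map in $[\phi]$ with small $E^q_q$ near $q=1+1/\log_2(2n+3)$, then invoke Theorem~\ref{thm:crit-sandwich}---but all of the content lies in the construction you leave unspecified: the paper chooses an explicit metric with an auxiliary length $x$, collapses certain subgraphs, and uses the parallel law $a\oplus_q b=(a^{1-q}+b^{1-q})^{1/(1-q)}$ to show the critical chains have effective $q$-length $(2n+3)\cdot 2^{1/(1-q)}$, which is exactly where the exponent $1+1/\log_2(2n+3)$ comes from; ``a natural weighting with multiplicity growing like $2n+3$'' is a placeholder for that computation, not an argument, and note that the explicit optimum achieves $E^q_q(\phi)=1$ (not $<1$) at that exponent, so strictness at level one is not automatic. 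Finally, Proposition~\ref{prop:Eq-lower-bound} is not the right tool here: once $E^q_q[\phi]<1$ is shown for each $q$ above the threshold (the paper does this by perturbing the metric), Theorem~\ref{thm:crit-sandwich} gives $\ARCdim\le q$ directly; Proposition~\ref{prop:Eq-lower-bound} is the device used for the barycentric example, where an energy bound is available only at the single value $q=2$.
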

 In particular, within this family of fixed degree, there are hyperbolic carpet maps with conformal dimension tending to 1 and to 2.  This latter result answers a question of the first author and P. Haïssinsky \cite{kmp:ph:ex}.  
 
\subsection{Note on notions of conformal dimension}

The \emph{conformal Hausdorff dimension} of an Ahlfors-regular metric
space $X$ is defined as the infimum of the Hausdorff dimensions of
metric spaces quasi-symmetrically homeomorphic to $X$.  \emph{A
  priori} it could be strictly smaller than $\ARCdim(X)$.

After submission of this article, we learned of a result of S. Eriksson-Bique \cite[Theorem 1.6]{erikssonbique:equality}, which implies the following:

\begin{citethm}[Eriksson-Bique]
    Let $X$ be a compact, connected, locally connected, quasi-self-similar metric space. Then the conformal Hausdorff dimension and Ahlfors-regular conformal dimension coincide.
\end{citethm}

The property of being quasi-self-similar means arbitrarily small balls
in $X$ are quasi-symmetrically equivalent to open sets in $X$ of
definite size, where the distortion function in the definition of
quasi-symmetry is independent of the location and radius of the chosen
ball. This property is invariant under quasi-symmetric maps.  The
visual metrics we construct in \S \ref{sec:gauge} below are
quasi-self-similar.  Therefore, all of our main results hold with
``Ahlfors-regular conformal dimension'' in the conclusion replaced
with ``conformal Hausdorff dimension''.

\subsection{Related work}
Kwapisz \cite{Kwapisz20:ConfDimResist} uses a similar approach to
estimating the conformal dimension for Sierpiński carpets. In his
paper, he considers only the standard square carpet, but uses very
similar notions, with the slight variation that his $q$-resistance
$r(e)$ is related to our
$q$-length $\alpha(e)$ by
\begin{equation}
  \label{eq:q-resist-length}
  r(e) = \alpha(e)^{q-1}.
\end{equation}
(In particular, as in this paper, there are quantities associated to
the edges rather than the vertices, as contrasted with the more common
notions of combinatorial $q$-modulus in the literature.) With this
correspondence, Kwapisz' formulas match with ours; for instance, his
$P(\mathcal{J})$ \cite[Eq.~(1.2)]{Kwapisz20:ConfDimResist} agrees with
our $\bigl(E^1_q(\phi)\bigr)^q$ in Eq.~\eqref{eq:E1q}. One difference between our
approaches is that he deals with signed flows as in electrical networks,
while we deal with  more general unsigned tensions related to
elasticity; see the discussion in
\cite[Appendix~B]{Thurston19:Elastic}.

A more substantive difference is that our $q$-conformal energies enjoy exact
sub\hyp multiplicativity \cite[Prop.~A.12]{Thurston19:Elastic}, while
Kwapisz only proves weak sub\hyp multiplicativity, up to a constant
\cite[Theorem~1.3]{Kwapisz20:ConfDimResist}. On the other hand, by
only asking for weak sub-multiplicativity, he is able to get bounds
on $q$-resistance for both the graph analogous to (but more general than)
the ones we consider
and its dual. Correspondingly, he gets numerical
lower bounds, in addition to numerical upper bounds analogous to the
ones we get. (Our numerical lower bounds rely on
Theorem~\ref{thm:Nbar}, which is unlikely to be sharp in general.)

In another direction, in the non-dynamical setting of Gromov
hyperbolic complexes, Bourdon and Kleiner \cite{MR3356973} consider
families of analytic invariants such as $\ell^p$ cohomology and
separation properties of certain associated function spaces. It would
be interesting to know if there is a connection to this work in our
setting. 

\subsection{Notation}\label{subsecn:notation}
We denote the unit interval by $I\coloneqq[0,1]\subset \mathbb{R}$.

Non-dynamical functions, i.e., where the domain and codomain are not
identified, are typically denoted using lower-case Greek letters.

If $K$ is a constant, or set of constants, the notation $A \lesssim_K
B$ means that $A/B \leq C(K)$ for some constant $C$ depending only on
$K$. The notation $A \asymp_K B$ means $A \lesssim_K B$ and $B
\lesssim_K A$.

The study of dynamical systems includes ``multivalued'' ones, which
are formally pairs of maps between different spaces. We denote these
spaces with
subscripts, e.g., $X_1 \to X_0$ in the case of general spaces, or
$G_1 \to G_0$ when the spaces are graphs. Such systems generate
sequences of spaces and graphs which we denote by $X_n$ and $G_n$
respectively, for $n=0, 1, 2, \ldots$ In this context, we decorate
maps with a superscript corresponds to the domain and the subscript to
the codomain, e.g., we have structure maps $\phi^n_m \colon G_n \to
G_m$ for $n > m$ (\S\ref{subsecn:multivalued}). Similarly when
looking at graph energies $E^p_q(\phi)$, the
domain has a $p$-conformal structure and the codomain has a
$q$-conformal structure
(\S\ref{sec:energies}).

We also often work with maps which are not part of the data of a dynamical system. 
For such maps, we typically distinguish domain and codomain by using different
letters as opposed to subscripts.

\subsection*{Acknowledgements}

We thank
Caroline Davis, 
InSung Park, 
and
Giulio Tiozzo 
for useful conversations, and the referee for detailed comments on the
first submission. The first author was supported by the Simons
Foundation under Grant Numbers 245269 and 615022.   The second author
was supported by the National Science Foundation under Grant Numbers
DMS-1507244 and DMS-2110143.


\section{Topological dynamics}
\label{sec:ve_and_dynamics}

The main result of this section is the following theorem,
giving good
dynamical properties of the action on a limit space.

\begin{theorem}
  \label{thm:lc}
Suppose $X_0, X_1$ are finite CW complexes equipped with complete length metrics.
Suppose $\pi, \phi \co X_1 \rightrightarrows X_0$ is a
$\lambda$-backward-contracting and recurrent virtual endomorphism, and
let $f\colon \cJ \to \cJ$ be the induced dynamics on its limit space.
Then  
\begin{enumerate}
\item\label{item:J-conn} The space $\cJ$ is connected and locally connected. 
\item\label{item:J-expansive} The map $f\colon\cJ \to \cJ$ is a positively expansive covering map of degree $\deg(\pi)$.
\item\label{item:J-cxc} The dynamical system $f\colon \cJ \to \cJ$ is topologically
  coarse expanding conformal (cxc), in the sense of Haïssinsky and Pilgrim
  \cite{kmp:ph:cxci}.
\end{enumerate}
\end{theorem}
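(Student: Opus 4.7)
The plan is to establish the three items in sequence, using the backward-contracting hypothesis as the principal metric tool and recurrence to propagate connectedness through the tower $(X_n)$ of iterates. Throughout, I would work with the canonical bonding maps $\phi^{n+1}_n \co X_{n+1} \to X_n$ and the level projections $\phi^\infty_n \co \cJ \to X_n$ arising from iterating the virtual endomorphism.

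For item (\ref{item:J-conn}), I first observe inductively that each $X_n$ is connected: $X_0$ is connected by recurrence, and at each stage the iterate $X_{n+1}$ is obtained as a pullback of $\pi \co X_1 \to X_0$ along $\phi^n_0 \co X_n \to X_0$, which is $\pi_1$-surjective since recurrence propagates up the tower. Hence $\cJ$, as the inverse limit of compact connected CW complexes under surjective bonding maps, is compact and connected. For local connectedness, I would fix $x = (x_n) \in \cJ$ and $\epsilon > 0$, choose $n$ large, and pick a connected open neighborhood $W_n$ of $x_n$ in $X_n$ of fixed combinatorial size (an open star of a vertex in a sufficiently fine subdivision). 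Backward contraction applied iteratively forces the component of $(\phi^m_n)^{-1}(W_n)$ containing $x_m$ to have diameter at most $\lambda^{m-n} \diam(W_n)$ for all $m \ge n$; the coherent tower of these components projects to a connected open neighborhood of $x$ of arbitrarily small diameter, yielding a basis of connected neighborhoods at~$x$.

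For item (\ref{item:J-expansive}), each $\pi^{n+1}_n \co X_{n+1} \to X_n$ is a covering of degree $d \coloneqq \deg(\pi)$ by construction, and the induced shift $f \co \cJ \to \cJ$ inherits the structure of a covering of degree $d$: local triviality at $y \in \cJ$ follows by choosing a connected neighborhood small enough to be evenly covered in $X_0$ (hence in every $X_n$) and invoking the local connectedness from (\ref{item:J-conn}) to identify the $d$ sheets in $\cJ$. Positive expansivity is then a direct reformulation of backward contraction: choose $\epsilon_0$ smaller than a Lebesgue number of the cover of $X_0$ by evenly covered neighborhoods of $\pi$, so that each inverse branch of $f$ is defined on $\epsilon_0$-balls and contracts by $\lambda$. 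The inverse branches of $f^k$ then contract by $\lambda^k$; if $x \ne y$ satisfied $d(f^k x, f^k y) \le \epsilon_0$ for all $k \ge 0$, lifting back along the branch of $f^k$ taking $f^k x \mapsto x$ and $f^k y \mapsto y$ would give $d(x,y) \le \lambda^k \epsilon_0 \to 0$, a contradiction.

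For item (\ref{item:J-cxc}), I would verify the topologically coarse expanding conformal axioms of \cite{kmp:ph:cxci} using as initial finite cover $\mathcal{U}_0$ the collection of sets $(\phi^\infty_0)^{-1}(\widehat{e})$, where $\widehat{e}$ ranges over open stars of edges in a sufficiently fine CW structure on $X_0$, so that the pullback covers $\mathcal{U}_n \coloneqq f^{-n}(\mathcal{U}_0)$ are naturally identified with analogous stars in $X_n$. The expansion axiom — that the mesh of $\mathcal{U}_n$ tends to zero — is backward contraction applied $n$ times; the irreducibility axiom follows from the covering structure of each $\pi^{n+1}_n$ combined with the connectedness established in (\ref{item:J-conn}). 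The main obstacle is the bounded-degree/roundness axiom, which requires uniform control on the number of connected components and the geometric shape of each element of $\mathcal{U}_n$. I would attack this by exploiting that open stars in a finite CW complex have uniformly bounded combinatorial complexity, so their $\pi$-preimages decompose into a bounded number of stars of comparable combinatorial type; the delicate step is converting this combinatorial boundedness into the quantitative metric control demanded by CXC, which requires carefully tracking how the length metrics on the $X_n$ relate to the candidate visual metric on $\cJ$ constructed in the next section.
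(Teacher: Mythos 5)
There is a genuine gap, and it sits in the step your whole argument leans on: local connectedness. First, the metric claim is backwards. The bonding maps $\phi^m_n \co X_m \to X_n$ of the inverse limit are (locally) modeled on $\phi$, hence are $\lambda^{m-n}$-Lipschitz \emph{contractions}; preimages under a contraction expand, so the component of $(\phi^m_n)^{-1}(W_n)$ containing $x_m$ can have diameter of order $\lambda^{-(m-n)}\diam(W_n)$, not at most $\lambda^{m-n}\diam(W_n)$ as you assert (recall $\diam X_m \to \infty$). Second, even setting the sizes aside, the ``coherent tower of components'' does not obviously yield what you need: if you take the full preimage $(\phi^\infty_n)^{-1}(W_n)$ it is open but need not be connected (the paper points this out explicitly for the star preimages $\cV_n$ in \S\ref{sec:sandwich}), while if you take the inverse limit of the chosen components it is connected but need not be a neighborhood of~$x$, since closeness of $y_m$ to $x_m$ at one level puts no constraint on which sheet $y_k$ lies in for $k>m$. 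Most tellingly, your local-connectedness argument never uses recurrence, yet recurrence ($\pi_1$-surjectivity of $\phi$) is exactly the hypothesis that rules out solenoid-type inverse limits, which satisfy all your other hypotheses locally and are \emph{not} locally connected; any proof that ignores it cannot be correct. The paper's proof goes through weak local path-connectivity: given two points close at a deep level, it builds a family of homotopy pseudo-orbits of paths, using honest lifts under the covering $\pi$ below level $n$ and \emph{approximate path lifting} under $\phi$ (Proposition~\ref{prop:apl}, which is where recurrence enters) above level $n$, and then applies the parameterized homotopy shadowing theorem (Theorem~\ref{thm:Ashadowing}) to shadow this family by an actual path in $\cJ$ with controlled size. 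Some substitute for this lifting-plus-shadowing mechanism is unavoidable.

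Two smaller points. Your positive-expansivity sketch has the right idea but as written it presupposes a metric on $\cJ$ in which inverse branches of $f$ contract, which is essentially what is being constructed later; it can be repaired either coordinatewise in $X_1^{\mathbb N}$ (using that $\pi$ is an isometry on balls of radius below a quarter of the systole, Lemma~\ref{lemma:evenly_covered}, together with the $\lambda$-contraction of $\phi$, giving $d_1(x_i,y_i)\le\lambda^k\epsilon_0$ for all $k$), or, as the paper does, via the uniqueness part of homotopy shadowing (Theorem~\ref{thm:homotopy_shadowing}). For item~(3) you are solving a harder problem than necessary: \emph{topological} cxc has no roundness axiom (that belongs to the metric theory), and there is no need to verify [Degree] and [Irreducibility] by hand from star covers. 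Once (1) and (2) are known, Proposition~\ref{prop:shrink} shows a positively expansive self-cover of a compact, locally connected space satisfies [Expansion], and Lemma~\ref{lemma:expansion_enough} shows [Expansion] implies [Degree] and [Irreducibility]; note also that [Irreducibility] is not a formal consequence of connectedness plus the covering structure, but uses a backward-orbit density argument driven by [Expansion].
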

The terminology is defined in \S\ref{subsecn:cxc} and
\S\ref{subsecn:multivalued}. One main result of \cite{kmp:ph:cxci} is
that
topologically cxc systems have a canonically associated nonempty set
of special Ahlfors-regular metrics, called \emph{visual metrics}. In
\S\ref{sec:gauge}, we will describe the geometry of~$\cJ$ when
equipped with such metrics.

Theorem \ref{thm:lc} could be deduced as follows. Nekrashevych
\cite[Theorem 5.10]{Nekrashevych14:CombModel} associates to such a virtual
endomorphism a self-similar recurrent contracting group
action. Such an object has also an associated limit space homeomorphic
to $\cJ$ that is connected and locally connected \cite[Theorem
3.5.1]{nekrashevych:book:selfsimilar}, establishing~\eqref{item:J-conn}. Moreover,
there is an induced dynamics on this limit space naturally conjugate
to $f\colon \cJ \to \cJ$. Conclusions \eqref{item:J-expansive} and~\eqref{item:J-cxc} then follow from
\cite[Theorem 6.15]{ph:kmp:GGD2011}. To keep this work self-contained,
and because we have later need of certain other related technical
facts, we give more direct arguments.

To prove Theorem \ref{thm:lc}, we present the limit space as a
subspace of the infinite product $(X_1)^\infty$, following Ishii and
Smillie \cite{ishii:smillie:homotopy-shadowing}. We generalize the
theory of homotopy pseudo-orbits developed there to \emph{families} of
homotopy pseudo-orbits parameterized by an auxiliary space~$A$, and show, by
generalizing their results on homotopy shadowing, that such a family
determines a map $A \to \cJ$ (Theorem \ref{thm:Ashadowing}). We prove
that the limit space is locally path-connected by applying this
generalization to the case when $A=I$ is an interval (\S
\ref{subsecn:lc}). To obtain the needed family of homotopy
pseudo-orbits in this setting, we develop in \S \ref{subsecn:apl} a
general notion of approximate path-lifting. 

For technical reasons, we need to control the
geometry of non-rectifiable paths. To this end, we introduce the
\emph{size} of a path, defined to be the diameter of the
lift to a universal cover; see~\S \ref{sec:size-length}.

We also present the associated limit space~$\cJ$ in an
equivalent, but more convenient, way as an inverse limit of
spaces with increasingly large diameter; see
Theorem~\ref{thm:Ashadowing-cover}. To find path families, we also
need to find ``homotopy
sections'' of projections from~$\cJ$.
We therefore need results on homotopy
shadowing and homotopy sections in those settings; see~\S\ref{subsecn:sections}.

\subsection{Topologically cxc systems} 
\label{subsecn:cxc}
We first recall the notion of topologically cxc systems.  To
streamline our presentation,  we specialize the setup of
\cite{kmp:ph:cxci}  to the case of self-covers.  We next define
positively expansive systems. Finally, we show that conclusions~\eqref{item:J-conn}
and~\eqref{item:J-expansive} of Theorem~\ref{thm:lc} imply
conclusion~(\ref{item:J-cxc}).

Suppose $\cJ$ is a compact, connected, and locally-connected
topological space, and $f\colon \cJ \to \cJ$ is a self-covering. 
A finite open cover $\cU_0$ of $\cJ$ by connected sets inductively
generates a sequence of coverings $\cU_n, n \in \mathbb{N}$, via the
recipe
\begin{equation}\label{eq:inductive-cover}
  \cU_{n+1} \coloneqq \{ \wtU \mid U \in \cU_n, \wtU\text{ is a
    component of }f^{-1}(U)\}.
\end{equation}
Also set
\[
  \mathbf{U} \coloneqq \bigcup_n \cU_n.
\]
The \emph{mesh} of a covering of a metric space is the
supremum of the diameters of its elements.

\begin{definition}
  The pair of the dynamical system $f \co \cJ \to \cJ$ and cover
  $\cU_0$ is said to satisfy Axiom [Expansion] if, for some
  (equivalently, any) metric on~$\cJ$ compatible with the topology,
  the mesh of $\mathcal{U}_n$ from Eq.~\eqref{eq:inductive-cover}
  tends to zero as $n \to \infty$. The map $f$ satisfies [Expansion]
  if there is some $\cU_0$ satisfying this condition.
\end{definition}

\begin{lemma}
  \label{lemma:expansion_enough}
  Suppose $\cJ$ is compact, connected, and locally connected,
$f\colon \cJ \to \cJ$ is a covering map,
and $\mathcal{U}_0$ is a covering so $(f,\cU_0)$ satisfies Axiom [Expansion].  Then this dynamical system satisfies the following additional two axioms:
\begin{itemize}
\item \textnormal{[Degree]} For fixed $n$ there is a uniform upper bound on
  the cardinality of fibers of restrictions $f^{\circ n}\colon
  \widetilde{U}_n \to U_0$, for all $U_0 \in \mathcal{U}_0$ and
  $\widetilde{U}_n \in \mathcal{U}_n$.
\item \textnormal{[Irreducibility]} For any nonempty open set
  $W \subset \cJ$, there is an integer $N$ for which $f^{\circ N}(W)=\cJ$. 
\end{itemize}
\end{lemma}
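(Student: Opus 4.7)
\textbf{[Degree]} will follow immediately from covering theory. Since $\cJ$ is compact, the covering map $f$ has some finite degree~$d$, so $f^{\circ n}$ is a covering of degree~$d^n$. Each $\widetilde{U}_n \in \cU_n$ is a connected component of $f^{-n}(U_0)$ for some connected $U_0 \in \cU_0$, so the restriction $f^{\circ n}|_{\widetilde{U}_n}\colon\widetilde{U}_n\to U_0$ is itself a covering, necessarily of degree dividing~$d^n$. Thus every fiber of this restriction has at most $d^n$ elements, giving a uniform bound depending only on~$n$.

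For \textbf{[Irreducibility]}, the plan is to proceed in two stages. First I would use [Expansion] to reduce to a statement about~$\cU_0$: given any nonempty open $W\subset\cJ$, the fact that the mesh of $\cU_m$ tends to zero guarantees, for $m$ large, some $\widetilde{U}_m\in\cU_m$ contained in~$W$. Since $\widetilde{U}_m$ is a component of $f^{-m}(U_0^{(i)})$ for some $U_0^{(i)}\in\cU_0$ and the covering restriction $f^{\circ m}|_{\widetilde{U}_m}$ surjects onto the connected target $U_0^{(i)}$, one has $f^{\circ m}(W)\supset U_0^{(i)}$. It then suffices to prove that for each $U_0\in\cU_0$ some iterate $f^{\circ M}(U_0)$ equals~$\cJ$; combined with the first reduction, this yields $f^{\circ(m+M)}(W)=\cJ$.

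To carry this out, I would analyze the open forward-invariant set $A\coloneqq\bigcup_{k\geq 0}f^{\circ k}(U_0)$, first showing $A=\cJ$ and then extracting a single integer~$M$. To show $A=\cJ$, suppose otherwise: then $B\coloneqq\cJ\setminus A$ is nonempty closed with $f^{-1}(B)\subset B$, and the descending sequence $\{f^{-n}(B)\}$ (nonempty at each stage by surjectivity of~$f$) has a nonempty intersection $K$ by compactness, which is closed, totally $f$-invariant, and disjoint from~$U_0$. By [Expansion] each $z\in K$ lies in $\cU_n$-elements of diameter tending to zero, each mapping under $f^{\circ n}$ onto some element of~$\cU_0$; combining this with the finiteness of $\cU_0$, the connectedness and local connectedness of~$\cJ$, and the $f$-invariance of $K$, I would argue that the forward orbit of~$z$ must enter~$U_0$, contradicting $K\cap U_0=\emptyset$.

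\textbf{The hard part} will be this final contradiction---a topological-transitivity statement for expanding covering maps of compact connected spaces---which must carefully combine connectedness with the combinatorics of the covers~$\cU_n$. A parallel technical subtlety is the passage from $A=\cJ$ to a single $M$ with $f^{\circ M}(U_0)=\cJ$, to be obtained by applying $f^{\circ N}$ to a finite subcovering of $A$ (extracted by compactness) and using surjectivity of $f$ together with an index-consolidation argument on the resulting shifted covers.
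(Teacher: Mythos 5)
Your treatment of \textbf{[Degree]} is correct and matches the paper's (which simply notes that elements of $\cU_n$ are eventually evenly covered; your bound by $\deg(f)^n$ via restriction to a component of $f^{-n}(U_0)$ is fine, using local connectedness of $\cJ$), and your reduction of \textbf{[Irreducibility]} to the claim that each $U_0\in\cU_0$ has some iterate with $f^{\circ M}(U_0)=\cJ$ is also sound. The genuine gap is exactly the step you flag as ``the hard part'': you give no mechanism for showing that the forward orbit of a point $z\in K$ enters $U_0$, and as a statement about a point's forward orbit this is not something that can follow from \textbf{[Expansion]} alone --- for $z\mapsto z^2$ on the circle with $\cU_0$ a cover by short arcs, the fixed point never visits an arc avoiding it. Any correct argument must exploit the full backward invariance $f^{-1}(K)=K$, and at that point what you actually need is that every backward orbit is dense (equivalently, that a nonempty closed backward-invariant set is all of $\cJ$). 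That density statement is the real content of the paper's proof, and it is where \textbf{[Expansion]} does its work: for $x\in\cJ$ one takes an accumulation point $y$ of the forward orbit of $x$, chooses $U\in\cU_0$ containing $y$, and observes that the components of $f^{-n_k}(U)$ containing $x$ lie in $\cU_{n_k}$, have diameters tending to zero, and surject onto $U$; this produces preimages of $y$ (indeed of every point of $U$) converging to $x$, shows the set $Y(x)$ of points whose backward orbits accumulate at $x$ is a nonempty union of elements of $\cU_0$, and connectedness of $\cJ$ with the finite cover $\cU_0$ then gives $Y(x)=\cJ$. None of this (or any substitute for it) appears in your plan, so the heart of \textbf{[Irreducibility]} is missing.

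There is a secondary gap in your last step: even granting $\bigcup_{k\ge 0}f^{\circ k}(U_0)=\cJ$, extracting a single $M$ with $f^{\circ M}(U_0)=\cJ$ is not routine, because the sets $f^{\circ k}(U_0)$ need not be nested, and a finite subcover $\bigcup_{k\in F}f^{\circ k}(U_0)=\cJ$ does not obviously ``consolidate'' into one iterate; your proposed index-consolidation is where this difficulty is hiding. The paper sidesteps it using the density of backward orbits a second time: pulling $U$ back along a backward orbit of a point of $U$ accumulating at that same point yields $\wtU\in\mathbf{U}$ with $\wtU\subset U$ and $f^{\circ m}(\wtU)=U$ for some $m>0$, so that $\bigl(f^{\circ nm}(\wtU)\bigr)_{n\ge 0}$ is an increasing sequence of open sets with union $\cJ$; compactness then gives a single $n$ with $f^{\circ nm}(\wtU)=\cJ$, and $N=nm$ works for any open $W$ containing $\wtU$. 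If you want to salvage your forward-image scheme, you should prove the backward-orbit density lemma first and then borrow this return-structure trick to obtain the increasing union.
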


On the conditions of this lemma, $f$ is \emph{topologically cxc}.
The general definition of topologically cxc is that  there exists a finite open cover $\mathcal{U}_0$ such that all three axioms [Expansion], [Degree], and [Irreducibility] hold. 

\begin{proof}
For a self-cover satisfying axiom [Expansion], axiom
[Degree] clearly holds, since for large enough $n$ the elements of
$\mathcal{U}_n$ will be evenly covered.

To show [Irreducibility], fix arbitrarily a metric on $\cJ$ compatible
with its topology. For $x \in \cJ$, define
\begin{align*}
  Y(x) &\coloneqq \{\,y\in X\mid \text{the backward orbit of $y$ accumulates at
         $x$}\,\}\\
       &= \{\, y \in X \mid \text{there is a sequence }\tilde{y}_{n_k}
         \in f^{-n_k}(y) \text{ s.t.\ }\lim_{k \to \infty} \tilde{y}_{n_k} = x\,\}.
\end{align*}
We first claim $Y(x)$ is nonempty for each $x$. To see this, fix
$x \in \cJ$, and let $y$ be any accumulation point of the forward
orbit of~$x$, i.e., $\lim_{k \to \infty} f^{\circ n_k}(x) = y$. Let
$U \in \mathcal{U}_0$ contain $y$. For each
sufficiently large~$k$, let $\widetilde{U}_{n_k}$ be the unique component of
$f^{-n_k}(U)$ containing $x$, and pick
$\tilde{y}_{n_k} \in \widetilde{U}_{n_k} \cap f^{-n_k}(y)$. Then
$\tilde{y}_{n_k} \to x$ since $\diam \widetilde{U}_{n_k} \to 0$. Hence $y \in Y(x)$.  

Now suppose $y \in Y(x)$ is arbitrary and choose $U \in \mathcal{U}_0$
with $y \in U$. The same reasoning with $\diam \widetilde{U}_{n_k}$
shows that $U \subset Y(x)$. Since $X$ is connected and
covered by $\mathcal{U}_0$, we conclude $Y(x)=X$. Since $x$ is
arbitrary, we conclude the set of backward orbits under $f$ of each point is
dense in $X$.

Now fix an arbitrary nonempty open set~$W$ as in the statement of Axiom [Irreducible].  
By [Expansion], there exists $U \in \mathbf{U}$ with $U \subset W$. Pick $y \in U$.
By the previous paragraph, there exists a backward orbit of $y$ accumulating at $y$.
Pulling back $U$ along this backward orbit, [Expansion] implies that
there exists some $\wtU \in \mathbf{U}$ so that
$\wtU \subset U$ and $f^{\circ m}\co \wt U \to U$ is a covering map for some $m>0$.
The previous paragraph implies $\bigcup_{n=0}^\infty f^{nm}(\wtU)=X$.
By our choice of $m$ and $\wtU$, this is an increasing union.  Since
$X$ is compact, $f^{nm}(\wtU)=X$ for some $n$. Then $N=nm$ suffices
for the statement, since $\wtU \subset W$.
\end{proof} 

\begin{definition}[Positively expansive]
\label{def:pos-expansive}
Let $(\cJ,d)$ be a metric space. A continuous surjection
$f\colon \cJ \to \cJ$ is \emph{positively expansive} if there is a
constant $e>0$ so that for any $x \neq y \in \cJ$ there is an $n\ge 0$ so that
$d(f^n(x),f^n(y))>e$.
\end{definition}

Note that a positively expansive map is locally injective. 

In the case $\cJ$ is compact, positively expansive is equivalent to
the following condition: there exists a neighborhood $N \supset
\{(x,x)\co x \in \cJ\}$ of the diagonal such that if $x, y \in \cJ$
satisfy $(f^ix, f^iy) \in N$ for all $i \geq 0$, then $x=y$.
(In particular, positively expansive is independent of the metric.)
In addition, by a
theorem of Reddy \cite[Theorem 2.2.10]{aoki:hiraide:topological} there
exists a compatible metric $D$ on~$\cJ$, called an \emph{adapted
  metric}, and \emph{expansion constants} $\delta>0$ and $0<
\lambda<1$
such that for any $x, y \in \cJ$,
\[ D(x,y) \leq \delta \implies D(f(x),f(y))\geq \lambda^{-1} D(x,y).\]
Note that this implies $f$ is a homeomorphism on $\delta$-balls in the $D$-metric. 

\begin{proposition}[Positively expansive implies {[Expansion]}]
\label{prop:shrink}
Suppose $\cJ$ is compact and locally connected, and $f\colon \cJ \to
\cJ$ is a positively expansive covering map.  Then $f$ satisfies Axiom
[Expansion].
\end{proposition}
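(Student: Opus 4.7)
The plan is to apply Reddy's theorem (quoted immediately before the statement) to replace the given metric on $\cJ$ by an adapted metric $D$ with constants $\delta>0$ and $0<\lambda<1$ satisfying $D(x,y)\le\delta \implies D(f(x),f(y))\ge \lambda^{-1}D(x,y)$. Since any two compatible metrics on $\cJ$ generate the same topology, Axiom [Expansion] is a topological condition, so it suffices to produce a finite cover $\cU_0$ of $\cJ$ by connected open sets whose iterated mesh in the $D$-metric tends to zero.

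First I would establish a uniform scale $\eta>0$ below which components of preimages are $D$-small. Fix $p\in\cJ$. Since $f$ is a positively expansive covering map, it is in particular a local homeomorphism, so $p$ has an evenly covered open neighborhood $W_p$; shrinking $W_p$ and using continuity of the local sections, each sheet of $f^{-1}(W_p)$ can be arranged to have $D$-diameter at most $\delta$. Compactness of $\cJ$ then furnishes a Lebesgue number $\eta>0$ for the cover $\{W_p\}_{p\in\cJ}$, so every subset of diameter less than $\eta$ lies inside some~$W_p$. Consequently, if $U\subset\cJ$ is connected with $\diam(U)<\eta$, then each component of $f^{-1}(U)$ sits inside a single sheet and therefore has $D$-diameter at most $\delta$.

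With $\eta$ in hand, use compactness and local connectedness of $\cJ$ to choose a finite cover $\cU_0$ by connected open sets of $D$-diameter less than $\epsilon_0\coloneqq\min(\eta,\delta)$. I claim by induction on $n$ that every $U_n\in\cU_n$ satisfies $\diam(U_n)\le \lambda^n\epsilon_0$; since $\lambda<1$, this forces $\text{mesh}(\cU_n)\to 0$ and hence Axiom [Expansion]. For the inductive step, let $U_n$ be a component of $f^{-1}(U_{n-1})$ for some $U_{n-1}\in\cU_{n-1}$. The inductive hypothesis gives $\diam(U_{n-1})\le \lambda^{n-1}\epsilon_0\le \eta$, so the preceding paragraph forces $\diam(U_n)\le \delta$. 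Any two points $\widetilde{x},\widetilde{y}\in U_n$ therefore satisfy $D(\widetilde{x},\widetilde{y})\le \delta$, and Reddy's inequality yields $D(\widetilde{x},\widetilde{y})\le \lambda D(f(\widetilde{x}),f(\widetilde{y}))\le \lambda\diam(U_{n-1})\le \lambda^n\epsilon_0$, closing the induction.

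The main subtlety is the two-step bound in the inductive step: the expansion inequality $\lambda$-contracts only pairs already within distance $\delta$ of each other, so before applying it one must independently know $\diam(U_n)\le \delta$. The uniform scale $\eta$ built from evenly covered neighborhoods is precisely what supplies this preliminary bound; without that input, the induction would be circular.
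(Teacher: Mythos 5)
Your proof is correct and follows essentially the same route as the paper: an adapted metric from Reddy's theorem, a uniform scale below which connected sets have $\delta$-small preimage components, a finite connected cover at that scale, and induction using the $\lambda$-contraction of inverse branches on $\delta$-small sets. The only difference is that where the paper invokes the Eilenberg-constants theorem (Theorem~\ref{thm:eilenberg}) to produce that uniform scale, you derive it directly from evenly covered neighborhoods with shrunken sheets plus a Lebesgue number, which is a perfectly adequate substitute.
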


To prove this, we will need the following result.

\begin{citethm}[{Eilenberg constants \cite[Theorem 2.1.1]{aoki:hiraide:topological}}]
\label{thm:eilenberg}
Let $X$ be compact and $f\co X \to Y$ be a continuous surjective local
homeomorphism. Then there exist two positive numbers $\tau$ and $\mu$
such that for each subset $U$ of $Y$ with diameter less than $\tau$
there is a decomposition of the set $f^{-1}(U)=U_1 \cup \dots \cup U_d$ with the following properties:
\begin{enumerate}
\item $f\co U_i \to U$ is a homeomorphism;
\item for $i \neq j$ no point of $U_i$ is closer than $2\mu$ to a
  point of $U_j$; and
\item for each $\eta>0$ there exists $0<\epsilon<\tau$ such that if
  $\diam U < \epsilon$ then  for all~$j$, $\diam U_j < \eta$.
\end{enumerate}
\end{citethm}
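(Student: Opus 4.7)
The plan is a three-stage compactness argument that first establishes a uniform separation~$\mu$ between distinct preimages of a common point, then builds a finite cover of~$X$ by evenly covered neighborhoods with a uniform modulus of continuity, and finally localizes to small neighborhoods in~$Y$ to extract~$\tau$ via a Lebesgue-number argument. For Stage~1, I would show there is $\mu>0$ so that any distinct $x,x'\in X$ with $f(x)=f(x')$ satisfy $d(x,x')\geq 4\mu$: a violating sequence $(x_n,x_n')$ with $d(x_n,x_n')\to 0$ would extract to a common limit $x_\infty$, but $f$ is injective on some neighborhood of~$x_\infty$, giving a contradiction for large~$n$. This separation also forces each fiber $f^{-1}(y)$ to be finite.

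For Stage~2, I would cover $X$ by finitely many open sets $W_1,\dots,W_N$ of diameter less than~$\mu$ on each of which $f$ is a homeomorphism, arranged so that each $\closure{W_i}$ sits inside a slightly larger set still evenly covered by~$f$. Each local inverse $g_i\co f(W_i)\to W_i$ then extends continuously to the compact $f(\closure{W_i})$ and is hence uniformly continuous; taking the worst modulus across $i=1,\dots,N$ yields a single function $\eta\mapsto\epsilon(\eta)$ with $\diam A<\epsilon(\eta)$ forcing $\diam g_i(A)<\eta$ for every~$i$.

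For Stage~3, fix $y\in Y$, enumerate $f^{-1}(y)=\{x_1,\dots,x_{d_y}\}$, pick indices $j_i$ with $x_i\in W_{j_i}$, and take an open $U_y\ni y$ inside $\bigcap_i f(W_{j_i})$ small enough that $f^{-1}(U_y)\subset\bigcup_i W_{j_i}$. Then $f^{-1}(U_y)=\bigsqcup_i g_{j_i}(U_y)$ with sheets pairwise separated by at least~$2\mu$ by Stage~1. Since $Y=f(X)$ is compact, the cover $\{U_y\}_{y\in Y}$ admits a Lebesgue number $\tau>0$, so any $U\subset Y$ with $\diam U<\tau$ lies in some $U_y$ and inherits the restricted decomposition $f^{-1}(U)=\bigsqcup_i g_{j_i}(U)$, verifying~(1) and~(2); property~(3) is then immediate from the uniform modulus of Stage~2.

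The main obstacle is the ``no-stray-preimages'' condition $f^{-1}(U_y)\subset\bigcup_i W_{j_i}$ used in Stage~3: without it, preimage components of $U_y$ could escape the chosen local inverses and the decomposition would not exhaust $f^{-1}(U_y)$. I would handle it by yet another compactness contradiction: a sequence $z_n\in f^{-1}(U_y)\setminus\bigcup_i W_{j_i}$ with $f(z_n)\to y$ would accumulate at some preimage of~$y$ lying outside the chosen $W_{j_i}$, which is impossible. This is precisely where the uniform fiber separation from Stage~1 is indispensable.
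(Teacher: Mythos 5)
The statement you are proving is quoted in the paper as an external result (Aoki--Hiraide, Theorem~2.1.1), so the paper contains no proof of it to compare against; judged on its own, your argument is correct and is essentially the standard compactness proof of this Eilenberg-type lemma. Stage~1 (a uniform $4\mu$-separation between distinct points of a fiber, by extracting a common limit from a violating sequence and contradicting local injectivity) is right and gives finiteness of fibers; Stage~2 is fine once one notes that $f$, being a local homeomorphism, is an open map, so each $f(W_i)$ is open and the local inverse $g_i$, being the restriction of the inverse on the slightly larger injectivity neighborhood, is uniformly continuous on the compact set $f(\overline{W_i})$; Stage~3's choice of $U_y\subset\bigcap_i f(W_{j_i})$ with $f^{-1}(U_y)\subset\bigcup_i W_{j_i}$, followed by a Lebesgue number for the cover $\{U_y\}$ of the compact space $Y=f(X)$, yields $\tau$, and then $U_i=g_{j_i}(U)$ verifies (1), (2) via $\diam W_{j_i}<\mu$ together with the $4\mu$ fiber separation, and (3) via the Stage~2 modulus (shrinking $\epsilon$ below $\tau$). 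Two small remarks: in your ``no-stray-preimages'' contradiction, what is actually used is only that $f^{-1}(y)\subset\bigcup_i W_{j_i}$ and that this union is open, so the limit of the escaping sequence $z_n$, which is a preimage of $y$, would lie in the closed complement --- the uniform separation $\mu$ is not what makes that step work (it is what gives (2), finiteness of fibers, and the fact that the indices $j_i$ are automatically distinct); and it is worth stating explicitly that exhaustion $f^{-1}(U)=\bigcup_i g_{j_i}(U)$ follows because any $z\in f^{-1}(U)$ lies in some $W_{j_i}$ on which $f$ is injective, forcing $z=g_{j_i}(f(z))$.
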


\begin{proof}[Proof of Proposition~\ref{prop:shrink}]
  Equip $\cJ$ with an adapted metric. We apply Theorem \ref{thm:eilenberg} with $X=Y=\cJ$ and obtain the constant $\tau$.  Let $\delta$ be as in the definition of adapted metric, and take the constant $\eta$ in Theorem \ref{thm:eilenberg} so that $\eta<\delta$; we obtain a constant $\epsilon$. In summary: any open connected set $U$ of diameter at most $\epsilon$ is evenly covered by $f$ and has preimages $\wtU_1, \ldots, \wtU_d$ of diameter at most $\delta$. The definition of adapted metric then implies that $f\co \wtU_j \to U$ expands distances by at least the factor $\lambda^{-1}$ and so the inverse branches $f_j^{-1}\co U \to \wtU_j$ contract distances by at least the factor $\lambda$.  

Since $\cJ$ is locally connected and compact, there is a finite open
cover $\cU_0$ by connected sets of mesh~$\epsilon$. The previous
paragraph implies that the covering $\cU_1$ has mesh at most
$\lambda\epsilon<\epsilon$. Induction shows $\mathrm{mesh}(\cU_n) <
\lambda^n\epsilon \to 0$ as required in Axiom [Expansion].
\end{proof} 

\subsection{Length spaces}
\label{subsecn:lscm} 
In this subsection, we prepare for the proof of local connectivity by collecting some technical results related to covering maps and length spaces. 
Here, $X$ denotes a finite, hence compact, connected CW complex, equipped with a compatible length metric.  The Hopf-Rinow theorem \cite[Proposition I.3.7]{MR1744486} implies that $X$ is a geodesic metric space.  While balls might not be simply-connected, they are path-connected. The \emph{systole} is 
\[ \systole(X)\coloneqq\inf\{ \ell(\gamma) \mid \gamma \; \text{is an essential loop in $X$}\};\]
if there are no essential loops in $X$, we set $\systole(X)\coloneqq+\infty$. 
The systole is positive, since $X$ is compact. Since $X$ is a length space,
any cover $\wt X$ inherits a lifted metric by lengths of paths. A ball
of radius less than $\tfrac{1}{2}\systole(X)$ in~$X$ no essential
loops, and
thus is evenly covered.

\begin{lemma}
\label{lemma:evenly_covered}
Let $X$ be a finite connected CW complex with a length metric. Suppose
$p\colon \widetilde{X} \to X$ is a covering map, $\widetilde{X}$ is equipped with the lifted metric,
and $r < \tfrac{1}{4}\systole(X)$. Then for any $\tilde{x} \in \widetilde{X}$ with
$p(\tilde{x})=x$, the restriction $p\colon B(\tilde{x},r) \to B(x,r)$
is an isometry. 
\end{lemma}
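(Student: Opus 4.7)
The plan is to establish that $p\co B(\tilde x, r) \to B(x,r)$ is both surjective and distance-preserving; injectivity will then be automatic, since distance preservation forces $d_{\widetilde X}(\tilde a, \tilde b) = 0$ whenever $p(\tilde a) = p(\tilde b)$. Two standard facts will drive the argument. First, path-lifting: any path in~$X$ lifts uniquely to $\widetilde X$ once a starting point is chosen, and the lift has the same length as its projection. Second, a homotopy-theoretic input: by definition of the systole, any loop in~$X$ of length strictly less than $\systole(X)$ is null-homotopic. Note also that $p$ is automatically $1$-Lipschitz in the lifted metric, so the inequality $d_X(p\tilde a, p\tilde b) \le d_{\widetilde X}(\tilde a, \tilde b)$ comes for free. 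Surjectivity is then immediate: given $y \in B(x,r)$, I would approximate $d_X(x,y)$ by a path from $x$ to $y$ in~$X$ of length less than~$r$, lift it starting at~$\tilde x$, and observe that its endpoint lies in $B(\tilde x, r)$ and projects to~$y$.

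For distance preservation, given $\tilde a, \tilde b \in B(\tilde x, r)$ and small $\eps > 0$, I would pick a path $\alpha$ in~$X$ from $p\tilde a$ to $p\tilde b$ of length at most $d_X(p\tilde a, p\tilde b) + \eps$, which is less than $2r + \eps$ by the triangle inequality through~$x$. I would also pick a path $\beta$ in $\widetilde X$ from $\tilde a$ to $\tilde b$ routed through~$\tilde x$, of length less than $2r + \eps$. The concatenation $\alpha \cdot \overline{p \circ \beta}$ is then a loop at $p\tilde a$ in~$X$ of length less than $4r + 2\eps$. For $\eps$ sufficiently small, this is below $\systole(X)$ by the hypothesis $r < \tfrac14 \systole(X)$, so the loop is null-homotopic. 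Homotopy lifting then forces the lift of $\alpha$ starting at~$\tilde a$ to end at~$\tilde b$, giving $d_{\widetilde X}(\tilde a, \tilde b) \le \text{length}(\alpha) \le d_X(p\tilde a, p\tilde b) + \eps$; letting $\eps \to 0$ yields equality.

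The one point requiring care is the calibration of the constant. The loop built above has length roughly $4r$ because it is the concatenation of two detour paths each of length roughly $2r$ (the diameter of $B(\tilde x, r)$ traversed via the center point). The factor of~$\tfrac14$ in the hypothesis is arranged precisely so that $4r < \systole(X)$ and the null-homotopy step applies with a small margin for~$\eps$. No other substantial obstacle arises; the rest is standard covering-space theory together with careful tracking of approximations.
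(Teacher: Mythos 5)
Your proof is correct and follows essentially the same route as the paper: both arguments build the triangle loop through $x$ (two legs of length less than $2r$ joining $x$ to $a=p(\tilde a)$ and $b=p(\tilde b)$, plus a near-geodesic from $a$ to $b$), note that its total length is below $4r<\systole(X)$, and use unique path lifting to conclude that the lift of the $a$-to-$b$ segment starting at $\tilde a$ ends at $\tilde b$, so $d_{\widetilde X}(\tilde a,\tilde b)\le d_X(a,b)$, which together with the $1$-Lipschitz bound and the same lifted-geodesic surjectivity argument gives the isometry. The only cosmetic difference is in how the lifting step is justified: the paper observes the loop lies in $B(x,2r)$, which is evenly covered, whereas you invoke the systole bound on the loop's length directly to get null-homotopy.
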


This is standard, but we provide a proof for completeness.

\begin{proof} The definition of the lifted metric says
  that $p$ preserves the length of paths, and is therefore
  1-Lipschitz. Thus $p(B(\tilde{x},r)) \subset B(x,r)$.
  We have $p(B(\tilde{x},r))=B(x,r)$ since a geodesic joining $x$ to
  $y \in B(x,r)$ lifts to a path of the same length joining
  $\tilde{x}$ to some point $\tilde{y}$ which therefore lies in
  $B(\tilde{x},r)$. We now claim that
  $p\colon B(\tilde{x},r) \to B(x,r)$ is an isometry. Suppose
  $\tilde{a}, \tilde{b} \in B(\tilde{x}, r)$, and put $a=p(\tilde{a})$
  and $b=p(\tilde{b})$. Then $a,b \in B(x,r)$. Consider the piecewise
  geodesic path~$\gamma$ comprised of 3 length-minimizing segments
  which runs from $x$ to $a$, then from $a$ to $b$, then from $b$ to
  $x$. This loop may not lie in $B(x,r)$. However, $\ell(\gamma) < 4r$
  and both endpoints are at~$x$,
  so $\gamma \subset B(x,2r)$, which is evenly covered by the
  choice of $r$. It follows that the middle segment lifts to a segment
  joining $\tilde{a}$ to $\tilde{b}$ of length equal to $d(a,b)$.
  Hence $d(\tilde{a}, \tilde{b}) \leq d(a,b)$ and the result is
  proved.
\end{proof}

If $\phi\colon X \to Y$ is a map between metric spaces, we say a non-decreasing
function $\omega_\phi\colon [0, \diam(X)] \to [0, \diam(Y)]$ is a
\emph{modulus of continuity} if, for all $E \subset X$,
$\diam \phi(E) \leq \omega_\phi(\diam E)$.

\begin{lemma} 
\label{lemma:uc} 
Let $X, Y$ be compact, connected, CW complexes equipped with
length metrics, and let $\phi\colon X \to Y$ be a continuous map, with modulus
of continuity~$\omega_\phi$. Let $p_Y\colon \widetilde{Y} \to Y$ be any
covering map, and equip $\widetilde{Y}$ with the lifted metric.
Let $p_X\colon \widetilde{X} \to X$ and
$\widetilde{\phi}\colon \widetilde{X} \to \widetilde{Y}$ be the maps
induced by pullback, and equip $\widetilde{X}$ with the lifted
metric from $X$. Then $\widetilde{\phi}$ is uniformly continuous, with
modulus of continuity $\omega_{\wt\phi}$ independent of the
cover $p_Y$.
Indeed, there exists $r_0>0$ depending only on
$\phi\colon X \to Y$ so that we can take
\begin{equation*}
  \omega_{\wt\phi}(\delta) =
  \begin{cases}
    \omega_\phi(\delta) & \delta \le r_0 \\
    \omega_\phi(r_0)\left\lceil\frac{d(a,b)}{r_0}\right\rceil & \delta > r_0.
  \end{cases}
\end{equation*}
\end{lemma}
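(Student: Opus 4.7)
The plan is to exploit the fact that balls in $Y$ of radius less than $\tfrac{1}{4}\systole(Y)$ are evenly covered by \emph{every} cover $p_Y\colon\widetilde{Y}\to Y$, with each sheet isometric to the downstairs ball by Lemma~\ref{lemma:evenly_covered}. Since the systole depends only on~$Y$, the threshold below which ``short paths lift to paths of the same length in a single sheet'' is uniform in the cover, and this is precisely the mechanism needed to make the modulus of continuity independent of~$p_Y$. Choose $r_0>0$ small enough that $\omega_\phi(r_0)\le \tfrac14\systole(Y)$; this uses only the data $\phi\co X\to Y$.

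For the short-distance range $\delta\le r_0$: given $\tilde a,\tilde b\in\widetilde X$ with $d_{\widetilde X}(\tilde a,\tilde b)\le \delta$, pick a path $\tilde\gamma$ in $\widetilde X$ joining them of length arbitrarily close to $\delta$, and set $\gamma=p_X\circ\tilde\gamma$, a path in~$X$ of the same length. Then $\phi\circ\gamma$ has diameter at most $\omega_\phi(\delta)\le \tfrac14\systole(Y)$, so its image lies in an evenly covered ball $B(\phi(p_X(\tilde a)),\omega_\phi(\delta))\subset Y$. By the pullback definition, $\widetilde\phi\circ\tilde\gamma$ satisfies $p_Y\circ\widetilde\phi\circ\tilde\gamma=\phi\circ\gamma$ and starts at $\widetilde\phi(\tilde a)$, so it is the unique path-lift of $\phi\circ\gamma$ through~$\widetilde\phi(\tilde a)$; by Lemma~\ref{lemma:evenly_covered} it lies in a single sheet isometric to its downstairs projection. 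Comparing diameters in the sheet gives
\[
d_{\widetilde Y}\bigl(\widetilde\phi(\tilde a),\widetilde\phi(\tilde b)\bigr)\le \diam(\phi\circ\gamma)\le\omega_\phi(\delta).
\]

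For the long-distance range $\delta>r_0$, I chain short jumps. Take a path in $\widetilde X$ of length just above~$\delta$ and subdivide it into $N=\lceil \delta/r_0\rceil$ consecutive sub-arcs of length at most~$r_0$, producing intermediate points $\tilde a=\tilde x_0,\tilde x_1,\dots,\tilde x_N=\tilde b$. Applying the short-distance bound to each consecutive pair and summing via the triangle inequality in $\widetilde Y$ yields
\[
d_{\widetilde Y}\bigl(\widetilde\phi(\tilde a),\widetilde\phi(\tilde b)\bigr)\le N\,\omega_\phi(r_0)\le \omega_\phi(r_0)+\omega_\phi(r_0)\,\delta/r_0,
\]
which is exactly the formula in the statement. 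The only subtlety worth flagging is the identification of $\widetilde\phi\circ\tilde\gamma$ as the specific lift of $\phi\circ\gamma$ starting at $\widetilde\phi(\tilde a)$—this is immediate from $p_Y\circ\widetilde\phi=\phi\circ p_X$, but it is the hinge on which the single-sheet confinement (and hence the uniformity in $p_Y$) hangs. Everything else is a straightforward diameter comparison and triangle-inequality chain.
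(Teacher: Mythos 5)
Your proof is correct and takes essentially the same route as the paper's: both hinge on the fact that balls of radius below $\tfrac14\systole(Y)$ are evenly covered with sheets isometric to the base (Lemma~\ref{lemma:evenly_covered}), so that at small scales $\widetilde\phi$ is an isometric conjugate of $\phi$, and then chain subdivided segments to get the Lipschitz bound for $\delta>r_0$. The only cosmetic differences are that you confine the lift by unique path-lifting of a short path rather than the ball inclusion $\widetilde\phi(B(\tilde x,r))\subset B(\tilde y,\omega_\phi(r))$ used in the paper, and that to get $\omega_\phi(\delta)$ on the nose (rather than $\omega_\phi(\delta+\epsilon)$, and to keep the image strictly inside an evenly covered ball) you should take $\tilde\gamma$ to be a geodesic, which exists since $\widetilde X$ is a complete, locally compact length space.
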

So $\widetilde{\phi}$ behaves just like $\phi$ at small scales, and is
Lipschitz at large scales.

\begin{proof} Put $s_0\coloneqq\frac{1}{4}\systole(Y)$.  By the
  uniform continuity of $\phi$, there exists
  $0<r_0<\frac{1}{4}\systole(X)$ such that for each $x \in X$
  and $y =\phi(x)$, we have $\phi(B(x,r_0))\subset B(y, s_0)$. Fix now
  $\tilde{x} \in \widetilde{X}$ and put $x=p_X(\tilde{x})$,
  $\tilde{y}=\widetilde{\phi}(\tilde{x})$, and
  $y=p_Y(\tilde{y})=\phi(x)$. By Lemma \ref{lemma:evenly_covered},
\[ p_Y^{-1}\co B(y,s_0) \to B(\tilde{y},s_0), \quad p_X\co B(\tilde{x}, r_0) \to B(x,r_0)\]
are isometric homeomorphisms. Now  fix $0<r\leq r_0$.  Then 
\[ \widetilde{\phi}(B(\tilde{x},r)) = (p_Y^{-1}\circ
  \phi \circ p_X)(B(\tilde{x},r)) = p_Y^{-1}(\phi(B(x,r))\subset
  p_Y^{-1}(B(y, \omega_\phi(r)))\subset B(\tilde{y}, \omega_\phi(r)),
\]
establishing the estimate in the case $\delta \le r_0$.
For the other case, suppose $a, b \in \widetilde{X}$ are at distance $R\geq r_0$,
and let $\gamma$ be a geodesic joining $a$ to $b$. Divide $\gamma$
into sub-segments $\gamma=\gamma_2 * \gamma_1 * \ldots * \gamma_n$ with
$\ell(\gamma_i) \leq r_0$ for $i=1, \ldots, n$, so
that $n = \lceil \ell(\gamma)/r_0\rceil$. Then 
\begin{align*}
 d(\widetilde{\phi}(a), \widetilde{\phi}(b)) &  \leq  \sum_{i=0}^n \diam \phi(\gamma_i)
 \le \omega_\phi(r_0)\lceil d(a,b)/r_0\rceil.\qedhere
 \end{align*}
\end{proof}

\subsection{Sizes of paths and traces of homotopies}
\label{sec:size-length}
It would be nice to always work with the length of paths, but it
turns out that not all the paths we consider are rectifiable. (In
particular, we consider paths in the Julia set~$\cJ$ and their
projections to the finite approximations~$G_n$; these projections
are usually not rectifiable.) We could consider the diameter of paths,
but we also need to lift paths to covers. We work instead with a
hybrid.

\begin{convention}
  For paths $\gamma \co I \to X$, the path $\overline{\gamma}$ is the
  reversed path, and $\gamma_1 * \gamma_2$ denotes composition of
  paths, defined when $\gamma_1(1) = \gamma_2(0)$.
  For homotopies $H \colon I \times A \to X$, we will more generally
  use the same notations $\overline{H}$ and $H_1 * H_2$, always
  operating on the first input (which is an interval).
\end{convention}

\begin{definition}\label{def:size}
  For $X$ a locally simply-connected length space, $A$ a
  simply-connected auxiliary space (usually the interval), and
  $\gamma \co A \to X$ a continuous map, there are lifts
  $\wt\gamma \co A \to \wt X$ of~$\gamma$ to the universal cover
  of~$X$. The \emph{size} of~$\gamma$ is the diameter of~$\wt \gamma$ with respect to the lifted metric on $\wt X$:
  \[
    \size(\gamma) \coloneqq
      \max_{s,t \in I} d_{\wt X}(\wt\gamma(s), \wt\gamma(t)).
    \]
\end{definition}

The proof of the following lemma is straightforward. 
\begin{lemma}[Properties of size for paths]
\label{lem:size}
The notion of size for paths (with $A$ the interval) satisfies the
following properties.
\begin{enumerate}[start=0]
\item Well-defined: $\size(\gamma)$ is independent
  of which lift of~$\gamma$ to~$\wt X$ you take.
\item\label{item:size-bound-length} Bounded by length: when $\gamma\co
  I \to X$ is rectifiable, 
  $\size(\gamma) \leq \ell(\gamma)$.
\item Invariance under lifts: if $p\co X \to Y$ is a covering map,
  $Y$~is a length space, $X$~is equipped with the lifted metric,
  $\gamma\co I \to Y$ is a path, and $\wt \gamma\colon I \to X$ is
  a lift of $\gamma$ under~$p$, then
  $\size(\wt \gamma) = \size(\gamma)$.
\item Sub-additive under path composition: $\size(\gamma_1 * \gamma_2) \leq \size(\gamma_1)+\size(\gamma_2)$.
\item Shortening: If $f \co X \to Y$ is $\lambda$-Lipschitz and
  $\gamma \co I \to X$ is a path in~$X$, then
  $\size(f \circ \gamma) \le \lambda\cdot \size(\gamma)$.
\end{enumerate}
\end{lemma}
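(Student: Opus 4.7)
The plan is to verify each clause directly from standard facts about universal covers of length spaces: (a) deck transformations act by isometries of the lifted metric, (b) the covering projection to the base is a local isometry preserving path lengths, and (c) a continuous map between path-connected spaces lifts to universal covers since these are simply connected.

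For (0), two lifts $\wt\gamma_1, \wt\gamma_2 \co I \to \wt X$ of $\gamma$ satisfy $\wt\gamma_2 = \sigma \circ \wt\gamma_1$ for some deck transformation $\sigma$, which is an isometry of $\wt X$, so the diameter of the image is the same. For (1), pick any lift $\wt\gamma$ and note that for any $s,t$ the restriction $\wt\gamma|_{[s,t]}$ is a path in $\wt X$ from $\wt\gamma(s)$ to $\wt\gamma(t)$ whose length equals $\ell(\gamma|_{[s,t]}) \le \ell(\gamma)$, bounding the distance and hence the diameter. For (3), one can choose a lift of $\gamma_1 * \gamma_2$ to $\wt X$ of the form $\wt\gamma_1 * \wt\gamma_2$ with $\wt\gamma_1(1) = \wt\gamma_2(0)$; for any two points on the concatenation, if both lie on the same half the bound is immediate, and if they lie on different halves the triangle inequality through the junction point yields the sum bound.

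The substantive cases are (2) and (4). For (2), note that the universal cover $\wt Y$ of $Y$ is simultaneously a universal cover of $X$: by the lifting criterion, $\wt Y \to Y$ factors through the cover $p\co X \to Y$ to give a covering map $\wt Y \to X$, which is universal because $\wt Y$ is simply connected. The lifted metric on $\wt Y$ via this factorization agrees with the one coming directly from $Y$, because both are defined by infima of lengths of downstairs paths (length in~$X$ equals length of the projection to $Y$, since $p$ is a local isometry in the lifted metric). Thus, with compatible choices of basepoints, the lift of $\gamma$ to $\wt Y$ is the same map as the lift of $\wt\gamma$ to $\wt Y$, and the two diameters coincide.

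For (4), the map $f \co X \to Y$ lifts to a map $\wt f \co \wt X \to \wt Y$ by applying the lifting criterion to $f \circ p_X$, since $\wt X$ is simply connected. This $\wt f$ is itself $\lambda$-Lipschitz: given $\wt x, \wt x' \in \wt X$ and a path $\alpha$ joining them, $\wt f \circ \alpha$ joins $\wt f(\wt x)$ to $\wt f(\wt x')$ and has length equal to $\ell(f \circ p_X \circ \alpha) \le \lambda \cdot \ell(p_X \circ \alpha) = \lambda \cdot \ell(\alpha)$; taking the infimum over $\alpha$ gives the Lipschitz bound on distances. Then $\wt f \circ \wt\gamma$ is a lift of $f \circ \gamma$, so $\size(f \circ \gamma)$ equals its diameter, which is at most $\lambda$ times $\size(\gamma)$. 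The only place requiring care is (2), where one must recognize that the universal covers of $X$ and $Y$ coincide and that the lifted metrics are compatible; the other items are essentially triangle inequality, Lipschitz, and isometry arguments.
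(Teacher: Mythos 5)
Your proof is correct; the paper itself offers no argument, stating only that the lemma is straightforward, and your verification is precisely the standard one intended: (0), (1), (3) follow from deck transformations being isometries, length-preservation of lifts, and the triangle inequality at the junction, while for (2) you correctly identify $\wt Y$ as also being the universal cover of~$X$ with matching lifted length metrics, and for (4) you lift $f$ to a $\lambda$-Lipschitz map $\wt f \co \wt X \to \wt Y$ and observe $\wt f \circ \wt \gamma$ is a lift of $f \circ \gamma$. The only (inessential) point to flag is that in (2) and (4) one uses that the relevant spaces are connected and locally path-connected so that the lifting criterion applies and the deck group acts transitively on fibers, which holds for the CW complexes with length metrics considered in the paper.
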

As a result of point~\eqref{item:size-bound-length}, we will prefer to give
statements with hypotheses on the \emph{size} of paths and construct
paths with bounds on \emph{length}, even if we don't
necessarily need the length bounds for our applications.

Another central feature of our development is the following.
\begin{definition}
  For $X$ a locally simply-connected length space, $A$ an auxiliary
  space---now not necessarily simply-connected---and $H \co I \times A \to X$ a homotopy of maps from~$A$
  to~$X$, a \emph{trace} of $H$ is a path of the form
  $t \mapsto H(t,a)$ for fixed $a \in A$.
  The \emph{trace size} of~$H$
  is the maximum size of a trace:
  \[
    \tracesize(H) \coloneqq \sup_{a \in A}\, \size(H(\cdot,a)).
  \]
  If two maps $f,g \co A \to X$ are homotopic by a homotopy of trace
  size at most~$K$, then we write $f \sim_K g$.
\end{definition}

For a homotopy $H\co I \times I \to X$ between two paths $\gamma_0$ and $\gamma_1$, be
careful to distinguish between its \emph{size} and \emph{trace size}.
For instance, if $H$ has bounded size, then the $\gamma_i$ must also
have bounded size, while two paths that are very long can still have a
homotopy of bounded trace size.

The trace size of a homotopy between two paths is sensitive to the parameterization of
the domain of
the two paths, which in turn is sensitive to details like exactly how
one defines the concatenation operation on paths. We will specify the
parameterization when necessary.

One
thing to note now is that, if $\beta$ is a path of size~$R$ and
any $\epsilon > 0$, then, for any concatenatable $\gamma_1,\gamma_2$ and
suitable parameterization of the domain,
\[
  \gamma_1 * \beta * \overline{\beta} * \gamma_2 \sim_{R+\epsilon}
  \gamma_1 * \gamma_2.
\]
(The parameterization to make this work uses
a very small interval in the domain for $\beta * \overline{\beta}$ on the left hand side.)

\begin{lemma}
\label{lem:comphom}
Suppose $X, Y, Z$ are length spaces. If $f\colon X \to Y$ and
$g_0, g_1\co Y \to Z$ are maps with $g_0 \sim_K g_1$ via the homotopy
$\alpha\co I \times Y \to Z$, then $g_0 \circ f \sim_K g_1 \circ f$ via the
homotopy $f^*\alpha\co I \times X \to Z$ given by $(t,x) \mapsto
\alpha(t,f(x))$.
\end{lemma}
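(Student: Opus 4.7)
The plan is essentially tautological: every trace of the pulled-back homotopy $f^*\alpha$ is literally a trace of the original homotopy $\alpha$, just restricted to a point in the image of~$f$.

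First, I would check that $f^*\alpha$ is indeed a homotopy from $g_0 \circ f$ to $g_1 \circ f$. By the definition $(f^*\alpha)(t,x) = \alpha(t,f(x))$, we have $(f^*\alpha)(0,x) = \alpha(0,f(x)) = g_0(f(x))$ and $(f^*\alpha)(1,x) = \alpha(1,f(x)) = g_1(f(x))$, and continuity is immediate from continuity of~$f$ and~$\alpha$.

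Next, for the trace-size bound, fix an arbitrary $x \in X$ and consider the trace $\gamma_x \co I \to Z$ given by $\gamma_x(t) = (f^*\alpha)(t,x) = \alpha(t,f(x))$. Setting $y = f(x) \in Y$, this is exactly the trace $\alpha(\cdot,y)$ of the homotopy~$\alpha$ at the point~$y$. By the hypothesis $g_0 \sim_K g_1$ via $\alpha$, we have $\size(\alpha(\cdot,y)) \le K$, and hence $\size(\gamma_x) \le K$. Taking the supremum over all $x \in X$ gives $\tracesize(f^*\alpha) \le K$, as required.

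There is no real obstacle here; the entire content of the lemma is the observation that pulling back a homotopy along a map on the source of the maps being homotoped does not create any new traces beyond those already present in the original homotopy. The only subtle point worth flagging is that one does \emph{not} need any regularity (e.g.\ Lipschitz property) of~$f$, in contrast to the shortening property in Lemma \ref{lem:size}\eqref{item:size-bound-length}; the map $f$ only reindexes which traces of~$\alpha$ appear, and this reindexing can only shrink the set of traces realized, so the bound is automatic.
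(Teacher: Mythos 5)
Your proof is correct and is essentially the paper's own argument: the entire content is the observation that every trace of $f^*\alpha$ is a trace of $\alpha$, which immediately gives the bound $K$. (One tiny slip in your closing aside: the ``shortening'' property is the Lipschitz item of Lemma~\ref{lem:size}, not the item labelled there as ``bounded by length''; this does not affect the argument.)
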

\begin{proof}
The only non-trivial point is the bound on the trace size, which
follows since every trace of $f^*\alpha$ is a trace of~$\alpha$.
\end{proof}

\subsection{Approximate path-lifting}
\label{subsecn:apl}
Suppose $X$, $Y$ are length spaces, $A$ is a topological space, and suppose we are given
maps $\phi\co X \to Y$ and $g\co A \to Y$. An \emph{approximate lift}
of~$g$ under~$\phi$ with constant~$K$ is a map $g'\co A \to X$ such that
$\phi\circ g' \sim_K g$.

Now suppose further that $X, Y$ are finite connected CW complexes,
equipped with complete length metrics, and $\phi\co X \to Y$ is
continuous and surjective on fundamental group. We consider the
problem of approximately lifting paths in~$Y$ under~$\phi$ to paths in~$X$.

In this subsection, the constants appearing in the conclusions depend
on $\phi\co X \to Y$ and on the other constants appearing
in the statements. We suppress their dependence on~$\phi$.

\begin{proposition}[Controlled approximate path-lifting]
\label{prop:apl}
Suppose $X$ and~$Y$ are finite connected CW complexes equipped with length metrics,
$\phi\co X \to Y$ is continuous, and
$\phi_*\co \pi_1(X) \to \pi_1(Y)$ is surjective. Then there exists $K>0$ so that for any
path $\gamma\co I \to Y$ joining endpoints $y_0$ to $y_1$, and any
preimages $x_i \in \phi^{-1}(y_i)$ for $i=0,1$ of these endpoints, there
exists an approximate lift $\gamma'\co I \to X$ with
$\gamma'(i)=x_i$ for $i=0,1$ with homotopy $H\co I \times I \to Y$
between $\gamma$ and $\phi \circ \gamma'$ of
trace size at most~$K$. Concretely,
\begin{itemize}
\item $H(0,t)=\gamma(t)$ and $H(1,t)=\phi \circ \gamma'(t)$ for
  $0 \leq t \leq 1$;
\item  $H(s,i)=y_i$ for $i=0,1$ and $0 \leq s \leq 1$; and
\item $\size(s \mapsto H(s, t)) \le K$ for each $t \in [0,1]$.
\end{itemize}
There exist constants $C_0$ and $C_1$ so that if $\gamma$ is rectifiable, then so is $\gamma'$, and 
$\ell(\gamma') < C_0+ C_1 \ell(\gamma)$.
\end{proposition}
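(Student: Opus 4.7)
The plan is a piecewise construction: subdivide $\gamma$ into short pieces, lift each piece through $X$ using path-connectedness (accepting an error in homotopy class), correct the error using $\phi_*$-surjectivity via a bounded loop, and concatenate. The endpoint conditions $\gamma'(i)=x_i$ are imposed by taking $x_0, x_1$ as the initial and final intermediate preimages.

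For setup, fix a basepoint $x_* \in X$ with $y_* \coloneqq \phi(x_*)$, and a finite generating set of $\pi_1(Y, y_*)$ realized by loops of bounded length. By $\phi_*$-surjectivity, choose for each generator a loop in $X$ at $x_*$ of bounded length mapping to it (a ``dictionary''). Let $\sigma = \systole(Y)$ and fix $\sigma' < \sigma/4$. Subdivide $\gamma$ at $0 = t_0 < \cdots < t_N = 1$ so each $\gamma_j \coloneqq \gamma|_{[t_{j-1},t_j]}$ has size at most $\sigma'$; this is possible by uniform continuity of a lift of $\gamma$ to $\wt Y$, and when $\gamma$ is rectifiable one can take $N \le \lceil \ell(\gamma)/\sigma' \rceil + 1$. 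Let $z_j = \gamma(t_j)$ and choose preimages $\tilde z_j \in \phi^{-1}(z_j)$ with $\tilde z_0 = x_0$ and $\tilde z_N = x_1$. For each $j$ pick a path $\alpha_j$ in $X$ from $\tilde z_{j-1}$ to $\tilde z_j$ of length at most $\diam(X)+1$. The loop $\ell_j \coloneqq \gamma_j * \overline{\phi\circ\alpha_j}$ at $z_{j-1}$ has bounded size in $Y$; transporting the dictionary along a bounded-length path in $X$ from $x_*$ to $\tilde z_{j-1}$, then applying $\phi_*$-surjectivity at $\tilde z_{j-1}$, produces a loop $L_j$ in $X$ at $\tilde z_{j-1}$ of uniformly bounded length with $\phi_*[L_j] = [\ell_j]$. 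Set $\alpha_j' \coloneqq L_j * \alpha_j$ and $\gamma' \coloneqq \alpha_1' * \cdots * \alpha_N'$; then $\phi\circ\alpha_j' \simeq \gamma_j$ rel endpoints, and since each $\ell(\alpha_j') \le C$ for a uniform $C$, we get $\ell(\gamma') \le N \cdot C$, hence the affine bound $\ell(\gamma') \le C_0 + C_1\ell(\gamma)$ when $\gamma$ is rectifiable.

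For the homotopy, construct $H_j \co I \times [t_{j-1},t_j] \to Y$ rel endpoints between $\gamma_j$ and $\phi\circ\alpha_j'$ as follows: the loop $\gamma_j * \overline{\phi\circ\alpha_j'}$ has bounded length and is null-homotopic, so its lift to $\wt Y$ lies in a ball of bounded radius about the basepoint lift and can be filled by a disk of bounded image, giving $\tracesize(H_j) \le K$ for a uniform constant $K$. Since each $H_j$ is rel endpoints, the traces at the joining times $t_j$ are constant, so the concatenation $H$ has the required boundary values and $\tracesize(H) \le K$.

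The crux is controlling $K$ and the length of $L_j$: both reduce to the uniform filling/word-length statement that in the universal cover of the finite CW complex $Y$, every loop of bounded length represents a word of bounded length in a fixed generating set and bounds a disk of bounded image. This is a standard consequence of compactness of $Y$, equivalently the Švarc--Milnor quasi-isometry between $\wt Y$ and the Cayley graph of $\pi_1(Y)$ combined with bounded-image fillings of the finitely many relators.
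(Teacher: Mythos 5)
Your overall architecture---subdivide $\gamma$ into pieces of bounded size, fix up each piece's homotopy class with a uniformly bounded correction loop supplied by $\phi_*$-surjectivity (made uniform via a basepoint dictionary plus a \v{S}varc--Milnor/compactness argument), and get the trace-size bound from uniform fillings in the universal cover $\wt Y$---uses the same toolkit as the paper, which packages the surjectivity step as a controlled approximate \emph{loop}-lifting lemma and likewise invokes uniform simple-connectivity of $\wt Y$. However, one step of your construction fails under the stated hypotheses: you choose preimages $\tilde z_j \in \phi^{-1}(z_j)$ of the subdivision points $z_j = \gamma(t_j)$. The proposition only assumes $\phi_*\co \pi_1(X)\to\pi_1(Y)$ is surjective, not that $\phi$ is surjective on points, and only the two endpoints $y_0,y_1$ are guaranteed to have preimages. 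The interior of $\gamma$ may leave the image of $\phi$ entirely (for instance $Y$ may contain a contractible subcomplex disjoint from $\phi(X)$ through which $\gamma$ travels), in which case $\phi^{-1}(z_j)=\emptyset$ and your paths $\alpha_j$, loops $\ell_j=\gamma_j*\overline{\phi\circ\alpha_j}$, and rel-endpoints homotopies $H_j$ are undefined.

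The gap is repairable, but at the cost of your clean rel-endpoints gluing: replace each interior $z_j$ by a point $z_j'\in\phi(X)$ (any such point is within $\diam(Y)$ of $z_j$, a constant of $\phi$ only), insert bounded connectors from $z_j$ to $z_j'$ into the loops $\ell_j$, and accept that the trace of $H$ at time $t_j$ is that connector rather than a constant---the trace-size bound survives since the connectors have uniformly bounded size. The paper avoids the issue by a different bookkeeping: it closes each sub-path $\gamma_i$ into a loop at a fixed basepoint $y_*=\phi(x_*)$ using connecting paths chosen in $Y$ (so no preimages of interior points are ever needed), approximately lifts these loops to loops at $x_*$, and concatenates, paying only a bounded $\beta*\overline{\beta}$-cancellation term in the trace size. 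Two minor further points: your word-length and filling bounds should be phrased in terms of size (displacement of lifts in $\wt Y$) rather than length, since the pieces $\gamma_j$ need not be rectifiable; and with the repair above your length estimate $\ell(\gamma')\le C_0+C_1\ell(\gamma)$ goes through as you wrote it.
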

\begin{figure}
  \[
    \mathcenter{\begin{tikzpicture}
        \node (X) at (0cm,0cm) {$\mfig{apl-1}$};
        \node (Y) at (5.5cm,0cm) {$\mfig{apl-0}$};
        \draw[->] (X) to node[above,cdlabel]{\phi} (Y);
      \end{tikzpicture}}
  \]
  \caption{Approximate path lifting. The traces of the homotopies are
    the red lines on the right; these must be of bounded size.}
  \label{fig:apl}
\end{figure}
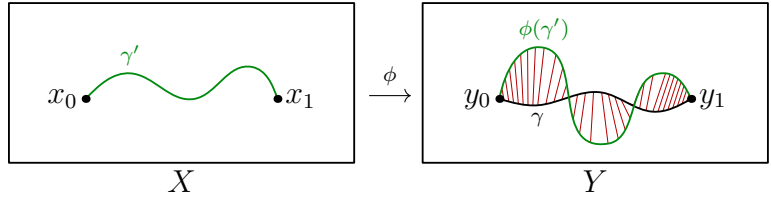

In other words, approximate lifting increases lengths by controlled amounts, and the failure of a
path to lift  is measured by a homotopy whose traces are
uniformly bounded in size, independent of the path~$\gamma$; see Figure~\ref{fig:apl}.
(We do not use the fact that lengths are increased by controlled
amounts in this paper, but it helps add motivation.)
We will
also say that
$\phi\co X \to Y$ satisfies the \emph{$K$-APL} condition.

Before proving the general statement, we first prove it for
loops~$\gamma$ of bounded length.

\begin{lemma}[Controlled approximate loop-lifting]
\label{lemma:call}
In the setup of Proposition \ref{prop:apl}, fix $C>0$. Then there
exist constants $L$ and $K$ with the following property. For any
$x \in X$, $y = \phi(x)$, and loop $\lambda\co I \to Y$ based at
$y$ with $\size(\lambda)\leq C$, there is a rectifiable loop
$\lambda'\co I \to X$ based at $x$ with $\ell(\lambda')<L$ so that
$\phi\circ \lambda' \sim_K \lambda$.
\end{lemma}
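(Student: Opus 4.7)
The plan is to fix once and for all a finite list of homotopy classes and their lifts to~$X$, and then reduce the lemma for an arbitrary $(x,y,\lambda)$ to one of these preselected data by conjugating with bounded-length paths from~$x$ to a chosen basepoint $x_0 \in X$.

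Concretely, fix $y_0 \in Y$, $x_0 \in \phi^{-1}(y_0)$, and lifts $\tilde x_0 \in \tilde X$, $\tilde y_0 \in \tilde Y$ of the basepoints to the universal covers, with $\tilde\phi(\tilde x_0) = \tilde y_0$ for the lifted map $\tilde\phi\co \tilde X \to \tilde Y$ of Lemma~\ref{lemma:uc}. Since $Y$ is compact and the action of $\pi_1(Y,y_0)$ on $\tilde Y$ is properly discontinuous, the set
\[
  S \coloneqq \bigl\{\,g \in \pi_1(Y,y_0) \,\big|\, d_{\tilde Y}(\tilde y_0, g\tilde y_0) \le 2\omega_{\tilde\phi}(D_X) + C\,\bigr\}
\]
is finite, where $D_X\coloneqq\diam(X)$ and $\omega_{\tilde\phi}$ is the modulus of continuity from Lemma~\ref{lemma:uc}. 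For each $g \in S$, surjectivity of $\phi_*$ lets me choose a rectifiable loop $\alpha_g$ in $X$ at $x_0$ of length at most $L_1\coloneqq \max_{g \in S} d_{\tilde X}(\tilde x_0, h_g \tilde x_0)$ with $\phi_*[\alpha_g]=g$; set $\beta_g\coloneqq \phi\circ\alpha_g$ and note that $\size(\beta_g) \le \omega_{\tilde\phi}(L_1)$.

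Given $(x,y,\lambda)$, I pick a path $\mu\co x \to x_0$ of length $\le D_X$ and set $\nu \coloneqq \phi\circ\mu$, so $\size(\nu) \le \omega_{\tilde\phi}(D_X)$ by Lemma~\ref{lemma:uc}. The conjugated loop $\lambda_0 \coloneqq \bar\nu * \lambda * \nu$ at $y_0$ satisfies $\size(\lambda_0) \le 2\omega_{\tilde\phi}(D_X) + C$ (by sub-additivity of size), so lifting to $\tilde Y$ shows $[\lambda_0] \in S$; call this class $g$. Define
\[
  \lambda' \coloneqq \mu * \alpha_g * \bar\mu,
\]
a rectifiable loop at~$x$ of length at most $L\coloneqq 2D_X + L_1$. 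Then $\phi\circ\lambda' = \nu * \beta_g * \bar\nu$, and the cancellation $\nu*\bar\nu\simeq\mathrm{const}$ inside $\lambda = \nu*\lambda_0*\bar\nu$ produces a homotopy to $\nu*\beta_g*\bar\nu$ with traces either of the form $\nu|_{[t,1]}*\overline{\nu|_{[t,1]}}$ (hence of size at most $\omega_{\tilde\phi}(D_X)$) or a trace of a homotopy from $\lambda_0$ to $\beta_g$ conjugated by~$\nu$.

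The one remaining input, and the main technical obstacle, is a uniform \emph{bounded-trace-size} homotopy between the two $y_0$-based loops $\lambda_0$ and $\beta_g$, which lie in the same $\pi_1$-class~$g\in S$ and both have size at most some $R$ depending only on $C$ and $\phi$. Lifting to $\tilde Y$ produces two paths from $\tilde y_0$ to $g\tilde y_0$ in the ball $B(\tilde y_0, R)$, which is compact by Hopf--Rinow (Lemma~\ref{lemma:evenly_covered} setup) and lies in a finite subcomplex of~$\tilde Y$. Uniform local contractibility of the finite CW complex~$Y$ (lifted to $\tilde Y$) then yields a rel-endpoints homotopy whose image is contained in some slightly larger ball $B(\tilde y_0, R')$, giving a trace size bound $K_0(R)$ depending only on $R$ and~$Y$. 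Projecting and conjugating by~$\nu$ gives the final trace size bound $K\coloneqq K_0(R) + 2\omega_{\tilde\phi}(D_X)$, completing the proof.
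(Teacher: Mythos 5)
Your proof is correct in substance, but it takes a genuinely different route from the paper's. The paper works at the variable basepoint: for each $x$ it uses the induced norms on $\pi_1(X,x)$ and $\pi_1(Y,y)$ together with surjectivity of $\phi_*$ to produce a bound $L(x)$, takes $L=\sup_x L(x)$ by compactness of~$X$, and then bounds the homotopy by lifting $\lambda$ and $\phi\circ\lambda'$ to common endpoints in $\wt Y$ and invoking \emph{uniform simple connectivity} of $\wt Y$ (citing Drutu--Kapovich) to contract the concatenated loop by a homotopy of bounded diameter. You instead conjugate everything to one fixed basepoint by geodesics of length at most $\diam(X)$, observe via proper discontinuity (plus Hopf--Rinow compactness of balls in $\wt Y$) that only the finite set $S$ of ``short'' classes in $\pi_1(Y,y_0)$ can arise, and pre-choose lifts $\alpha_g$ once and for all; the payoff is that the uniformity you need is only over loops lying in a \emph{single} fixed compact ball $\overline{B}(\wt y_0,R)$, rather than over all locations in $\wt Y$. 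The one thin spot is your final step: ``uniform local contractibility'' is not by itself the operative fact, since local contractibility only handles small loops, and the family of loops of size at most $R$ in the fixed class is not compact. But your parenthetical appeal to compactness and finite subcomplexes is exactly what completes it: the fixed ball lies in a finite connected subcomplex $F$ of the locally finite complex $\wt Y$, $\pi_1(F)$ is finitely generated and dies in the simply connected $\wt Y$, hence dies in some larger finite subcomplex $F'$, which is compact and so contained in some $B(\wt y_0,R')$; every loop in the ball is then null-homotopic within $F'$, and converting the null-homotopy of $\wt\lambda_0 * \overline{\wt\beta_g}$ into a rel-endpoints homotopy gives the trace-size bound $K_0(R)$. (Alternatively you could simply cite the same uniform simple connectivity statement the paper uses.) With that step filled in, the remaining bookkeeping --- insertion and cancellation of $\nu*\overline\nu$ with traces of size at most $\size(\nu)+\epsilon$, and conjugation of the middle homotopy by~$\nu$ via Lemma~\ref{lem:comphom}-type reasoning --- is fine, and your $\lambda'=\mu*\alpha_g*\overline\mu$ is rectifiable with the required uniform length bound.
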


\begin{proof} First, fix $x \in X$ and $y = \phi(x)$. The length
  metrics on $X$ and~$Y$ induce norms
  on the fundamental groups $\pi_1(X,x)$ and $\pi_1(Y,y)$. Since
  $\size(\lambda) \le C$, the norm of $[\lambda] \in \pi_1(Y,y)$ is also bounded
  by~$C$.
  Since
  $\phi_*\co \pi_1(X,x)\to\pi_1(Y,y)$ is surjective, there exists
  $L(x)$ such that the image of the ball of radius $L(x)$ in
  $\pi_1(X,x)$ contains the ball of radius $C$ in $\pi_1(Y,y)$.  Now
  vary $x \in X$. Since the induced norms on $\pi_1(X,x)$ and $\pi_1(Y,\phi(x))$ vary
  continuously as a function of~$x$, we can take $L$ to be continuous
  and so, by
  compactness, $\sup_{x \in X}L(x)$ is finite; call this supremum~$L$. Thus
  there
  exists a loop $\lambda'$ based at $x$ of length at most $L$ for
  which $\phi\circ\lambda' \sim \lambda$.
  
  We must bound the trace size of the homotopy; in fact we bound its
  size. For any~$\lambda$ and
  $\lambda'$ as above,
  we can lift $\phi\circ \lambda'$ and $\lambda$ to paths in
  the universal cover~$\wt Y$. Fix lifts
  $\widetilde{\phi\circ \lambda'}$ and $ \widetilde{\lambda}$, respectively, joining
  common endpoints. The concatenation
  $\widetilde{\alpha}\coloneqq\widetilde{\phi\circ \lambda'}
  *\overline{\widetilde{\lambda}}$ is a loop in the simply-connected
  length CW complex~$\wtY$, so, since $\widetilde{\phi}$ is uniformly continuous,
  \[
    \diam(\widetilde{\alpha}) \leq
    \diam(\widetilde{\phi\circ\lambda'})+\diam(\widetilde{\lambda})   \leq
    \omega_{\widetilde{\phi}}(L)+C\eqqcolon D_1.
  \]
  By \cite[Lemma
  9.51]{drutu:kapovich:ggt}, $\wtY$ is uniformly simply-connected.
  This implies that $\widetilde{\alpha}$ is homotopic to a constant
  map via a homotopy whose image has diameter $K\coloneqq K(D_1)$, as desired.
\end{proof}

\begin{proof}[Proof of Proposition~\ref{prop:apl}]
  Pick (arbitrarily) a basepoint $x_* \in X$, set $y_* \coloneqq \phi(x_*)$, and
  pick an arbitrary constant $C > 0$. Divide $\gamma$ into sub-paths
  \[
    \gamma = \gamma_0 * \dots * \gamma_{n-1}
  \]
  with $\size(\gamma_i) \le C$. Let the endpoints of $\gamma_i$ be
  $z_i,z_{i+1} \in Y$, and for each $i$ pick a path $\rho_i$ from
  $y_*$ to $z_i$, of length less than $\diam(Y)$.
  Pick also paths $\rho_0', \rho_n'$ from $x_*$ to $x_0, x_1$,
  respectively, of length less than $\diam(X)$.
  Then $\lambda_i \coloneqq \rho_i * \gamma_i * \overline{\rho_{i+1}}$ is a loop based
  at~$y_*$ of size less then $2\diam(X)+C$. By Lemma~\ref{lemma:call}, there
  are constants $L_1, K_1$ so that, for each~$i$, there is a loop $\lambda_i'$ in $X$
  based at~$x_*$ with $\ell(\lambda_i') < L_1$ and
  $\phi \circ \lambda_i' \sim_{K_1} \lambda_i$.
  In addition,
  $\sigma_0 \coloneqq (\phi \circ \rho_0') * \overline{\rho_0}$ is a
  loop of size less than
  $\omega_{\wt \phi}(\diam Y) + \diam X$, so there are constants
  $L_2,K_2$ and a
  loop $\sigma_0'$ based at $x_*$ of length less than $L_2$ so that
  $\phi \circ \sigma_0' \sim_{K_2} \sigma_0$. Similarly pick an approximate lift
  $\sigma_n'$ of $\sigma_n \coloneqq \rho_n * \overline{(\phi \circ \rho_n')}$.
  Now set
  \begin{align*}
    K_3 &\coloneqq \max(K_1,K_2)\\
    D &\coloneqq \max(\diam(X), \omega_{\wt \phi}(\diam(Y))\\
    \gamma' &\coloneqq \overline{\rho'_0} * \sigma_0' * \lambda_1' * \lambda_2' *
              \dots * \lambda_{n-1}' * \sigma_n' * \rho'_{n}
  \end{align*}
   (with suitable parameterization of the domain for~$\gamma'$), so
   that
  \begin{align*}
    \phi \circ \gamma'
      &\sim_{K_3} \overline{(\phi \circ \rho_0')} * \sigma_0 *
        \lambda_1 * \lambda_2 * \dots * \lambda_{n-1} *
        \sigma_n * (\phi \circ \rho_{n}')\\
      &\sim_{D+\epsilon} \gamma_0 * \gamma_1 * \dots * \gamma_{n-1}
      = \gamma,
  \end{align*}
  as desired.

  To get the bounds on length in the case that $\gamma$ is
  rectifiable, choose the initial decomposition of $\gamma$ into
  sub-paths~$\gamma_i$ so that, for $i > 0$, $\ell(\gamma_i) \ge C$;
  then we have $n \le 1+\ell(\gamma)/C$. We can choose the paths
  $\lambda_i'$, $\rho_i'$, and $\sigma_i'$ all to have length bounded
  by a constant, which then gives the desired bound on $\ell(\gamma)$.
\end{proof}

\begin{remark}
  Lemma~\ref{lemma:call} gives a
  bound on the overall size of the homotopy, but
  Proposition~\ref{prop:apl} only gives a bound on the trace sizes.
\end{remark}

\subsection{Multi-valued dynamical systems}
\label{subsecn:multivalued}
We turn our attention back to dynamics. We think of two spaces and a pair of
maps between them:
\begin{center}
\begin{tikzcd}
& X_1 \arrow{ld}[swap]{\phi} \arrow{rd}{\pi} \\
X_0 & & X_0  
\end{tikzcd}
\end{center}
as a
\emph{multi-valued dynamical system}. We introduce an associated limit
space
and describe it
in two different ways, as in \cite{ishii:smillie:homotopy-shadowing},
but adopting slightly different notation.  Here is how to translate
between their (IS) and our (PT) notation:
\begin{align*}
  X^i_{IS}&=X_{i, PT};\\
  (\iota, \sigma)_{IS} = (\iota, f)_{IS} &= (\phi, \pi)_{PT};\\
  X^{+\infty}_{IS} &= \cJ_{PT}.
\end{align*}
(There are also minor differences in the indexing.)

Suppose $X_1$ and $X_0$ are compact topological spaces and
$\pi, \phi\co X_1 \rightrightarrows X_0$ are two continuous maps.
An \emph{orbit} is a
sequence $x=(x_1, x_2, \ldots)$ of points $x_i \in X_1$ with
$\pi(x_i)=\phi(x_{i+1})$ when both sides are defined. If $x_i$ is
defined for $1 \le i \le n$ for some $n \geq
1$, we get the space $X_n$ of orbits of
length~$n$. If $x_i$ is defined for all $1 \le i$, we get the \emph{limit space}
$\cJ \subset X_1^\mathbb{N}$ of one-sided infinite orbits. Note the typographical distinction between the abstract limit
space~$\cJ$ of a virtual endomorphism and the concrete limit space
$J_R\subset\CCa$ of a rational map~$R$.

With this setup, there are two families of canonical maps
\begin{align*}
  \phi^{n+1}_n, \pi^{n+1}_n\co X_{n+1} &\to X_n,\\
  \phi^{n+1}_n(x_1, x_2, \ldots, x_n, x_{n+1}) &\coloneqq (x_1, x_2, \ldots, x_n)\\
  \pi^{n+1}_n(x_1, x_2, \ldots, x_n, x_{n+1}) &\coloneqq (x_2, \ldots, x_n,x_{n+1}).
\end{align*}
We also set $\phi^1_0 \coloneqq \phi$ and $\pi^1_0 \coloneqq \pi$.
We can compose these to get maps $\phi^n_k, \pi^n_k \co X_n \to
X_k$ for $0 \le k < n$.
We recall a convention from
\S\ref{subsecn:notation}.

\begin{convention}
In our indexing convention, the index of the domain appears
as a superscript and the index of the codomain appears as a subscript.
This way, composition corresponds to ``contraction'' of
indices, as is conventional in tensor notation.
\end{convention}

We can
present the space $\cJ$ as an inverse limit of the
sequence~$\phi^{n+1}_n$:
\[
  \begin{tikzpicture}[x=1.6cm,y=1.5cm]
    \node (Xdots) at (0,0) {$\cdots$};
    \node (Xn) at (1,0) {$X_n$};
    \node (Xn1) at (2,0) {$X_{n-1}$};
    \node (Xdots2) at (3,0) {$\cdots$};
    \node (X1) at (4,0) {$X_1$};
    \node (X0) at (5,0) {$X_0$};
    \node (J) at (0,-1) {$\cJ$};
    \draw[->] (Xdots) to node[above,cdlabel]{\phi^{n+1}_n} (Xn);
    \draw[->] (Xn) to node[above,cdlabel]{\phi^n_{n-1}} (Xn1);
    \draw[->] (Xn1) to node[above,cdlabel]{\phi^{n-1}_{n-2}} (Xdots2);
    \draw[->] (Xdots2) to node[above,cdlabel]{\phi^2_1} (X1);
    \draw[->] (X1) to node[above,cdlabel]{\phi^1_0} (X0);

    \draw[->] (J) to node[above left=-2mm,cdlabel]{\phi^\infty_n} (Xn);
    \draw[->,bend right=8] (J) to node[above left=-2mm,cdlabel,pos=0.7]{\phi^\infty_{n-1}} (Xn1);
    \draw[->,bend right=8] (J) to node[above left=-1.5mm,cdlabel,pos=0.72]{\phi^\infty_1} (X1);
    \draw[->,bend right=15] (J) to node[above left=-1.5mm,cdlabel,pos=0.83]{\phi^\infty_0} (X0);
  \end{tikzpicture}
\]
where the $\phi^\infty_n\co \cJ \to X_n$ are analogues of $\phi^n_k$:
\[
  \phi^\infty_n (x_1,x_2,\dots) \coloneqq (x_1,x_2,\dots,x_n).
\]
There is also a canonical map $f\colon \cJ \to \cJ$ induced by the
one-sided shift, a kind of analogue of $\pi^{n+1}_n$:
\[ f(x_1,x_2,x_3 \ldots) \coloneqq (x_2,x_3, \ldots).\]

There are two natural modifications of a pair of maps $(\pi,\phi)$: we
can \emph{iterate} it, replacing the pair by
\begin{equation}\label{eq:iterate}
  \pi^n_0, \phi^n_0 \co X_n \rightrightarrows X_0,
\end{equation}
or we can \emph{reindex} it, replacing the pair by
\begin{equation}\label{eq:reindex}
  \pi^n_{n-1}, \phi^n_{n-1} \co X_n \rightrightarrows X_{n-1}.
\end{equation}
Neither of these operations changes the limit space~$\cJ$, but
iterating replaces the dynamics of $f$ on $\cJ$ by $f^{\circ n}$, while
reindexing does not change~$f$.

We now restrict attention to expanding systems, as in
\cite{ishii:smillie:homotopy-shadowing}, but adopting terminology of
Nekrashevych \cite{Nekrashevych14:CombModel} and the second author
\cite{Thurston20:Characterize}.
We continue with some definitions.

\begin{definition}\label{def:ve}
  \textbf{Virtual endomorphisms.} A pair of
continuous maps $\pi, \phi\co X_1 \rightrightarrows X_0$ between
topological spaces is a
\emph{virtual endomorphism} if $\pi$ is a covering map of finite
degree.
\end{definition}

\begin{convention}
In this section we are exclusively concerned with virtual endomorphisms
where $X_0, X_1$ are finite connected CW complexes, $X_0$ is equipped with a
complete length metric $d_0$, and $X_1$ is equipped with the
length metric~$d_1$ induced by the covering~$\pi$, i.e., the length metric on~$X_1$
so that $\pi$ is a local isometry.
\end{convention}

\begin{definition}\label{def:contracting}
  Suppose $0<\lambda<1$.  The virtual endomorphism $\pi, \phi\co X_1 \rightrightarrows X_0$ is \emph{$\lambda$-backward-contracting} if $d_0(\phi(a),\phi(b))\leq \lambda d_1(a,b)$ for all
$a,b \in X_1$, i.e., $\phi$ is a uniform contraction. Equivalently, we
say the virtual endomorphism is $\lambda^{-1}$-\emph{forward-expanding}.  A virtual endomorphism is backward contracting
(equivalently, forward expanding) if it is
$\lambda$-backward-contracting for some $0<\lambda<1$.
\end{definition}

\begin{definition} \label{def:recurrent} 
  The virtual endomorphism $\pi, \phi\co X_1 \rightrightarrows X_0$ is
  \emph{recurrent} if $X_0$ and $X_1$ are connected and
  $\phi_*\co\pi_1(X_1) \to \pi_1(X_0)$ is surjective.
\end{definition}
If $\pi, \phi\co X_1 \rightrightarrows X_0$ is recurrent, then $X_n$ is connected for each $n$. 

In the setting of a virtual endomorphism, the map $\pi^{n+1}_n \co
X_{n+1} \to X_n$ defined above is
also a covering map.
We will use the fact that $X_{n+1}$ is a pullback, which
concretely gives the following lemma, among other pullback diagrams.

\begin{lemma}
  \label{lem:pullback}
Given maps $F\colon A \to X_1$ and $G\co A \to X_n$ for which $\phi
\circ F = \pi^n_0\circ G$, there exists a unique map $A \to X_{n+1}$
such that the following diagram commutes:
\[ 
\begin{tikzpicture}[x=1.8cm,y=1.8cm]
  \node (X00) at (1,0) {$X_1$};
 \node (X01) at (2,0) {$X_0$};
 \node (X02) at (1,1) {$X_{n+1}$};
 \node (X03) at (2,1) {$X_{n}$};
 \node (X04) at (0,2) {$A$};
 
 \draw[->,color=black,bend left=20] (X04) to node[above right=-1mm,cdlabel]{G} (X03);
 \draw[->,color=black,bend right=20] (X04) to node[left=1mm,cdlabel]{F} (X00);
 \draw[->,densely dashed, color=black] (X04) to node[above left=-1mm,cdlabel]{} (X02);
 \draw[->,color=black] (X03) to node[right,cdlabel]{\pi^n_0} (X01);
 \draw[->,color=black] (X00) to node[above,cdlabel]{\phi} (X01);
 \draw[->, color=black] (X02) to node[above,cdlabel]{\phi^{n+1}_n} (X03);
 \draw[->, color=black] (X02) to node[right,cdlabel]{\pi^{n+1}_1} (X00);
\end{tikzpicture}
\]
\end{lemma}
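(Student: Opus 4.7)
The statement asserts that the square
\[
\begin{tikzpicture}[x=1.8cm,y=1.8cm,baseline=(current bounding box.center)]
 \node (X00) at (1,0) {$X_1$};
 \node (X01) at (2,0) {$X_0$};
 \node (X02) at (1,1) {$X_{n+1}$};
 \node (X03) at (2,1) {$X_{n}$};
 \draw[->] (X03) to node[right,cdlabel]{\pi^n_0} (X01);
 \draw[->] (X00) to node[above,cdlabel]{\phi} (X01);
 \draw[->] (X02) to node[above,cdlabel]{\phi^{n+1}_n} (X03);
 \draw[->] (X02) to node[left,cdlabel]{\pi^{n+1}_1} (X00);
\end{tikzpicture}
\]
realizes $X_{n+1}$ as the pullback of $\phi$ along $\pi^n_0$ in the category of topological spaces (with $\phi\circ F=\pi^n_0\circ G$ the correct compatibility condition on the cone with apex $A$). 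The plan is to verify this directly from the explicit model of $X_{n+1}$ given in \S\ref{subsecn:multivalued}: points of $X_{n+1}$ are tuples $(x_1,\ldots,x_{n+1})\in X_1^{n+1}$ satisfying $\pi(x_i)=\phi(x_{i+1})$ for $1\le i\le n$, the map $\phi^{n+1}_n$ drops the last coordinate, and the map $\pi^{n+1}_1$ reads off the last coordinate.

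First I would unwind the composition $\pi^n_0 \co X_n \to X_0$. Because each $\pi^{m+1}_m$ deletes the \emph{first} coordinate, iterating and applying $\pi^1_0=\pi$ gives
\[
\pi^n_0(y_1,\ldots,y_n)=\pi(y_n).
\]
Thus the compatibility hypothesis $\phi\circ F=\pi^n_0\circ G$ is equivalent to $\phi(F(a))=\pi(g_n(a))$ for every $a\in A$, where $G(a)=(g_1(a),\ldots,g_n(a))$.

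Next I would define the candidate lift $H\co A \to X_1^{n+1}$ by
\[
H(a)\coloneqq\bigl(g_1(a),\,g_2(a),\,\ldots,\,g_n(a),\,F(a)\bigr).
\]
The constraints making $H(a)$ lie in $X_{n+1}$ split into two groups: the first $n-1$ identities $\pi(g_i(a))=\phi(g_{i+1}(a))$ hold because $G(a)\in X_n$, and the last one, $\pi(g_n(a))=\phi(F(a))$, is precisely the hypothesis unwound above. By construction, $\phi^{n+1}_n\circ H=G$ and $\pi^{n+1}_1\circ H=F$, and $H$ is continuous because its coordinates are (and $X_{n+1}$ carries the subspace topology from $X_1^{n+1}$).

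Uniqueness requires no further work: any $H'\co A\to X_{n+1}$ satisfying $\phi^{n+1}_n\circ H'=G$ and $\pi^{n+1}_1\circ H'=F$ has its first $n$ coordinates equal to those of $G$ and its last coordinate equal to $F$, so $H'=H$. There is no substantive obstacle here; the only thing that requires care is bookkeeping the indexing conventions for $\pi^n_0$, $\phi^{n+1}_n$, and $\pi^{n+1}_1$ so that the abstract pullback square and the concrete coordinate description match.
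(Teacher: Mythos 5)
Your proof is correct and is exactly the verification the paper has in mind: the paper states Lemma~\ref{lem:pullback} as an immediate consequence of the explicit orbit-space model of $X_{n+1}$, and your coordinate-by-coordinate construction of the lift $(g_1,\dots,g_n,F)$, together with the uniqueness and continuity remarks, fills in that verification faithfully (including correctly reading the hypothesis as $\phi\circ F=\pi^n_0\circ G$, where the paper's ``$F\circ\phi$'' is a typo). Nothing further is needed.
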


Via pullback of length metrics under~$\pi^n_0$, each space $X_n$
inherits a length metric $d_n$. The degree of $\pi^n_0$ is $d^n$, and the complexes $X_n$ are locally finite uniformly in $n$, so $\diam X_n \to \infty$ as $n \to \infty$.

\begin{remark}
  From a virtual endomorphism
  $\pi,\phi \co X_1 \rightrightarrows X_0$, we will not see every cover
  of~$X_0$ among the covers $\pi^n_0 \co X_n \to X_0$ (or their normal
  closures). Studying the
  exact covers that appear leads to a
  very interesting group, the \emph{iterated monodromy group}
  $\IMG(\pi,\phi)$, which is a natural quotient of  $\pi_1(X_0)$. This group is the deck group of a 
Galois cover $\wt X_0'$ of~$X_0$ usually different from  the universal cover.
  Instead of measuring
  the size by lifting paths to the universal cover as in
  Definition~\ref{def:size}, we could get a
  different (smaller) notion by lifting to $\wt X_0'$ instead.
  The difference is inessential for this paper.
\end{remark}

\begin{definition}
A \emph{homotopy pseudo-orbit} $(x, \alpha)=(x_i, \alpha_i)$ is a sequence, for $i \ge 1$, of points $x_i \in X_1$ and of paths $\alpha_i \co [0,1] \to X_0$ such that 
\begin{itemize}
\item $\alpha_i(0)=\pi(x_i)$, $\alpha_i(1)=\phi(x_{i+1})$
\item $\ell(\alpha_i)\leq C$ for some $C \geq 0$ independent of $i$. 
\end{itemize}
Two homotopy pseudo-orbits $(x,\alpha)=(x_i, \alpha_i)$ and $(x', \alpha')=(x'_i, \alpha'_i)$ are
\emph{homotopic} if there is a sequence $\beta = (\beta_i)$, $i \geq 1$, 
of paths $\beta_i\co [0,1] \to X_1$ with 
\begin{itemize}
\item $\beta_i(0)=x_i$ and $\beta_i(1)=x_i'$;
\item  for $i \geq 1$, $\alpha_i * (\phi\circ \beta_{i+1}) \sim (\pi\circ \beta_i) * \alpha_i'$; and 
\item $\ell(\beta_i) < B$ for some $B>0$ independent of $i$.  
\end{itemize}
\end{definition}
See Figure \ref{fig:homotopic-p-orbits} and \cite[Definition 6.4;
Figure 5a]{ishii:smillie:homotopy-shadowing}.
Following Ishii and Smillie, we use lengths
in the condition on $\alpha_i$ and $\beta_i$, but size would work
equally well, since these are paths in a CW complex.

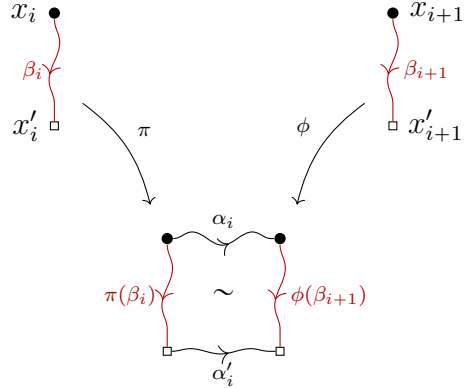
\begin{figure}
  \[
    \begin{tikzpicture}[x=1.5cm,y=1.5cm]
      \node[shape=circle,fill,inner sep=1.5pt,label=left:{$x_i$}] (xi1) at (0,3) {};
      \node[shape=rectangle,draw,inner sep=1.5pt,label=left:{$x_i'$}] (xi1') at (0,2) {};
      \draw[decorate,decoration={snake,amplitude=2pt,segment length=1cm},
        color=dark-red,markdir] (xi1) to node[left,cdlabel]{\beta_i} (xi1');

      \node[shape=circle,fill,inner sep=1.5pt] (yi1) at (1,1) {};
      \node[shape=rectangle,draw,inner sep=1.5pt] (yi1') at (1,0) {};
      \draw[decorate,decoration={snake,amplitude=2pt,segment
        length=1cm},color=dark-red,markdir] (yi1) to node[left,cdlabel]{\pi(\beta_i)} (yi1');
      \node[shape=circle,fill,inner sep=1.5pt] (yi) at (2,1) {};
      \node[shape=rectangle,draw,inner sep=1.5pt] (yi') at (2,0) {};
      \node at (1.5,0.5) {$\sim$};
      \draw[decorate,decoration={snake,amplitude=2pt,segment length=1cm},
        color=dark-red,markdir] (yi) to node[right,cdlabel]{\phi(\beta_{i+1})} (yi');
      \draw[decorate,decoration={snake,segment length=0.8cm},
        markdir] (yi1) to node[above,cdlabel]{\alpha_i} (yi);

      \draw[decorate,decoration={snake,amplitude=1pt,segment length=0.8cm},
        markdir] (yi1') to node[below,cdlabel]{\alpha'_i} (yi');
      
      \node[shape=circle,fill,inner sep=1.5pt,label=right:{$x_{i+1}$}] (xi) at (3,3) {};
      \node[shape=rectangle,draw,inner sep=1.5pt,label=right:{$x_{i+1}'$}] (xi') at (3,2) {};
      \draw[decorate,decoration={snake,amplitude=2pt,segment length=1cm},
        color=dark-red,markdir] (xi) to node[right,cdlabel]{\beta_{i+1}} (xi');

      \draw[bend left=20,->] (0.25,2.2) to node[above right,cdlabel]{\pi}
        (0.85,1.3);
      \draw[bend right=20,->] (2.75,2.2) to node[above left,cdlabel]{\phi}
        (2.15,1.3);
    \end{tikzpicture}
  \]
  \caption{In homotopic pseudo-orbits, $\alpha_i * \phi(\beta_{i+1})
    \sim \pi(\beta_i) \cdot \alpha_i'$.}
  \label{fig:homotopic-p-orbits}
\end{figure}

The following result appears in \cite[\S 7, 8]{ishii:smillie:homotopy-shadowing}. 
\begin{citethm}[Homotopy shadowing]
\label{thm:homotopy_shadowing}
Suppose $\pi, \phi\co X_1 \rightrightarrows X_0$ is forward-expanding.
Then every homotopy pseudo-orbit is homotopic to an orbit, and this
orbit is unique.
\end{citethm}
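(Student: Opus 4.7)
The plan is to construct the shadowing orbit by descending induction, using the contraction of~$\phi$ to drive convergence, and to use the same contraction once more to force uniqueness.

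Fix a homotopy pseudo-orbit $(x_i,\alpha_i)$ with $\ell(\alpha_i)\le C$ for all~$i$. For each $n\ge 1$, I would build, by descending induction on $i$ from $i=n$ down to $i=1$, points $y_i^{(n)}\in X_1$ and paths $\beta_i^{(n)}\co I\to X_1$ from $x_i$ to $y_i^{(n)}$ as follows. Set $y_n^{(n)}\coloneqq x_n$ with $\beta_n^{(n)}$ constant. Given $y_{i+1}^{(n)}$ and $\beta_{i+1}^{(n)}$, let $\beta_i^{(n)}$ be the unique lift under the covering $\pi$, starting at~$x_i$, of the concatenation $\alpha_i*(\phi\circ\beta_{i+1}^{(n)})$; take $y_i^{(n)}$ to be its endpoint. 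By construction $\pi(y_i^{(n)})=\phi(y_{i+1}^{(n)})$, so $(y_1^{(n)},\dots,y_n^{(n)})\in X_n$. Since $\pi$ is a local isometry and $\phi$ is $\lambda$-Lipschitz,
\[
\ell(\beta_i^{(n)})\le C+\lambda\,\ell(\beta_{i+1}^{(n)}),
\]
which telescopes to the uniform bound $\ell(\beta_i^{(n)})\le C/(1-\lambda)$.

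The main step is convergence: for fixed~$i$ the sequence $(y_i^{(n)})_{n\ge i}$ should be Cauchy. The lifts $\beta_i^{(n)}$ and $\beta_i^{(n+1)}$ agree on the initial portion determined by~$\alpha_i$, and thereafter lift from a common intermediate point the two paths $\phi\circ\beta_{i+1}^{(n)}$ and $\phi\circ\beta_{i+1}^{(n+1)}$, whose $X_0$-endpoints differ by at most $\lambda\,d_1(y_{i+1}^{(n)},y_{i+1}^{(n+1)})$. Comparing lifts by subdividing into pieces short enough that Lemma~\ref{lemma:evenly_covered} applies, I expect to obtain
\[
d_1(y_i^{(n)},y_i^{(n+1)})\le \lambda\,d_1(y_{i+1}^{(n)},y_{i+1}^{(n+1)}),
\]
and descending induction, beginning with $d_1(y_n^{(n)},y_n^{(n+1)})\le C/(1-\lambda)$, then yields $d_1(y_i^{(n)},y_i^{(n+1)})\le C\lambda^{n-i}/(1-\lambda)$. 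Let $y_i\coloneqq\lim_n y_i^{(n)}$ and $\beta_i\coloneqq\lim_n \beta_i^{(n)}$; continuity gives $\pi(y_i)=\phi(y_{i+1})$, while the equality $\pi\circ\beta_i^{(n)}=\alpha_i*(\phi\circ\beta_{i+1}^{(n)})$ passes to the limit and delivers precisely the homotopy required to identify the orbit $(y_i)$ with the original pseudo-orbit.

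For uniqueness, suppose $(y_i)$ and $(y_i')$ are two orbits both homotopic to the pseudo-orbit via paths $\beta_i,\beta_i'$ bounded in length. The paths $\gamma_i\coloneqq\overline{\beta_i}*\beta_i'$ then run from $y_i$ to $y_i'$ with $\pi\circ\gamma_i\sim\phi\circ\gamma_{i+1}$ rel endpoints. Replacing each $\gamma_i$ by a geodesic in its homotopy class preserves this relation while maintaining a uniform length bound, and the combination of the local isometry~$\pi$ with the contraction of~$\phi$ then forces $d_1(y_i,y_i')\le \lambda\,d_1(y_{i+1},y_{i+1}')$; iteration against the uniform bound yields $d_1(y_i,y_i')=0$ for all~$i$. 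The main obstacle is the recursive inequality driving the convergence in the third paragraph: one must show that lifts under~$\pi$ of two paths in~$X_0$ sharing a common initial segment, whose tails have $X_0$-endpoints at distance~$\delta$, end in~$X_1$ at distance close to~$\delta$. This requires ensuring that the two lifts remain in the same sheet of the cover, handled by decomposing the tails into pieces short enough that Lemma~\ref{lemma:evenly_covered} gives isometric lifts and telescoping, with the contraction of~$\phi$ used once more to keep the discrepancies small at each step.
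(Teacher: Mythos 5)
Your construction itself is sound, and in fact it is the paper's own argument (the Ishii--Smillie iteration copied in the proof of Theorem~\ref{thm:Ashadowing}) read along the diagonal: your $y_i^{(n)}$ is exactly the point obtained after $n-i$ steps of their ``lift $\alpha$ by $\pi$, push by $\phi$'' iteration, so the two proofs differ only in bookkeeping. The uniqueness half is also essentially right, provided you run the contraction on the lengths of chosen representatives of the homotopy classes of the $\gamma_i$ (e.g.\ near-geodesics in each class), not on the distances $d_1(y_i,y_i')$ themselves: from $\pi\circ\gamma_i\sim\phi\circ\gamma_{i+1}$ rel endpoints you get a path from $y_i$ to $y_i'$ of length at most $\lambda$ times the class-length at level $i+1$, and iterating against the uniform bound gives $d_1(y_i,y_i')=0$.

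The genuine gap is in the key Cauchy estimate. The inequality $d_1(y_i^{(n)},y_i^{(n+1)})\le \lambda\, d_1(y_{i+1}^{(n)},y_{i+1}^{(n+1)})$ does not follow from the mechanism you propose: after the common initial segment lifting $\alpha_i$, you are lifting two \emph{different} paths, $\phi\circ\beta_{i+1}^{(n)}$ and $\phi\circ\beta_{i+1}^{(n+1)}$, from a common point, and knowing only that their endpoints downstairs are within $\lambda\, d_1(y_{i+1}^{(n)},y_{i+1}^{(n+1)})$ gives no control on the lifted endpoints --- the two lifts can terminate in different sheets (e.g.\ if one tail winds around an essential loop the other does not), and these tails have length up to $\lambda C/(1-\lambda)$, well above the scale at which Lemma~\ref{lemma:evenly_covered} applies; that lemma compares a path with its lift, not lifts of two distinct paths. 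Moreover the distance recursion is the wrong induction variable: even when $d_1(y_{i+1}^{(n)},y_{i+1}^{(n+1)})$ is tiny the lifted endpoints need not be close unless you know the two paths differ by a \emph{short} path rel endpoints. The repair is available inside your own construction: show by the same descending induction that $\beta_i^{(n+1)}$ is literally $\beta_i^{(n)}$ followed by a tail $\tau_i^{(n)}$, where $\tau_n^{(n)}$ is the lift of $\alpha_n$ (length $\le C$) and $\tau_i^{(n)}$ is the lift, from $y_i^{(n)}$, of $\phi\circ\tau_{i+1}^{(n)}$, so $\ell(\tau_i^{(n)})\le C\lambda^{n-i}$. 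This gives $d_1(y_i^{(n)},y_i^{(n+1)})\le C\lambda^{n-i}$ directly, makes the $\beta_i^{(n)}$ converge (after reparameterization, as in the Uniform Limit Theorem step of the paper's proof) to the homotopy identifying the limit orbit with the pseudo-orbit, and is exactly the decay estimate the paper extracts as trace-size bounds $K\lambda^n$ on the iterated homotopies.
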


We will need a generalization of the homotopy shadowing theorem to a
setting where the orbit depends on a parameter $a$ lying in a space
$A$.  We develop this notion in close parallel to the above notions
and Ishii and Smillie.

\begin{definition} \label{def:homotopy-pseudo-orbit}
  For $\pi,\phi \co X_1 \rightrightarrows X_0$ a virtual endomorphism
  between locally simply-connected length spaces and $A$ an auxiliary
  space, a \emph{family $(x,\alpha)$ of homotopy pseudo-orbits parameterized by~$A$ of trace size at most~$K$}
  is a sequence of maps $x\coloneqq (x_i \co A \to X_1)_{i \geq 1}$ and a sequence of homotopies
  $\alpha\coloneqq (\alpha_i \co I \times A \to X_0)_{i \geq 1}$, so that 
  \begin{enumerate}
  \item $\alpha_i$ is a
  homotopy from $\pi \circ x_i$ to $\phi \circ x_{i+1}$, in the sense that
  $\alpha_i(0,\cdot)=\pi \circ x_i$ and $\alpha_i(1, \cdot)=\phi \circ
  x_{i+1}$, and
  \item there exists a constant $K<\infty$ so that for each $i \geq
    1$, we have
    \[\tracesize(\alpha_i) \leq K.\]
  \end{enumerate}
   See Figure~\ref{fig:homotopy-pseudo-orbit}.
\end{definition}

\begin{definition}
  Two families $(x, \alpha)$ and $(x', \alpha')$ of homotopy
  pseudo-orbits parameterized by $A$ are \emph{homotopic} if there
  exists a constant $B$ and a sequence $\beta=(\beta_i)_{i \geq 1}$ of
  homotopies $\beta_i\co I \times A \to X_1$ such that
  \begin{enumerate}
  \item $\beta_i$ is a homotopy from $x_i$ to $x_i'$, in the sense
    that $\beta_i(0,\cdot)=x_i$ and $\beta_i(1,\cdot)=x_i'$;
  \item for each $i \geq 1$, we have $\tracesize(\beta_i) \leq B$; and
  \item for each $i \geq 1$, the map
    $\alpha_i * (\phi\circ \beta_{i+1})$ is homotopic to
    $(\pi\circ \beta_i) * \alpha_i'$ in the following sense. There is
    a map $H_i\co I \times I \times A \to X_0$ such that
    $H_i(0, \cdot, \cdot)=\alpha_i * (\phi\circ \beta_{i+1})$,
    $H_i(1, \cdot, \cdot)=(\pi\circ \beta_i) * \alpha_i'$, and for all
    $0 \leq t \leq 1$, $H_i(t,0,\cdot)=\pi \circ x_i$ and
    $H_i(t,1,\cdot)=\phi\circ x_{i+1}'$.
  \end{enumerate}
  \end{definition}
  These conditions in (3) guarantee that the homotopic squares appearing in
  Figure \ref{fig:homotopic-p-orbits} remain squares throughout the
  interpolating maps $H_i(\cdot,\cdot,a)$. We do not require a size
  bound on the homotopies~$H_i$ in condition (3). 
  
  Here is our generalization of Ishii-Smillie's homotopy shadowing result. 

\begin{figure}
  \[
    \begin{tikzpicture}[x=1cm,y=1cm]
      \path[use as bounding box] (-1.6,-1.7) rectangle (8.1,1.1);
      \node (X00) at (0,0) {$X_0$};
      \node (X01) at (2,0) {$X_0$};
      \node (X02) at (4,0) {$X_0$};
      \node (X03) at (6,0) {$X_0$};
      \node (X04) at (8,0) {$\cdots$};

      \node (X10) at (1,1) {$X_1$};
      \node (X11) at (3,1) {$X_1$};
      \node (X12) at (5,1) {$X_1$};
      \node (X13) at (7,1) {$X_1$};
      \node at (8,1) {$\cdots$};
      
      \node(A) at (-1.5,-1.5) {$A$};
      
      \draw[->,color=dark-green] (X10) to node[above left=-1mm,cdlabel]{\phi} (X00);
      \draw[->,color=dark-green] (X11) to node[above left=-1mm,cdlabel]{\phi} (X01);
      \draw[->,color=dark-green] (X12) to node[above left=-1mm,cdlabel]{\phi} (X02);
      \draw[->,color=dark-green] (X13) to node[above left=-1mm,cdlabel]{\phi} (X03);

      \draw[->,color=dark-red] (X10) to node[above right=-1mm,cdlabel]{\pi} (X01);
      \draw[->,color=dark-red] (X11) to node[above right=-1mm,cdlabel]{\pi} (X02);
      \draw[->,color=dark-red] (X12) to node[above right=-1mm,cdlabel]{\pi} (X03);
      \draw[->,color=dark-red] (X13) to node[above right=-1mm,cdlabel]{\pi} (X04);

      \draw[->] (A) to[out=10,in=270] node[left,cdlabel,pos=0.65]{x_1} (X10);
      \draw[->] (A) to[out=5,in=270] node[right,cdlabel,pos=0.75]{x_2} (X11);
      \draw[->] (A) to[out=0,in=270] node[right,cdlabel,pos=0.835]{x_3} (X12);
      \draw[->] (A) to[out=-5,in=270] node[right,cdlabel,pos=0.866]{x_4} (X13);
      
      \node at (1.6,-0.55) {$\sim_K$};
      \node at (3.75,-0.55) {$\sim_K$};
      \node at (5.8,-0.55) {$\sim_K$};
    \end{tikzpicture}
  \]
  \caption{A homotopy pseudo-orbit of maps of $A$}
  \label{fig:homotopy-pseudo-orbit}
\end{figure}

\begin{theorem}
  \label{thm:Ashadowing}
  Suppose $\pi, \phi\co X_1 \rightrightarrows X_0$ is a
  $\lambda$-backward-contracting virtual endomorphism of CW complexes
  with induced dynamics $f \co \cJ \to \cJ$ on its limit space.
  Let $(x, \alpha)=(x_i, \alpha_i)_{i\geq 1}$ be a
  family of homotopy pseudo-orbits
parameterized by $A$ of trace size at most $K$.
\begin{enumerate}
\item There exists 
\begin{enumerate}
\item a map $x^\infty \co A \to \cJ$, i.e., a family of orbits of~$f$
  parameterized by $A$, and 
\item a homotopy $\beta=(\beta_i^\infty)_{i \geq 1}$ from $(x,\alpha)$ to $x^\infty$.
\end{enumerate}
\item We have $\tracesize(\beta^\infty_i) \le K'\coloneqq K/(1-\lambda)$ for all $i \geq 1$.  
\end{enumerate}
Furthermore, $x^\infty$ is unique among all such maps with uniformly
bounded $\tracesize(\beta_i^\infty)$.
\end{theorem}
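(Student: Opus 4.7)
The plan is to adapt the classical homotopy shadowing argument of \cite[\S\S 7--8]{ishii:smillie:homotopy-shadowing} to the family setting by iterative refinement. The key inputs are: $\pi\co X_1 \to X_0$ is a covering map, so homotopies lift uniquely and uniformly in~$A$; and $\phi$ is a $\lambda$-contraction, so errors shrink geometrically. At each stage $k \ge 0$, we maintain a family $(x^{(k)}_i, \alpha^{(k)}_i)_{i \ge 1}$ of homotopy pseudo-orbits parameterized by~$A$ with $\tracesize(\alpha^{(k)}_i) \le \lambda^k K$, starting from $(x^{(0)}_i, \alpha^{(0)}_i) \coloneqq (x_i, \alpha_i)$.

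For the inductive step, since $\pi$ is a covering map and $\pi\circ x^{(k)}_i = \alpha^{(k)}_i(0,\cdot)$, the homotopy $\alpha^{(k)}_i$ lifts uniquely to $\wt\alpha^{(k)}_i\co I \times A \to X_1$ with $\wt\alpha^{(k)}_i(0,\cdot) = x^{(k)}_i$. Set $x^{(k+1)}_i \coloneqq \wt\alpha^{(k)}_i(1,\cdot)$ and $\alpha^{(k+1)}_i \coloneqq \phi\circ\wt\alpha^{(k)}_{i+1}$. Direct computation checks the new pseudo-orbit endpoint conditions, and invariance of size under covering lifts together with the $\lambda$-Lipschitz property of~$\phi$ (Lemma~\ref{lem:size}) gives $\tracesize(\alpha^{(k+1)}_i) \le \lambda\cdot\tracesize(\alpha^{(k)}_{i+1}) \le \lambda^{k+1}K$.

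Since size dominates distance in the base space, $d_1(x^{(k)}_i(a), x^{(k+1)}_i(a)) \le \lambda^k K$ uniformly in $a$, so $(x^{(k)}_i)_k$ is Cauchy and converges uniformly to a continuous map $x^\infty_i\co A \to X_1$. The vanishing trace size of $\alpha^{(k)}_i$ forces $\pi\circ x^\infty_i = \phi\circ x^\infty_{i+1}$ in the limit, so $a \mapsto (x^\infty_1(a), x^\infty_2(a),\dots)$ defines a continuous map $x^\infty\co A \to \cJ$. Taking $\beta^\infty_i$ to be the infinite concatenation $\wt\alpha^{(0)}_i * \wt\alpha^{(1)}_i * \cdots$, suitably reparameterized so that $\wt\alpha^{(k)}_i$ occupies the interval $[1-2^{-k}, 1-2^{-k-1}]$, sub-additivity of size yields $\tracesize(\beta^\infty_i) \le \sum_{k \ge 0}\lambda^k K = K/(1-\lambda)$.

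The commutativity condition (3) in the definition of homotopic pseudo-orbits then follows because both $\alpha_i * (\phi\circ\beta^\infty_{i+1})$ and $\pi\circ\beta^\infty_i$ unfold, via the relations $\pi\circ\wt\alpha^{(k)}_i = \alpha^{(k)}_i$ and $\phi\circ\wt\alpha^{(k)}_{i+1} = \alpha^{(k+1)}_i$, to the same concatenation $\alpha^{(0)}_i * \alpha^{(1)}_i * \cdots$ in $X_0$. The chief technical obstacle lies in bookkeeping: the successive reparameterizations of the infinite concatenations must be arranged consistently so that these identifications hold on the nose and so that continuity at the endpoint $t=1$ is inherited from the geometric trace size estimates, after which the interpolating homotopy $H_i$ may be taken to be constant.
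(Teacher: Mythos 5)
Your proposal is correct and follows essentially the same route as the paper: the paper likewise iterates the lift-by-$\pi$, compose-with-$\phi$ refinement from Ishii--Smillie, uses the $\lambda$-contraction to get trace sizes $\lambda^k K$, and concatenates the lifted homotopies (extending continuously to $t=1$ via uniform convergence) to obtain $\beta_i^\infty$ and $x^\infty$ with $\tracesize(\beta_i^\infty)\le K/(1-\lambda)$. Your explicit verification of condition (3) by unfolding both sides to the same infinite concatenation is a minor elaboration of what the paper dismisses as ``by construction,'' but it is the same argument.
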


To make sense of part 1(b) of the statement, we regard an
orbit as a family of homotopy pseudo-orbits parameterized by~$A$,
namely $(x^\infty_i, \alpha^\infty_i)_{i \geq 1}$, where each
$\alpha^\infty_i$ is a constant homotopy.

\begin{proof}
  This is a straightforward modification of the proof of \cite[Theorem
  7.1]{ishii:smillie:homotopy-shadowing}. Since we will need the
  notation later, we shamelessly copy their proof, more or less word
  for word,
  with slight adjustments to indexing.
  We let the pseudo-orbit $x$ now depend on a parameter $a\in A$,
  so that $x\coloneqq(x_i\co A \to X_1)_{i \geq 1}$,
  and we denote the
  collection of homotopies by
  $\alpha\coloneqq(\alpha_i\co I \times A \to X_0)_{i \geq 1}$. To ease
  notation, we think of our homotopies $\alpha_i$ and $\beta_j$ below as
  paths in the space of continuous maps from $A$ to $X_0$ and $X_1$,
  respectively.

  We inductively define a sequence of families of homotopy
  pseudo-orbits with successively smaller traces as follows. Set
  $x_i^0\coloneqq x_i$ and
  $\alpha_i^0\coloneqq\alpha_i$. Suppose that a family of homotopy
  pseudo-orbits $(x_i^n,\alpha_i^n)_{i \geq 1}$ is defined.
  Then, since $\pi$ is a covering and
  $\alpha_i^n(0,\cdot)=\pi\circ x_i^n$, there exists a unique lift
  $\beta_i^n\co I \times A \to X^1$ of $\alpha_i^n$ by $\pi$ so that
  $\beta_i^n(0,\cdot)=x_i^n$, by the homotopy lifting property
  of covering maps. Put
  $\alpha_i^{n+1}\coloneqq\phi\circ \beta_{i+1}^n$ and
  $x_i^{n+1}\coloneqq\beta_i^n(1)$. Then, we have
  $\pi\circ x_i^{n+1}=\pi\circ\beta_i^n(1)=\alpha_i^n(1)=\phi(x_{i+1}^n)=\phi(\beta_{i+1}^n(0))=\alpha_i^{n+1}(0)$
  and $\phi\circ x_{i+1}^{n+1}=\phi\circ \beta_{i+1}^n(1)=\alpha_i^{n+1}(1)$.
  This means that, once we verify a trace size bound,
  $((x_i^{n+1}), (\alpha_i)^{n+1}))_{n+1}\eqqcolon
  (x^{n+1}, \alpha^{n+1})$ is a family of homotopy pseudo-orbits.

Contraction implies that the length of the traces of the homotopies
$\alpha_i^n$ are bounded by $K\lambda^n$ for $n \geq 1$; this is
\cite[Lemma 7.2]{ishii:smillie:homotopy-shadowing}. Concatenating the
homotopies $\alpha_i^n$ for $n=1, 2, \ldots$ and scaling the time
parameters in the homotopy to consecutive intervals in $[0,1)$  as in
their proof, we obtain, for $i \ge 1$, a sequence of maps
$\alpha^\infty_i\co [0,1)\times A \to X_0$ of trace size
$K' \coloneqq K/(1-\lambda)$.

To get a map defined on $[0,1]\times A$, we need to say a little more.
First, as in their proof, for fixed $a \in A$, the path
$\alpha^\infty_i(t,a)$ is Cauchy as $t \to 1$ in the sense that, for
any $\epsilon > 0$, there is $\delta < 1$ so that for
$t_0,t_1 > \delta$, we have
$d_{X_0}(\alpha^\infty_i(t_0,a),\alpha^\infty_i(t_1,a)) < \epsilon$.
Furthermore these paths are uniformly Cauchy as $a$ varies. There is
therefore a well-defined limit $\alpha_i^\infty(1,a)$, and the
continuous functions $\alpha_i^\infty(t,\cdot) \co A \to X_0$ converge
uniformly to $\alpha_i^\infty(1,\cdot)$. By the Uniform Limit Theorem,
the limiting function $\alpha_i^\infty$ restricted to $\{1\}\times A$
is therefore continuous. The standard proof of
the Uniform Limit Theorem shows that, in fact, we get a continuous function
$\alpha^\infty_i\co [0,1]\times A \to X_0$, as desired.

We also have sequences of maps $\beta_i^n\co I \times A \to X_1$ and
concatenating the $\beta_i^n$'s for $n=1, 2, 3, \ldots$ and extending
by the Uniform Limit Theorem yields $\beta_i^\infty$, a lift of
$\alpha_i^\infty$ under $\pi$. We put
$x^\infty_i\coloneqq\beta_i^\infty(\cdot, 1)$ and note that $x^\infty$
defines a family of orbits, i.e., a map $x^\infty\co A \to \cJ$; the
associated homotopies in Def.~\ref{def:homotopy-pseudo-orbit} are constant. By
construction, $\beta^\infty$ gives a homotopy between the family of
homotopy pseudo-orbits $(x,\alpha)$ and the family of
orbits~$x^\infty$. Since $\pi$ is an isometry, the trace sizes of the
$\beta_i^\infty$ are bounded by $K'$, as
required.
\end{proof}

In our later applications, it will be useful to restate
Theorem~\ref{thm:Ashadowing} in terms of maps into
the~$X_n$.  For its proof we will need the following lemma,
illustrated in Figure~\ref{fig:homotopy-lift}.

\begin{lemma}\label{lem:homotopy-lift} 
  Suppose $(x, \alpha)$ is a family of homotopy pseudo-orbits
  parameterized by $A$, with
  trace size at most~$K$. Then for each $n \geq 1$ there are unique lifted
  maps
  $\wt x_n \co A \to X_n$ and homotopies $\wt \alpha_n \co I \times A \to X_n$
  so that $\pi^n_1 \circ \wt x_n = x_n$, $\pi^n_0 \circ \wt \alpha_n = \alpha_n$,
  and $\wt \alpha_n$ is a homotopy from $\wt x_n$ to $\phi^{n+1}_n \circ \wt x_{n+1}$ of trace size at most~$K$.
\end{lemma}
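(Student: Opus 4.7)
The plan is to proceed by induction on $n \ge 1$, alternating two tools: the homotopy lifting property of the covering $\pi^n_0 \co X_n \to X_0$, and the pullback characterization of $X_{n+1}$ in Lemma~\ref{lem:pullback}. Concretely, I will produce $\wt x_1, \wt \alpha_1, \wt x_2, \wt \alpha_2, \dots$ in order, at each step using $\wt x_n$ and $\wt \alpha_n$ to feed the construction of $\wt x_{n+1}$.

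For the base case $n=1$, set $\wt x_1 \coloneqq x_1$, which makes $\pi^1_1 \circ \wt x_1 = x_1$ tautologically, since $\pi^1_1$ is the identity. Because $\alpha_1(0,\cdot) = \pi \circ x_1 = \pi^1_0 \circ \wt x_1$, the homotopy lifting property of the covering $\pi = \pi^1_0$ yields a unique lift $\wt \alpha_1 \co I \times A \to X_1$ of $\alpha_1$ with $\wt \alpha_1(0,\cdot) = \wt x_1$.

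For the inductive step, assume $\wt x_n$ and $\wt \alpha_n$ have been built with the stated properties. I construct $\wt x_{n+1}$ by applying Lemma~\ref{lem:pullback} to the pair $F \coloneqq x_{n+1} \co A \to X_1$ and $G \coloneqq \wt \alpha_n(1,\cdot) \co A \to X_n$. The requisite compatibility $\phi \circ F = \pi^n_0 \circ G$ reads $\phi \circ x_{n+1} = \pi^n_0 \circ \wt \alpha_n(1,\cdot) = \alpha_n(1,\cdot)$, which is precisely the pseudo-orbit condition. The lemma delivers a unique $\wt x_{n+1} \co A \to X_{n+1}$ with $\pi^{n+1}_1 \circ \wt x_{n+1} = x_{n+1}$ and $\phi^{n+1}_n \circ \wt x_{n+1} = \wt \alpha_n(1,\cdot)$; the second identity is exactly what is needed for $\wt \alpha_n$ to be a homotopy from $\wt x_n$ to $\phi^{n+1}_n \circ \wt x_{n+1}$. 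Finally, I lift $\alpha_{n+1}$ along the covering $\pi^{n+1}_0$ with starting point $\wt x_{n+1}$; this is possible because $\pi^{n+1}_0 \circ \wt x_{n+1} = \pi \circ \pi^{n+1}_1 \circ \wt x_{n+1} = \pi \circ x_{n+1} = \alpha_{n+1}(0,\cdot)$.

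The trace size bound comes for free: the pullback length metric on $X_n$ makes $\pi^n_0$ a local isometry of length spaces, so by the invariance-under-lifts clause of Lemma~\ref{lem:size}, each trace of $\wt \alpha_n$ has the same size as the corresponding trace of $\alpha_n$, giving $\tracesize(\wt \alpha_n) = \tracesize(\alpha_n) \le K$. Uniqueness at each stage comes from the unique lifting of homotopies along covering maps together with the uniqueness clause of Lemma~\ref{lem:pullback}. I do not anticipate a genuine obstacle; the construction is rigid given the pullback structure, and the only real bookkeeping task is keeping straight that $\pi^n_0 = \pi \circ \pi^n_1$ so that the compatibility diagrams at each level line up with the pseudo-orbit relations $\alpha_i(0,\cdot) = \pi \circ x_i$ and $\alpha_i(1,\cdot) = \phi \circ x_{i+1}$.
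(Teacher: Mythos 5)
Your proposal is correct and follows essentially the same route as the paper's proof: induction using unique homotopy lifting along the covering $\pi^n_0$ to produce $\wt\alpha_n$ (which preserves trace size) and the pullback property of Lemma~\ref{lem:pullback} applied to $x_{n+1}$ and $\wt\alpha_n(1,\cdot)$ to produce $\wt x_{n+1}$, with uniqueness from both lemmas. The only difference is bookkeeping in how the inductive step is phrased, not in substance.
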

\begin{figure}
  \[
    \begin{tikzpicture}[x=1.3cm,y=1.3cm]
      \path[use as bounding box] (-1.6,-1.8) rectangle (8.2,4.6);
      \node (X00) at (0,0) {$X_0$};
      \node (X01) at (2,0) {$X_0$};
      \node (X02) at (4,0) {$X_0$};
      \node (X03) at (6,0) {$X_0$};
      \node (X04) at (8,0) {$X_0$};

      \node (X10) at (1,1) {$X_1$};
      \node (X11) at (3,1) {$X_1$};
      \node (X12) at (5,1) {$X_1$};
      \node (X13) at (7,1) {$X_1$};

      \node (X20) at (2,2) {$X_2$};
      \node (X21) at (4,2) {$X_2$};
      \node (X22) at (6,2) {$X_2$};

      \node (X30) at (3,3) {$X_3$};
      \node (X31) at (5,3) {$X_3$};

      \node (X40) at (4,4) {$X_4$};
      
      \node(A) at (-1.5,-1.5) {$A$};
      
      \draw[->,color=dark-green] (X10) to node[above left=-1mm,cdlabel]{\phi} (X00);
      \draw[->,color=dark-green] (X11) to node[above left=-1mm,cdlabel]{\phi} (X01);
      \draw[->,color=dark-green] (X12) to node[above left=-1mm,cdlabel]{\phi} (X02);
      \draw[->,color=dark-green] (X13) to node[above left=-1mm,cdlabel]{\phi} (X03);

      \draw[->,color=dark-green] (X20) to node[above left=-1mm,cdlabel]{\phi^2_1} (X10);
      \draw[->,color=dark-green] (X21) to node[above left=-1mm,cdlabel]{\phi^2_1} (X11);
      \draw[->,color=dark-green] (X22) to node[above left=-1mm,cdlabel]{\phi^2_1} (X12);

      \draw[->,color=dark-green] (X30) to node[above left=-1mm,cdlabel]{\phi^3_2} (X20);
      \draw[->,color=dark-green] (X31) to node[above left=-1mm,cdlabel]{\phi^3_2} (X21);

      \draw[->,color=dark-green] (X40) to node[above left=-1mm,cdlabel]{\phi^3_2} (X30);

      \draw[->,color=dark-red] (X10) to node[above right=-1mm,cdlabel]{\pi} (X01);
      \draw[->,color=dark-red] (X11) to node[above right=-1mm,cdlabel]{\pi} (X02);
      \draw[->,color=dark-red] (X12) to node[above right=-1mm,cdlabel]{\pi} (X03);
      \draw[->,color=dark-red] (X13) to node[above right=-1mm,cdlabel]{\pi} (X04);

u      \draw[->,color=dark-red] (X20) to node[above right=-1mm,cdlabel]{\pi^2_1} (X11);
      \draw[->,color=dark-red] (X21) to node[above right=-1mm,cdlabel]{\pi^2_1} (X12);
      \draw[->,color=dark-red] (X22) to node[above right=-1mm,cdlabel]{\pi^2_1} (X13);

      \draw[->,color=dark-red] (X30) to node[above right=-1mm,cdlabel]{\pi^3_2} (X21);
      \draw[->,color=dark-red] (X31) to node[above right=-1mm,cdlabel]{\pi^3_2} (X22);

      \draw[->,color=dark-red] (X40) to node[above right=-1mm,cdlabel]{\pi^4_3} (X31);

      \draw[->] (A) to[out=10,in=270] node[right,cdlabel,pos=0.65]{x_1=\wt x_1} (X10);
      \draw[->] (A) to[out=5,in=270] node[right,cdlabel,pos=0.75]{x_2} (X11);
      \draw[->] (A) to[out=0,in=270] node[right,cdlabel,pos=0.835]{x_3} (X12);
      \draw[->] (A) to[out=-5,in=270] node[right,cdlabel,pos=0.866]{x_4} (X13);

      \draw [->] (A) to[out=80,in=160] node[above,cdlabel,pos=0.8]{\wt x_2} (X20);
      \draw [->] (A) to[out=85,in=160] node[above,cdlabel,pos=0.83]{\wt x_3} (X30);
      \draw [->] (A) to[out=90,in=160] node[above,cdlabel,pos=0.86]{\wt x_4} (X40);
      
      \node at (1.8,-0.5) {$\sim_K$};
      \node at (3.6,-0.5) {$\sim_K$};
      \node at (5.6,-0.5) {$\sim_K$};

      \node at (0.8,1.7) {$\sim_K$};
      \node at (1.8,2.7) {$\sim_K$};
      \node at (2.8,3.7) {$\sim_K$};
    \end{tikzpicture}
  \]
  \caption{Lifting a family of homotopy pseudo-orbits parameterized by $A$ to covering spaces}
  \label{fig:homotopy-lift}
\end{figure}
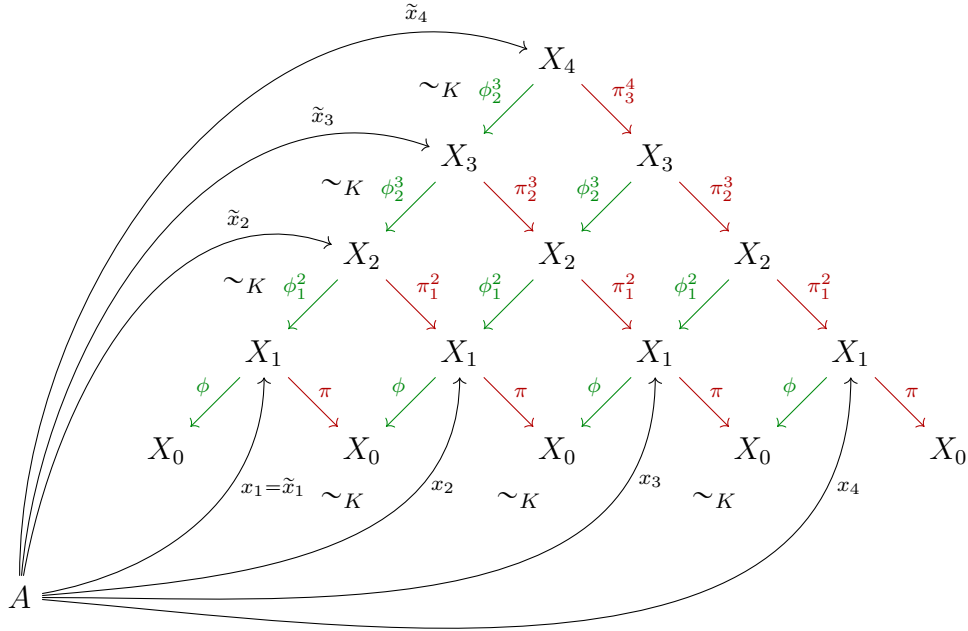

\begin{proof}
  We proceed by induction on~$n$, first constructing $\wt \alpha_n$ and
  then $\wt x_{n+1}$. Recall that $\alpha_n(0)=\pi \circ x_n$ and
  $\alpha_n(1)=\phi \circ x_{n+1}$. We start by setting
  $\wt x_1 \coloneqq x_1$. If we have constructed $\wt x_n$, then by
  unique homotopy lifting applied to the covering map $\pi^n_0$, there is
  a unique function $\wt \alpha_n$ that is a lift of~$\alpha_n$ with
  starting point $\wt x_n$; since $\wt \alpha_n$ is a lift, the trace
  size is still~$K$, as desired. We next construct $\wt x_{n+1}$. Let
  $\wt x_{n+1,n}\co A \to X_n$ be $\wt \alpha_n(1)$.
  Then, by the defining property of~$\alpha_n$, we have
  $\pi^n_0 \circ \wt x_{n+1,n} = \phi \circ x_{n+1}$.
  Since we have a pullback square (Lemma~\ref{lem:pullback}), there
  is a unique map
  $\wt x_{n+1} \co A \to X_{n+1}$ compatible with these two
  projections.
\end{proof}

\begin{taggedthm}{G\/$'$}
\label{thm:Ashadowing-cover}
  Suppose $\pi, \phi\co X_1 \rightrightarrows X_0$ is a
  $\lambda$-backward-contracting virtual endomorphism of CW complexes
  with limit space~$\cJ$, and $(x,\alpha)=(x_i, \alpha_i)_{i\geq 1}$ is a
  family of homotopy pseudo-orbits parameterized by $A$ of trace size at most~$K$.
Let $x^\infty$ and~$\beta$ be the families of orbits parameterized by $A$ and
homotopies given by Theorem~\ref{thm:Ashadowing}, and let
$\wt x\coloneqq (\wt x_n\co A \to X_n)_{n \geq 1}$ and
$\wt x^\infty=(\wt x^\infty _n\co A \to X_n)_{n \geq 1}$ be the families of maps
given by Lemma~\ref{lem:homotopy-lift} from $x$ and~$x^\infty$. Then for each $n \geq 0$, we
have $\wt x^\infty_n = \phi^{n+1}_n \circ \wt x^\infty_{n+1}$ and
$\wt x_n \sim_{K'} \wt x^\infty_n$, where $K'\coloneqq K/(1-\lambda)$.
\end{taggedthm}

\begin{proof}
  Immediate from Theorem~\ref{thm:Ashadowing} and
  Lemma~\ref{lem:homotopy-lift}.
\end{proof}

\subsection{Proof of Theorem \ref{thm:lc}\label{subsecn:lc}}

 An inverse limit of connected spaces is connected, so $\cJ$ is connected.  

  We now show that
  $f$ is a covering map. Let
  $x^\infty=(x_0, x_1, \ldots) \in X_0 \times X_1 \times \ldots$
  represent an element of $\cJ$. A finite $CW$ complex is locally
  contractible \cite[Prop.~A.4]{Hatcher02:AlgebraicTopology}, so there is a
  connected neighborhood $U$ of $x_0\in X_0$ which is contained in a
  contractible set. The set $U$ is then evenly covered by $\pi$; let
  $\wtU_i$, for $i=1, \ldots, \deg(\pi)$, be the components of its
  preimages in~$X_1$.
  The induced map $f\colon \cJ \to \cJ$ is the
  pullback of the covering map~$\pi$ under $\phi^\infty_0\co \cJ \to X_0$.
  This  implies that the neighborhood
  $(\phi_0^\infty)^{-1}(U)$ of $x^\infty$ is evenly covered by the
  neighborhoods $(\phi^\infty_1)^{-1}(\wtU_i)$ under~$f$.

  We now show that $\cJ$ is locally path-connected and hence locally
  connected. In fact we will
  show that $\cJ$ is \emph{weakly locally path-connected} at every
  point: for all $x \in \cJ$ and every open neighborhood $U \ni x$,
  there is a smaller open neighborhood $x \in V \subset U$ so that any
  two points in~$V$ can be connected by a path in~$U$.
  This is enough, since weak
  local path-connectivity implies that path components of open sets
  are open.
  
  We switch to thinking of $\cJ$ as a subset of $(X_1)^\infty$.
  Let $d_1$ denote the length metric on~$X_1$.
  Fix $p^\infty = (p_1, p_2, \ldots) \in \cJ$ and $\epsilon_0 > 0$. For $m \ge 1$ and
  $\epsilon < \epsilon_0$, let $U_{m,\epsilon}$ be those points
  $q^\infty=(q_1, q_2, \ldots) \in \cJ$ for which
  $d_1(p_i, q_i) < \epsilon$ for $i=1, \ldots, m$.
  The definition of the product topology says that the
  for any $\epsilon_0 > 0$ the $U_{m,\epsilon}$ are a
  neighborhood basis of $p^\infty$.
  Using local contractibility and Lemma~\ref{lemma:evenly_covered},
  choose $\epsilon_0$ so that balls in
  $X_0$ of radius
  $\epsilon_0$ or smaller are contained in a contractible neighborhood
  and hence evenly covered by~$\pi$, and so that $\pi$ is an
  isometry on these balls.
  Fix $m$ and~$\epsilon < \epsilon_0$ and focus attention on $U\coloneqq U_{m,\epsilon}$.

  Let $K$ be the constant for approximate path-lifting given by
  Proposition~\ref{prop:apl} for the map $\phi \co X_1 \to X_0$, and
  choose $n\ge m$ large enough that
  $\lambda^{n-m}K/(1-\lambda) < \epsilon/3$. Let $V\subset U$ be
  $U_{n,\epsilon/3}$, and fix
  $q^\infty \in V$. Using Theorem~\ref{thm:Ashadowing}, we are going
  to show $\cJ$ is weakly locally path-connected by
  constructing a path $x^\infty\co I \to \cJ$ from $p^\infty$ to
  $q^\infty$ which is contained in~$U$.

  To construct $x^\infty$, we will first construct a family
  $x = (x_i\co I \to X_1)_{i \geq 1}$ of
  homotopy pseudo-orbits of paths
  joining $p^\infty$ to $q^\infty$
  parameterized by $A=I$, the unit interval; as a path, each $x_i$
  joins $p_i$ to~$q_i$. We first
  construct the $x_i$ for $1 \le i \le n$, where $n$ is the integer
  from the previous paragraph; in this range $x_i$ will be an actual
  orbit (i.e., for $i < n$ the homotopies $\alpha_i$ are constant).
  We begin by choosing the path~$x_{n}$.
  By construction,
  $d_1(p_n,q_n)<\epsilon/3$; let $x_n\co A \to X_1$ be a path
  exhibiting this.
  By decreasing induction, for $1 \le i < n$ define $x_i$ to be the
  lift of $\phi \circ x_{i+1}$ under~$\pi$ starting at $p_i$. Since $\phi$ is a
  contraction and $\pi$ is an isometry on balls of radius $\epsilon/3$,
  the image of $x_i$ is contained in the $\epsilon/3$-ball
  about~$p_i$.
  In particular, $q_i$ is the only element of
  $\pi^{-1}(\phi(q_{i+1}))$ in this ball, so $x_i$ ends at $q_i$.

  We complete the construction of $x$ by
  constructing $x_i$ for $i > n$, 
  by increasing induction
  starting with~$x_n$.
  For $i\geq n$, supposing we have defined $x_i$, let $x_{i+1}$ be an
  approximate lift
  under $\phi$ of $\pi \circ x_i$ joining $p_{i+1}$ to~$q_{i+1}$.
  By Proposition~\ref{prop:apl}, $\pi \circ x_i$ and $\phi \circ
  x_{i+1}$ are joined by a homotopy $\alpha_i$
  with trace size bounded by~$K$. This completes the construction
  of~$x$.

By Theorem \ref{thm:Ashadowing}, the family of homotopy pseudo-orbits
$(x,\alpha)=(x_i, \alpha_i)_{i\geq 1}$ defines
\begin{enumerate}[label=(\roman*)]
\item a family of orbits parameterized by $A$,
$x=(x_i^\infty\co A \to X_1)_{i \geq 1}$, yielding a map
$x^\infty\co A \to \cJ$, and
\item a sequence of homotopies
$\beta=(\beta_i^\infty\co I \times A \to X_1)_{i \geq 1}$ with
$\beta_i^\infty$ joining $x_i$ to $x_i^\infty$.
\end{enumerate}
The bounds on the trace size of the homotopies $\beta_i^\infty$
from $x_i$ to $x_i^\infty$ given in
Theorem~\ref{thm:Ashadowing} are not enough for
our purposes: to make sure the path $x^\infty$ remains within $U$, we
need to make sure that $\beta_i^\infty$ has trace size less than
$\epsilon$ for $1 \le i \le m$, while Theorem~\ref{thm:Ashadowing}
gives a constant trace size $K/(1-\lambda)$.
Thus we consider the proof of the Theorem,
which expresses each element
$\beta_i^\infty$  as a concatenation
$\beta_i^1 * \beta_i^2 * \cdots * \beta_i^k * \cdots$, where each
$\beta_i^k$ is obtained from $\alpha_{i+k}$ by repeatedly lifting by
$\pi$ (a total of $k+1$ times) and composing with~$\phi$ (a total of
$k$~times). Thus $\beta_i^k$ is constant (trace size~$0$) for
$i+k < n$, and otherwise
has trace size bounded by $\lambda^k K$. In particular, for $i \le m$,
we have
\[
  \tracesize(\beta_i^\infty) \le \sum_{k=n-i}^\infty \tracesize(\beta_i^k)
    \le \frac{\lambda^{n-i}}{1-\lambda} K
    \le \frac{\lambda^{n-m}}{1-\lambda} K < \epsilon/3.
\]

Finally, we estimate the size of $x_i^\infty$ for $1 \leq i \leq m$ to
see it remains within~$U$.
We have 
\[ \diam(x_i^\infty) \leq \tracesize(\beta^\infty_i) + \diam(x_i)+\tracesize(\beta^\infty_i)< \epsilon/3 + \epsilon/3 + \epsilon/3 = \epsilon.\]
This concludes the proof of weak local path connectivity, and hence
local connectivity.

It remains to prove that $f\colon \cJ \to \cJ$ is positively
expansive.
Let $\epsilon_0$ be the parameter chosen above, so that $d_0$-balls of
radius $\epsilon_0$ are contained in contractible sets.
Consider the neighborhood of the diagonal
$N\coloneqq U_{1,\epsilon_0} = \{(x,y) \in \cJ \times \cJ \mid
d_1(x_1, y_1)<\epsilon_0\}$.
To prove that $f$ is positively expansive, it suffices to show that,
if $(f^ix, f^iy) \in N$ for each $i \geq 0$, then $x = y$; so let us
suppose the iterates remain in~$N$. If we think
of $\cJ$ as a subset of
$X_1^{\mathbb{N}}$, the
map $f$ is given by the left-shift. Thus $d_1(x_i, y_i)<\epsilon_0$
for each $i \geq 0$.

Since $d_1$ is a length metric,  for each such $i$ there
exists a path $\beta_i\co [0,1] \to X_1$ with
$\ell(\beta_i) < \epsilon_0$ joining $x_i$ to $y_i$. By construction,
the paths $\pi(\beta_i)$ and $\phi(\beta_{i+1})$ join the same
endpoints for each $i \geq 1$. We have
$\diam(\pi(\beta_i))<\epsilon_0$ by construction,
while $\diam(\phi(\beta_{i+1})) < \lambda\epsilon_0$;
hence the union of
these paths lies in an $\epsilon_0$-ball in~$X_0$. Since the ball is
contractible, the two paths
are homotopic. The collection $\beta\coloneqq (\beta_i)_{i \geq 1}$ is therefore a
homotopy between the orbits $x=(x_i)_i$ and $y=(y_i)_i$. By
Theorem~\ref{thm:homotopy_shadowing}, we then have $x=y$, completing
the proof
that $f$ is positively expansive.\qed

\subsection{Homotopy sections}
\label{subsecn:sections}

Suppose $\phi\co X \to Y$ is a continuous map between topological spaces. A \emph{homotopy section} of $\phi$ is a continuous map $\sigma\co Y \to X$ such that $\phi \circ \sigma \sim_K \id_Y$ for some constant $K$. 
The main result of this section is the existence of homotopy sections to~$\cJ$.

\begin{proposition}
\label{prop:section}
Suppose $\phi, \pi\co X_1 \rightrightarrows X_0$ is a $\lambda$-backward-contracting and
recurrent virtual endomorphism of finite CW complexes. Suppose
$\sigma\co X_0 \to X_1$ is a homotopy section of $\phi$, with
$\phi \circ \sigma \sim_K \id_{X_0}$. Then for each $n \in
\mathbb{N}$, we have the following.
\begin{enumerate}
\item There is a canonically associated family
\[ (x^n,\alpha^n)=(x^n_i\co X_n \to X_1, \alpha^n_i\co I \times X_n \to
  X_0)_{i \geq 1}\]
of homotopy pseudo-orbits parameterized by $X_n$
with trace sizes at most~$K$.  When $X_n$ is identified with the set of orbits of length $n$, the map $X_n \to X_1 \times \ldots \times X_1$ determined by the first $n$ coordinates, $(x_1^n, x_2^n, \ldots, x_n^n)$ induces the identity map on $X_n$.   Concretely, we require that
$x_i = \pi^i_1 \circ \phi^n_i$ for $i \le n$ and $\alpha_i$ is
constant for $i < n$.
\item There exists $\tilde{x}^n_\infty\co X_n \to \cJ$ with
  $\phi^\infty_n\circ \tilde{x}^n_\infty \sim_{K'}\id_{X_n}$, where $K'\coloneqq K/(1-\lambda)$.
\end{enumerate}
\end{proposition}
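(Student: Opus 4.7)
The plan is to build the family $(x^n,\alpha^n)$ explicitly, using the hypothesized section~$\sigma$ to extend past index~$n$, and then apply Theorem~\ref{thm:Ashadowing-cover}. The main observation---from which (2) will follow almost for free---is that the lift $\wt x_n \co X_n \to X_n$ associated to this particular family is the identity map, by the uniqueness part of the pullback characterization of~$X_n$.

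For~(1), set $x_i \coloneqq \pi^i_1 \circ \phi^n_i$ for $1 \leq i \leq n$; on a point $(y_1,\dots,y_n)\in X_n$ this is just the projection to~$y_i$. The orbit relation $\pi(y_i)=\phi(y_{i+1})$ gives $\pi\circ x_i = \phi\circ x_{i+1}$ strictly for $i<n$, so $\alpha_i$ may be taken constant. For $i\geq n$, recursively define $x_{i+1} \coloneqq \sigma\circ\pi\circ x_i$; then $\phi\circ x_{i+1} = (\phi\circ\sigma)\circ(\pi\circ x_i)$, and pulling back the hypothesized homotopy $\phi\circ\sigma \sim_K \id_{X_0}$ along $\pi\circ x_i$ via Lemma~\ref{lem:comphom} yields the required $\alpha_i\co I\times X_n\to X_0$ of trace size at most~$K$.

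For~(2), apply Theorem~\ref{thm:Ashadowing} to obtain $\tilde{x}^n_\infty \co X_n\to\cJ$, and lift $(x^n,\alpha^n)$ as in Lemma~\ref{lem:homotopy-lift} to maps $\wt x_i\co X_n\to X_i$. Theorem~\ref{thm:Ashadowing-cover} gives $\wt x_n \sim_{K'} \wt x^\infty_n = \phi^\infty_n\circ\tilde{x}^n_\infty$ with $K' = K/(1-\lambda)$, so the conclusion reduces to verifying $\wt x_n = \id_{X_n}$.

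This last identification is the only step requiring any real care, and it is handled by induction on $i\leq n$ showing $\wt x_i = \phi^n_i$. The base case $\wt x_1 = x_1 = \phi^n_1$ holds by definition. Inductively, for $i<n$ the homotopy $\alpha_i$ is constant, so its lift $\wt\alpha_i$ under $\pi^i_0$ is constant at $\wt x_i = \phi^n_i$, giving $\wt x_{i+1,i} = \phi^n_i$. The map $\phi^n_{i+1}$ then satisfies both pullback compatibilities $\phi^{i+1}_i\circ\phi^n_{i+1} = \phi^n_i$ and $\pi^{i+1}_1\circ\phi^n_{i+1} = x_{i+1}$, so uniqueness in Lemma~\ref{lem:pullback} forces $\wt x_{i+1} = \phi^n_{i+1}$. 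Taking $i=n$ gives $\wt x_n = \id_{X_n}$ and completes~(2).
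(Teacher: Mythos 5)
Your proposal is correct and follows essentially the same route as the paper: construct the pseudo-orbit family using $\sigma$ past level~$n$ (identical recursion $x_{i+1}=\sigma\circ\pi\circ x_i$ with $\alpha_i=(\pi\circ x_i)^*\alpha$ via Lemma~\ref{lem:comphom}) and then invoke Theorem~\ref{thm:Ashadowing-cover}. The only difference is that you spell out, via uniqueness in Lemma~\ref{lem:pullback}, the identification $\wt x_i=\phi^n_i$ (hence $\wt x_n=\id_{X_n}$), a verification the paper treats as immediate; your argument for it is correct.
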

Observe that in the lifted length metrics, the diameters of the
$X_n$ typically tend to infinity exponentially fast.
Conclusion~(2), however, says that the compositions $\phi^\infty_n \circ
\tilde{x}^n_\infty$ are at
uniformly bounded distance from the identity.

\begin{proof}
  We will construct a family of homotopy pseudo-orbits
  $x^n=(x_i\co X_n \to X_1)_{i \geq 1}$. Fix a homotopy
  $\alpha\co I \times X_0 \to X_0$ joining the identity to
  $\phi \circ \sigma$ with trace size~$K$.

 For $1 \leq i \leq n$, let $x^n_i\co X_n \to X_1$ be the natural
 maps as defined in the statement; likewise, for $1 \le i < n$, let
 $\alpha^n_i\co I \times X_n \to X_0$ be the
 constant homotopies.

 For $i \ge  n$, set by induction $x_{i+1} \coloneqq \sigma \circ \pi
 \circ x_i$. Then
\[ \phi \circ x_{i+1}=\phi \circ \sigma \circ \pi \circ x_i \sim_K
  \pi \circ x_i\]
with homotopy $\alpha_i = (\pi \circ x_i)^* (\alpha)$
(see
Lemma~\ref{lem:comphom}). This gives the desired family of homotopy
pseudo-orbits. 

The second assertion follows immediately from Theorem
\ref{thm:Ashadowing-cover}, applied to the family $(x,\alpha)$ of
homotopy pseudo-orbits parameterized by $X_n$ constructed above.
\end{proof}

To get started applying this result, we have the following lemma.
\begin{lemma}
\label{lemma:sections}
Suppose $G, H$ are finite connected graphs and $\phi\co G \to H$ is
surjective on the fundamental group. 
Then there exists a homotopy section $\sigma\co H \to G$ of $\phi$. 
\end{lemma}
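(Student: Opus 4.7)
The plan is to construct $\sigma$ explicitly from a spanning tree decomposition of $Y$, and then argue that $\phi\circ\sigma$ is homotopic to $\id_Y$ in the ordinary sense; the bounded-trace version follows automatically from compactness of $Y$.

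First, I would choose a basepoint $y_0 \in Y$, a preimage $x_0 \in \phi^{-1}(y_0)$, and a spanning tree $T \subset Y$. The non-tree edges $e_1,\dots,e_r$ of $Y$ give rise to a free generating set $[\gamma_1],\dots,[\gamma_r]$ of $\pi_1(Y,y_0)$, where $\gamma_i$ is the loop that travels through $T$ from $y_0$ to the tail of $e_i$, crosses $e_i$, and returns through~$T$. Using surjectivity of $\phi_*$, I would pick loops $\tilde\gamma_i\co S^1 \to X$ based at $x_0$ with $[\phi\circ\tilde\gamma_i]=[\gamma_i]$ in $\pi_1(Y,y_0)$.

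Next, define $\sigma\co Y \to X$ by collapsing all of $T$ to the single point $x_0$ and sending each edge $e_i$ (whose endpoints are now both identified with $x_0$) to the loop $\tilde\gamma_i$, reparametrizing as necessary. This is continuous by construction. To verify $\phi\circ\sigma\sim_K \id_Y$, introduce the comparison map $r\co Y \to Y$ that collapses $T$ to $y_0$ and sends each $e_i$ to $\gamma_i$. Since $T$ is a contractible subcomplex of $Y$, the map $r$ is homotopic to $\id_Y$. On the other hand, $\phi\circ\sigma$ and $r$ agree on the parts that collapse $T$, and on each $e_i$ they differ by a homotopy rel endpoints from $\phi\circ\tilde\gamma_i$ to $\gamma_i$, which exists by our choice of $\tilde\gamma_i$. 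Concatenating gives $\phi\circ\sigma\simeq r \simeq \id_Y$.

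There is no real obstacle here: every step uses only that $Y$ is a finite graph, that $T$ deformation retracts to a point, and that $\phi_*$ is surjective. Because $Y$ is compact and the homotopy produced above is continuous on $Y \times I$, its traces have uniformly bounded size, so we may take $K$ to be the supremum of the trace sizes, yielding $\phi\circ\sigma\sim_K \id_Y$ as required by the definition of a homotopy section.
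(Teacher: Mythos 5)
Your argument is correct and is essentially the paper's proof with the reduction made explicit: collapsing the spanning tree $T$ is exactly the homotopy equivalence to a rose that the paper invokes, after which both arguments use surjectivity of $\phi_*$ to lift a free generating set of loops and define $\sigma$ generator by generator. The only cosmetic difference is how the constant $K$ is produced (your compactness-of-$Y \times I$ observation, which does give a uniform trace-size bound via uniform continuity and the systole, versus the paper's maximum over the finitely many chosen homotopies).
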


\begin{proof}
  The statement is invariant under homotopy
  equivalence, so we may assume
  $H$ is a rose of $k$ circles
  with basepoint~$y$.  Let $x \in \phi^{-1}(y)$ be a basepoint for
  $G$.  Fix one of the $k$
  circles, say $\beta \subset H$. The assumption that $\phi$
  induces a  surjection between fundamental groups implies there is a
  loop $\alpha$ based at $x$ for which $\phi(\alpha) \sim_{K(\beta)} \beta$
  relative to $y$.  We set $\sigma|_\beta=\alpha$. Doing this for each
  circle and putting $K\coloneqq \max_\beta K(\beta)$ proves the claim. 
\end{proof}

As a corollary of Proposition~\ref{prop:section}, we then have 

\begin{corollary}
\label{cor:recurrrent_graph_ve}
If $\pi, \phi\co G_1 \rightrightarrows G_0$ is a backward-contracting
recurrent virtual endomorphism of graphs with limit space $\cJ$, then
there exists a constant $K>0$ and a family of maps $\sigma^n_\infty\co
G_n \to \cJ$ for $n \in \mathbb{N}$  such that $\phi^\infty_n \circ
\sigma^n_\infty \sim_K \mathrm{id}_{G_n}$.
\end{corollary}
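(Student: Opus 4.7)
The plan is to combine the two results immediately preceding the statement, with recurrence providing exactly the hypothesis needed to trigger each of them. Concretely, since $(\pi,\phi)$ is recurrent, $G_0$ and $G_1$ are connected graphs and $\phi_*\co \pi_1(G_1) \to \pi_1(G_0)$ is surjective. This is precisely the hypothesis of Lemma~\ref{lemma:sections}, which I would invoke first to obtain a homotopy section $\sigma\co G_0 \to G_1$ of $\phi$, i.e., a continuous map with $\phi \circ \sigma \sim_{K_0} \id_{G_0}$ for some constant $K_0 > 0$.

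Next I would feed this $\sigma$ into Proposition~\ref{prop:section}, whose other hypotheses---that $(\pi, \phi\co G_1 \rightrightarrows G_0)$ is a $\lambda$-backward-contracting recurrent virtual endomorphism of finite CW complexes---are satisfied by assumption (graphs being finite CW complexes of dimension~$1$). For each $n \in \NN$, part~(2) of the proposition produces a map $\tilde{x}^n_\infty\co G_n \to \cJ$ together with the estimate
\[
  \phi^\infty_n \circ \tilde{x}^n_\infty \sim_{K_0/(1-\lambda)} \id_{G_n}.
\]
Defining $\sigma^n_\infty \coloneqq \tilde{x}^n_\infty$ and $K \coloneqq K_0/(1-\lambda)$ yields the family of maps claimed in the corollary.

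The crucial observation---and the only content beyond quoting the two preceding results---is that the constant $K$ is independent of~$n$. This is guaranteed by Proposition~\ref{prop:section}(2) because $K_0$ depends only on the choice of section $\sigma$ (not on $n$), and the contraction factor $\lambda$ depends only on $(\pi,\phi)$. There is therefore no obstacle in the argument: the two statements slot together, with recurrence playing the dual role of producing the initial section via Lemma~\ref{lemma:sections} and satisfying the recurrence hypothesis of Proposition~\ref{prop:section}.
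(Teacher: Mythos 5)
Your proposal is correct and matches the paper's own (implicit) argument: the paper presents the corollary exactly as the combination of Lemma~\ref{lemma:sections} (recurrence gives a homotopy section $\sigma\co G_0 \to G_1$) with Proposition~\ref{prop:section}(2), which supplies the maps $\tilde{x}^n_\infty\co G_n \to \cJ$ and the $n$-independent constant $K/(1-\lambda)$. Nothing further is needed.
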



\section{The conformal gauge}
\label{sec:gauge}

We recall here from \cite{kmp:ph:cxci} the construction of two natural classes of metrics, one larger than the other, associated to certain expanding dynamical systems.

\subsection{Convention}
\label{subsecn:convention}
Throughout this section, we suppose $\cJ$ is compact, connected, and
locally connected, and $f\co \cJ \to \cJ$ is a positively expansive
self-cover of degree $d \geq 2$. Proposition~\ref{prop:shrink} and
Lemma~\ref{lemma:expansion_enough} imply that the dynamics of
$f\co \cJ \to \cJ$ is topologically cxc. It follows that there exists a
finite open cover $\cU_0$ by connected sets such that, as in the notation from \S
\ref{subsecn:cxc}, the mesh of the coverings $\cU_n$ tends to zero as
$n \to \infty$, and in addition, for all $\wtU \in \cU_n$ with
$f^n\co \wtU \to U \in \cU_0$, we have $\deg(f^n\co \wtU \to U)=1$, i.e.,
each such $U$ is evenly covered by each iterate.

\subsection{Visual metrics} 
\label{subsecn:visual}
The metrics we construct are most conveniently defined coarse-geometrically as visual metrics on the boundary of a certain rooted Gromov hyperbolic 1-complex. Before launching into technicalities, we quickly summarize the development.
The \emph{visual metrics}
on~$\cJ$ have the properties that there exists a
constant $0<\theta<1$ such that for any $n \in \mathbb{N}$ and any
$U \in \cU_n$, we have $\diam U \asymp \theta^n$, and these $U$ are
uniformly nearly round. See Theorem \ref{thm:visual} below for the
precise statements. The \emph{snowflake gauge} is the set of
metrics bilipschitz equivalent to some power of a visual metric. The
snowflake gauge is an invariant of the topological dynamics. Visual
metrics are Ahlfors regular with respect to the Hausdorff measure in
their Hausdorff dimension, and this measure is comparable to the
measure of maximal entropy. The \emph{Ahlfors-regular gauge} of
metrics is the larger set of all Ahlfors-regular spaces
quasi-symmetrically equivalent to a visual metric; it too is an
invariant of the topological dynamics.

\subsubsection*{\textalt{$1$}{1}-complex} From coverings as above, we define a
rooted hyperbolic 1-complex~$\Sigma$ to get the visual metrics. In
addition to the $\cU_n$ for $n \ge 0$, let
$\cU_{-1}$ be the trivial covering $\{\cJ\}$. The vertices of~$\Sigma$ are the elements of the
$\cU_n$ for $n \ge -1$, with root $\cJ \in \cU_{-1}$. If $U \in \cU_n$,
the \emph{level} of~$U$ is $|U|\coloneqq n$. Edges of~$\Sigma$ are of two types.
\begin{itemize}
\item \emph{Horizontal} edges join $U, U' \in \cU_n$ if
  $U \cap U' \neq \emptyset$.
\item \emph{Vertical} edges join $U, U'$ if $\bigl| |U| - |U'| \bigr|=1$ and
  $U \cap U' \neq \emptyset$.
\end{itemize}
We equip $\Sigma$ with the
word length metric in which each edge has length~$1$. The \emph{level} of
an edge is the maximum of the levels of its endpoints.

\subsubsection*{Compactification} We compactify $\Sigma$ in the spirit of W. Floyd rather than of M.
Gromov, as follows. Let $\epsilon>0$ be a parameter, and let
$d_\epsilon$ be the length metric on $\Sigma$ obtained by scaling so
edges at level $n$ have length $\theta^n$ where
$\theta=e^{-\epsilon}$. The metric space $(\Sigma, d_\epsilon)$ is not complete. Its completion $\overline{\Sigma}_\epsilon$, sometimes called the \emph{Floyd completion}, adjoins the corresponding boundary $\partial_\epsilon\Sigma\coloneqq \overline\Sigma_\epsilon - \Sigma$. 
The extension of $d_\epsilon$ to a
metric on the boundary $\partial_\epsilon\Sigma$ is called a
\emph{visual metric} on the boundary.

\subsubsection*{Snapshots} To set up the statement of the next theorem, we need a definition.

\begin{definition}
\label{defn:snapshots} Suppose $X$ is a metric space and $0<\theta<1$. A sequence $(\cS_n)_n$ of finite coverings of $X$ is called a \emph{sequence of snapshots of $X$ with scale parameter $\theta$} if there exists a constant $C>1$ such that 
\begin{enumerate}
\item (scale and roundness) For all $n$ and all $s \in \cS_n$, there exists $x_s \in s$ with 
  \[ B(x_s, C^{-1}\theta^n) \subset s \subset B(x_s, C\theta^n),\]
\item (nearly disjoint) For all $n$, the collection of pairs
    $\{(x_s,s)\mid s \in \cS_n\}$ may be chosen so that in addition 
  \[ B(x_s, C^{-1}\theta^n) \cap B(x_{s'}, C^{-1}\theta^n)\neq \emptyset \]
  whenever $s$ and $s'$ are distinct elements of~$\cS_n$.
  \end{enumerate}
\end{definition}
The elements $s \in \cS_n$ need not be open, nor connected. 

\subsubsection*{Properties of visual metrics} Theorem \ref{thm:visual} summarizes highlights of \cite[Chapter 3]{kmp:ph:cxci}, specialized to the conventions in \S \ref{subsecn:convention}. 

\begin{citethm}[Visual metrics]
\label{thm:visual}
Suppose the topological dynamical system $f\co \cJ \to \cJ$ and sequence of open covers $(\cU_n)_n$ satisfy the assumptions in \S \ref{subsecn:convention}. Let $\Sigma$ be the associated 1-complex.

There exists $\epsilon_0>0$ such that for all $0<\epsilon<\epsilon_0$,
the boundary $\partial_\epsilon\Sigma$ equipped with the visual metric
$d_\epsilon$ is naturally homeomorphic to $\cJ$. Moreover, for each
$\epsilon$ in this range there
exists $C>1$ such that the following hold. Let
$\theta=e^{-\epsilon}<1$; we denote by $B_\epsilon$ an open ball with
respect to~$d_\epsilon$.
\begin{enumerate}
\item (Snapshot property) The sequence $(\cU_n)_n$ is a sequence of snapshots of $\cJ$ with scale parameter $\theta$.
 \item\label{item:visual-regular} For $q\coloneqq\frac{\log d}{\epsilon}$, for all $x \in \cJ$ and all $r < \diam(\cJ)$, the Hausdorff $q$-dimensional measure satisfies 
  \[ C^{-1}r^q < \cH^q(B_\epsilon(x,r))< Cr^q.\]
\item There exists a unique $f$-invariant probability measure $\mu_f$
  of maximal entropy $\log d$ supported on $\cJ$. The support of
  $\mu_f$ is equal to all of~$\cJ$, and for all Borel sets $E$, we
  have $C^{-1} < \mu_f(E)/\cH^q(E) < C$.
\item\label{item:scaling} There exists $r_0<\diam(\cJ)$ so that for
  all $r<r_0$, we have
  $f(B_\epsilon(x,r))=B_\epsilon(f(x),\theta^{-1}r)$, and the
  restriction $f\co B_\epsilon(x,r) \to B_\epsilon(f(x),\theta^{-1}r)$
  scales distances by the factor~$\theta^{-1}$.
\item\label{item:visual-indep} If two different parameters
  $\epsilon$, $\epsilon'$ and two different open covers $\cU_0$, $\cU'_0$
  are employed in the construction, the resulting metrics $d$, $d'$ are
  snowflake equivalent. 
  \end{enumerate}
\end{citethm}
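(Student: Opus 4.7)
Proof proposal. The theorem is a compendium of results; I would treat the five conclusions as building on one another in the order \eqref{item:visual-regular} $\to$ (1) $\to$ (3) $\to$ \eqref{item:scaling} $\to$ \eqref{item:visual-indep}, with the prerequisite step of showing that, for $\epsilon$ small enough, the Floyd completion $\overline{\Sigma}_\epsilon$ really does make $\partial_\epsilon\Sigma$ homeomorphic to $\cJ$. The overall plan is to exploit (i) the combinatorial structure of $\Sigma$ coming from Axioms \textbf{[Expansion]}, \textbf{[Degree]}, \textbf{[Irreducibility]} (Lemma~\ref{lemma:expansion_enough}), and (ii) the evenly-covered structure of elements of $\cU_n$ under $f^n$ to transfer dynamics to geometry.

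First I would show that for sufficiently small $\epsilon > 0$, any Cauchy sequence in $(\Sigma, d_\epsilon)$ either converges in $\Sigma$ or determines a unique point of $\cJ$, and conversely each $x \in \cJ$ arises from such a sequence. The key estimate is that a geodesic in $\Sigma$ between two vertices at level $n$ whose corresponding open sets are disjoint must descend to a level roughly $n - C$ determined by overlaps; here \textbf{[Expansion]} provides the uniform contraction needed to ensure $\partial_\epsilon\Sigma$ separates points. This identification $\partial_\epsilon\Sigma \cong \cJ$ plus the definition of $d_\epsilon$ immediately gives the snapshot property~(1): for $U \in \cU_n$ pick a center $x_U$ realizing $U$, and the two-sided bound $B_\epsilon(x_U, C^{-1}\theta^n) \subset U \subset B_\epsilon(x_U, C\theta^n)$ comes from counting edges in $\Sigma$ from the vertex $U$ to nearby ones. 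The nearly-disjoint condition uses \textbf{[Degree]} to bound the multiplicity at each level.

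With snapshots in hand, Ahlfors regularity~\eqref{item:visual-regular} follows from a standard Frostman-type argument: for any ball $B_\epsilon(x,r)$ choose $n$ with $\theta^n \asymp r$, cover by $O(1)$ elements of $\cU_n$, and use that each $U \in \cU_n$ has roughly $d^n$ preimage-count giving $\cH^q(U) \asymp \theta^{nq} = d^{-n}$ when $q = \log d/\epsilon$. The measure of maximal entropy~(3) would be built by pushing forward normalized counting measure on level-$n$ preimages and passing to a weak-$*$ limit, with uniqueness following from positive expansivity; equivalence with $\cH^q$ is then forced by the common scaling $\asymp \theta^{nq}$ on elements of $\cU_n$. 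Conclusion~\eqref{item:scaling} is essentially a reformulation of the fact that $f$ sends level-$(n{+}1)$ elements homeomorphically onto level-$n$ elements, combined with the definition that edges at level $n+1$ have length $\theta$ times edges at level $n$; the radius $r_0$ is chosen smaller than $\tfrac{1}{2}\operatorname{mesh}(\cU_1)$ so that the ball sits inside a single evenly-covered piece.

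The hardest technical point, and the main obstacle, is verifying independence~\eqref{item:visual-indep} under change of parameter $\epsilon$ and cover $\cU_0$. Changing $\epsilon$ rescales $d_\epsilon$ to a power, so it suffices to fix $\epsilon$ and compare visual metrics $d, d'$ from two different covers $\cU_0, \cU'_0$. The plan is to show that the two associated $1$-complexes $\Sigma, \Sigma'$ are quasi-isometric by a rooted quasi-isometry: any element of $\cU_n$ is covered by boundedly many elements of $\cU'_{n+k}$ for some uniform $k$ (using \textbf{[Expansion]} applied to both covers simultaneously), and vice versa. This quasi-isometry extends to a snowflake equivalence of boundaries, which is the desired statement. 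I expect the bookkeeping here — making the constants depend on $\epsilon,\epsilon'$ but not on $n$ — to be the most delicate part, since the two cover sequences $(\cU_n)$ and $(\cU'_n)$ evolve independently under the dynamics and matching them requires the uniform decay of mesh from \textbf{[Expansion]}.
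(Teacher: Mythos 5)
This statement is a \emph{cited} theorem: the paper offers no proof of it, but instead quotes it (as a \texttt{citethm}) from \cite[Chapter 3]{kmp:ph:cxci}, specialized to the standing conventions of \S\ref{subsecn:convention}. So there is no internal argument to compare against; what you have written is, in effect, a reconstruction of the Haïssinsky--Pilgrim development, and in broad outline it follows the same path (build $\Sigma$ from the covers, identify $\partial_\epsilon\Sigma$ with $\cJ$, read off the snapshot and scaling properties from the level structure, compare different choices by a level-respecting quasi-isometry of the complexes).

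Three points in your sketch gloss over the genuinely substantial parts of that cited proof. First, you never establish that $\Sigma$ is Gromov hyperbolic; this is the structural backbone of the whole construction, it is what makes the Floyd-type completion behave like a visual boundary, and it is also what you silently invoke at the end when you pass from a quasi-isometry $\Sigma \to \Sigma'$ to a boundary map --- and even then, a general quasi-isometry of hyperbolic spaces only gives a quasisymmetric (power-quasisymmetric) boundary map, so to get the stronger \emph{snowflake} equivalence claimed in \eqref{item:visual-indep} you must use that your comparison map shifts levels by a uniformly bounded amount, not just that it is a rooted quasi-isometry. Second, your order ``(\ref{item:visual-regular}) then (3)'' is backwards as an argument: the upper bound $\cH^q(U)\lesssim d^{-n}$ follows from counting, but the \emph{lower} bound on Hausdorff measure of balls requires a mass-distribution (Frostman) argument against a measure that already satisfies $\mu(B(x,r))\asymp r^q$; in \cite{kmp:ph:cxci} one first constructs the measure of maximal entropy (as a weak-$*$ limit of normalized preimage counts, using [Degree] and [Irreducibility] for the uniform bounds), proves it is Ahlfors regular of exponent $q=\log d/\epsilon$, and only then deduces (\ref{item:visual-regular}) and the comparability $\mu_f\asymp\cH^q$. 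Third, ``changing $\epsilon$ rescales $d_\epsilon$ to a power'' is not literally true --- $d_{\epsilon'}$ is only comparable to $d_\epsilon^{\epsilon'/\epsilon}$ up to uniform multiplicative constants, and proving that comparability is exactly the content of the snowflake statement, so it cannot be used as a reduction step without proof.
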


\subsubsection*{Snowflake equivalence} Two metrics $d, d'$ are \emph{snowflake equivalent} if there exist parameters
$\beta, \beta'>0$ such that the ratio $d^\beta/(d')^{\beta'}$ is
bounded away from zero and infinity.  Put another way, snowflake equivalent means bilipschitz, after raising the metrics to appropriate powers. The \emph{snowflake gauge} of
$(f\co \cJ \to \cJ)$ is the snowflake equivalence class of
some (and by Theorem \ref{thm:visual}(5), equivalently, of any) visual
metric on $\cJ$. Thus, the snowflake gauge of the dynamical system
$f\co \cJ \to \cJ$ depends only on the topological dynamics, and not on
choices.  

A snowflake equivalence $d \mapsto d'$ preserves the property of being
a sequence of snapshots, though the scale parameter and constant $C$
may change.  However, if $(\cS_n)_n$ is a sequence of snapshots in a
metric space $X$, if $Y$ is another metric space, and
if $h\co X \to Y$
is only a quasi-symmetry, then
the transported sequence $(\cT_n)_n$
comprised of images of elements of $\cS_n$ under $h$ need not be a
sequence of snapshots: the scale condition in the definition of sequence
of snapshots (Definition \ref{defn:snapshots}(1)) can fail. (See
\S\ref{sec:ahlfors-regular} for a definition of quasi-symmetry.)

\subsubsection*{Conformal elevator and naturality of gauges}
Suppose now $f\co \widehat{\mathbb{C}} \to \widehat{\mathbb{C}}$ is a
hyperbolic rational function. Its Julia set $\cJ$ can now be equipped
with at least two natural metrics: the round spherical metric, and
a visual metric. The
Koebe distortion principles imply that small
spherical balls can be blown up via iterates of $f$ to balls of
definite size, with uniformly bounded distortion. The same is true for
the visual metric, by Theorem \ref{thm:visual}(4). Combining these
observations in a technique
known as the \emph{conformal elevator} implies the following.

\begin{proposition}
\label{prop:elevator}
On Julia sets of hyperbolic rational functions, the spherical and
visual metrics are quasi-symmetrically equivalent.
\end{proposition}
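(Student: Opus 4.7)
The plan is to apply the standard \emph{conformal elevator}. Write $d_s$ for the spherical metric and $d_v$ for a fixed visual metric on $J_f$, and fix $\eta > 0$ so that for every $z \in J_f$ and every $n \ge 0$ the branch of $f^{-n}$ sending $f^n(z)$ back to $z$ is univalent on the spherical ball $B_s(f^n(z),\eta)$; this is possible because the hyperbolicity of $f$ puts a definite spherical distance between $J_f$ and the closure of its postcritical set. Koebe's distortion theorem then furnishes a universal constant $C_K$ controlling the distortion of every such branch on $B_s(f^n(z),\eta/2)$. Pick a threshold $r_0 < \eta/2$ small enough that every $d_s$-ball in $J_f$ of radius $r_0$ is contained in a $d_v$-ball of radius smaller than the threshold $r_0$ appearing in Theorem~\ref{thm:visual}\eqref{item:scaling}; such a choice exists because $d_s$ and $d_v$ induce the same topology on the compact set $J_f$.

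Given any $x,y,z \in J_f$ close in $d_s$, let $n$ be the least integer for which $\max\bigl(d_s(f^n x, f^n y),\, d_s(f^n x, f^n z)\bigr) \ge r_0$; this $n$ exists because $f$ is expanding on $J_f$. By minimality, for every $0 \le k < n$ the points $f^k(y)$ and $f^k(z)$ lie in $B_s(f^k(x), r_0)$, and hence by our choice of $r_0$ also in the $d_v$-threshold ball about $f^k(x)$. Koebe's theorem applied to the univalent inverse branch of $f^n$ based at $f^n(x)$ yields
\[
  \frac{d_s(x,y)}{d_s(x,z)} \;\asymp\; \frac{d_s(f^n x, f^n y)}{d_s(f^n x, f^n z)}
\]
with an implicit constant depending only on $C_K$; meanwhile, $n$ applications of Theorem~\ref{thm:visual}\eqref{item:scaling} show that $f$ multiplies $d_v$-distances in the relevant balls by exactly $\theta^{-1}$ at every step, so
\[
  \frac{d_v(x,y)}{d_v(x,z)} \;=\; \frac{d_v(f^n x, f^n y)}{d_v(f^n x, f^n z)}.
\]

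At this stopping time, the points $f^n(x), f^n(y), f^n(z)$ lie at pairwise $d_s$-distance $O(r_0)$ in $J_f$ with one such distance at least $r_0$; since $d_s$ and $d_v$ both induce the compact topology on $J_f$, on such ``macroscopic'' triples they are bi-Lipschitz, so the two right-hand ratios above are comparable. Chaining the three estimates produces a quasi-symmetric distortion function depending only on $f$ which controls $d_v$-ratios by $d_s$-ratios, and the reverse bound follows from the same elevator with the roles of $d_s$ and $d_v$ exchanged. The main subtlety is arranging that a single stopping time~$n$ serves both the Koebe and the exact-scaling arguments simultaneously; this is precisely what the choice of $r_0$ (so small that the spherical threshold automatically enforces the visual scaling threshold) is designed to achieve.
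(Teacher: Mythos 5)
Your overall strategy is exactly the paper's: the paper proves this by the conformal elevator, citing Koebe distortion for the spherical metric and the exact-scaling property of Theorem~\ref{thm:visual}\eqref{item:scaling} for the visual metric, and delegating the details to Haïssinsky--Pilgrim. Your setup of the stopping time, the Koebe-controlled comparison of spherical ratios, and the exact preservation of visual ratios along the orbit (guaranteed by choosing the spherical threshold small enough that spherical balls sit inside visual threshold balls) is all sound.

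The gap is in the final chaining step. You claim that at the stopping time the triple $f^n(x), f^n(y), f^n(z)$ is ``macroscopic'' and that $d_s$ and $d_v$ are bi-Lipschitz on such triples, so the two ratios are comparable. But the stopping rule only forces the \emph{larger} of the two distances to be at least $r_0$; the other pair, say $\bigl(f^n(x), f^n(y)\bigr)$, can be at arbitrarily small spherical distance (take $y$ vastly closer to $x$ than $z$ is). For pairs at small distance, no bi-Lipschitz comparison between $d_s$ and $d_v$ is available from compactness and common topology alone --- and in fact none exists in general: the two metrics typically have different Hausdorff dimensions, and the paper itself points out they are usually not even snowflake equivalent. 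So the asserted comparability of the two right-hand ratios fails as stated, and with it the claimed quasi-symmetric distortion function.

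The argument is repairable without changing its skeleton. The cleanest fix is to prove the \emph{weak} quasi-symmetry condition, which is the definition the paper uses and which (per its footnote) is equivalent to quasi-symmetry for compact connected doubling spaces: assume $d_s(x,y) \le d_s(x,z)$, run your elevator, and note that by Koebe the denominator pair satisfies $d_s(f^n x, f^n z) \gtrsim r_0$, hence $d_v(f^n x, f^n z) \ge c(r_0) > 0$ by compactness, while trivially $d_v(f^n x, f^n y) \le \diam_v(J_f)$; pulling back by the exact scaling gives $d_v(x,y) \le H\, d_v(x,z)$ with $H = \diam_v(J_f)/c(r_0)$. Alternatively, keep the ratio formulation but replace ``bi-Lipschitz'' by two-sided uniform moduli of continuity $d_v \le \omega_1(d_s)$ and $d_s \le \omega_2(d_v)$ (which do follow from compactness), applied to the small pair, with the large pair bounded below; this yields an honest distortion function $\eta$.
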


This is true much more generally
\cite[Theorems 2.8.2 and 4.2.4]{kmp:ph:cxci}. In all but the most restricted cases, however, the
spherical and visual metrics are not snowflake equivalent. In the
visual metric, the map $f$ is locally a homothety with a constant
factor which is the same at all points. In the spherical metric, the image of a ball under $f$ need not be a ball, and what's more, typically, the magnitude of the derivative $\abs{f'(z)}$ varies as $z$ varies in $J_f$. The exceptions include maps such as $f(z)=z^d$.

\subsection{Ahlfors-regular spaces}\label{sec:ahlfors-regular}

We collect several facts here; see \cite{heinonen:analysis}.  
A metric space is \emph{doubling} if there is an integer $N$ such that
any ball of radius~$r$ is covered by at most $N$ balls of radius
$r/2$. Doubling is a finite-dimensionality condition: the Assouad
Embedding Theorem asserts that a doubling metric space is snowflake
equivalent to a subset of a finite dimensional spherical space.

Ahlfors regularity is a homogeneity condition that implies doubling.
A space with both a metric and a measure $(Z, d, \mu)$ is
\emph{Ahlfors regular} of exponent $q$ if $\mu(B(z,r)) \asymp r^q$ for
each $r<\diam(Z)$, where the implicit constant is independent of $z$
and of~$r$.  The Hausdorff dimension of such a space is necessarily
equal to~$q$, and in fact
the given measure~$\mu$ is comparable to the
$q$-dimensional Hausdorff measure.
A metric space is \emph{Ahlfors regular} if its Hausdorff measure in
its Hausdorff dimension is Ahlfors regular.

Suppose $(Z, d, \mu)$ and $(Z', d', \mu')$ are connected compact
doubling metric spaces. A homeomorphism $h\co Z \to Z'$ is a
\emph{quasi-symmetry} (qs) if there is a constant $H \geq 1$ such that
for each $r < \diam(Z)$ and each $z \in Z$, there is an $s>0$ such
that
\[ B_{d'}(h(z), s) \subset h(B_d(z,r)) \subset B_{d'}(h(z), Hs).\]
In other words: the image of a round ball is nearly round.\footnote{Strictly speaking, this is the definition of a \emph{weakly qs} map; this is equivalent to the standard but less intuitive definition of a qs map in our setting; see \cite{heinonen:analysis}.} 
A quasi-symmetric map does not, in general, preserve the property of
Ahlfors regularity, though it does preserve the property of being
doubling.

The \emph{Ahlfors-regular conformal gauge} $\cG(Z)$ of a metric space
$Z$ is the set of all Ahlfors-regular metric spaces qs equivalent to
$Z$; it may be empty. Let $f\co \cJ \to \cJ$ be a positively expansive
self-cover as in the conventions of \S \ref{subsecn:convention} and let $d_\epsilon$ be a visual metric from
Theorem~\ref{thm:visual}. Conclusion \eqref{item:visual-regular} of
that theorem shows that visual metrics are Ahlfors regular and so
$\cG(f\co \cJ \to \cJ)\coloneqq\cG(\cJ, d_\epsilon)$ is nonempty. For
example, if $f$ is a hyperbolic rational function with Julia set
$\cJ$, the spherical metric $d_{sph}$ on $\cJ$ is Ahlfors regular, with exponent $q=\hdim(\cJ, d_{sph})$; see \cite[Corollary 9.1.7]{MR2656475}.
Proposition~\ref{prop:elevator} then
implies that the spherical and visual metrics on $\cJ$ both belong to
the gauge $\cG(f\co \cJ \to \cJ)$.

The \emph{Ahlfors-regular conformal dimension} $\ARCdim(f\co\cJ \to \cJ)$ is the infimum of the Hausdorff dimensions of the metrics in the Ahlfors regular conformal gauge. It is thus another numerical invariant of the topological dynamics.  
Note that by definition, for any Ahlfors-regular metric $d$ in $\cG(f\co\cJ \to \cJ)$, we have 
\[ \ARCdim(f\co\cJ \to \cJ) \leq  \hdim(\cJ, d).\]

\subsection{Approximately self-similar spaces} Our proof of Theorem
\ref{thm:crit-sandwich} uses a result of Carrasco and
Keith-Kleiner that says that for certain classes of spaces, the
Ahlfors-regular conformal dimension is a critical exponent of
combinatorial modulus.
(See Theorem~\ref{thm:keith-kleiner}.)
Here, we introduce that class of spaces. 

 The following definition appears in \cite[\S3]{Kleiner06:ICM}.
\begin{definition}[Approximately self-similar]
\label{defn:approx_self_similar}
A compact metric space $(Z,d)$ is \emph{approximately self-similar} if
there exists a constant $L\geq 1$ such that for every ball $B(z,r) \subset Z$ with radius 
$0 < r < \diam(Z)$, there exists an open set $U \subset Z$ which
is $L$-bilipschitz equivalent to the rescaled ball
$(B(z,r), \frac{1}{r}d)$.
\end{definition}

An immediate consequence of Theorem \ref{thm:visual} is the following.

\begin{corollary}[Visual metrics are self-similar]
\label{cor:ss}
In the setting of Theorem \ref{thm:visual}, visual metrics are approximately self-similar.
\end{corollary}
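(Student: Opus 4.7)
The plan is to leverage the scaling property (item \eqref{item:scaling} of Theorem \ref{thm:visual}), which says that $f$ acts as a pure local homothety with factor $\theta^{-1}$ on balls of radius below $r_0$. This is the classical \emph{conformal elevator} idea: given a small ball $B_\epsilon(z,r)$, I would apply a suitable iterate $f^n$ to blow it up to a ball of definite (bounded) size, with the iterate being an exact rescaling, and then use that enlarged ball as the desired model set~$U$.

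Concretely, fix $z \in \cJ$ and $0 < r < \diam(\cJ)$. First I would dispose of the easy case $r \geq r_0$ by simply taking $U \coloneqq B_\epsilon(z, r)$, so that the identity is a bilipschitz equivalence between $(B_\epsilon(z,r), \tfrac{1}{r}d_\epsilon)$ and $(U, d_\epsilon)$ with constant at most $\max(\diam\cJ, 1/r_0)$. For the main case $r < r_0$, I would choose $n \ge 0$ to be the largest integer with $\theta^{-n} r < r_0$, which forces $\theta^{-n} r \in [\theta r_0, r_0)$. Since $\theta < 1$, the intermediate scales $\theta^{-k} r$ remain below $r_0$ for all $0 \le k \le n$, so item \eqref{item:scaling} of Theorem \ref{thm:visual} applies at every step and, by composition, $f^n$ maps $B_\epsilon(z,r)$ bijectively onto $B_\epsilon(f^n(z), \theta^{-n} r)$ while scaling $d_\epsilon$ by the exact factor~$\theta^{-n}$.

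Taking $U \coloneqq B_\epsilon(f^n(z), \theta^{-n} r)$ and setting $c \coloneqq \theta^{-n} r$, the map $f^n \co (B_\epsilon(z,r), \tfrac{1}{r} d_\epsilon) \to (U, d_\epsilon)$ is then a pure rescaling by factor $c \in [\theta r_0, r_0)$, so it is $L$-bilipschitz with $L = \max(c, c^{-1}) \le \max(r_0, (\theta r_0)^{-1})$. Combining both cases yields a single bilipschitz constant $L$ depending only on $\theta$, $r_0$, and $\diam(\cJ)$, which is exactly the condition in Definition~\ref{defn:approx_self_similar}.

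There is no real obstacle here: the entire argument is a bookkeeping consequence of item~\eqref{item:scaling} of Theorem~\ref{thm:visual}. The only point requiring a moment of care is the choice of the iterate~$n$, which must be picked so that all intermediate balls $B_\epsilon(f^k(z), \theta^{-k}r)$, $k = 0, \ldots, n-1$, stay strictly below the threshold~$r_0$ where the homothety property is guaranteed; the maximal-$n$ prescription accomplishes this automatically.
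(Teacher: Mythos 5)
Your argument is correct and is essentially the paper's own proof: both handle the trivial large-radius case by taking $U = B_\epsilon(z,r)$ itself, and otherwise choose the iterate $n$ so that $\theta^{-n}r$ lands in $[\theta r_0, r_0]$ and invoke Theorem~\ref{thm:visual}\eqref{item:scaling} to see $f^n$ as an exact homothety onto $U = B_\epsilon(f^n(z),\theta^{-n}r)$, with the leftover factor $\theta^{-n}r$ absorbed into a uniform bilipschitz constant. The only cosmetic difference is where you split the cases ($r_0$ versus $\theta r_0$) and your explicit check that the intermediate radii stay below $r_0$, which the paper leaves implicit.
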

\begin{proof} Let $\epsilon$, $r_0$, and $\theta$ be as in Theorem
  \ref{thm:visual}.  Set $L:=\diam(Z)/(\theta r_0)$. Choose any $0<r<\diam(\cJ)$. Let
  $B_\epsilon(x,r)$ be any ball. If $r>\theta r_0$ we take $U\coloneqq
  B_\epsilon(x,r)$ and note that the metric space $(B_\epsilon(x,r),
  d_\epsilon)$ is $L$-bilipschitz equivalent to its rescaling 
  $(B_\epsilon(x,r), \frac{1}{r} d_\epsilon)$. If $r<\theta r_0$, let
  $n$ be the unique integer for which $\theta r_0 < \theta^{-n}r \leq
  r_0$.  Set $U\coloneqq B_\epsilon(f^nx, \theta^{-n}r)$. By Theorem \ref{thm:visual}(\ref{item:scaling}), the open set $U$ is isometric to the rescaled metric space $(B_\epsilon(x,r), \theta^{-n}d_\epsilon)=(B_\epsilon(x,r), \frac{1}{r}\cdot r\theta^{-n}d_\epsilon)$ which is in turn $L$-bilipschitz equivalent to $(B_\epsilon(x,r), \frac{1}{r}d_\epsilon)$ by our choice of $n$.
\end{proof}


\section{Energies of graph maps}
\label{sec:energies}
In this section, we introduce asymptotic $q$-conformal energies associated to
virtual endomorphisms of graphs and related analytical notions of
extremal length.  This section is a review of concepts from
\cite{Thurston19:Elastic}, which contains further motivation,
especially in its Appendix~A.

In this section, all graphs are assumed to be finite.

\subsection{Weighted and \textalt{$q$}{q}-conformal graphs}
\label{subsecn:pcg} 
\begin{definition}
\label{defn:pcg}
Suppose $G$ is a graph, and $1 \leq q \leq \infty$. 
\begin{enumerate}
\item For $1 < q \leq \infty $, a \emph{$q$-conformal structure} on
  $G$
  is a positive
  \emph{$q$-length} $\alpha(e)$ on each edge~$e$, giving a length
  metric $|dx|$ in which~$e$
  has length $\alpha(e)$. For $q=\infty$, this is also called a
  \emph{length graph}.
\item
  For $q=1$, a \emph{$1$-conformal structure} on $G$
  is a positive weight $w(e)$ on each edge~$e$.
  These weights do not determine a length structure. (For instance, in
  cases where
  we take derivatives for maps from a $1$-conformal graph, the length
  structure is arbitrary.)
  A $1$-conformal graph is also called a \emph{weighted graph}.
\end{enumerate}
We will write $G^q$ for a graph together with a choice of $q$-conformal
structure on it, and denote its underlying graph by $G$. 
\end{definition}

\begin{remark}
  Although $q$-conformal structures for $q \in (1,\infty]$ are all
  formally the same data, we distinguish the value of~$q$ for two
  reasons. First, it helps us keep track of which energies $E^p_q$ to
  consider. Secondly, from another point of view a $q$-conformal
  structure is naturally thought of as an equivalence class of pairs
  $(\ell,w)$ of a length structure~$\ell$ and weights~$w$ under a
  rescaling depending on~$q$ \cite[Def.\ A.17]{Thurston19:Elastic},
  and from that point of view there are two natural length-like
  structures in the picture.

The distinctions between $q$-conformal graphs as $q$ varies arise from
how various related
analytical quantities are defined and scale under changes of weights.
Imagine each edge as ``thickened'' with an extra $(q-1)$-dimensional
space, to obtain a ``rectangle''
equipped with $q$-dimensional Hausdorff measure $\mathcal{H}^q$.
Scaling the length of an edge by a factor $\lambda$ changes the total
$\mathcal{H}^q$-measure by the factor $\lambda^q$ and therefore scales
in the imaginary direction ``orthogonal'' to the edge by the factor
$\lambda^{q-1}$.
\end{remark}

An important special case of weighted graphs (i.e., $q=1$) is when the
underlying
space is a $1$-manifold $\bigcup_i J_i$, where each $J_i$ is
an interval or the circle. We may regard each $J_i$ as a graph, by
adding the endpoints of the interval, or an arbitrary vertex on the
circle. Up to isomorphism, the result is unique. A formal sum
$C = \sum w_i J_i$ with $w_i > 0$ then determines a weighted graph in a
unique way, by setting the weight of the unique edge in $J_i$ to
be~$w_i$. Ignoring the graph structure, we also call $C$ a weighted $1$-manifold.

\begin{definition}\label{def:multicurve}
  A \emph{curve} on a space~$X$ is a connected 1-manifold $C$ (either $I$ or $S^1$) together with a map $\gamma \co C \to X$. We refer
  to the curve by just the map~$\gamma$ to be short, but the underlying
  domain~$C$ is part of the data. (Note the distinction with a
  \emph{path},
  where, for us, the domain is a fixed interval of real numbers.) For a
  \emph{multi-curve} we drop the restriction that $C$ be connected,
  giving $C = \bigcup_i J_i$ where the $J_i \in \{I, S^1\}$ are the connected
  components and the map $\gamma$ is determined by
  $\gamma_i\co J_i \to X$. A \emph{strand} of a multi-curve is one of its
  component curves~$\gamma_i$. A \emph{weighted multi-curve} $\gamma\colon C \to X$ is similar, but
  where $C=\sum w_i J_i$ has the structure of a weighted graph; we also denote this by $\gamma = \sum w_i \gamma_i$. The homotopy class of a curve or multi-curve $\gamma$ is denoted $[\gamma]$.
\end{definition}

\subsection{Energies and extremal length} 
\label{subsec:el} 
For any
$1 \le p \le q \le \infty$ and piecewise linear (PL) map $\phi \co G^p \to H^q$ from a
$p$-conformal graph to a $q$-conformal graph, there is an energy
$E^p_q(\phi)$ with well-behaved properties. For any of these energies
and a homotopy class $[\phi]$ of maps as above, we denote by
$E^p_q[\phi]$ the infimum of the energy over maps in the class. In
this paper we only need a few special cases.

\subsubsection*{Case \textalt{$p=1$}{p=1} and \textalt{$1<
    q<\infty$}{1<q<infty}}
We switch names to consider a PL map
$\phi\co W^1 \to G^q$ from a weighted (or $1$-conformal) graph $W^1$ to a
$q$-conformal graph $G^q$, where $1 < q < \infty$. (In this paper
$W^1$ will typically be a 1-manifold, so this is the structure of a
weighted curve on~$G^q$.) Let
$\qdual=q/(q-1)$ be
the H\"older conjugate of~$q$.

Define
\[ E^1_q(\phi)\coloneqq \norm{n_\phi}_{\qdual, G^q}.\]
We now explain the notation.  For $y \in G^q$, the value 
\[ n_\phi(y)\coloneqq  \sum_{\phi(x)=y}w(e(x))\]
is the weighted number of pre-images
of~$y$, where for $x \in W^1$, the quantity $e(x)$ denotes an edge
containing $x$. The edge-weights $\alpha$ on $G^q$, when interpreted
as lengths, define a length measure $|dy|$ on the underlying set of
$G^q$ such that $\int_e |dy| = \alpha(e)$ for each edge $e$ of $G^q$.
The quantity $n_\phi(y)$ may be infinite (if, e.g., $\phi$ collapses an
edge to the point~$y$) or undefined (if, e.g., $x \in \phi^{-1}(y)$ is a
vertex incident to edges with different weights). However, the
assumption that $\phi$ is PL and our convention that the graphs are finite imply that the set of such $y$ is
finite, hence of $|dy|$-measure zero. The quantity $\norm{n_\phi}_{\qdual, G^q}$
is then the usual $L^{\qdual}$ norm of the function $n_\phi$ with respect to
the measure $|dy|$.
For instance,
$E^1_\infty(\phi)$ is the weighted total length of the image
of~$\phi$.

If $n_\phi$ is constant on edges of~$G^q$---automatic if $\phi$
minimizes energy in its homotopy class---the formula for the energy is
concretely given by
\begin{equation}\label{eq:E1q}
  E^1_q(\phi) = \biggl(\sum_{e \in \Edges(G)} \alpha(e)
  n_\phi(e)^{\qdual}\biggr)^{1/\qdual}.
\end{equation}

\begin{definition}
\label{defn:reduced}
A weighted multi-curve $\gamma\co C \to G$ on a graph $G$ is \emph{reduced} if the restriction to each strand is either constant, or has arbitrarily small perturbations that are locally injective.
\end{definition}
Thus the images of the strands of a reduced multi-curve have no
backtracking.
Reduced curves $\phi$ minimize 
$E^1_q$ in their homotopy class \cite[Proposition
3.8 and Lemma 3.10]{Thurston19:Elastic}.

We also define the \emph{$q$-extremal-length} of a homotopy class of
maps $\phi\co W^1 \to G^q$
by
\begin{equation}\label{eq:ELq}
  \EL_q[\phi] \coloneqq \bigl(E^1_q[\phi]\bigr)^q.
\end{equation}
To justify the terminology, note that as explained in
\cite[\S5.2]{Thurston16:RubberBands}, the
minimizer of extremal length over a suitable homotopy class of maps
exists and is realized by a map with nice properties, and the minimum
value, when formulated as an extremal problem, mimics the usual
definition of extremal length. See also \S\ref{sec:comb-mod-graph} below.

\subsubsection*{Case \textalt{$1 < p < \infty$}{1<p<infty} and \textalt{$q=\infty$}{q=infty}}
The \emph{$p$-harmonic energy} of a PL map $\phi \co
G^p \to K^\infty$ from a $p$-conformal graph to a length graph is
\[
  E^p_\infty(\phi) \coloneqq \norm{\abs{\phi'}}_{p,G^p}.
\]
Here $|\phi'|$ denotes the size of the derivative:
since $p>1$, the conformal graph $G^p$ has a length structure, so we
can differentiate. If the
derivative of~$\phi$ is constant on the edges of~$G^p$---automatic if
$\phi$ minimizes energy in its
homotopy class---this is
\[
  E^p_\infty(\phi) = \biggl(\sum_{e\in\Edge(G)}
    \alpha(e)^{1-p}\ell(\phi(e))^p\biggr)^{1/p},
\]
where $\ell(\phi(e))$ is the total length of the image of~$e$.

\subsubsection*{Case \textalt{$p = 1$}{p=1} and \textalt{$q=\infty$}{q=infty}} This
is the common limit of the above two cases, slightly modified since
$1$-conformal graphs have a weight~$w$ instead of a
$p$-length~$\alpha$. For a PL map $\phi \co W^1 \to K^\infty$ from a weighted
graph to a length graph, set
\[
  E^1_\infty(\phi) \coloneqq \int_{x \in W} w(x) \abs{\phi'(x)}\,dx
    = \int_{y \in K} n_\phi(y)\,dy.
\]
This is the weighted length of the image of~$\phi$.

\subsubsection*{Case \textalt{$1<p=q<\infty$}{1<p=q<infty}}  A PL map $\phi\co G^q \to H^q$ between $q$-conformal
graphs has \emph{filling function}
$\Fill^q(\phi)\co H^q \to \mathbb{R}$ given at generic points by
\begin{align}
   \Fill^q(\phi)(y)&\coloneqq\sum_{\phi(x)=y}\lvert \phi'(x)\rvert^{q-1}
   \label{eq:Fillq}\\
\intertext{and a $q$-conformal \emph{energy} given by}
  E^q_q(\phi)&\coloneqq\bigl(\norm{\Fill^q(\phi)}_{\infty, H}\bigr)^{1/q}.
\end{align}
If we interpret edges of conformal graphs as ``thickened to
rectangles'' as described above, the filling function sums the
``thicknesses'' of the ``rectangles'' over a fiber.

\subsubsection*{Case \textalt{$p=q=1$}{p=q=1}} A PL map $\phi \co G^1
\to H^1$ has an energy $E^1_1$ that is again a limit
of the above case, modified to account for weights rather than
$q$-lengths:
\[
  N(\phi) = E^1_1(\phi) \coloneqq \esssup_{y \in H} \frac{n_\phi(y)}{w(y)}.
\]
We will apply this in cases where the weights are all~$1$, in which
case this amounts to counting the essential maximum of the number of
preimages of any point. (``Essential'' as usual means that we can
ignore sets of linear measure zero, and in particular we can ignore images of edges of~$G^1$ that map to a
single point in~$H^1$.) We will also call this quantity~$N(\phi)$.

\subsubsection*{Properties of energies}  

It is not too hard to see that these energies above are all \emph{sub-multiplicative}: if
$\phi \co G^p \to H^q$ and $\psi\co H^q \to K^r$ are maps from a
$p$-conformal graph to a $q$-conformal graph to an $r$-conformal graph
(where the energies are defined), then
$E^p_r(\psi \circ \phi) \leq E^p_q(\phi)E^q_r(\psi)$ \cite[Prop.~A.12]{Thurston19:Elastic}.

Given $\phi\co G^q \to H^q$,
we denote by $[\phi]$ its homotopy class. It is natural to consider
minimizers of $E^q_q$ over the class $[\phi]$. The fact that
minimizers of these energies exist is not obvious, but more is true.
We first give some general context.

\begin{definition}[{\cite[Definition 1.33]{Thurston19:Elastic}}]
  Given maps $\phi \co G^p \to H^q$ and $\psi \co H^q \to K^r$ between
  graphs $G^p$, $H^q$, and $K^r$ with the respective conformal structures,
  we say that the sequence $\shortseq{G^p}{\phi}{H^q}{\psi}{K^r}$ is \emph{tight} if
  \[
    E^p_r[\psi \circ \phi] = E^p_r(\psi \circ \phi) = E^p_q(\phi) E^q_r(\psi).
  \]
\end{definition}
Together with sub-multiplicativity of energy, the existence of a tight
sequence implies that $\phi$
and $\psi$ both minimize energy in their respective homotopy classes,
and furthermore the sub-multiplicativity inequalities are sharp.
Then \cite[Theorem~6, Appendix~A]{Thurston19:Elastic} asserts that
\emph{any} map $\phi \co G^q \to H^r$ can be
homotoped to be part of a tight sequence (on either side). We will
state and use two
special cases, starting with the easier one.

\begin{proposition}\label{prop:duality}
  Pick $1 \le q \le \infty$ and let $G^q$ be a $q$-conformal graph.
  For any reduced multi-curve $\gamma \colon C^1 \to G^q$ on~$G$, there is a
  length graph~$K^\infty$ and map $\psi \colon G^q \to K^\infty$ so that
  \[
    E^1_q(\gamma) = E^1_q[\gamma] = \frac{E^1_\infty(\psi \circ \gamma)}{E^q_\infty(\psi)}
  \]
  and the maps $\psi$ and $\psi \circ \gamma$ minimize the energies $E^q_\infty$
  and $E^1_\infty$, respectively, in their homotopy classes.
  Furthermore, we can take $K^\infty$ to have the same underlying graph as~$G^q$
but with different edge lengths, and can take $\psi$ to be the
  identity.
\end{proposition}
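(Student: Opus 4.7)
The plan is to take $K$ to have the same underlying graph as~$G$ but with new edge lengths $\beta(e)$ to be chosen, and take $\psi = \id$. With this setup, $\psi \circ c$ equals $c$ as a set map (viewed with codomain~$K$), and every quantity of interest reduces to a sum over edges of~$G$. The conclusion will then fall out as the equality case of Hölder's inequality, with $\beta$ determined by the equality condition.

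I carry this out first for $1 < q < \infty$. Since $c$ is reduced, the multiplicity $n(e) \coloneqq n_c(e)$ is constant on each edge of~$G$, so by Eq.~\eqref{eq:E1q},
\[
  E^1_q(c)^{\qdual} = \sum_e \alpha(e)\, n(e)^{\qdual}.
\]
With $\psi = \id$, the derivative of~$\psi$ on edge~$e$ equals $\beta(e)/\alpha(e)$, so
\[
  E^1_\infty(\psi \circ c) = \sum_e \beta(e)\, n(e), \qquad
  E^q_\infty(\psi)^q = \sum_e \alpha(e)^{1-q} \beta(e)^q.
\]
Applying Hölder's inequality with conjugate exponents $q$ and $\qdual$ to the factorization
\[
  \beta(e)\, n(e) = \bigl(\alpha(e)^{(1-q)/q}\beta(e)\bigr)\cdot\bigl(\alpha(e)^{(q-1)/q}n(e)\bigr)
\]
yields $E^1_\infty(\psi \circ c) \le E^q_\infty(\psi)\cdot E^1_q(c)$ for every positive~$\beta$, with equality exactly when the two factors are proportional: $\alpha(e)^{1-q}\beta(e)^q = \lambda\,\alpha(e)\,n(e)^{\qdual}$ for some $\lambda > 0$ independent of~$e$. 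Solving gives the prescription
\[
  \beta(e) = \alpha(e)\, n(e)^{1/(q-1)},
\]
up to an irrelevant positive scalar; this defines~$K$.

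To finish I verify the minimality statements. Since $\psi = \id$ sends each edge of~$G$ to the corresponding edge of~$K$ by a constant-speed parametrization, any homotopic map must cover each edge of~$K$ at least once, and Jensen's inequality applied to $t \mapsto t^q$ shows that $\psi$ realizes the minimum of~$E^q_\infty$ in its homotopy class. Likewise $\psi \circ c = c$ is reduced by hypothesis, so it minimizes $E^1_\infty$ in its class by the general fact that reduced multi-curves minimize $E^1_q$ for every~$q$. The boundary cases $q = 1$ and $q = \infty$ are handled by direct computation (for $q = \infty$ one may take $\beta(e) = \alpha(e)$; for $q = 1$ one concentrates~$\beta$ on the edge maximizing $n(e)/w(e)$). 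The main obstacle is bookkeeping: one must handle edges with $n(e) = 0$, where the Hölder-optimal~$\beta$ degenerates and one must either shrink to the subgraph actually covered by~$c$ or perturb and pass to a limit, and one must negotiate the differing conventions for weighted versus length graphs at the endpoints $q = 1, \infty$. Away from these degeneracies, the argument is uniformly a one-line application of the equality case of Hölder's inequality.
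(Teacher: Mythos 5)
Your route---take $K$ to be $G$ with new edge lengths $\beta(e)=\alpha(e)\,n_c(e)^{1/(q-1)}$, take $\psi=\id$, and read the identity off as the equality case of H\"older---is exactly the ``duality of $L^q$ and $L^{\qdual}$ norms'' that the paper has in mind; the paper gives no argument of its own, deferring to \cite[Theorem~6]{Thurston19:Elastic} and describing the content in precisely these terms. Your computations of $E^1_\infty(\psi\circ c)=\sum_e\beta(e)n(e)$ and $E^q_\infty(\psi)^q=\sum_e\alpha(e)^{1-q}\beta(e)^q$, and the equality condition determining $\beta$, are correct for $1<q<\infty$ on edges with $n_c(e)>0$.

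The genuine gap is your justification that $\psi=\id$ minimizes $E^q_\infty$ in its homotopy class. First, the claim that any map homotopic to the identity must cover every edge of~$K$ is false when $G$ has a separating edge with a tree on one side (e.g.\ a valence-one vertex): such edges can be missed, and there the identity is in fact not a minimizer. Second, and more seriously, even when every edge must be covered, Jensen only gives $\int_{g^{-1}(e^\circ)}\abs{g'}^q\,dx\ge \beta(e)^q/\mu_e^{q-1}$ with $\mu_e$ the $\alpha$-length of $g^{-1}(e^\circ)$, and nothing forces $\mu_e\le\alpha(e)$ edge by edge; for \emph{arbitrary} target lengths $\beta$ the identity between two metrics on the same graph is generally not the $E^q_\infty$-minimizer in its class (a map redistributing speeds can beat it, e.g.\ on a subdivided loop with unequal $\beta$), so an argument that never uses your particular choice of $\beta$ cannot be right. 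Minimality of $\psi$ here is special to the chosen $\beta$ and is most easily certified \emph{using the curve}: for any $\psi'\simeq\psi$, sub-multiplicativity gives $E^1_\infty(\psi'\circ c)\le E^1_q(c)\,E^q_\infty(\psi')$, while $\psi\circ c=c$ is reduced and hence minimizes $E^1_\infty$ in its class, so $E^1_\infty(\psi\circ c)\le E^1_\infty(\psi'\circ c)$; combined with the equality you already proved, this yields $E^q_\infty(\psi)\le E^q_\infty(\psi')$ (when $E^1_q(c)>0$), with no covering or Jensen argument needed. Relatedly, the $n_c(e)=0$ degeneracy you flag is not cosmetic: if $\beta(e)>0$ on an uncovered edge then H\"older is strict and the claimed equality fails outright, so the ``shrink to the covered subgraph or pass to a limit'' step (or allowing collapsed edges) must actually be carried out, and the endpoint cases $q=1,\infty$ likewise need the concentration argument written down rather than gestured at.
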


This is the case $p=1$ of \cite[Theorem~6]{Thurston19:Elastic}; see
also \cite[Proposition~A.10]{Thurston19:Elastic}.
It can be viewed as the duality of $L^q$ and $L^{\qdual}$ norms. The
case $q=2$ appears as the duality map in
\cite[Definition~6.21]{Thurston19:Elastic}. Since the proof of this
case was omitted in the cited paper, we include it here.

\begin{proof}
  Define $K^\infty$ to
  have underlying graph $G$ with the length of
  an edge given by $\ell(e) \coloneqq \alpha(e) n_\gamma(e)^{1/(q-1)}$. Set
  $\psi$ to be the identity as a map from $G$ to itself.
  Then $\psi \circ \gamma \colon C^1 \to K^\infty$ is reduced. We thus have
  \begin{align*}
    E^1_\infty[\psi \circ \gamma]
      &= E^1_\infty(\psi \circ \gamma) \\
      &= \sum\nolimits_e n_\gamma(e) \cdot \ell(e) \\
      &= E^1_q(\gamma) E^q_\infty(\psi),
  \end{align*}
  where last equality is the equality case of the Hölder inequality
  $\norm{n_\gamma \cdot (\ell/\alpha)}_{1,G^q} \le \norm{n_\gamma}_{\qdual,G^q}
  \cdot \norm{\ell/\alpha}_{q,G^q}$, and
  is immediate if you expand the
  definitions. Thus the sequence
  $\shortseq{C^1}{\gamma}{G^q}{\psi}{K^\infty}$ is tight, yielding all the
  conclusions in the statement by
  \cite[Lemma~1.34]{Thurston19:Elastic}.
\end{proof}

For the other version, we
define another quantity, the \emph{stretch factor}.
Define
\[
  \SFto_q[\phi] \coloneqq \sup_{[\gamma] \co C \to G} \frac{E^1_q[\phi \circ \gamma]}{E^1_q[\gamma]}.
\]
where the supremum runs over all non-trivial homotopy classes of
weighted multi-curves $\gamma\co C^1\to G^q$ (which we
may take to be reduced). In other words, by Eq.~\eqref{eq:ELq}, the
$q$-stretch-factor is
a power of the maximum ratio of distortion of $q$-extremal-length of
homotopy classes of maps of weighted multi-curves.
(It is easy to see that the supremum is the same if we take it over
unweighted multi-curves or over unweighted curves, but in either of
these other cases the supremum is
not always realized.)

\begin{citethm}
\label{thm:sf} For $1 \leq q \leq \infty$ and $[\phi]\co 
G^q \to H^q$ a homotopy class of maps between $q$-conformal graphs,
there is a map $\psi \in [\phi]$, a weighted 1-manifold $C^1=\sum_i
w_iJ_i$, and a
map $\gamma \colon C^1 \to H^q$ fitting into a tight sequence
\[ C^1 \overset{\gamma}{\longrightarrow}G^q \overset{\psi}{\longrightarrow}H^q.\]
Explicitly, $\psi$ minimizes $E^q_q$ in $[\phi]$ and
\[ E^q_q(\psi)=E^q_q[\phi]=\frac{E^1_q(\psi \circ \gamma)}{E^1_q(\gamma)}=\SFto_q[\phi].\]
The multi-curves $\gamma\co C^1 \to G^q$ and $\psi \circ \gamma\co C^1 \to H^q$ are
reduced and thus minimize $E^1_q$ in their homotopy classes.
Furthermore, we can choose the curve so that
for each edge $e$ of $G^q$ and each strand $J_i$ of $C^1$, the image
of the restriction $\gamma|J_i$ meets $e$ at most twice.
\end{citethm}

Most of this is the case $p=q$ of
\cite[Theorem~6]{Thurston19:Elastic}.
The last conclusion in Theorem~\ref{thm:sf} (on $C^1$ meeting each edge
of~$G^q$ at most twice) is a consequence of
\cite[Proposition~3.19]{Thurston19:Elastic}.

\subsection{Asymptotic energies of virtual graph endomorphisms}
In this subsection, we summarize some results from \cite{Thurston20:Characterize}.

We now assume $\pi, \phi\co G_1 \rightrightarrows G_0$ is a virtual
endomorphism of graphs. Following the notation from
\S\ref{subsecn:multivalued}, for $n=1, 2, \ldots$ let
$\phi^n_0\co G_n \to G_0$ and $\pi^n_0\co G_n \to G_0$ be the induced maps.
For ease of reading, we write $\phi^n\coloneqq\phi^n_0$ and $\pi^n\coloneqq\pi^n_0$. 

Fix $1 \leq q \leq \infty$. Fix a
$q$-conformal structure on~$G_0$: for
$q > 1$, pick $q$-lengths~$\alpha_0$, or for $q=1$, pick weights~$w_0$. This
$q$-conformal structure can be lifted under the coverings
$\pi^n\co G_n \to G_0$ to yield a $q$-conformal structure
$\alpha_n$ or~$w_n$ on~$G_n$ that we denote $G_n^q$. 

\begin{definition}[Asymptotic $q$-conformal energy]
\label{defn:ae}
Suppose $1 \leq q \leq \infty$. The \emph{asymptotic $q$-conformal energy} of a
virtual endomorphism $(\pi,\phi)$ is
\[ \oE^q(\pi, \phi)\coloneqq \lim_{n \to \infty} E^q_q[\phi^n]^{1/n}\]
where $\phi^n\colon G_n^q \to G_0^q$ and we use the lifted $q$-conformal structure as above.
\end{definition}
The limit exists, and is equal to the infimum of the terms, since the
energies are invariant under passing to covers, and are
sub-multiplicative; see \cite[Prop.\ 5.6]{Thurston20:Characterize}.
Since we take homotopy classes of $\phi^n$ on the right-hand side, it follows
easily (\cite[Prop.\ 5.7]{Thurston20:Characterize}) that $\oE^q(\pi,\phi)$ only depends on the homotopy
class of $(\pi,\phi)$ as defined in \cite[Def.\ 2.2]{Thurston20:Characterize}.  In particular, it is independent of the choice of $q$-conformal structure on $G_0$.  So
we will henceforth write the asymptotic energy as $\oE^q[\pi,\phi]$. In addition, the asymptotic energy is continuous and non-increasing in the exponent~$q$ \cite[Prop.\ 6.10, Cor.\ 6.12]{Thurston20:Characterize}. We summarize these facts as follows:

\begin{proposition}[{\cite[Prop.\ 6.10]{Thurston20:Characterize}}]
\label{prop:energy}
Suppose $1 \leq q \leq \infty$ and $\pi, \phi\co G_1 \rightrightarrows G_0$
is a virtual endomorphism of graphs. Then the
asymptotic $q$-conformal energy $\oE^q[\pi,\phi]$ depends only on the
homotopy class of $[\pi, \phi]$, and is a continuous and
non-increasing as a function of~$q$.
\end{proposition}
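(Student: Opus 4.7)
The proposition has two claims, which I would address separately: that $\oE^q[\pi,\phi]$ depends only on the homotopy class of $(\pi,\phi)$, and that it is non-increasing in~$q$.

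For homotopy invariance, my plan is to show that each finite-level energy $E^q_q[\phi^n]$ is itself a homotopy invariant of $[\pi,\phi]$; the limit $\lim_n E^q_q[\phi^n]^{1/n}$ then inherits the invariance automatically. The structural point is that $G_n$ is built by a tower of pullback squares (Lemma~\ref{lem:pullback}), so a homotopy between representatives $(\pi,\phi)$ and $(\pi',\phi')$ of the same class lifts, via unique homotopy-lifting under the covering maps, to a compatible sequence of homotopy equivalences $G_n \simeq G'_n$ intertwining $\pi^n_0$ with $(\pi')^n_0$ and $\phi^n_0$ with $(\phi')^n_0$. Because the $q$-conformal structure on~$G_n$ is by construction the $\pi^n_0$-pullback of the fixed structure on~$G_0$, these equivalences preserve the conformal data, and $E^q_q[\phi^n]$---an invariant of the homotopy class of a map between $q$-conformal graphs---takes the same value for both representatives.

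For monotonicity in~$q$, my plan is to use the stretch factor formulation of Theorem~\ref{thm:sf},
\[
  E^q_q[\phi^n] = \SFto_q[\phi^n] = \sup_{[c]} \frac{E^1_q[\phi^n \circ c]}{E^1_q[c]},
\]
with the supremum over non-trivial homotopy classes of reduced weighted multicurves, and then compare across $q_1 < q_2$. Writing $E^1_q(c) = \lVert n_c\rVert_{q/(q-1),\,G}$ as a length-weighted $L^{q/(q-1)}$-norm of the edge-multiplicity function, the passage from $q_1$- to $q_2$-energies reduces to an $L^p$-interpolation statement, and I would apply H\"older's inequality on the finite-length graphs $G_0$ and $G_n$ separately to the numerator and denominator. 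The key geometric input is that $\pi^n_0$ is a local isometry of degree~$d^n$, so the total length of~$G_n$ equals $d^n \cdot L_0$; this uniform scaling makes the correction factors appearing on the numerator and denominator commensurate in~$n$.

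The hardest part is controlling the H\"older correction finely enough that the resulting inequality survives the $n$-th root and limit rather than degenerating into a vacuous statement. To handle this I would exploit the final clause of Theorem~\ref{thm:sf}: the extremal multicurve $c$ can be chosen so that each strand meets each edge of~$G_n$ at most twice, bounding $n_c$ uniformly and allowing the $L^{q_1'}$- and $L^{q_2'}$-norms of~$n_c$ to be compared up to an absolute constant that depends only on~$G_0$ (not on~$n$). After extracting $n$-th roots and letting $n \to \infty$, this constant vanishes and yields $\oE^{q_2}[\pi,\phi] \le \oE^{q_1}[\pi,\phi]$. The detailed computation is carried out in \cite[Proposition~6.10]{Thurston20:Characterize}, which I would follow.
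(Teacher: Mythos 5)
First, a framing note: the paper does not actually prove this proposition---both halves are imported verbatim from \cite{Thurston20:Characterize} (Prop.~5.7 for homotopy invariance, Prop.~6.10 for monotonicity)---so there is no in-paper argument to match, and your closing deferral to that reference is in effect exactly what the paper does. Judging your sketch on its own merits: the invariance half is right in spirit, but your claim that each finite-level energy $E^q_q[\phi^n]$ is itself a homotopy invariant is too strong. These numbers depend on the chosen $q$-conformal structure on~$G_0$ and on the representative graphs (a homotopy equivalence $G_0 \simeq G_0'$, or a change of $q$-lengths, changes them); what is true is that such changes distort $E^q_q[\phi^n]$ by multiplicative constants bounded independently of~$n$, so only the limit of $n$-th roots is an invariant of $[\pi,\phi]$. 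Only homotoping $\phi$ with everything else fixed leaves $E^q_q[\phi^n]$ literally unchanged.

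The monotonicity half has the right architecture (stretch-factor formulation of Theorem~\ref{thm:sf}, a H\"older comparison of $E^1_{q_1}$ and $E^1_{q_2}$, constants removed by the $n$-th root), but the mechanism you give for the crucial $n$-independence of the constant is not correct. The observation that the total length of $G_n$ is $d^nL_0$ does not make the corrections ``commensurate'': any H\"older correction on $G_n$ involving its total measure grows like $d^{n(1/q_2^\vee-1/q_1^\vee)}$, whose $n$-th root survives the limit as precisely the factor $d^{1/q_1-1/q_2}$---that route only reproves Proposition~\ref{prop:Eq-lower-bound}, not monotonicity. Likewise, the ``each strand meets each edge at most twice'' clause bounds $n_c$ only by twice the total weight of the multicurve, an arbitrary normalization, and does not by itself yield $\norm{n_c}_{q_1^\vee,G_n}\le C\norm{n_c}_{q_2^\vee,G_n}$ with $C$ independent of~$n$. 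What does work: the extremal strands are reduced essential loops, so $n_c$ is constant on open edges of~$G_n$, and the edge lengths of~$G_n$ are pulled back from~$G_0$, hence bounded below by $\ell_{\min}(G_0)$; this gives $\norm{n_c}_\infty\le \ell_{\min}^{-1/q_2^\vee}\norm{n_c}_{q_2^\vee}$ and then $\norm{n_c}_{q_1^\vee}\le \ell_{\min}^{1/q_1^\vee-1/q_2^\vee}\norm{n_c}_{q_2^\vee}$, a constant depending only on~$G_0$. For the numerator the curves $\phi^n\circ c$ live on the fixed graph~$G_0$, so the mass-based bound there costs only $L_0^{1/q_2^\vee-1/q_1^\vee}$. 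With these two corrections (and a separate word for the endpoints $q=1,\infty$), your plan does give $E^{q_2}_{q_2}[\phi^n]\le C(G_0,q_1,q_2)\,E^{q_1}_{q_1}[\phi^n]$ and hence $\oE^{q_2}\le\oE^{q_1}$; as written, though, the justification of the key step would fail, and the real work is being carried by the citation.
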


We will assume that $\oE^\infty[\pi, \phi]<1$; equivalently,
after passing to an iterate, there is a metric on $G_0$ such that
$\phi\co G_1 \to G_0$ is contracting \cite[\S 6, p. 36]{Thurston20:Characterize}. This places us in the
setup of \S\ref{sec:ve_and_dynamics}, so we have, by Theorem \ref{thm:lc}, a locally connected limit space $\cJ$ and a positively expansive self-cover $f\co \cJ \to \cJ$. The asymptotic
energies $\oE^q[\pi, \phi]$ become numerical invariants of our
presentation of the dynamical system $f\co \cJ \to \cJ$.
\begin{question}\label{quest:invariance}
  Is $\oE^q[\pi,\phi]$ an invariant of the topological conjugacy class
  of $f \co \cJ \to \cJ$? That is,
  if two different contracting graph virtual endomorphisms
  $\pi,\phi \co G_1 \rightrightarrows G_0$ give homeomorphic
  limit spaces $\cJ$ and topologically conjugate maps~$f$ on them, do
  the energies $\oE^q$ coincide? Is this the case if they are combinatorially equivalent in the sense of \cite{Nekrashevych14:CombModel}?
\end{question}

\begin{remark}\label{rem:invariance}
We expand on Question~\ref{quest:invariance}. If we start with a
hyperbolic pcf rational map $f \co \CCa \to \CCa$, then we can take
$G_0$ to be a spine of $\CCa$ minus the post-critical set $P_f$
of~$f$, and $G_1 \coloneqq f^{-1}(G_0)$. Since any two spines of the
complement of $P_f$ are homotopy-equivalent, any two such choices will
give homotopy-equivalent virtual endomorphisms, and the same energies
$\oE^q$ and critical exponents $q^*$ and~$q_*$. But there are other
virtual endomorphisms that give the same dynamics on the limit space.
For instance, we could reindex, considering
$\pi^n_{n-1}, \phi^n_{n-1} \co G_n \rightrightarrows G_{n-1}$
as in \S\ref{subsecn:multivalued},
without
changing the dynamics of $f$ on~$\cJ$. (Note, by contrast, that
iterating to
$\pi^n_0, \phi^n_0 \co G_n \rightrightarrows G_0$ does not change $\cJ$, but
does change $f$ and~$\oE^q$ in a predictable way.) More generally, for
a hyperbolic pcf rational
map one could look at a forward-invariant set $P' \supsetneq P_f$ of
Fatou points, adding some preimages of points in~$P_f$, and construct
a virtual endomorphism from that; for $P' = f^{-n}(P_f)$, we get
$(\pi^{n+1}_n,\phi^{n+1}_n)$, but there are many other possibilities,
yielding virtual endomorphisms that are not homotopy equivalent in the
sense of \cite[Def.~2.2]{Thurston20:Characterize}
(since
the graphs $G_0$ have different rank) but give the same limiting
dynamics on $\cJ$.

For general dynamics $f \co \cJ \to \cJ$, it is unclear how to
parameterize all the different ways to see $f$ as the limit of a graph
virtual endomorphism, and thus Question~\ref{quest:invariance} is
open. In the special case of rational maps $f \co \CCa \to \CCa$ (or
expanding branched self-covers), all spines of $\CCa - P_f$
are homotopy equivalent, and so we can write $\oE(f)$ unambiguously.
Another question for future research it to consider cases where $\cJ$
is not topologically
$1$-dimensional and thus not a limit of graph virtual endomorphisms at
all: there should be some suitable replacement for the graph
energies~$E^p_q$.
\end{remark}


\section{Combinatorial modulus}
\label{sec:modulus}

In this section, we first recall the definition of the combinatorial
$q$-modulus (or, equivalently, its inverse,
the combinatorial
$q$-extremal-length)
associated to a family $\Gamma$ of curves $\gamma$ on a topological space $X$
equipped with a
finite cover~$\cU$. In fact there are several such notions, treating families of curve, of multi-curves, and of weighted multi-curves, as defined in \S \ref{subsecn:pcg}.  We next show their
coarse equivalence. Suppose $\Gamma$ is a family of curves on $X$. 
\begin{itemize}
\item There is a canonically associated family $\Gamma^w$ of weighted multi-curves $\sum w_i\gamma_i$, $\gamma_i \in \Gamma$, normalized so $\sum_i w_i=1$.  We extend the definition of combinatorial modulus to weighted families so that the moduli of $\Gamma$ and $\Gamma^w$ coincide; see Lemma \ref{lemma:weighted}.  
\item The family $\Gamma$ may be chopped up, or \emph{subdivided}, into a family of weighted multi-curves whose strands are shorter curves.  Under natural conditions, the moduli of $\Gamma$ and of the resulting subdivided family are comparable, with control; see Lemma~\ref{lem:subdivide}.  
\item If $\cU$ and $\cV$ are two covers with controlled overlap, then
  the moduli of the family $\Gamma$ with respect to $\cU$ and to $\cV$
  are comparable, with control; see Lemma~\ref{lemma:bounded_overlap}.
  As an application, two curves that are
  close, with control, have comparable modulus; see
  Lemma~\ref{lemma:regularity}.
\item If $X=G^q$ is a $q$-conformal graph, $C$ is a weighted $1$-manifold 
  as in \S \ref{subsecn:pcg}, and $\Gamma$ is the set of maps
  $\gamma\co C \to G^q$ from $C$ to $G^q$ in a fixed
  homotopy class $[\gamma]$, then the combinatorial $q$-extremal-length
  of~$\Gamma$ with respect to the covering by closed edges of~$G^q$ is
  comparable to a power of the $q$-conformal energy,
  $(E^1_q[\gamma])^q$, with control; see Proposition~\ref{prop:energies-comp}.
\end{itemize}
We conclude with a result, Theorem~\ref{thm:keith-kleiner},
which says the Ahlfors-regular
conformal dimension coincides with a critical exponent for
combinatorial modulus of weighted curve families whose elements have
diameters bounded from below. This follows from
 a result of Carrasco \cite[Corollary
1.4]{C13:Gauge} and Keith-Kleiner, which holds for unweighted curves, and Lemma~\ref{lemma:weighted}.   

\subsection{Combinatorial modulus}\label{subsec:comb-mod}
Let $\cS$ be a finite covering of a topological space~$X$, by which we
mean a multi-set of subsets $s$ of~$X$ whose union is~$X$. Note that we
allow repetitions,  and do not assume the $s$'s are open
or connected.
For $K\subset X$, denote by $\cS(K)$ the
set of elements of~$\cS$ which intersect~$K$, and let $\bigcup \cS(K)$
be the
union of these elements of~$\cS$; we may think of $\bigcup \cS(K)$ as
the ``$\cS$-neighborhood'' of~$K$.

Let $q\ge 1$.
A \emph{test metric} is a function $\rho\co \cS\to [0,\infty)$.   The
\emph{$q$-volume of~$X$ with respect to~$\rho$} is 
\[V_{q}(\rho,\cS)\coloneqq\sum_{s\in\cS} \rho(s)^q\,.\]
For $K \subset X$, the \emph{$\rho$-length} of $K$ is 
$$\ell_\rho(K, \cS)\coloneqq\sum_{s\in\cS( K)} \rho(s).$$
If $\gamma\colon C \to X$ is a curve, we define  the
$\rho$-length of its image in $X$ by
\[ \ell_\rho(\gamma, \cS) \coloneqq \sum_{s \in \cS(\gamma(C))}\rho(s),\]
i.e. identifying $\gamma$ with its image. Since $C$ is implicitly part of the data defining $\gamma$, we adopt the briefer notation $\cS(\gamma)$ for $\cS(\gamma(C))$.  

We pause to clarify some subtle points.  First, each sum is a sum over a multi-set.  So, if an element $s$ of $\cS$ is repeated, say twice, then the sum for $V_q(\rho, \cS)$ has two corresponding terms.  Next, recall that the domain of a curve is connected.  The $\rho$-length of a curve $\gamma$ depends only on its image $\gamma(C)$, and not on its parameterization.  So, for example, if $\gamma$ parameterizes a simple loop, then $\ell_\rho(\gamma, \cS)=\ell_\rho(\gamma^n, \cS)$ for all $n \geq 1$.  And finally, more generally, the computation of $\ell_\rho(\gamma, \cS)$ does not depend on the number of times $\gamma$ meets an element $s \in \cS$.  See \S\ref{sec:subdivision} for alternatives.

If $\Gamma$ is a family of curves in $X$, we make the following
definitions leading up to
the \emph{combinatorial modulus of $\Gamma$ with respect to~$\cS$},
denoted $\modulus_q(\Gamma,\cS)$.
\begin{equation}\label{eq:mod-q}
  \begin{aligned}
  L_\rho(\Gamma,\cS)&\coloneqq\inf_{\gamma\in\Gamma} \ell_\rho(\gamma,
                      \cS)\\
  \modulus_q(\Gamma, \rho,\cS)&\coloneqq\frac{V_{q}(\rho,\cS)}{L_\rho(\Gamma,\cS)^q}\\
 \modulus_q(\Gamma,\cS) &\coloneqq \inf_{\rho}\modulus_q(\Gamma,\rho,\cS).
  \end{aligned}
\end{equation}
In the last infimum, we restrict to test metrics~$\rho$ for which
$L_\rho(\Gamma,\cS) \ne 0$.
We often use an equivalent formulation of $\modulus_q(\Gamma,\cS)$
more familiar to analysts. A test metric is \emph{admissible}
for~$\Gamma$ if $\ell_\rho(\gamma, \cS) \geq 1$ for each
$\gamma \in \Gamma$. Then it is easy to see that
\[ \modulus_q(\Gamma, \cS)=\inf\,\{\,V_q(\rho, \cS) \mid \rho \; \textrm{admissible
    for $\Gamma$}\, \}.\]
The combinatorial modulus of a nonempty family of curves
is always finite and positive. 

The \emph{combinatorial extremal length} of a curve family $\Gamma$ is the reciprocal of the combinatorial modulus:
\[ \EL_q(\Gamma, \cS)=\sup_{\rho} \frac{L_\rho(\Gamma,
    \cS)^q}{V_q(\rho, \cS)}.\]
We will relate this to
$E^1_q[\gamma]$ in the sense of \S\ref{sec:energies}; see
\S\ref{sec:comb-mod-graph}.

\begin{remark}
\label{remark:singlecurve}
 In this finite combinatorial world, it makes sense to take
$\Gamma=\{\gamma\}$, a single curve. That is, for any $q\geq 1$, any
finite cover $\cS$, and any curve $\gamma$, we have a finite and nonzero
$\modulus_q({\gamma}, \cS)$. Essentially, discretizing via the
coverings thickens a single curve so that it
behaves like a family.
\end{remark} 

\subsection{Weighted multi-curves}\label{sec:weighted-curves}
Recall that a \emph{weighted multi-curve} is a finite formal sum
$\gamma=\sum w_i\gamma_i$ where $w_i>0$ and the $\gamma_i$'s are curves
on~$X$. It is \emph{normalized} if $\sum w_i=1$.

We will require two sorts of families of weighted multi-curves on a space $X$.

\begin{itemize}
\item Fix a multicurve $\gamma\colon C \to X$ with strands
  $\gamma_i\colon J_i \to X$, $J_i \in \{I, S^1\}$,  and fix
  corresponding positive weights $w_i$. Then we may consider the
  family whose elements are weighted multi-curves  $\gamma=\sum w_i
  \gamma_i$. The number of strands and the weights are fixed within
  the family.

\item Fix a set $\Gamma$ of unweighted curves on $X$. Then we may
  consider the family whose elements are weighted multi-curves
  $\gamma=\sum_i w_i \gamma_i$ where  $\gamma_i \in \Gamma$. Here,
  both the number of strands and the weights may vary within the
  family.
\end{itemize}

To be concrete about the second type of family, suppose
$\Gamma$ is a family of unweighted curves on $X$. Define 
\[ \Gamma^w\coloneqq\Bigl\{\,\sum
w_i\gamma_i\Bigm|\gamma_i \in \Gamma, \sum w_i=1\,\Bigr\}.\]
This is a family of normalized weighted curves canonically associated
to $\Gamma$. The assignment $\gamma \mapsto 1\cdot \gamma$ yields an
inclusion $\Gamma \hookrightarrow \Gamma^w$.  So, 
as families of weighted curves, we have $\Gamma \subset
\Gamma^w$.

We next extend the definition of combinatorial modulus to families of weighted multi-curves.  Let $\cS$ be a finite covering of $X$, and let $q\ge 1$. 
If $\rho\co \cS \to [0,\infty)$ is a test metric, the \emph{$\rho$-length} of a weighted multi-curve $\gamma=\sum w_i\gamma_i$ is given by 
\[ \ell_\rho(\gamma, \cS)\coloneqq\sum w_i \ell_\rho(\gamma_i).\]
We then define the combinatorial modulus $\modulus_q(\Gamma^w, \cS)$ using Eq.~\eqref{eq:mod-q}.

\begin{lemma}[Moduli of weighted curves]
\label{lemma:weighted} Suppose $\Gamma$ is a family of unweighted
curves, and $\Gamma^w$ is the corresponding family of normalized weighted curves.
For any $q \geq 1$, we have 
$\modulus_q(\Gamma^w, \cS) = \modulus_q(\Gamma, \cS)$. 
\end{lemma}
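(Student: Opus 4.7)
The plan is to show that for every test metric $\rho$, the quantity $L_\rho(\Gamma^w,\cS)$ agrees with $L_\rho(\Gamma,\cS)$. Once this is established, the equality of moduli follows immediately from the defining formula
\[
  \modulus_q(\Gamma,\cS) = \inf_\rho \frac{V_q(\rho,\cS)}{L_\rho(\Gamma,\cS)^q},
\]
since the volume term $V_q(\rho,\cS)$ depends only on $\rho$ and $\cS$.

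To prove $L_\rho(\Gamma^w,\cS)=L_\rho(\Gamma,\cS)$, I will argue two inequalities. For the direction $L_\rho(\Gamma^w,\cS)\le L_\rho(\Gamma,\cS)$, I use the canonical inclusion $\Gamma\hookrightarrow\Gamma^w$ sending $\gamma\mapsto 1\cdot\gamma$: every single curve in $\Gamma$ is a legitimate normalized weighted multi-curve (one strand, weight~$1$), and the $\rho$-length of such an element is just $\ell_\rho(\gamma,\cS)$, so the infimum over the larger family $\Gamma^w$ is at most the infimum over $\Gamma$. For the opposite direction, I take an arbitrary weighted multi-curve $c=\sum w_i\gamma_i\in\Gamma^w$ with $\sum w_i=1$ and $\gamma_i\in\Gamma$, and observe that by definition
\[
  \ell_\rho(c,\cS) \;=\; \sum_i w_i\,\ell_\rho(\gamma_i,\cS) \;\ge\; \Bigl(\sum_i w_i\Bigr)\inf_{\gamma\in\Gamma}\ell_\rho(\gamma,\cS) \;=\; L_\rho(\Gamma,\cS).
\]
Taking the infimum over $c\in\Gamma^w$ gives $L_\rho(\Gamma^w,\cS)\ge L_\rho(\Gamma,\cS)$.

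Combining these two inequalities yields $L_\rho(\Gamma^w,\cS)=L_\rho(\Gamma,\cS)$, and substituting into the definition of $\modulus_q$ completes the proof. There is no serious obstacle here: the argument is essentially the observation that minimizing a linear functional over the probability simplex reduces to minimizing over vertices, together with the fact that the vertices of the simplex correspond exactly to the unweighted curves in $\Gamma$ under the canonical inclusion.
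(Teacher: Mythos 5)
Your proof is correct and follows essentially the same route as the paper: the key step in both is that $\ell_\rho(c,\cS)=\sum_i w_i\,\ell_\rho(\gamma_i,\cS)$ is a convex combination, so normalization $\sum_i w_i=1$ forces the weighted lengths to be no smaller than the unweighted infimum, while the inclusion $\Gamma\hookrightarrow\Gamma^w$ gives the reverse comparison. The paper packages the second inequality via admissible metrics rather than by showing $L_\rho(\Gamma^w,\cS)=L_\rho(\Gamma,\cS)$ directly, but this is only a cosmetic difference.
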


\begin{proof} The inequality $\modulus_q(\Gamma^w, \cS) \geq
  \modulus_q(\Gamma, \cS)$ holds since $\Gamma^w \supset \Gamma$.  Now
  suppose $\rho$ satisfies $\ell_\rho(\gamma, \cS) \geq 1$ for each
  $\gamma \in \Gamma$, and suppose $c=\sum_i w_i\gamma_i \in \Gamma^w$. Then
\[ \ell_\rho(c,\cS)=\sum_i w_i\left(\sum_{s \cap \gamma_i \neq\emptyset}\rho(s) \right)
  = \sum_i w_i\ell_\rho(\gamma_i, \cS)
  \geq \sum_i w_i =1.\]
Thus each admissible metric for $\Gamma$ is also admissible for
$\Gamma^w$, so
$\modulus_q(\Gamma^w, \cS) \leq \modulus_q(\Gamma, \cS)$.
\end{proof} 

\subsection{Subdivision}
\label{sec:subdivision}
We now turn to subdividing weighted multi-curves. We only need this construction for
the case $\Gamma = \{\gamma\}$, a single weighted multi-curve $\gamma=\sum_i w_i\gamma_i$, and not for curve families. 

Suppose $\gamma = \sum w_i \gamma_i$ is a weighted multi-curve corresponding to a
map $\gamma\co  C \to X$, where $C=\bigcup_i w_iJ_i$ and $\gamma_i\co  J_i
\to X$, where each
$J_i$ is either an interval or a circle. Suppose
that for each $i$ we are given a finite set of maps
$\alpha_{i,k} \co I_{i,k} \to J_i$, where each $I_{i,k}$ is a copy of the
unit interval and the images of the $\alpha_{i,k}$ cover $J_i$ without
gaps
or overlaps, except for singleton points on the boundary.
Then the
corresponding \emph{subdivision} of $\gamma$ is
the weighted multi-curve on $X$
\[
  \sum_i \sum_k w_i (\gamma_i \circ \alpha_{i,k}).
\]
corresponding to the map 
\[ \zeta\co  D=\bigcup_{i,k} w_i I_{i,k} \to X\]
with component maps $\zeta_{i,k} = \gamma_i \circ \alpha_{i,k}$.
The subdivision of a normalized weighted multi-curve is not
usually normalized.

\begin{lemma}[Subdivision]
\label{lem:subdivide} Suppose $\gamma=\sum_iw_i\gamma_i$ is a weighted
multi-curve on $X$, and $\cS$ is a finite cover of $X$. Let $q \geq 1$,
and let $\zeta$ be a subdivision of~$\gamma$. Then $\modulus_q(\zeta,\cS) \le
\modulus_q(\gamma,\cS)$.

Furthermore, if no $\zeta_{i,k}$ has image contained in a single element
of~$\cS$ and there is a constant~$K$ so that, for each
$s \in \cS$ and each strand~$i$, the number of connected components of
$\gamma_i^{-1}(s)$ is bounded by~$K$, then
$\modulus_q(\gamma,\cS) \lesssim_{K,q} \modulus_q(\zeta,\cS)$.
\end{lemma}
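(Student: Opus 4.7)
The plan is to reduce everything to a pointwise (in the test metric) comparison of $\rho$-lengths of~$c$ and~$d$, then pass to moduli via the admissible metric formulation. For each strand index~$i$ and each $s\in\cS$, let $M_{i,s}$ be the number of connected components of $\gamma_i^{-1}(s)$ and let $N_{i,s}$ be the number of indices~$k$ with $\delta_{i,k}^{-1}(s)\neq\emptyset$. Unfolding the definitions in \S\ref{subsec:comb-mod} gives
\[
  \ell_\rho(c,\cS)=\sum_i w_i\sum_{s\,:\,M_{i,s}>0}\rho(s),\qquad
  \ell_\rho(d,\cS)=\sum_i w_i\sum_s N_{i,s}\,\rho(s),
\]
so the entire lemma reduces to controlling $N_{i,s}$ in terms of the indicator of $\{M_{i,s}>0\}$ and in terms of $M_{i,s}$.

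For the first inequality, I would observe that whenever $M_{i,s}\ge 1$ some preimage point of~$s$ under~$\gamma_i$ lies in some~$I_{i,k}$ (by the covering assumption on the $a_{i,k}$), so $N_{i,s}\geq 1$. Hence $\ell_\rho(d,\cS)\geq\ell_\rho(c,\cS)$ for every~$\rho$, so every $\rho$ admissible for $\{c\}$ is admissible for $\{d\}$, and taking infima of $V_q(\rho,\cS)$ yields $\modulus_q(d,\cS)\leq\modulus_q(c,\cS)$.

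For the quantitative reverse bound, the key combinatorial claim is $N_{i,s}\leq 2M_{i,s}$, which together with $M_{i,s}\leq K$ forces $N_{i,s}\leq 2K$ whenever $M_{i,s}>0$. I would prove this componentwise: fix a component~$C$ of $\gamma_i^{-1}(s)$ and show that $C$ meets at most two of the intervals~$I_{i,k}$. If $C$ is a single point, it sits in at most two consecutive $I_{i,k}$ (only as a shared endpoint). If $C$ is a non-degenerate closed interval meeting three consecutive intervals $I_{i,k_1},I_{i,k_2},I_{i,k_3}$, then both endpoints of the middle interval~$I_{i,k_2}$ lie strictly in the interior of~$C$, hence $I_{i,k_2}\subset C$ and $\gamma_i(I_{i,k_2})\subset s$, i.e.\ $\delta_{i,k_2}$ is contained in a single element of~$\cS$, contradicting the hypothesis. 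Summing the bound $N_C\le 2$ over the at most~$K$ components gives $\ell_\rho(d,\cS)\leq 2K\,\ell_\rho(c,\cS)$, so if $\rho$ is admissible for $\{d\}$ then $2K\rho$ is admissible for $\{c\}$; since $V_q(2K\rho,\cS)=(2K)^q V_q(\rho,\cS)$, passing to infima gives $\modulus_q(c,\cS)\leq (2K)^q\modulus_q(d,\cS)$.

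The only real obstacle is the combinatorial claim $N_C\leq 2$: that is the single place the hypothesis ``no $\delta_{i,k}$ is contained in a single element of~$\cS$'' is invoked, and it must be applied uniformly across all components~$C$ and all $s\in\cS$. Everything else is bookkeeping, since the rewriting of lengths at the start does the work and the passage from pointwise length comparisons to modulus comparisons is routine: $V_q(\rho,\cS)$ depends only on~$\rho$, not on the curve family.
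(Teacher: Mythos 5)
Your proof is correct and takes essentially the same route as the paper's: for an arbitrary test metric one compares $\ell_\rho(c,\cS)$ and $\ell_\rho(d,\cS)$ pointwise (the paper simply asserts the ``each $s\in\cS$ meeting $\gamma_i$ meets at most $2K$ of the $\delta_{i,k}$'' bound, which your componentwise count $N_{i,s}\le 2M_{i,s}\le 2K$ supplies), and then passes to moduli via admissible metrics. One tiny imprecision: in degenerate cases (e.g.\ when $C$ meets the outer two subdivision pieces only at shared endpoints) the endpoints of the middle piece need not lie \emph{strictly} in the interior of~$C$, but the containment of the middle piece's image $a_{i,k_2}(I_{i,k_2})\subset C$ --- which is all your argument uses --- still holds, so this is a wording fix rather than a gap.
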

  
\begin{proof}
  Suppose $\rho$ is any test metric.  For the first assertion, we have
  $\ell_\rho(\zeta,\cS) \ge \ell_\rho(\gamma,\cS)$. The inequality for
  $q$-modulus then follows from the definitions.

  For the second assertion, the given conditions imply that if a given
  element $s \in \cS$ meets
  some $\gamma_i$, it then meets at most $2K$ of the restricted curves
  $\zeta_{i,k}$. Thus
  $\ell_\rho(\zeta,\cS) \le 2K \cdot \ell_\rho(\gamma,\cS)$. It
  follows that
  $\modulus_q(\zeta,\cS) \ge (2K)^{-q} \modulus_q(\gamma,\cS)$.
\end{proof}

\subsection{When moduli are comparable}
\label{sec:regularity}
Several different coverings naturally arise in our setting on graphs.
We show that given a curve family, the corresponding $q$-moduli are coarsely equivalent when certain natural regularity conditions hold.

Two coverings $\cU, \cV$ of a topological space $X$ have
\emph{$K$-bounded overlap} if, for each $u \in \cU$, we have
$1 \le \#\cV(u) \le K$, and vice-versa for each $v \in \cV$ we have
$1 \le \#\cU(v) \le K$.  Here, the notation follows that from \S \ref{subsec:comb-mod}, so that $\cV(u)=\{v \in \cV \mid v \cap u \neq \emptyset\}$ and $\cU(v)=\{u \in \cU \mid u \cap v \neq \emptyset\}$. 
A covering~$\cU$ is \emph{$K$-bounded}
if it has $K$-bounded overlap with itself.

\begin{lemma}[Bounded overlap]
\label{lemma:bounded_overlap}
Suppose $\cU, \cV$ are two finite coverings of a topological space $X$ with $K$-bounded overlap.  
Let $\Gamma$ be a family of weighted multi-curves on $X$.
Then, for any $q\geq 1$,
\begin{align*}
    \modulus_q(\Gamma, \cU) &\asymp_{K,q}\modulus_q(\Gamma,\cV)\\
  \intertext{or, more precisely,}
    \modulus_q(\Gamma, \cU) &\in [K^{-q-1},K^{q+1}]\cdot\modulus_q(\Gamma, \cV).
\end{align*}
\end{lemma}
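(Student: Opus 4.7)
Since the hypothesis of $K$-bounded overlap is symmetric in $\cU$ and $\cV$, it suffices to prove one direction, say $\modulus_q(\Gamma,\cU)\le K^{q+1}\modulus_q(\Gamma,\cV)$. The strategy is to transport an arbitrary admissible test metric $\rho\co\cV\to[0,\infty)$ for $\Gamma$ to a comparable admissible test metric on $\cU$, and compare $q$-volumes.

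Concretely, I would define $\sigma\co\cU\to[0,\infty)$ by $\sigma(u)\coloneqq\max_{v\in\cV(u)}\rho(v)$ and establish two estimates. The length estimate is: for every (unweighted) curve $\gamma$ in $X$,
\[
  \ell_\rho(\gamma,\cV)\;\le\;K\,\ell_\sigma(\gamma,\cU).
\]
The key observation is that for each $v\in\cV(\gamma)$, since $\cU$ covers the nonempty set $v\cap\gamma$, there exists some $u\in\cU(\gamma)$ with $v\in\cV(u)$; for that $u$ we have $\sigma(u)\ge\rho(v)$. Any given $u$ can arise in this way for at most $\#\cV(u)\le K$ distinct $v$'s, so summing over $v\in\cV(\gamma)$ gives the bound. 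By linearity of $\ell_\rho$ and $\ell_\sigma$ in the weights, this immediately passes to weighted multi-curves $c=\sum w_i\gamma_i$: $\ell_\sigma(c,\cU)\ge K^{-1}\ell_\rho(c,\cV)\ge K^{-1}$ whenever $c\in\Gamma$. Consequently $K\sigma$ is admissible for $\Gamma$ on $\cU$.

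For the volume estimate, note that since the maximum is one of the summands and all terms are nonnegative, $\sigma(u)^q\le\sum_{v\in\cV(u)}\rho(v)^q$. Summing over $u\in\cU$, interchanging the order of summation, and invoking $\#\cU(v)\le K$ yields
\[
  V_q(\sigma,\cU)\;\le\;\sum_{v\in\cV}\rho(v)^q\,\#\cU(v)\;\le\;K\,V_q(\rho,\cV),
\]
hence $V_q(K\sigma,\cU)=K^q\,V_q(\sigma,\cU)\le K^{q+1}V_q(\rho,\cV)$. Taking the infimum over admissible $\rho$ gives $\modulus_q(\Gamma,\cU)\le K^{q+1}\modulus_q(\Gamma,\cV)$, and symmetry closes the argument. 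There is no real obstacle here: it is a routine transfer-of-metrics computation. The one point that demands care is that the transported metric must be rescaled by exactly the right power of~$K$ so that admissibility on $\cU$ and the volume bound are compatible; the factor $K^{q+1}=K\cdot K^q$ records one factor from the volume swap and $q$ factors from the rescaling that restores admissibility.
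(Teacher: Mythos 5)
Your proof is correct and takes essentially the same approach as the paper's: transport a test metric from $\cV$ to $\cU$ by taking the maximum over intersecting elements, then obtain one factor of $K$ from the length comparison (using the covering property plus bounded overlap) and one factor of $K$ from the volume comparison. The only cosmetic difference is that you phrase the conclusion via admissible metrics rescaled by $K$, while the paper works directly with the ratio $V_q(\rho,\cdot)/L_\rho(\Gamma,\cdot)^q$; these formulations are equivalent.
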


\begin{proof}
We 
  imitate the argument in \cite[Theorem
  4.3.1]{cannon:swenson:characterization}. Suppose
  $\sigma\co  \cV \to \mathbb{R}$ is a test metric for $\cV$. Define
  a test metric $\rho\co  \cU \to \mathbb{R}$ and a function
  $f\co  \cU \to \cV$ by setting
\[ \rho(u) = \sigma(f(u)) \coloneqq\max\,\{\sigma(v)\mid u \cap v \neq
  \emptyset\},
\]
i.e., $f(u)\in \cV$ is any element that realizes the maximum. 

For any strand $\gamma$ of an element in $\Gamma$, we have
\[ \ell_\sigma(\gamma,\cV)
  = \sum_{\substack{v\in\cV\\v \cap \gamma \neq \emptyset}}\sigma(v)
  \leq \sum_{\substack{u\in\cU\\u \cap \gamma \neq \emptyset}}\Biggl( \sum_{\substack{v\in\cV\\v \cap u \neq \emptyset}} \sigma(v)\Biggr)
  \leq \sum_{\substack{u\in\cU\\u \cap \gamma \neq \emptyset}}K\rho(u)
  =K\ell_\rho(\gamma,\cU).\]
So for any weighted multi-curve $\gamma=\sum w_i\gamma_i$ in $\Gamma$, we therefore have by linearity that 
\[ \ell_\sigma(\gamma,\cV) \leq K\ell_\rho(\gamma, \cU).\]
Taking the infimum over the set $\Gamma$, we conclude 
\[ L_\rho(\Gamma, \cU) \geq \frac{1}{K}\cdot L_\sigma(\Gamma, \cV).\]
From the definition and $K$-bounded overlap, it is straightforward to see that 
\[ V_q(\rho,\cU) \leq K\cdot V_q(\sigma,\cV).\]
We conclude from the definition of modulus as the infimum  of the ratio $V_q(\cdot)/(L(\cdot))^q$ that 
\[\modulus_q(\Gamma,\cU) \leq K^{q+1}\modulus_q(\Gamma, \cV).\]
The other bound follows by symmetry. 
\end{proof}
We need additional notation to set up the statement of the next lemma.
For $\cS$ a cover of~$X$, let $\cS^2$ be the cover whose elements are
$\bigcup \cS(s)$ for $s\in \cS$. (That is, we take the union of all
elements of~$\cS$ that intersect~$s$.) Inductively define $\cS^N$ to
be the cover whose elements are $\bigcup \cS(t)$ for
$t \in \cS^{N-1}$.  The elements of $\cS^N$ are in natural bijection
with those of~$\cS$, but are larger ``combinatorial $\cS$-neighborhoods''.

\begin{lemma}[Fellow travellers]
  \label{lemma:regularity}
  Let $X$ be a topological space equipped with a $K$-bounded finite
  cover~$\cS$. Fix a compact $1$-manifold $C=\bigcup_i J_i$ and corresponding positive weights $w_i$,  and suppose $\gamma^j=\sum_iw_i\gamma_i^j, j=1, 2$ are two normalized weighted multi-curves given by maps $\gamma^j\co C \to X$. 
 Suppose there is an $N \in \mathbb{N}$ so that each strand
 $\gamma_i^1$ of $\gamma^1$  is contained in
 $\bigcup\cS^N(\gamma_i^2)$, and similarly $\gamma_i^2$ is contained
 in $\bigcup\cS^N(\gamma_i^1)$.
 Then for all $q \geq 1$, we have 
 \[\modulus_q(\gamma^1, \cS) \asymp_{K,N,q} \modulus_q(\gamma^2, \cS).\] 
\end{lemma}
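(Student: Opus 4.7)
The plan is to deduce the modulus comparison by constructing, for any test metric on $\cS$, a ``smoothed'' test metric whose length along the sibling multi-curve dominates the original length, while only enlarging the $q$-volume by a constant depending on $K,N,q$.

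The first step is combinatorial translation of the geometric hypothesis. Introduce the graph distance $d_\cS$ on the index set $\cS$ where $d_\cS(s,s')\leq 1$ iff $s\cap s'\neq\emptyset$, and observe by induction that an element of $\cS^N$ corresponds to some $s\in\cS$ and equals the union of all elements at graph distance $\leq N-1$ from~$s$. Using this, from $\gamma_i^1\subset\bigcup\cS^N(\gamma_i^2)$ I would chase the definitions (each $s\in\cS$ meeting $\gamma_i^1$ picks up a point $x\in\gamma_i^1$, which lies in an element $t\in\cS^N$ meeting $\gamma_i^2$, etc.) to conclude
\[
  \cS(\gamma_i^1)\ \subset\ \{\,s\in\cS\mid d_\cS(s,\cS(\gamma_i^2))\leq M\,\},
\]
for $M=2N-1$, and symmetrically with the roles of $c^1,c^2$ reversed.

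Next, given a test metric $\rho\co\cS\to[0,\infty)$, define the smoothed metric
\[
  \rho^*(s)\ \coloneqq\ \sum_{s'\in\cS,\,d_\cS(s,s')\leq M}\rho(s').
\]
To check $\ell_{\rho^*}(c^2,\cS)\geq\ell_\rho(c^1,\cS)$: for each $s''\in\cS(\gamma_i^1)$ the combinatorial step above supplies some $s\in\cS(\gamma_i^2)$ with $d_\cS(s,s'')\leq M$, so $\rho(s'')$ appears in the expansion of $\rho^*(s)$; summing against the weights~$w_i$ gives
\[
  \ell_{\rho^*}(c^2,\cS)\ \geq\ \sum_iw_i\sum_{s''\in\cS(\gamma_i^1)}\rho(s'')\ =\ \ell_\rho(c^1,\cS).
\]
To control the volume: the $K$-boundedness iterated $M$ times yields that for any $s$, the number of $s'$ with $d_\cS(s,s')\leq M$ is at most $K^M$. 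Applying $(a_1+\dots+a_n)^q\leq n^{q-1}(a_1^q+\dots+a_n^q)$ and swapping the order of summation,
\[
  V_q(\rho^*,\cS)\ \leq\ K^{M(q-1)}\sum_{s}\sum_{s':\,d_\cS(s,s')\leq M}\rho(s')^q\ \leq\ K^{Mq}V_q(\rho,\cS).
\]

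Combining these two inequalities, $\modulus_q(c^2,\cS)\leq V_q(\rho^*,\cS)/\ell_{\rho^*}(c^2,\cS)^q\leq K^{Mq}\,V_q(\rho,\cS)/\ell_\rho(c^1,\cS)^q$, and taking the infimum over $\rho$ gives $\modulus_q(c^2,\cS)\leq K^{Mq}\modulus_q(c^1,\cS)$. The reverse inequality follows by symmetry from the companion containment $\gamma_i^2\subset\bigcup\cS^N(\gamma_i^1)$. The only real obstacle is bookkeeping—tracking exactly how the iteration of the $\cS^N$ construction translates into the graph-distance radius $M$ and verifying the correct constants—which is purely mechanical once the smoothing construction above is in place.
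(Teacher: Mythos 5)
Your proof is correct. The underlying idea is the same as the paper's---translate the hypothesis $\gamma_i^1\subset\bigcup\cS^N(\gamma_i^2)$ into a statement that the $\cS$-neighborhoods of the two curves are within bounded combinatorial distance, and use $K$-boundedness to control how much the volume can grow---but your packaging differs. The paper transports a test metric $\rho$ to the enlarged cover $\cS^N$ via the natural bijection (so the volume is literally unchanged), deduces $\modulus_q(c^1,\cS^N)\le\modulus_q(c^2,\cS)$, and then appeals to the separately proved bounded-overlap comparison (Lemma~\ref{lemma:bounded_overlap}, with $\cS$ and $\cS^N$ having $K^N$-bounded overlap) to return to~$\cS$. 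You instead stay with the single cover $\cS$ and build the smoothed metric $\rho^*(s)=\sum_{d_\cS(s,s')\le M}\rho(s')$, proving the length comparison directly from the graph-distance translation (your $M=2N-1$ bookkeeping is right) and the volume bound $V_q(\rho^*,\cS)\le K^{Mq}V_q(\rho,\cS)$ via the convexity inequality and a count of combinatorial $M$-balls. In effect you inline the bounded-overlap lemma, using a sum where the paper's Lemma~\ref{lemma:bounded_overlap} uses a maximum; this makes your argument more self-contained (no auxiliary cover carrying a modulus, no external lemma) at the cost of a slightly worse, but still only $(K,N,q)$-dependent, constant. One small point to keep in mind: the infimum defining $\modulus_q$ is restricted to test metrics with nonzero length, and your inequality $\ell_{\rho^*}(c^2,\cS)\ge\ell_\rho(c^1,\cS)>0$ ensures $\rho^*$ is an allowable competitor, so the deduction goes through as written.
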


Note that the statement does not concern \emph{families} of multi-curves---just single multi-curves. The content of the lemma is that the  bound depends only on the constants $K, N, q$.

\begin{proof} Fix $q \geq 1$. Fix temporarily a test metric $\rho\co \cS
  \to [0,\infty)$. For $s \in \cS$ we denote by $\hat{s}$ the
  corresponding element of $\cS^N$. We denote by $\widehat{\rho}\co
  \cS^N \to [0,\infty)$ the test metric obtained by setting
  $\widehat{\rho}(\widehat{s})\coloneqq\rho(s)$.

  Fix a strand $\gamma_i^2$ of $\gamma^2$. By assumption,
  $\gamma_i^2 \subset \cS^N(\gamma_i^1)$. This implies that if
  $s \in \cS(\gamma_i^2)$, then $\widehat{s} \in \cS^N(\gamma_i^1)$.
  Thus
  $\ell_{\widehat{\rho}}(\gamma_i^1, \cS^N) \geq \ell_\rho(\gamma^2_i,
  \cS)$ and so via linear combinations and the definition we have
  $\ell_{\widehat{\rho}}(\gamma^1, \cS^N) \geq \ell_\rho(\gamma^2, \cS)$. Since
  $\gamma^1, \gamma^2$ are weighted curves, as opposed to families, we have
  $L_{\widehat{\rho}}(\gamma^1, \cS^N) \geq L_\rho(\gamma^2, \cS)$. Since
  $V_q(\widehat{\rho}, \cS^N)=V_q(\rho, \cS)$, we conclude
\[ \frac{V_q(\widehat{\rho}, \cS^N)}{L_{\widehat{\rho}}(\gamma^1, \cS^N)^q}\leq \frac{V_q(\rho, \cS)}{L_\rho(\gamma^2, \cS)^q}.\]
Taking $\rho$ to realize the infimum of the right-hand ratio, we conclude 
\[ \modulus_q(\gamma^1, \cS^N) \leq \modulus_q(\gamma^2, \cS).\]
The covering $\cS^N$ has $K^N$-bounded-overlap with the covering $\cS$. By Lemma \ref{lemma:bounded_overlap}, we conclude 
\[ \modulus_q(\gamma^1, \cS) \lesssim_{K, N, q}\modulus_q(\gamma^1, S^N) \leq _{K, N, q} \modulus_q(\gamma^2, \cS).\]
The other bound follows by symmetry. 
\end{proof}

The next two lemmas are technically convenient. 

A graph~$G$ has a natural covering~$\cE$ whose elements are the
closed edges of~$G$. There is also a \emph{partial
  covering}~$\cE^\circ$ whose elements are the interiors of the edges
of~$G$. Since~$\cE^\circ$ does not cover all of~$G$, there are
certain curve families (constant curves at vertices) with length~$0$ and
undefined modulus. However, any non-constant curve intersects at least
one element of $\cE^\circ$, so if we restrict ourselves to families of
non-constant curves we do not run into trouble defining $q$-modulus with respect to $\cE^\circ$.

\begin{lemma}
\label{lemma:openedges}
  If $G$ is a graph of valence bounded by~$M$ and $\Gamma$ is any family of weighted multi-curves on~$G$, not necessarily normalized, each of whose strands is non-constant, then
  for any $1 \le q \le \infty$,
  \[
    \modulus_q(\Gamma, \cE) \asymp_M \modulus_q(\Gamma, \cE^\circ).
  \]
\end{lemma}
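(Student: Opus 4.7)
The plan is a direct comparison of admissible test metrics via the natural bijection $e \leftrightarrow e^\circ$ between closed and open edges. For the direction $\modulus_q(\Gamma,\cE) \le \modulus_q(\Gamma,\cE^\circ)$, given any admissible $\rho^\circ$ on $\cE^\circ$ I would simply set $\rho(e) \coloneqq \rho^\circ(e^\circ)$. Since $\cE^\circ(\gamma) \subseteq \cE(\gamma)$ under the bijection, we have $\ell_\rho(\gamma,\cE) \ge \ell_{\rho^\circ}(\gamma,\cE^\circ)$ on each strand, and by linearity on weighted multi-curves, so $\rho$ is admissible; since the volumes are identical, this gives the bound.

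For the reverse direction $\modulus_q(\Gamma,\cE^\circ) \lesssim_{d,q} \modulus_q(\Gamma,\cE)$, given admissible $\rho$ on $\cE$ I would set $\rho^\circ(e^\circ) \coloneqq \sum_{e' \in N(e)} \rho(e')$, where $N(e)$ is the set of closed edges sharing a vertex with~$e$ (including~$e$ itself). The valence bound gives $|N(e)| \le 2d-1$, and each $e'$ lies in at most $2d-1$ of the sets $N(e)$. The power-mean inequality applied edgewise, followed by swapping the order of summation, then yields $V_q(\rho^\circ,\cE^\circ) \le (2d-1)^q V_q(\rho,\cE)$, which is the required volume bound.

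The main step is the admissibility check: for every strand~$\gamma$ I need $\ell_{\rho^\circ}(\gamma,\cE^\circ) \ge \ell_\rho(\gamma,\cE)$. Expanding and swapping sums, this reduces to showing that each $e' \in \cE(\gamma)$ admits some $e \in \cE^\circ(\gamma)$ with $e' \in N(e)$, so that $\rho(e')$ occurs with coefficient at least~$1$ on the left. When $(e')^\circ$ meets $\gamma$ we may take $e = e'$; otherwise $\gamma$ meets $e'$ only at a vertex~$v$, and here the non-constancy hypothesis enters. The image of a non-constant strand is a connected subset of~$G$ containing $v$ together with some point distinct from~$v$, and any such set must include a point in the interior of some edge incident to~$v$; choosing that edge as $e$ gives $e \in \cE^\circ(\gamma)$, and $e \in N(e')$ since both contain~$v$. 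The only non-bookkeeping point in the argument is this final connectedness observation, which is also the place where the hypothesis on strands being non-constant is essential (a constant path at a vertex would break the bound, since then $\ell_\rho(\gamma,\cE)$ can be positive while $\ell_{\rho^\circ}(\gamma,\cE^\circ) = 0$).
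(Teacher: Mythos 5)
Your proposal is correct, and its skeleton is the same as the paper's: transfer test metrics across the canonical bijection $\cE \leftrightarrow \cE^\circ$, with the easy direction exactly as in the paper, and in the harder direction replace a weight on closed edges by a neighborhood-aggregated weight on open edges and pay a constant controlled by the valence bound. The one substantive difference is the aggregation: the paper sets $\sigma(e)$ to be the \emph{maximum} of $\rho$ over $e$ and its neighbors, while you take the \emph{sum} over $N(e)$. Your choice is the more robust one. With the sum, the admissibility step goes through with coefficient~$1$: every closed edge met by a strand is, by your connectedness argument (which is precisely where the non-constancy hypothesis is used, and which the paper leaves implicit), adjacent to some open edge met by the strand, so its weight is counted at least once on the $\cE^\circ$ side; the price is the volume loss $(2d-1)^q$ via the power-mean inequality. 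With the maximum, the claimed inequality $L_\rho(\Gamma,\cE) \le L_\sigma(\Gamma,\cE^\circ)$ is not literally true curve-by-curve: if a strand passes through a vertex of valence $d>2$ meeting the interiors of only two incident edges, all $d$ closed edges at that vertex contribute to $\ell_\rho(\cdot,\cE)$ but are charged to only two open edges, each carrying only the maximal weight, so an extra factor of $2d-1$ is needed there (after which the lemma still follows, with a slightly larger constant). So your argument in fact tightens the step that is most delicate in the paper's write-up. Two minor remarks: as in the paper's analogous Lemma on bounded overlap, your constants depend on $q$ as well as on~$d$ (the statement's $\asymp_d$ suppresses this), and the case $q=\infty$ is glossed over equally by both arguments.
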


\begin{proof}
  This is similar to Lemma~\ref{lemma:bounded_overlap}, but that lemma
  does not apply directly, since $\cE^\circ$ is not a covering. But
  the same techniques work. Note that there is a canonical bijection $\cE \leftrightarrow \cE^\circ$ and so a test metric $\rho$ on one determines uniquely a test metric on the other that we denote with the same symbol. 

  For any test metric~$\rho$ on~$\cE^\circ$, we have
  $L_\rho(\Gamma, \cE^\circ) \le L_\rho(\Gamma, \cE)$. 
  Since $V_q(\rho, \cE)=V_q(\rho, \cE^\circ)$, it follows
  that
  \[\modulus_q(\Gamma, \cE) \le \modulus_q(\Gamma, \cE^\circ).\]

  Conversely, given a test metric~$\rho$ on~$\cE$, define a test metric~$\sigma$ on $\cE^\circ$ by setting $\sigma(e)$ to be the
  maximum of~$\rho(e')$ for $e'$ equal to $e$ or any of its neighbors.
  In a graph of valence at most $M$, a closed edge meets itself and at most $2(M-1)$ other closed edges. 
  Thus $L_\rho(\Gamma,\cE) \le L_\sigma(\Gamma, \cE^\circ)$ and
  $(2M-1)V_q(\rho,\cE) \ge V_q(\sigma, \cE^\circ)$, so
  \[\modulus_q(\Gamma,\cE) \ge \modulus_q(\Gamma,\cE^\circ)/(2M-1).\qedhere\]
\end{proof}

We use Lemma~\ref{lemma:openedges} to make cleaner estimates in the proof of Proposition \ref{prop:energies-comp} below, since
the elements of the collection $\cE^\circ$ do not overlap.

\begin{definition}[Stars]
\label{def:star}
Suppose $G$ is a length graph with each edge of length $1$, and $e$ is an edge of $G$. The \emph{open star} $\widehat{e}$ of $e$ is the open-$1/3$-neighborhood of $e$:
\[ \widehat{e}\coloneqq\bigcup_{x \in \overline{e}}B(x, 1/3).\]
We denote by $\widehat{\cE}$ the covering of $G$ by
(open) stars of edges. 
\end{definition}

\begin{lemma}
\label{lemma:stars}
  If $G$ is a graph of valence bounded by~$M$ and $\Gamma$ is any family of weighted multi-curves on~$G$, not necessarily normalized, then
  for any $1 \le q \le \infty$,
  \[
    \modulus_q(\Gamma, \cE) \asymp_M \modulus_q(\Gamma, \widehat{\cE}).
  \]
\end{lemma}

\begin{proof} $\cE$ and $\widehat{\cE}$ have $M$-bounded overlap. 
Apply Lemma \ref{lemma:bounded_overlap}.
\end{proof}

\subsection{Combinatorial modulus and graph energies}
\label{sec:comb-mod-graph}
Our next task is to relate the $q$-combinatorial modulus of the family comprised of a homotopy class $[\gamma]$ of weighted multi-curve $\gamma$ with the
graphical energy $E^1_q[\gamma]$ from
\S\ref{sec:energies}.

Suppose $\gamma\co C=\bigcup_iw_iJ_i \to G^q$ is  a weighted multi-curve
on a $q$-conformal graph~$G^q$, where each $J_i=S^1$. We have defined two
quantities. From
\S\ref{sec:energies}, we have the graphical energy
$E^1_q[\gamma]$. We have also the combinatorial modulus
$\modulus_q([\gamma], \cE)$.  The main result of this
section is that these two quantities are comparable.

\begin{proposition}
  \label{prop:energies-comp}
 Suppose $q \geq 1$ and $m \geq 1$. Let $G^q$ be a $q$-conformal graph with valence bounded by~$M$ and
  $q$-lengths $\alpha(e)\in[1/m,m]$ bounded above and below. Let $\cE$ denote the covering of $G^q$ by closed edges. Let
  $\gamma \co C \to G$ be a non-empty weighted multi-curve on~$G^q$ with no
  null-homotopic components.
  Suppose $N$ is an upper bound on the number of times a
  reduced representative in~$[\gamma]$ passes over each edge of~$G^q$. Then
  \[ (E^1_q[\gamma])^q =_{\mathrm{def}} \EL_q([\gamma],\alpha) \asymp_{m,M,N,q} \EL_q([\gamma], \cE) =_{\mathrm{def}} 1/\modulus_q([\gamma],\cE).\]
\end{proposition}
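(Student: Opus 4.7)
The plan is to use the duality of Proposition~\ref{prop:duality} to express $(E^1_q[c])^q$ as an optimum over edge lengths $\beta\co \Edges(G) \to (0,\infty)$ on~$G$, and then compare that optimum to the combinatorial extremal length by using the \emph{same} $\beta$ on both sides. First, Lemma~\ref{lemma:openedges} lets me replace $\cE$ by the open-edge partial covering $\cE^\circ$ at the cost of a factor depending only on~$d$, which is convenient because elements of $\cE^\circ$ are disjoint. Let $c = \sum_i w_i \gamma_i$ be a reduced representative of~$[c]$, and for each edge~$e$ and strand~$i$ let $k_{i,e}$ be the number of traversals of~$e$ by~$\gamma_i$. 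Set
\[
  n_c(e) \coloneqq \sum_i w_i k_{i,e},
  \qquad
  m_c(e) \coloneqq \sum_i w_i \mathbf{1}[k_{i,e}>0].
\]
The hypothesis on~$N$ supplies the key arithmetic comparison $m_c(e) \le n_c(e) \le N\, m_c(e)$. Moreover, any representative $c'\in[c]$ differs from $c$ by back-and-forth pieces which only enlarge the set of edges each strand meets, so $m_{c'}(e) \ge m_c(e)$ for every~$e$; hence for any non-negative $\tilde\rho$ on~$\cE^\circ$ the infimum $L_{\tilde\rho}([c],\cE^\circ)$ is attained at~$c$.

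Applied with $\psi$ the identity and $K$ the graph~$G$ equipped with edge lengths $\beta(e)>0$, Proposition~\ref{prop:duality} together with sub-multiplicativity of energy give
\[
  (E^1_q[c])^q \;=\; \sup_{\beta > 0} \frac{\bigl(\sum_e n_c(e)\beta(e)\bigr)^q}{\sum_e \alpha(e)^{1-q}\beta(e)^q}.
\]
For the bound $(E^1_q[c])^q \lesssim \EL_q([c],\cE^\circ)$, I use an almost-optimal $\beta$ as a combinatorial test metric $\tilde\rho \coloneqq \beta$: by the previous paragraph, $L_{\tilde\rho}([c],\cE^\circ) = \sum_e m_c(e)\beta(e) \ge \tfrac{1}{N}\sum_e n_c(e)\beta(e)$, while $\alpha(e) \ge 1/m$ gives $V_q(\tilde\rho,\cE^\circ) = \sum_e \beta(e)^q \le m^{q-1}\sum_e \alpha(e)^{1-q}\beta(e)^q$, so dividing yields $\modulus_q([c],\tilde\rho,\cE^\circ) \le m^{q-1}N^q/(E^1_q[c])^q$. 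Conversely, given any admissible combinatorial $\tilde\rho$ on~$\cE^\circ$, plugging $\beta \coloneqq \tilde\rho$ into the duality formula gives $\sum_e n_c(e)\tilde\rho(e) \ge \sum_e m_c(e)\tilde\rho(e) = L_{\tilde\rho}([c],\cE^\circ) \ge 1$ and $\sum_e \alpha(e)^{1-q}\tilde\rho(e)^q \le m^{q-1}V_q(\tilde\rho,\cE^\circ)$, so $(E^1_q[c])^q \ge 1/\bigl(m^{q-1} V_q(\tilde\rho,\cE^\circ)\bigr)$; taking the infimum over admissible~$\tilde\rho$ produces the matching lower bound.

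The main subtle point is the gap between the weighted crossing count~$n_c$ (which governs the analytic energy) and the weighted meeting count~$m_c$ (which governs combinatorial length); the hypothesis on~$N$ is precisely what absorbs this gap into a multiplicative constant, and without such a bound no comparison of this kind is possible. The endpoint case $q=1$ requires reading Proposition~\ref{prop:duality} with the weighted-graph form of $E^1_\infty$, but the same linear comparisons go through. Combining the two inequalities with Lemma~\ref{lemma:openedges} to pass from $\cE^\circ$ back to~$\cE$ gives $(E^1_q[c])^q \asymp_{m,d,N,q} \EL_q([c],\cE)$, as claimed.
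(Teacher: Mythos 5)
Your proposal is correct and follows essentially the same route as the paper's proof: pass to the open-edge partial cover via Lemma~\ref{lemma:openedges}, use that every representative's image contains the reduced one's so the infimum of $\rho$-length over $[c]$ is realized at the taut $c$, compare combinatorial length to the weighted crossing count $\sum_e n_c(e)\rho(e)$ up to the factor~$N$ and $V_q$ to $\sum_e\alpha(e)^{1-q}\rho(e)^q$ up to $m^{q-1}$, and identify the resulting supremum over edge-length vectors with $(E^1_q[c])^q$ via Proposition~\ref{prop:duality} (you merely unpack that duality into an explicit $\sup_\beta$ with sub-multiplicativity giving one inequality, where the paper quotes it once). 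One cosmetic slip: in the bound $V_q(\tilde\rho,\cE^\circ)\le m^{q-1}\sum_e\alpha(e)^{1-q}\tilde\rho(e)^q$ the hypothesis actually used is $\alpha(e)\le m$ rather than $\alpha(e)\ge 1/m$ (the latter is what the reverse direction needs), but both are assumed, so nothing is affected.
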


\begin{proof}
  Up to reparameterization, there is a unique reduced
  representative in~$[\gamma]$; we assume that $\gamma = \sum_i w_i \gamma_i$ is this
  representative. We will then use Lemma~\ref{lemma:openedges} to work
  with the partial covering $\cE^\circ$ rather than $\cE$.

  We then have a quantity $\modulus_q(\gamma, \cE^\circ)$.  Since $\{\gamma\} \subset [\gamma]$ we have $\modulus_q(\gamma, \cE^\circ)\leq \modulus_q([\gamma], \cE^\circ)$.  Since the underlying set of any
  strand of a curve in $[\gamma]$ contains that of the corresponding strand in the reduced representative $\gamma$, we
  have $\modulus_q(\gamma, \cE^\circ)\geq \modulus_q([\gamma], \cE^\circ)$.
  Thus $\modulus_q(\gamma, \cE^\circ)=\modulus_q([\gamma], \cE^\circ)$. Taking
  reciprocals, we conclude $\EL_q(\gamma, \cE^\circ)=\EL_q([\gamma],
  \cE^\circ)$.

  Let $\rho\co  \cE^\circ \to \mathbb{R}$ be a test metric; we will
  assume $\rho(e) > 0$ for each edge. (This does not change the
  relevant infima/suprema, although it does mean they will not always
  be realized.)
 By assumption, $n_\gamma(e) \leq N$.  We have then 
\[ L_\rho(\gamma, \cE^\circ)=\ell_\rho(\gamma, \cE^\circ)  =  \sum_i w_i\ell_\rho(\gamma_i, \cE^\circ) = \sum_i w_i \sum_{e \subset \gamma_i}\rho(e) \asymp_N \sum_e
  n_\gamma(e)\rho(e);\]
the first equality coming from the fact that $\gamma$ is a single multi-curve and not a family. 
For fixed non-zero~$\rho$, consider the length graph $K^\infty_\rho$ with
underlying graph~$G$ but lengths~$\rho$ on the edges, and let $\psi_\rho
\co G^q \to K^\infty_\rho$ be the identity map (which we introduce to keep
track of which lengths to use). Then the right-hand side above is
$E^1_\infty(\psi_\rho \circ \gamma)$.
We conclude 
\[ L_\rho(\gamma, \cE^\circ) \asymp_N E^1_\infty(\psi_\rho \circ \gamma).\]

 In addition,
\[
  V_q(\rho,\cE^\circ) = \sum_e \rho(e)^q \asymp_{m,q} \sum_e
  \alpha(e)^{1-q} \rho(e)^q = \bigl(E^q_\infty(\psi_\rho)\bigr)^q,
\]
so
\[ \EL_q(\gamma, \cE^\circ) \overset{{\mathrm{def}}}{=}\sup_\rho \frac{L_\rho(\gamma, \cE^\circ)^q}{V_q(\rho, \cE^\circ)}\asymp_{m,N,q}
  \sup_\rho \left(\frac{E^1_\infty(\psi_\rho \circ \gamma)}{E^q_\infty(\psi_\rho)}\right)^q=_{\text{Prop.~\ref{prop:duality}}}\EL_q([\gamma],\alpha),\]
since maximizing over~$\rho$
is equivalent to maximizing over metric graphs~$K^\infty$.
\end{proof}

\subsection{Conformal dimension and combinatorial modulus}
\label{subsecn:conf-dim-comb-mod}
In this subsection, we give the relationship between critical
exponents for combinatorial modulus and conformal dimension
used in our main result.

\begin{citethm}[{\cite[Corollary 1.4]{C13:Gauge}}]
\label{thm:keith-kleiner}
Let $X$ be a connected, locally connected, approximately
self-similar
metric space. For $\delta>0$ denote by $\Gamma_\delta$ the family of
curves of diameter bounded below by $\delta$, and by $\Gamma_\delta^w$
the corresponding family of normalized weighted multi-curves from \S \ref{sec:weighted-curves}. Suppose
$(\cS_n)_{n=0}^\infty$ is a sequence of snapshots at some scale parameter. 

Then the Ahlfors-regular conformal gauge of $X$ is nonempty, and for some $\delta_0>0$ and all $0<\delta<\delta_0$, 
\[ \ARCdim(X)=\inf\,\{\,q \mid \modulus_q(\Gamma^w_\delta, \cS_n) \to 0 \; \mbox{when}\; n \to +\infty\,\}.\]
\end{citethm}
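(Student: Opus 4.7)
The statement is essentially a restatement of Carrasco's result \cite[Corollary 1.4]{C13:Gauge}, with the one substantive addition being the use of weighted multi-curves in place of ordinary curves. So my plan has two parts.

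First I would invoke Carrasco's theorem directly. His result is stated for unweighted curve families $\Gamma_\delta$ of diameter at least~$\delta$, relative to a sequence of coverings that matches our notion of snapshots (his ``$k$-approximations'' of an approximately self-similar space); one should check that our Definition~\ref{defn:snapshots} satisfies the scale/roundness and near-disjointness axioms he requires, but this is routine since approximate self-similarity together with the snapshot property gives a space of bounded geometry at every scale. Carrasco's theorem then states that the Ahlfors-regular conformal gauge is non-empty and
\[
  \ARCdim(X)=\inf\,\{\,q\mid \modulus_q(\Gamma_\delta,\cS_n)\to 0 \text{ as } n\to\infty\,\}
\]
for $\delta$ small enough.

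Second, I would pass from $\Gamma_\delta$ to $\Gamma_\delta^w$ using Lemma~\ref{lemma:weighted}. That lemma asserts that for each fixed $q\geq 1$ and each covering $\cS_n$, one has the exact equality
\[
  \modulus_q(\Gamma_\delta^w,\cS_n)=\modulus_q(\Gamma_\delta,\cS_n).
\]
Consequently the two families have the same critical exponent, and the weighted version of the characterization follows immediately from the unweighted one.

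The only step that takes any thought is the verification that our snapshot axioms match the combinatorial input required by Carrasco's proof; once that is in place, the remainder is a direct citation plus an application of Lemma~\ref{lemma:weighted}. I do not expect any genuine obstacle, since the monograph \cite{C13:Gauge} is phrased in sufficient generality to accommodate the snapshot-style coverings used throughout this paper.
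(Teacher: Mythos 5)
Your proposal matches the paper's own argument: the paper derives the statement from Carrasco's unweighted result together with Lemma~\ref{lemma:weighted}, exactly as you do. No gaps; this is essentially the same proof.
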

This theorem in the case of unweighted curves was proved by Carrasco
and, in unpublished work, Keith and Kleiner.
The statement as given above
follows from their result and Lemma \ref{lemma:weighted}.

The results above let us characterize the conformal dimension in terms
of curve families.
Consider the limit
dynamical system $f\co  \cJ \to \cJ$ associated to a forward-expanding recurrent virtual
graph endomorphism $\pi, \phi\co  G_1 \rightrightarrows G_0$.
We have a covering $\cU_0$ of $\cJ$ by connected open sets, which by Theorem \ref{thm:lc} we
can choose to be arbitrarily fine. Via
iterated pullback we get a sequence $\cU_n$ of finite covers that form
a sequence of snapshots at some scale parameter $\theta$  with respect
to a visual metric~$d_{vis}$. We then
have the following.

\begin{corollary}
\label{cor:critexp_J}
For all sufficiently small $\delta>0$, 
\begin{align*}
  \ARCdim(\cJ,d_{vis})
    &= \inf\,\{q \mid \modulus_q(\Gamma^w_\delta, \cU_n) \to 0 \; \mbox{when}\; n \to +\infty\}\\
    &= \sup\,\{q \mid \modulus_q(\Gamma^w_\delta, \cU_n) \to \infty \; \mbox{when}\; n \to +\infty\}.
\end{align*}
At the critical exponent, the combinatorial modulus is bounded from below. 
\end{corollary}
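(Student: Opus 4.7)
The plan is to deduce the corollary by verifying that $(\cJ, d)$ fits the hypotheses of Theorem~\ref{thm:keith-kleiner}, and then extracting the sharper information from the full version of the Carrasco and Keith--Kleiner result. First, I will check the three structural hypotheses on the ambient space: $\cJ$ is connected and locally connected by Theorem~\ref{thm:lc}(1), and the visual metric $d$ is approximately self-similar by Corollary~\ref{cor:ss}. Next, by Theorem~\ref{thm:visual}(1), the iterated pullback covers $(\cU_n)_{n \ge 0}$ form a sequence of snapshots of $(\cJ, d)$ at scale parameter $\theta = e^{-\epsilon}$. These verifications place us directly in the setting of Theorem~\ref{thm:keith-kleiner}, which immediately yields
\[
  \ARCdim(\cJ, d) = \inf\,\{\,q \mid \modulus_q(\Gamma^w_\delta, \cU_n) \to 0 \text{ as } n \to \infty\,\}.
\]

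For the supremum characterization and the final claim that the modulus at the critical exponent stays bounded from below, I would invoke the stronger statement in \cite[Theorem 1.3]{C13:Gauge}, which establishes a sharp dichotomy at $Q = \ARCdim(\cJ, d)$: for $q < Q$ the modulus $\modulus_q(\Gamma^w_\delta, \cU_n) \to +\infty$; for $q > Q$ it tends to $0$; and at $q = Q$ it remains bounded between two positive constants uniformly in $n$. Since a sequence cannot simultaneously tend to $0$ and to $\infty$, this dichotomy forces the coincidence of the infimum and supremum critical exponents, and it delivers the boundedness assertion at $q = Q$ as a direct byproduct.

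The only bookkeeping point to watch is the passage between the unweighted families $\Gamma_\delta$, in which those theorems are customarily phrased, and the normalized weighted families $\Gamma^w_\delta$ appearing in the statement. This is dispatched by Lemma~\ref{lemma:weighted}, which gives $\modulus_q(\Gamma_\delta, \cU_n) = \modulus_q(\Gamma^w_\delta, \cU_n)$ term by term, so the dichotomy transfers verbatim. The hardest step, such as it is, is merely ensuring that the finer dichotomy from \cite{C13:Gauge}---and not only the infimum version recorded in Theorem~\ref{thm:keith-kleiner}---is what is being applied; once that is noted, no genuine calculation remains.
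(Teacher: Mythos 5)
Your argument is correct and follows the paper's proof essentially verbatim: verify the hypotheses (Theorem~\ref{thm:lc}, Corollary~\ref{cor:ss}, Theorem~\ref{thm:visual}) and get the first equality from Theorem~\ref{thm:keith-kleiner}, whose statement already incorporates the weighted/unweighted passage via Lemma~\ref{lemma:weighted}, then delegate the rest to the known dichotomy for combinatorial modulus on approximately self-similar spaces. The one caveat is citation bookkeeping: the paper obtains the second equality and the lower bound at the critical exponent from \cite[Corollary 3.7]{MR3019076} rather than from \cite[Theorem 1.3]{C13:Gauge}, and your claim that at $q=\ARCdim(\cJ,d)$ the modulus stays \emph{between two positive constants} overstates what either reference gives (and what the corollary asserts) --- only boundedness from below is available; a uniform upper bound at criticality would be a Loewner-type property that is not claimed here.
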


\begin{proof}
The first equality holds by Theorem~\ref{thm:keith-kleiner}; the second equality and the last assertion follow from \cite[Corollary 3.7]{MR3019076}. 
\end{proof}


\section{Sandwiching \textalt{$\ARCdim$}{ARCdim}} 
\label{sec:sandwich} 

To prove Theorem~\ref{thm:crit-sandwich},
we begin by supposing
$\pi, \phi\co G_1 \rightrightarrows G_0$ is a
$\lambda$-backward-contracting ($\lambda<1$), recurrent, virtual endomorphism of
graphs. 

In fact, we will prove something slightly stronger, namely, that  the
conclusion holds under the weaker assumption that $\oE^\infty[\pi,
\phi]<\lambda < 1$. (The earlier assumption was that $E^\infty(\phi) < 1$.)
We begin by making some basic simplifications. 

First, we may assume $G_0$ has no vertices of valence $1$. 
Next, a map $\phi\colon G_1 \to G_0$ between connected graphs without leaves which induces a  surjection on the fundamental group is necessarily surjective.  
Finally, the pullback of a surjective map under a connected cover is again surjective. 
In summary, we may assume $G_0$, hence $G_1$ and indeed all the
$G_n$, have no leaves, and that for all $n$, each of $\phi^{n+1}_n$,
$\phi^n_0$, and~$\phi^\infty_n$ are surjective.  

There is a uniform upper bound, say $M$, of the valences of the graphs $G_n$, $n \in \mathbb{N}$.

For technical reasons, we subdivide any loops
of~$G_0$ by adding a vertex on the interior.  
Since asymptotic energies are independent of the chosen metric, for later convenience, 
we choose the metric so each
edge of $G_0$ has length $1$. Together, these conventions serve to make the star (recall Definition~\ref{def:star}) of any edge at
any level contractible.

The assumption $\oE^\infty[\pi, \phi]<\lambda < 1$ implies there exists
$N\geq 1$ such that $\phi^N_0\co G_N \to G_0$ is
contracting, say with some other constant $\lambda'<1$.  We iterate the pair $(\pi, \phi)$
so that the new $G_1$ is the old~$G_N$, and reset $\lambda\coloneqq\lambda'$. This modification does not alter the critical exponents $p_*, p^*$. 

We apply the construction in \S \ref{sec:ve_and_dynamics} to present
the limit space $\cJ$ as an inverse limit; we let
$G_n$, $\phi^\infty_n$, $\pi^\infty_n$, etc., be the corresponding spaces
and maps as in that section. Theorem~\ref{thm:lc} implies that the
limit dynamics $f\co \cJ \to \cJ$ associated to $(\pi, \phi)$ is
topologically cxc with respect to a finite open
cover $\cU_0$ by open connected sets such that the mesh of $\cU_n$
tends to zero as $n \to \infty$. We may also take the mesh of $\cU_0$
to be as small as we like.

We equip the limit space $\cJ$ with a visual metric $d_{vis}$ with parameter
$\theta<1$ as in Theorem~\ref{thm:visual}. The collection of coverings
$(\cU_n)_n$ is then a sequence of
snapshots with parameter~$\theta$ (\S \ref{subsecn:visual}).  Inconveniently, the elements of $\cU_n$ are not clearly related to structures in the graphs $G_n$. We therefore 
define another collection of coverings of $\cJ$ as follows. Given
$n \in \mathbb{N}$ and $e \in E(G_n)$, we denote by
\[ V(e)\coloneqq (\phi^\infty_n)^{-1}(\widehat{e})\subset\cJ\]
the open subset of~$\cJ$ lying over the star of~$e$ in~$G_n$;
$\cV_n$ is the covering of $\cJ$ by these sets. Unlike $\cU_n$, the sets in
$\cV_n$ are not necessarily connected.

In \S \ref{subsec:snap-stars}, we show that the coverings $\cU_n$ and $\cV_n$ are quantitatively comparable. As a consequence, the sequence $\cV_n$ is also a sequence of snapshots. In what follows, we work exclusively with the sequence of coverings $(\cV_n)_n$. 

In \S \ref{subsec:jg}, we relate weighted multi-curves on $G_n$ to weighted multi-curves on $\cJ$, and show comparability of moduli with respect to natural coverings. We do this by applying the constructions in \S \ref{subsecn:sections}. 

\S \ref{subsec:leq} and \S \ref{subsec:geq} give the proofs of the upper and lower bounds on $\ARCdim(\cJ)$, respectively.

\subsection{Snapshots from stars}
\label{subsec:snap-stars}

\begin{lemma} We have the following properties of the $\cV_n$.
\label{lem:V}
\begin{enumerate}
\item For $0 \leq k < n$, $e$ an edge of $G_k$, and $\wte$ an edge of $G_n$ with
  $\pi_n^k(\wt e) = e$, the map $f^{\circ (n-k)}\co V(\wte) \to V(e)$ is a homeomorphism.
\item With respect to the visual metric, $\mathrm{mesh}(\cV_n) \to 0$.
\end{enumerate}
\end{lemma}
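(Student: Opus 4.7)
For part (1), my plan is to first verify the commutation identity
\[
  \phi^\infty_k \circ f^{n-k} = \pi^n_k \circ \phi^\infty_n
\]
by a direct calculation: both sides send the orbit $(x_1,x_2,\dots) \in \cJ$ to the tuple $(x_{n-k+1},\dots,x_n)$. Since the edge lengths on each $G_j$ were chosen so that every star $\hat{e}$ is contractible and $\pi^n_k$ is a covering map of graphs, $\hat{e}$ is evenly covered by $\pi^n_k$ with sheets exactly the stars $\hat{\wte}$ of the edges $\wte$ of $G_n$ with $\pi^n_k(\wte) = e$. Pulling this decomposition back along $\phi^\infty_n$ and using the identity above yields the disjoint open decomposition
\[
  (f^{n-k})^{-1}(V(e)) \;=\; \bigsqcup_{\wte \text{ over } e} V(\wte).
\]
On each sheet $V(\wte)$ the restriction of $f^{n-k}$ is a bijection onto $V(e)$: given $y = (y_1, y_2,\dots) \in V(e)$, I lift $(y_1,\dots,y_k) \in \hat{e}$ to the unique tuple $(z_1,\dots,z_n) \in \hat{\wte}$ with $\pi^n_k(z_1,\dots,z_n) = (y_1,\dots,y_k)$ and prepend to obtain the unique preimage orbit $(z_1,\dots,z_n,y_{k+1},\dots) \in V(\wte)$. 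Since $f^{n-k}$ is itself a covering map by Theorem~\ref{thm:lc}(\ref{item:J-expansive}), it is open; restricted to the open set $V(\wte)$ it is an open continuous bijection, hence a homeomorphism.

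For part (2), my plan is to combine part (1) with the scaling property of the visual metric (Theorem~\ref{thm:visual}(\ref{item:scaling})) and positive expansivity of~$f$. By Theorem~\ref{thm:lc}(\ref{item:J-expansive}) and Proposition~\ref{prop:shrink}, axiom [Expansion] holds for the finite open cover $\cV_0$: the mesh of its iterated $f$-pullback tends to zero. Using part (1) to identify $(f^n)^{-1}(V(e))$ as $\bigsqcup_{\wte \text{ over } e} V(\wte)$, the connected components of the iterated pullback partition the components of the $V(\wte)$, so every component of every $V(\wte)$ has diameter tending to zero uniformly in~$\wte$.

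The main obstacle will be bounding distances between distinct components of a single $V(\wte)$, should it be disconnected. By part (1) such components are in bijection via $f^n|_{V(\wte)}$ with the finitely many components of $V(\pi^n_0 \wte) \in \cV_0$, and Theorem~\ref{thm:visual}(\ref{item:scaling}) gives that this homeomorphism is locally an exact $\theta^{-n}$-similarity. The plan to finish is to fix a pair of components $C_1, C_2$ of $V(\pi^n_0 \wte)$, join chosen points $y_i \in C_i$ by a short chain of small balls in $\cJ$, and apply the exact scaling on each ball along the appropriate inverse branch of $f^n$ to conclude that the corresponding components $C_1', C_2'$ of $V(\wte)$ lie at distance $O(\theta^n)$; combined with the finiteness of~$\cV_0$ and the uniformly bounded sheet count provided by axiom [Degree], this yields $\mathrm{mesh}(\cV_n) \to 0$.
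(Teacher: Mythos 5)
Your part (1) is correct and is essentially the paper's own (very terse) argument, written out in detail: the identity $\phi^\infty_k \circ f^{n-k} = \pi^n_k \circ \phi^\infty_n$, the fact that the contractible star $\whe$ (contractible thanks to the loop-subdivision convention) is evenly covered by $\pi^n_k$ with sheets exactly the stars $\widehat{\wte}$, and the pullback description of~$f$. Part (2), however, has a genuine gap, in two places. First, Proposition~\ref{prop:shrink} does not give you Axiom [Expansion] for the specific cover $\cV_0$: its proof produces \emph{some} sufficiently fine cover by connected open sets (with diameters below constants coming from the adapted metric and Eilenberg's theorem), whereas the elements of $\cV_0$ are preimages of level-$0$ stars, need not be connected, and have no smallness guarantee (the reindexing that makes them small comes later and itself relies on Lemma~\ref{lem:V}). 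Second, and fatally, the step bounding the distance between two distinct components $C_1', C_2'$ of a single $V(\wte)$ does not work as sketched: since $C_1, C_2$ are different components of $V(\pi^n_0\wte)$, any chain of balls joining $y_1 \in C_1$ to $y_2 \in C_2$ must leave $V(\pi^n_0\wte)$, and continuing an inverse branch of $f^n$ along that chain from a point of $C_1'$ lands at \emph{some} point of $(f^n)^{-1}(y_2)$ — a set of $d^n$ points of which only a bounded number lie in $V(\wte)$ — with no reason for it to be the point of $C_2'$, or to lie in $V(\wte)$ at all. But "the components of $V(\wte)$ are uniformly close to each other" is exactly the content of statement (2), so this is the crux, not a technicality that finiteness of $\cV_0$ and Axiom [Degree] can absorb.

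The paper's proof of (2) takes a different and simpler route that sidesteps both issues: it never uses the visual-metric scaling or inverse branches, but only the backward contraction of $\phi$ together with the product topology. Since $\diam_{G_n}(\whe) < 2$ and $\phi^n_i$ is $\lambda^{n-i}$-Lipschitz, one has $\diam_{G_i}(\phi^n_i(\whe)) < 2\lambda^{n-i}$, so for $n$ large every $V(e) \in \cV_n$ has all of its points with $\epsilon$-close coordinates in $G_1,\dots,G_M$; hence $V(e)$ is contained in a basic product-topology neighborhood of any of its points. Compactness of $\cJ$ and the fact that the visual metric induces the product topology then give $\mathrm{mesh}(\cV_n) \to 0$. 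Note that this argument is completely indifferent to whether $V(e)$ is connected, which is precisely the feature your decomposition into components was struggling to handle.
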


\begin{proof} The first assertion follows from the fact that
  stars are contractible, $\phi^\infty_n$ is the pullback of $\phi^\infty_k$ under
  $\pi^n_k$, the presentation of $\cJ$ as the inverse limit of the
  sequence $(\phi^{n+1}_n)_n$, and the definition of the dynamics on the limit
  space $f\co \cJ \to \cJ$ as induced by~$\pi$.

For the second, we argue directly, using the definition of the product
topology.
For $x \in \cJ$,  write
$x=(x_0, x_1, x_2, \ldots) \in \prod_{i=0}^\infty G_i$. 
Then there is a neighborhood
basis of $x$ in the product topology on~$\cJ$ consisting of sets of the form
\[ U(x; \epsilon, M)\coloneqq \left(\prod_{i=1}^M B_{G_i}(x_i,
    \epsilon) \times \prod_{i=M+1}^\infty G_i \right) \cap \cJ
\]
for $\epsilon>0$ and $M \in \mathbb{N}$.
Fix such a neighborhood $U(x; \epsilon, M)$. 
Choose $N>M$ large enough that $2\lambda^{N-M} < \epsilon$, and
suppose $n>N$, $e \in E(G_n)$, and $x \in V(e)$. Since $\diam_{G_n}(\widehat{e})<2$, we have for
$i \leq M$ that
\[ \diam_{G_i}(\phi^n_i(\widehat{e})) < 2 \lambda^{n-i}\leq 2\lambda^{N-M}<\epsilon.\]
Hence for each $0 \leq i \leq M$ we have
$\phi^\infty_i(V(e)) \subset B_{G_i}(x_i,\epsilon)$ and so
$V(e) \subset U(x; \epsilon, M)$. The conclusion follows since $\cJ$
is compact, and the topology determined by the visual metric is the
same as that induced from the product topology.
\end{proof}

\begin{lemma}[The $\cU_n$ are comparable to the $\cV_n$]
\label{lem:UV}
There exists $N_0, N_1 \in \mathbb{N}$ with the following property.

For each $n\geq N_0$ and each $V \in \cV_n$, there exist $U \in \cU_{n-N_0}$ and $U' \in \cU_{N_1+n-N_0}$ such that 
\[ U' \subset V \subset U.\]
\end{lemma}

Recall that \emph{Lebesgue number} of a finite open cover~$\cU$ is the largest
$\delta>0$ such that any subset of diameter less than $\delta$ is
contained in an element of~$\cU$.

\begin{proof} Lemma \ref{lem:V} implies there exists $N_0$
  such that the mesh of $\cV_{N_0}$ is smaller than the Lebesgue
  number of $\cU_0$. 
  
  We first show the conclusion holds for $n=N_0$.  Fix $V \in \cV_{N_0}$, so that $V=V(e)$,
  where $e \in E(G_{N_0})$. The choice of $N_0$ implies that
  $V \subset U$ for some $U \in \cU_0$. To find $U'$, let $y$ be the
  midpoint of~$e$. By the uniform continuity of $\phi^\infty_{N_0}$,
  there exists $\delta>0$ such that the image of any visual
  $\delta$-ball under $\phi^\infty_{N_0}$ has diameter at most $1/4$.
  Theorem \ref{thm:visual} implies there exists $N_1>N_0$ such that
  $\diam U' < \delta$ for each $U' \in \cU_{N_1}$. Pick any
  $x \in \cJ$ with $\phi^\infty_{N_0}(x)=y$, and pick any
  $U' \in \cU_{N_1}$ containing~$y$. Then
  $\phi^\infty_{N_0}(U') \subset B(x_{N_0}(e), 1/4) \subset e \subset
  \widehat{e}$ and so $U' \in V(e)$.

We now prove the conclusion holds for general $n \geq N_0$.  Given such $V\in \cV_n$, rename it $\wtV$, put $V\coloneqq f^{n-N_0}(\wtV)$, and
apply the argument above to obtain $U'$ and $U$. Now let $\wtU'$ and $\wtU$ be connected components of the 
preimages of $U'$ and $U$, respectively, under $f^{n-N_0}$ meeting $\wtV$. Recalling the construction in \S \ref{subsecn:convention}, each maps homeomorphically to its image. Renaming
$\wtU'$ to $U'$ and $\wtU$ to $U$ yields the result.
\end{proof}

\begin{proposition}
\label{prop:qp_from_stars}
There exists $m \in \mathbb{N}$ such that the family
$(\cV_n)_{n \geq m}$ is a family of snapshots with
parameter~$\theta$.
\end{proposition}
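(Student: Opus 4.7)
My plan is to deduce the snapshot property for $(\cV_n)_n$ from that of $(\cU_n)_n$, which is supplied by Theorem~\ref{thm:visual}(1), using the sandwiching in Lemma~\ref{lem:UV} together with a middle-third refinement exploiting the graph structure of each~$G_n$. I would set $m \coloneqq N_0$ and prove the snapshot property for all $n \ge N_0$ with a single constant $C$ depending on $C_0$, $N_0$, $N_1$, a new integer $N_2$, and on~$\theta$.

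For the scale-and-roundness axiom, fix $V = V(\wte) \in \cV_n$ with $n \ge N_0$. Lemma~\ref{lem:UV}(2) provides $U' \subset V \subset U$ with $U' \in \cU_{N_1+n-N_0}$ and $U \in \cU_{n-N_0}$. Let $x_{U'}$ and $x_U$ be the snapshot centers supplied by Theorem~\ref{thm:visual}(1), both with common constant~$C_0$. Then
\[
  B(x_{U'},C_0^{-1}\theta^{N_1+n-N_0}) \subset U' \subset V \subset U \subset B(x_U,C_0\theta^{n-N_0}),
\]
and the triangle inequality applied to $x_{U'} \in V \subset B(x_U,C_0\theta^{n-N_0})$ transfers the outer inclusion to a ball of radius $2C_0\theta^{n-N_0}$ centered at~$x_{U'}$; both bounds are constant multiples of~$\theta^n$.

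The subtler axiom is pairwise disjointness of the inner balls around the chosen centers: adjacent edges of $G_n$ produce overlapping stars and hence overlapping $V$'s, so the naive choice $x_V \coloneqq x_{U'}$ need not place the inner ball into a disjoint subregion. The remedy is the following middle-third construction. Since each edge of $G_n$ has length~$1$ and a star is a $1/3$-neighborhood, the closed middle third $\wte^*$ of any edge $\wte$ satisfies $\wte^* \cap \widehat{\wte'} = \emptyset$ for every $\wte' \ne \wte$; hence the subsets $V^*(\wte) \coloneqq (\phi^\infty_n)^{-1}(\wte^*) \subset V(\wte)$ are pairwise disjoint across $\wte \in E(G_n)$. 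I would then strengthen Lemma~\ref{lem:UV} to produce $W \in \cU_{N_2+n-N_0}$ with $W \subset V^*(\wte)$, for a suitable~$N_2$. At level $N_0$ this is an easy variant of Lemma~\ref{lem:UV}(1): uniform continuity of $\phi^\infty_{N_0}$ yields $N_2$ so that any element of $\cU_{N_2}$ has $\phi^\infty_{N_0}$-image of diameter at most $1/6$; any $W_0 \in \cU_{N_2}$ containing a preimage of the midpoint of an edge $e \in E(G_{N_0})$ then projects into the open $1/6$-neighborhood of that midpoint, which lies inside~$e^*$, so $W_0 \subset V^*(e)$. The case $n > N_0$ follows by dynamics via Lemma~\ref{lem:V}(1): for each $\wte$ above $e$, the inverse branch of $f^{n-N_0}$ on $V(e)$ is a homeomorphism onto~$V(\wte)$, and the commutation $\phi^\infty_{N_0}\circ f^{n-N_0} = \pi^n_{N_0}\circ \phi^\infty_n$ together with $\pi^n_{N_0}|_{\widehat\wte}$ being a homeomorphism onto $\widehat e$ (since stars are contractible after subdividing loops) carries $V^*(e)$ bijectively onto $V^*(\wte)$. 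The image of~$W_0$ under this inverse branch is the desired $W \in \cU_{N_2+n-N_0}$ contained in $V^*(\wte)$. Taking $x_V \coloneqq x_W$ gives $B(x_V, C_0^{-1}\theta^{N_2+n-N_0}) \subset W \subset V^*(\wte)$, and disjointness of the $V^*(\wte)$'s forces pairwise disjointness of these inner balls. Setting $C \coloneqq \max\bigl(2C_0\theta^{-N_0},\, C_0\theta^{N_0-N_2}\bigr)$ unifies all the estimates.

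The principal obstacle is the nearly-disjoint axiom; Lemma~\ref{lem:UV} by itself only controls roundness. The middle-third refinement, combined with the dynamical self-similarity from Lemma~\ref{lem:V}(1), provides a canonical protected region $V^*(\wte) \subset V(\wte)$ into which the snapshot center can be placed so that distinct inner balls are forced apart.
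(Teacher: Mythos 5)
Your argument is correct, and it rests on the same key geometric observation as the paper's proof --- the middle third of an edge (the $1/6$-ball about its midpoint) is disjoint from the star of every other edge, so the regions lying over the middle thirds give disjoint ``protected'' subsets of the $V(\wte)$ in which to place the inner balls, and the construction is made at one base level and propagated to all $n$ by the dynamics --- but the way you uniformize over $n$ is genuinely different. The paper works intrinsically in the visual metric: it chooses $m$ so that level-$m$ elements have diameter below the radius $r_0$ of Theorem~\ref{thm:visual}(\ref{item:scaling}), places $x_V$ over the midpoint of each edge of $G_m$, extracts one inner radius $s$ by finiteness and uniform continuity at that single level, and then transports both ball inclusions to every $n>m$ using that $f^{n-m}$ restricted to an element of $\cV_n$ is an exact homothety onto a level-$m$ element, so a single constant $C=\max\{D/\theta^m,\theta^m/s\}$ works verbatim at all levels. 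You never invoke the scaling property: you get the outer inclusion from Lemma~\ref{lem:UV}(2) together with the snapshot property of $(\cU_k)_k$, and the inner inclusion by trapping an element $W\in\cU_{N_2+n-N_0}$ inside $V^*(\wte)$ --- built at the base level by uniform continuity and at higher levels by pulling back along the inverse branch of $f^{n-N_0}$ on $V(\wte)$ from Lemma~\ref{lem:V}(1) --- and taking the center to be the $\cU$-snapshot center of $W$. Your route avoids Theorem~\ref{thm:visual}(\ref{item:scaling}) at the cost of extra covering bookkeeping; the paper's route is shorter because the homothety carries balls to balls exactly. The one step you assert without verification is that the inverse-branch image of $W_0$ really is an element of $\cU_{N_2+n-N_0}$, i.e.\ a full component of $f^{-(n-N_0)}(W_0)$; this is true and routine (the component containing it lies in $f^{-(n-N_0)}(V^*(e))$, a finite union of pairwise disjoint closed sets $V^*(\wte')$, hence lies in $V^*(\wte)$, and injectivity of $f^{n-N_0}$ on $V(\wte)$ then forces it to equal your $W$), but it deserves a line; note also that both your proof and the paper's implicitly use that midpoints of edges lie in the image of the projections $\phi^\infty_n$, so you are on equal footing there.
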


\begin{proof}
  Let $r_0$ be the constant from Theorem
  \ref{thm:visual}(\ref{item:scaling}). Lemma \ref{lem:V}(2) implies
  there is $m \in \mathbb{N}$ such that for $n \geq m$, each
  $V \in \cV_n$ has diameter at most~$r_0$, and hence the restriction
  $f\co V \to f(V)$ is a homeomorphism which scales distances by the
  factor~$\theta^{-1}$. 
  
  Recall the graphs $G_n$ are equipped with length metrics in which
  edges are isometric to the unit interval. For each
  $V=V(e) \in \cV_{m}$, let $x_V \in \cJ$ be a point which projects
  under $\phi^\infty_{m}$ to the midpoint of the edge $e$. 
  Since there are finitely many edges at level
  $m$, we can find $s>0$ such that for each $V \in \cV_{m}$,
  $\diam(\phi^\infty_{m}(B(x_V, s)))<1/2$. This choice of $s$ guarantees
  that the visual balls $B(x_V, s)$ for $V \in \cV_{m}$ are pairwise disjoint. Let
  $D=\max\{\diam V \mid V \in \cV_{m}\}$. For each $V \in \cV_m$, we
  have
 \[ B(x_V,s) \subset V \subset B(x_V, D)\]
 so that if we put $C\coloneqq\max\{D/\theta^m, \theta^m/s \}$, then $C>1$, and 
 \[ B(x_V, C^{-1}\theta^m) \subset V \subset B(x_V, C\theta^m).\]
 The finite collection of pointed sets
 $\{(V, x_V) \mid V \in \cV_{m}\}$ then satisfies condition~(1) in
 Definition~\ref{defn:snapshots} on sequences of snapshots. By construction
 $C^{-1}\theta^m<s$ and it follows that condition~(2) holds as well.

Now suppose $n>m$ and $\widetilde{V} \in \cV_n$.  The restriction $f^{n-m}|\widetilde{V}$ is
  an expanding homothety with factor $\theta^{m-n}$ onto say $V \in
  \cV_m$. Put $x_{\widetilde{V}}\coloneqq(f^{n-m}|\widetilde{V})^{-1}(x_V)$.
  Using the same constant $C$ constructed in the previous paragraph,
  it follows that the sequence $(\cV_n)_{n \geq m}$ is a sequence of
  snapshots of $\cJ$ with parameter~$\theta$.
\end{proof}

For convenience, we reindex our virtual endomorphism and sequence of
covers $(\cV_n)_n$ so
that $m=0$, as in Eq.~\eqref{eq:reindex}

\subsection{Comparing modulus of curves on \textalt{$G_n$}{Gn} and on \textalt{$\cJ$}{J}}
\label{subsec:jg}

Suppose $\sigma^n_\infty\colon G_n \to \cJ$ is the homotopy section of $\phi^\infty_n$ given by Corollary \ref{cor:recurrrent_graph_ve}, and suppose $\gamma\co C \to G_n$ is a weighted multi-curve on $G_n$. 
Define the curve
$\gamma'\coloneqq\sigma^n_\infty \circ \gamma$ on~$\cJ$, and the curve
$\gamma''\coloneqq\phi^\infty_n\circ \gamma'$ on~$G_n$.
By Corollary \ref{cor:recurrrent_graph_ve}, there exists $K>0$ so that
\[ \gamma \sim_K \gamma''.\]
It is important that $K$ is independent of $n$. 
Though $\gamma$, $\gamma'$, and $\gamma''$ are single curves, they nevertheless have a
meaningful combinatorial modulus when regarded as a family; see
Remark~\ref{remark:singlecurve}.

Recall from \S\ref{sec:regularity} that,
on $G_n$, there are coverings $\cE_n$ by closed edges, and $\widehat\cE_n$ by
open stars.  Also, recall that $M$ is a uniform upper bound on the degrees of the graphs $G_n$, $n \in \mathbb{N}$.

\begin{lemma}
\label{lemma:eg_vs_ej}
For all $q \geq 1$ and any weighted multi-curve $\gamma$ on $G_n$, with $\gamma'$ and
$\gamma''$ as above we have 
\[ \modulus_q(\gamma', \cV_n) =\modulus_q(\gamma'', \widehat{\cE}_n)\asymp_M \modulus_q(\gamma'', \cE_n) \asymp_{K,M,q} \modulus_q(\gamma, \cE_n). \]
In particular, the implicit constants are independent of $n$ and~$\gamma$. 
\end{lemma}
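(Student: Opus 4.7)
The plan is to establish the three relations in the chain in order.

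\textbf{First equality.} The definition $V(e) = (\phi^\infty_n)^{-1}(\widehat{e})$ gives a canonical bijection $\cV_n \leftrightarrow \widehat{\cE}_n$. A test metric $\rho$ pushed across this bijection preserves $q$-volume. Since $c'' = \phi^\infty_n \circ c'$, for every parameter $t$ we have $c'(t) \in V(e)$ if and only if $c''(t) \in \widehat{e}$, so the covering elements met by $c'$ and by $c''$ correspond and $\ell_\rho(c', \cV_n) = \ell_\rho(c'', \widehat{\cE}_n)$. The equality $\modulus_q(c', \cV_n) = \modulus_q(c'', \widehat{\cE}_n)$ then follows directly from the definitions in \S\ref{subsec:comb-mod}.

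\textbf{Stars versus closed edges.} Since $\pi^n_0 \co G_n \to G_0$ is a covering, the valence of $G_n$ is bounded by the valence $d$ of $G_0$. Lemma~\ref{lemma:stars} then yields $\modulus_q(c'', \widehat{\cE}_n) \asymp_d \modulus_q(c'', \cE_n)$.

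\textbf{Fellow-traveler comparison.} This is the main step. By Corollary~\ref{cor:recurrrent_graph_ve} there is a constant $K$ (independent of $n$) with $\phi^\infty_n \circ \sigma^n_\infty \sim_K \mathrm{id}_{G_n}$. Precomposing this homotopy with $c$ and invoking Lemma~\ref{lem:comphom} produces a homotopy $H \co I \times C \to G_n$ from $c$ to $c''$ of trace size at most $K$. For each parameter $x$ in a strand $J_i$ of $C$, the trace $H(\cdot, x)$ is a path in $G_n$ joining $c_i(x)$ to $c''_i(x)$ whose lift to the universal cover has diameter at most $K$. Since each edge of $G_n$ has unit length, the lift meets only edges at combinatorial edge-distance at most $N := \lfloor K \rfloor + 1$ from the edge containing $c_i(x)$, and these project to edges of $G_n$ at the same combinatorial distance. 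Hence each strand $\gamma_i$ of $c$ lies in $\bigcup \cE_n^N(\gamma_i'')$ and vice versa. Since $\cE_n$ is $(2d-1)$-bounded, Lemma~\ref{lemma:regularity} gives $\modulus_q(c, \cE_n) \asymp_{d, N, q} \modulus_q(c'', \cE_n)$, with implicit constants depending only on $K$, $\lambda$, $d$, $q$ and independent of $n$.

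\textbf{Main obstacle.} The essential point is the uniformity of the trace-size bound $K$ across all levels $n$: without a uniform section supplied by Corollary~\ref{cor:recurrrent_graph_ve}, there would be no single fellow-traveler radius $N$ valid for all $n$, and the final constant would blow up. The translation from the metric bound ``size $\le K$'' to the combinatorial bound ``edge-distance $\le \lfloor K \rfloor + 1$'' relies on the convention fixed at the start of \S\ref{sec:sandwich} that edges of $G_n$ inherit unit length from $G_0$ via $\pi^n_0$; this is what makes the analytic size bound translate cleanly into a combinatorial neighborhood bound in $\cE_n$.
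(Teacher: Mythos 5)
Your proof is correct and follows essentially the same route as the paper: the saturation of the sets $V(e)=(\phi^\infty_n)^{-1}(\widehat e)$ for the first equality, Lemma~\ref{lemma:stars} for the middle comparison, and Corollary~\ref{cor:recurrrent_graph_ve} (precomposed with $c$ via Lemma~\ref{lem:comphom}) together with the fellow-traveller Lemma~\ref{lemma:regularity} for the last. Your explicit translation of the uniform trace-size bound into a combinatorial $\cE_n^N$-neighborhood, using the unit edge lengths on $G_n$, just spells out what the paper leaves implicit.
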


Tracing through the dependencies, note that the constant $K$ depends on the Lipschitz constant $\lambda<1$ of $\phi$. 
 
\begin{proof}
  By definition the map $\phi^\infty_n$ sends $\gamma' $ to $\gamma''$ and
  induces an isomorphism between the nerve of~$\cV_n$ and
  the nerve of~$\widehat{\cE}_n$. The elements $V(e)$ are given by
  $(\phi^\infty_n)^{-1}(\whe)$; in particular they are saturated with
  respect to the fibers of $\phi_\infty^n$. Thus
  $\modulus_q(\gamma', \cV_n) = \modulus_q(\gamma'', \widehat{\cE}_n)$. The
  middle estimate is the content of Lemma \ref{lemma:stars}.
  For the
  last estimate, $\gamma$
  and $\gamma''$ are in uniform combinatorial
  $\cE_n$-neighborhoods of each other since $\gamma \sim_K \gamma''$.
  Then Lemma~\ref{lemma:regularity} (with its $K\coloneqq M$ and its $N\coloneqq K$)
  implies that
  $\modulus_q(\gamma'', \cE_n) \asymp_{K, M,q} \modulus_q(\gamma, \cE_n)$. 
\end{proof}

\subsection{Upper bound on \textalt{$\ARCdim$}{ARCdim}}
\label{subsec:leq}

Let $\delta_0$ be as in Theorem~\ref{thm:keith-kleiner}, and fix
$0<\delta<\delta_0$.

\begin{proposition}
\label{proposition:to_zero}
Suppose that $q>1$ and $\oE^q[\pi, \phi]<1$. Let $\Gamma_\delta$ be the family of curves in $\cJ$ whose diameters are bounded below by $\delta$.  Then $\modulus_q(\Gamma_\delta, \cV_n) \to 0$ as $n \to \infty$.
\end{proposition}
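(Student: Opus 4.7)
The plan is to transfer the modulus computation from $\cJ$ to the finite graphs $G_n$ and then exploit the energy decay forced by $\oE^q[\pi,\phi] < 1$. Since each $V(e) = (\phi^\infty_n)^{-1}(\widehat{e})$ is saturated for $\phi^\infty_n$, the nerves of $\cV_n$ and $\widehat{\cE}_n$ are canonically identified, so $\modulus_q(\Gamma_\delta, \cV_n) = \modulus_q(\bar\Gamma_n, \widehat{\cE}_n) \asymp_d \modulus_q(\bar\Gamma_n, \cE_n)$ by Lemma~\ref{lemma:stars}, where $\bar\Gamma_n \coloneqq \{\phi^\infty_n \circ \gamma \mid \gamma \in \Gamma_\delta\}$.

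Next I fix a level $k_0$ (depending on $\delta$, not on $n$) large enough that every $\gamma\in\Gamma_\delta$ projects under $\phi^\infty_{k_0}$ to a curve in $G_{k_0}$ of diameter at least some $\delta'>0$. By Proposition~\ref{prop:qp_from_stars}, $\cV_{k_0}$ has mesh $\lesssim \theta^{k_0}$, so for $k_0$ sufficiently large no curve of visual diameter $\geq\delta$ fits in a single element of $\cV_{k_0}$; its $G_{k_0}$-projection must then meet at least two distinct open stars of $G_{k_0}$, forcing $G_{k_0}$-diameter $\geq 1/3$. On the \emph{fixed} finite graph $G_{k_0}$, the family $\Gamma_{k_0}$ of such curves has bounded combinatorial $q$-modulus (witnessed, e.g., by the constant admissible test metric $1/\delta'$); choose a near-optimal admissible $\rho_0$ on $\cE_{k_0}$ with $V_q(\rho_0, \cE_{k_0}) \leq 2\modulus_q(\Gamma_{k_0}, \cE_{k_0})\eqqcolon M$.

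The main step is to pull $\rho_0$ back to an admissible test metric on $\cE_n$ whose $V_q$-volume is bounded by $(E^q_q[\phi^n_{k_0}])^q \cdot V_q(\rho_0)$. Treating $\rho_0$ as edge lengths gives $G_{k_0}$ the structure of a length graph $L$, and the sub\hyp multiplicativity of graph energies (\S \ref{subsec:el}) applied to the composition $G_n^q \to G_{k_0}^q \to L$ yields
\[
E^q_\infty[\phi^n_{k_0} \co G_n \to L] \leq E^q_q[\phi^n_{k_0}] \cdot E^q_\infty[\id \co G_{k_0} \to L] = E^q_q[\phi^n_{k_0}] \cdot V_q(\rho_0)^{1/q}.
\]
Setting $\tilde\rho_n(\wte) \coloneqq \ell_L(\psi(\wte))$ for an energy-minimizing $\psi \sim \phi^n_{k_0}$ then realizes the volume bound; admissibility of a constant multiple of $\tilde\rho_n$ for $\bar\Gamma_n$ will follow from the admissibility of $\rho_0$ for $\Gamma_{k_0}$ together with the identity $\phi^n_{k_0} \circ c = \phi^\infty_{k_0} \circ \gamma \in \Gamma_{k_0}$. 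Finally, the hypothesis $\oE^q[\pi,\phi]<1$, combined with the reindexing-invariance of the asymptotic energy, gives $E^q_q[\phi^n_{k_0}] \to 0$ exponentially in $n$, and the proposition follows.

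The most delicate point I expect is verifying admissibility of $\tilde\rho_n$: length in $L$ counts multiplicities, while combinatorial $\rho_0$-admissibility of a curve in $\Gamma_{k_0}$ does not, so matching the two inequalities requires care. I anticipate resolving this either by passing to a combinatorial pullback $\tilde\rho_n(\wte) \coloneqq \sum_{e \in \phi^n_{k_0}(\wte)} \rho_0(e)$ and controlling the excess $V_q$-volume via a Hölder estimate and a filling-function bound coming from $E^q_q[\phi^n_{k_0}]$, or by directly invoking Proposition~\ref{prop:energies-comp} on a per-homotopy-class basis and appealing to subadditivity of $\modulus_q$ together with the bounded trace size provided by Corollary~\ref{cor:recurrrent_graph_ve}.
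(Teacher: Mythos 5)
Your overall strategy is the paper's strategy: the paper's own remark after its proof recasts the volume estimate exactly as your submultiplicativity bound $E^q_\infty(\psi^n)\le E^q_q(\phi^n)\,E^q_\infty(\id\co G_0\to K_0)$, with $L$ taken to be $G_0$ with unit edge lengths (so no near-optimal $\rho_0$ is needed), and with admissibility organized by decomposing $\Gamma_\delta$ into the subfamilies $\Gamma_e$ of curves whose projection to a fixed level contains a whole edge~$e$. The genuine gap in your write-up is the one you flag, and neither of your proposed patches closes it. Defining $\tilde\rho_n(\wte)=\ell_L(\psi(\wte))$ with $\psi$ an energy minimizer in $[\phi^n_{k_0}]$ destroys admissibility: for $\gamma\in\Gamma_\delta$ with projection $c=\phi^\infty_n\circ\gamma$, the sum $\sum_{\wte\cap c\neq\emptyset}\ell_L(\psi(\wte))$ only controls the image of $\psi\circ c$, which is merely \emph{homotopic} to $\phi^\infty_{k_0}\circ\gamma$. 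Most projected curves are arcs (or null-homotopic loops), whose homotopy classes carry no information, and a minimizer may have zero derivative on a subgraph of $G_n$ containing the image of~$c$; then $\psi\circ c$ is constant and $\ell_{\tilde\rho_n}(c)$ has no lower bound. Your first fix, the combinatorial pullback $\tilde\rho_n(\wte)=\sum_{e\cap\phi^n_{k_0}(\wte)\neq\emptyset}\rho_0(e)$, restores admissibility but ruins the volume: every image $\phi^n_{k_0}(\wte)$ meets at least one closed edge of $G_{k_0}$, so $\tilde\rho_n(\wte)\ge\min_e\rho_0(e)>0$ for \emph{every} edge of $G_n$ and $V_q(\tilde\rho_n)\gtrsim \#E(G_n)\to\infty$. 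Your second fix founders for the same reason homotopy classes were useless above: Proposition~\ref{prop:energies-comp} requires no null-homotopic components (and bounded edge multiplicity), while $\Gamma_\delta$ projects mostly to arcs spread over infinitely many classes, so a per-class application plus subadditivity cannot cover the family.

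The resolution is to pull back by the \emph{actual} dynamical map and use measure-theoretic length rather than a combinatorial count: set $\rho(s)=\ell_{L}\bigl(\phi^n_{k_0}(s)\bigr)$ (the paper's Eq.~\eqref{eqn:rho} with $L=G_0$, unit lengths). This is tiny for most edges $s$ of $G_n$, so Hölder plus the filling function give $V_q(\rho,\cV_n)\lesssim\bigl(E^q_q(\phi^n)\bigr)^q$, while admissibility holds because the images of the edges met by $\phi^\infty_n(\gamma)$ \emph{cover} the genuine projection $\phi^\infty_{k_0}(\gamma)$ (in the paper, cover the fixed edge $e_0$), giving a lower bound $\gtrsim\min_e\rho_0(e)$ independent of~$n$. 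The price is that the exponential decay must hold for the energy of the actual representative, not just the homotopy class; the paper handles this by noting that $\cJ$, the covers up to comparability of moduli, and $\oE^q$ depend only on $[\pi,\phi]$, so after iterating one may replace $\phi$ by a homotopic representative with $E^q_q(\phi)=\lambda<1$, whence $E^q_q(\phi^n)<\lambda^n$. If you incorporate that normalization and use $\phi^n_{k_0}$ itself in place of $\psi$, your argument becomes correct and is then essentially the published one.
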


\begin{proof}
  We
  begin with the following claim. There is an integer $N$ such that
  for each $\gamma \in \Gamma_\delta$ there is $e \in E(G_N)$ so that
  $e \subset \phi^\infty_N(\gamma)$. To see this, we argue as follows.
  Using Lemma \ref{lem:V}, choose~$N$ so that the mesh of $\cV_N$ is
  at most $\delta/10$. Pick $x^1, x^2 \in \gamma$ with
  $d_{vis}(x^1, x^2)>\delta/2$. Choose closed edges $e_1, e_2 \in E(G_N)$
  such that $\phi^\infty_N(x_i) \in e_i$. The triangle inequality
  shows that $e_1 \cap e_2 = \emptyset$. The image
  $\phi^\infty_N(\gamma)$ is a connected set in $G_N$ which meets two
  disjoint closed edges, so must contain an entire edge~$e$.

  Let the level $N$ be as in the previous paragraph. 
  For $e \in E(G_N)$, let $\Gamma_e$ be the family of
  curves~$\gamma$ in $\cJ$ so that $e \subset \phi^\infty_N(\gamma)$. Then
  \begin{align*}
    \Gamma_\delta &\subset\! \bigcup_{e \in E(G_{N})}\!\Gamma_e\\
    \intertext{and so, for $n \ge N$,}
    \modulus_q(\Gamma_\delta, \cV_n) &\leq\! \sum_{e \in E(G_{N})}\!
                                       \modulus_q(\Gamma_e, \cV_n).
  \end{align*}
  It therefore suffices to show that
  $\modulus_q(\Gamma_{e}, \cV_n) \to 0$ for each $e \in E(G_{N})$.

  The proposition deals with the asymptotic energy $\oE^q[\pi,\phi]$
  and the limit set~$\cJ$.
  Neither the property that $\oE^q[\pi,\phi]<1$ nor the conformal gauge of the limit space is changed by reindexing,
  changing the weights, or iterating.
  We may therefore  choose
  $q$-lengths $\alpha\equiv 1$ on $G_0$ (and thus on~$G_N$), and then
  reindex to  assume $N=0$.  We now have a $q$-conformal structure $G_0^q$ on $G_0$. By lifting under the coverings $\pi^n_0$, we obtain $q$-conformal structures $G_n^q$ on $G_n$ for each $n$.  Since
  $\oE^q[\pi, \phi]<1$, by iterating we may assume $\phi\colon G^q_1 \to G^q_0$ satisfies 
  $E^q_q(\phi) = \lambda < 1$ and so $\phi^n_0\colon G^q_n \to G^q_0$ satisfies $E_q^q(\phi^n) \leq \lambda^n$ for
  all $n > 0$.

Fix some $e_0 \in E(G_0)$. We must show $\modulus_q(\Gamma_{e_0}, \cV_n) \to 0$ as $n \to \infty$. 
Fix $n \in \mathbb{N}$.   Recall elements of $\cV_n$ are preimages of
stars $(\phi^\infty_n)^{-1}(\widehat{s})$.  For brevity, we will
write $\rho(s)$ instead of $\rho((\phi^{\infty}_n)^{-1}(\widehat{s}))$. Let $x$ be a local length coordinate on the edge $s$. Define a test metric $\rho\co \cV_n
\to [0,\infty)$ by 
\begin{equation}
\label{eqn:rho}
\rho(s)=\int_{x \in s}\lvert(\phi^n)'(x)\rvert |dx|.
\end{equation}
In other words, $\rho(s)$ is the length of $\phi^n(s)$
regarded as a curve on the length-graph underlying~$G_0$, which we
denote $\ell_{G_0}(\phi^n(s))$. (Note that the image 
$\phi^n(s)$ may backtrack.)

We now show that this $\rho$ is admissible for $\Gamma_{e_0}$.
Pick $\gamma \in \Gamma_{e_0}$, and set $\beta\coloneqq\phi^\infty_n(\gamma)$; it is a curve in $G_n$. We have  
\[ e_0 \subset \phi^\infty_0(\gamma) = \phi^n_0(\phi^\infty_n(\gamma))=\phi_0^n(\beta).\]

Let $s_1, \ldots, s_m$ be the edges in $E(G_n)$ met by~$\beta$. Then 
\[ \ell_\rho(\gamma, \cV_n) = \sum_{j=1}^m\rho(s_j) =
  \sum_{j=1}^m \ell_{G_0}(\phi^n(s_j))\geq \ell_{G_0}(e_0)=1.
\]
The last inequality works even in the presence of backtracking: the
total length
in~$G_0$ covered by the $\phi^n(s_j)$ is at least as long as $e_0$,
even if $\beta$ crosses over a given edge $s_j$ multiple times.

Next we estimate the $q$-volume $V_q(\rho, \cV_n)$; for Jensen's Inequality, see, e.g., \cite[Theorem 3.3]{MR924157}.  We have
\begin{align*}
  V_q(\rho, \cV_n)
  &= \sum_{s\in\Edges(G_n)} \rho(s)^q
    &&\text{Definition of $V_q$}\\
  &=\sum_s \left( \int_{x\in s} \lvert (\phi^n)'(x)\rvert\; dx \right)^q
    &&\text{Definition of $\rho$}\\
 &\leq \sum_s \int_{x\in s} \lvert(\phi^n)'(x)\rvert^q \, dx 
    &&\text{Jensen's inequality}\\
&=\int_{x\in G_n}\lvert(\phi^n)'(x)\rvert^q \, dx
    &&\\
  &=\int_{x \in G_0}\Fill^q(\phi^n)\, dx
    &&\text{Change of variables from $G_n$ to $G_0$}\\
&\leq \norm{\Fill^q(\phi^n)}_{\infty, G_0} \cdot \left(\int_{x\in G_0}1\, dx\right)\\
&\asymp \left(E_q^q(\phi^n)\right)^q && \text{Definition of $E_q^q$}\\
& < \lambda^{nq}.
\end{align*}
We conclude $\modulus_q(\Gamma_{e_0}, \cV_n) \to 0$, as required.  
\end{proof}

Proposition~\ref{proposition:to_zero} and
Theorem~\ref{thm:keith-kleiner} then imply
$\ARCdim(\cJ) \leq p^*[\pi, \phi]$.

\begin{remark}
  The calculations in the proof of
  Proposition~\ref{proposition:to_zero} can be interpreted as follows.
  Let $K^\infty_0$ be $G_0$, considered as a length graph with all edge
  lengths~$1$. Let $\psi^n \co G^q_n \to K^\infty_0$ be $\phi^n$, homotoped to
  be constant-derivative on each edge of~$G^q_n$. Then, if you trace
  through the definitions, $V_q(\rho,\cV_n)=\bigl(E^q_\infty(\psi^n)\bigr)^q$,
  and the relevant inequalities are
  \[
    E^q_\infty(\psi^n) \le E^q_\infty(\phi^n)
    \le E^q_q(\phi^n) \cdot E^q_\infty(\textrm{id}\co G^q_0 \to K^\infty_0) <
    \lambda^n \cdot E^q_\infty(\textrm{id}\co G^q_0 \to K^\infty_0) \asymp \lambda^n.
  \]
\end{remark}

\subsection{Lower bound on \textalt{$\ARCdim$}{ARCdim}}
\label{subsec:geq}

For this inequality, we will use more of the theory from
\S\ref{sec:modulus}.  Again let $\delta_0$ be as in Theorem
\ref{thm:keith-kleiner}, and fix $\delta<\delta_0$ that is
sufficiently small (to be specified).

\begin{proposition}
\label{proposition:to_infinity}
Suppose $\oE^q[\pi, \phi] > 1$. Then $\modulus_q(\Gamma^w_\delta, \cV_n) \to \infty$.
\end{proposition}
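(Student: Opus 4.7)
The plan is to use the extremal weighted multi-curve from Theorem~\ref{thm:sf} to exhibit, for each large~$n$, a weighted family of curves on~$\cJ$ all lying in~$\Gamma^w_\delta$ whose total weight forces every admissible test metric on~$\cV_n$ to have large $L^q$-mass. Concretely, I would equip $G_0$ with unit $q$-lengths (lifted to each~$G_n$), pick $\mu\in(1,\oE^q[\pi,\phi])$ so that $E^q_q[\phi^n_0]>\mu^n$ for $n$ large, and invoke Theorem~\ref{thm:sf} to produce a map $\psi_n\in[\phi^n_0]$ and a reduced weighted multi-curve $c_n\co C_n\to G_n$ realizing the stretch factor, with each strand meeting each edge of~$G_n$ at most twice and with $\psi_n\circ c_n$ itself reduced on~$G_0$. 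After rescaling weights so that $E^1_q(c_n)=1$, this gives $E^1_q(\psi_n\circ c_n)\geq\mu^n$.

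Next I would form the subdivision $d_n=\sum_{i,j}w_{n,i}\gamma_{n,i,j}$ of~$c_n$ obtained by cutting each strand at every preimage of a vertex of~$G_0$ under~$\psi_n$. Reducedness of $\psi_n\circ c_n$ ensures that each sub-strand $\gamma_{n,i,j}$ is mapped by~$\psi_n$ onto a single full edge of~$G_0$. The total weight $W_n\coloneqq\sum_{i,j}w_{n,i}$ equals the weighted edge-traversal count $\|n_{\psi_n\circ c_n}\|_1$, which dominates $\|n_{\psi_n\circ c_n}\|_{\qdual}=E^1_q(\psi_n\circ c_n)\geq\mu^n$ by the monotonicity $\|\cdot\|_1\geq\|\cdot\|_{\qdual}$ for nonnegative sequences, valid since $\qdual\geq 1$. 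I would then lift each sub-strand to $\gamma'_{n,i,j}\coloneqq\sigma^n_\infty\circ\gamma_{n,i,j}$ in~$\cJ$ using the uniform homotopy section from Corollary~\ref{cor:recurrrent_graph_ve}. Since $\phi^\infty_0\circ\sigma^n_\infty\sim\phi^n_0$ with trace size at most $\lambda^n K$ (the section has trace size~$K$ independent of~$n$ and $\phi^n_0$ is $\lambda^n$-Lipschitz), the projection $\phi^\infty_0(\gamma'_{n,i,j})$ differs from the full edge $\psi_n(\gamma_{n,i,j})$ by an arbitrarily small amount, hence has $G_0$-diameter at least $1/2$ for $n$ large; uniform continuity of~$\phi^\infty_0$ then yields a lower bound $\delta_0>0$ on $\diam_\cJ(\gamma'_{n,i,j})$ independent of~$n$ and~$(i,j)$. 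Fix $\delta\in(0,\delta_0)$ smaller than the threshold of Theorem~\ref{thm:keith-kleiner}; each $\gamma'_{n,i,j}$ then belongs to~$\Gamma_\delta$.

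The final step is to bound $V_q(\rho,\cV_n)$ from below for every $\rho$ admissible for~$\Gamma^w_\delta$. Summing the admissibility inequalities $\ell_\rho(\gamma'_{n,i,j},\cV_n)\geq 1$ with weights gives $\ell_\rho(d'_n,\cV_n)\geq W_n$. Transporting~$\rho$ to $\rho_n(e)\coloneqq\rho(V(e))$ on~$\cE_n$ and applying Lemma~\ref{lemma:eg_vs_ej} yields $\ell_\rho(d'_n,\cV_n)\asymp\ell_{\rho_n}(d_n,\cE_n)$, and Lemma~\ref{lem:subdivide} with bound $K=2$ gives $\ell_{\rho_n}(d_n,\cE_n)\asymp\ell_{\rho_n}(c_n,\cE_n)$. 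The latter is bounded below by $\tfrac12\sum_e\rho_n(e)n_{c_n}(e)$ using the two-crossings property, so Hölder's inequality with exponents~$q$ and~$\qdual$ produces
\[
  \ell_{\rho_n}(c_n,\cE_n) \lesssim \sum_e\rho_n(e)n_{c_n}(e) \leq \|\rho_n\|_q\cdot\|n_{c_n}\|_{\qdual} = V_q(\rho,\cV_n)^{1/q}\cdot E^1_q(c_n) = V_q(\rho,\cV_n)^{1/q}.
\]
Chaining these estimates, $V_q(\rho,\cV_n)^{1/q}\gtrsim W_n\geq\mu^n$, so $V_q(\rho,\cV_n)\gtrsim\mu^{nq}\to\infty$; hence $\modulus_q(\Gamma^w_\delta,\cV_n)\to\infty$.

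The main obstacle will be verifying that the lifted sub-strands really do have uniformly positive diameter in the visual metric on~$\cJ$, and that all the comparability constants in the chain of inequalities are genuinely uniform in~$n$. Both points rest on the $n$-independent trace-size bound from Corollary~\ref{cor:recurrrent_graph_ve} and on the two-crossings property of Theorem~\ref{thm:sf}, which controls the multiplicative losses in the subdivision and Hölder steps.
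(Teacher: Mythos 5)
Your proposal is, in substance, the paper's own argument: the extremal weighted multi-curve from Theorem~\ref{thm:sf} with the two-crossings property, the subdivision of $\psi_n\circ c_n$ into sub-strands lying over single edges of~$G_0$, the lift to~$\cJ$ via the homotopy section of Corollary~\ref{cor:recurrrent_graph_ve}, the diameter lower bound for sub-strands from the $n$-independent trace-size bound plus forward expansion, and the quantitative blow-up $\gtrsim\mu^{nq}$. The only real difference is bookkeeping: you normalize $E^1_q(c_n)=1$ and carry the growth in the total weight $W_n=\norm{n_{\psi_n\circ c_n}}_1\ge\mu^n$, then argue directly against an arbitrary admissible test metric via H\"older, whereas the paper normalizes $E^1_q(\psi\circ c)=1$ (so the total weight is $\asymp 1$, using equivalence of norms on $\RR^{E(G_0)}$) and transports the bound $\modulus_q([c],\cE_n)\gtrsim\mu^{nq}$ of Proposition~\ref{prop:energies-comp} through the modulus-comparison lemmas. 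Your route in effect re-derives Proposition~\ref{prop:energies-comp} by hand inside the proof; that is legitimate, and your use of the free inequality $\norm{\cdot}_1\ge\norm{\cdot}_{\qdual}$ (valid with unit edge lengths) is a small simplification over the paper's norm-equivalence step.

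Two places need more care than your write-up shows, though neither is a fatal gap. First, Lemma~\ref{lemma:eg_vs_ej} and Lemma~\ref{lem:subdivide} are statements about \emph{moduli}, not about $\rho$-lengths for a single fixed test metric, so the chain $\ell_\rho(d'_n,\cV_n)\asymp\ell_{\rho_n}(d_n,\cE_n)\asymp\ell_{\rho_n}(c_n,\cE_n)\lesssim\sum_e\rho_n(e)\,n_{c_n}(e)$ is not literally licensed by those citations: the fellow-traveller step requires replacing $\rho_n$ by its maximum over a $K'$-combinatorial neighborhood (with $K'=K/(1-\lambda)$), which inflates $V_q$ by a factor depending only on the valence bound and~$K'$; and passing from ``edges met'' to ``edges traversed'' (i.e., to $n_{c_n}$) needs the open-edge covering $\cE^\circ$ of Lemma~\ref{lemma:openedges} to discard vertex-touchings, exactly as in the proof of Proposition~\ref{prop:energies-comp}. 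All these losses are uniform in~$n$, so the argument survives, but the estimates must be run at that level rather than edge-for-edge with the same~$\rho$. Second, in the diameter bound you tacitly identify $\psi_n$ with $\phi^n_0$ (they are only homotopic); the endpoint estimate should be phrased with the map actually used to define the subdivision, and you should also record the subdivision-lemma hypothesis that no sub-strand lies in a single edge of~$G_n$, which holds for large~$n$ since each sub-strand has $G_n$-length at least $\lambda^{-n}$.
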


\begin{proof} 
  Set $\mu = \oE^q[\pi,\phi] > 1$; then
  $E^q_q[\phi^n] \ge \mu^n$ \cite[Proposition 5.6]{Thurston20:Characterize}.
  We are going to find a normalized weighted multi-curve $\zeta'$ on $\cJ$
  in $\Gamma_\delta^w$ such that
  $\modulus_q(\zeta', \cV_n) \to \infty$ as $n \to \infty$. This will
  imply $\modulus_q(\Gamma^w_\delta, \cV_n) \to \infty$ as required.
  
  Fix $n \in \mathbb{N}$. Recall from \S \ref{sec:regularity} that the graph $G_n$ is naturally covered by its set $\cE_n$ of closed edges.  By Theorem~\ref{thm:sf}, we can find a curve
  exhibiting $E^q_q[\phi^n]$:
  there is a reduced weighted multi-curve $\gamma\co C^1 \to G^q_n$ and a map
  $\psi\co G^q_n \to G^q_0$ that minimizes $E^q_q$
  in the homotopy class $[\phi^n]$ and fits into
  a tight sequence
\begin{equation}
\label{equation:tight}
C^1\overset{\gamma}{\longrightarrow} G^q_n \overset{\psi}{\longrightarrow}G^q_0.
\end{equation}
(Since the exponent $q$ in $G_n^q$ is fixed, we will suppress it
from the notation.) That is,
\begin{equation}
\label{equation:blowup}
\mu^n \le 
  E^q_q(\psi) = \frac{E^1_q(\psi \circ \gamma)}{E^1_q(\gamma)}= \frac{E^1_q[\phi^n \circ \gamma]}{E^1_q[\gamma]}.
\end{equation}
This goes to infinity
as $n \to \infty$. Furthermore, Theorem \ref{thm:sf} guarantees that
for each strand~$J_i$ of~$C$, the restriction $\gamma|J_i$ has image
covering a given edge
of $G_n$ at most twice, so that Proposition~\ref{prop:energies-comp}
applies.

The weighted multi-curve $\gamma\co C^1 \to G_n$ is not unique. In particular,
we can and will scale the weights on~$C^1$ so that $E^1_q[\psi \circ \gamma]=1$.
Then $1/E_q^1[\gamma] \ge \mu^n$, and 
Proposition~\ref{prop:energies-comp}
implies that $\modulus_q([\gamma], \cE_n) \gtrsim \mu^{nq}$. 

We next apply the constructions in \S \ref{subsec:jg} to obtain curves $\gamma'\colon C \to \cJ$ and $\gamma''\colon C \to G_n$ with $\gamma''\coloneqq\sigma^n_\infty \circ \gamma'$ and $\gamma\sim_K\gamma''$ for some constant $K>0$ independent of $n$.
Lemma~\ref{lemma:eg_vs_ej} implies
\[ \modulus_q(\gamma', \cV_n) =\modulus_q(\gamma'', \cE_n) \asymp \modulus_q(\gamma, \cE_n)\gtrsim \mu^{nq}.\]
Though the diameters of the strands of $\gamma'$ are bounded from below and
the modulus is blowing up, we do not yet have control on the weights
of $\gamma'$; usually, $\gamma'$ will not be in~$\Gamma_\delta^w$. (Correspondingly,
the strands in $\gamma'$ are much longer than any constant~$\delta$.)
We will remedy
this by subdividing $\gamma'$ to obtain a more suitable curve, as in \S \ref{sec:subdivision}.

The curve $\psi \circ \gamma\co C \to G_0$ is reduced. The strands
$\psi \circ (\gamma|J_i)\co J_i \to G_0$ may run many times over a given
edge of~$G_0$. We decompose $\psi \circ \gamma$ into separate pieces, one for each
time such a strand runs over an edge of $G_0$. Formally: for a fixed
pair $(i,e)$ consisting of a strand $\psi \circ (\gamma|J_i)$ and an edge
$e$ of $G_0$, suppose the restriction $\psi \circ (\gamma|J_i)$ runs
$N(i,e)$ times over $e$. We obtain a collection of sub-intervals
$I_{i,e,k}$ of $J_i$ for which $\psi \circ \gamma\co I_{i,e,k} \to e$ is a
homeomorphism. (Note the $I_{i,e,k}$ are disjoint except at their
endpoints, and since each strand $J_i$ of~$c$ is a circle there are no end
issues to worry about.)
We identify each $I_{i,e,k}$ with the unit interval
and obtain a weighted multi-curve $\zeta\co D \to G_n$ as follows. With
$w_i$ the weight of $J_i$ in~$C$, set
\[ D \coloneqq \bigsqcup_i\, \bigsqcup_e \bigsqcup_{k=1}^{N(i,e)} w_i I_{i,e,k}.\]
Let $\iota \co D \to C$ be the natural inclusion of
intervals, and define $\zeta \coloneqq \gamma \circ \iota \co D^1 \to G_n$. By
construction, the
function $n_{\psi \circ \zeta}\co G_0 \to \mathbb{R}^+$ is constant on edges
of $G_0$ and coincides with $n_{\psi \circ \gamma}$.

Recall we have normalized so $E_q^1[\psi \circ \gamma]=1$, so with
$\qdual$ the H\"older conjugate of~$q$, 
\[1 = E_q^1[\psi \circ \gamma]=\norm{n_{\psi \circ \gamma}}_{\qdual} \asymp_{q, \#E(G_0)} \norm{n_{\psi \circ \gamma}}_1=\norm{n_{\psi \circ \zeta}}_1=\sum_{i,e,k} w_{i,e,k}.\]
At the third step, we use the fact that in $\mathbb{R}^{E(G_0)}$, any two norms are comparable.
We conclude: the sum of the weights of $D$ is comparable to $1$. 

We now decompose the curve $\gamma' \co C \to \cJ$ with the same decomposition. Let
$\zeta' = \gamma' \circ \iota \co D \to \cJ$. We must show that the size of each strand of $\zeta'$ has diameter bounded below, independent of~$n$. We focus
attention on one component $I_{i,e,k}$ of~$D$ and identify that
interval with $[0,1]$. Let
$\zeta'' = \phi^\infty_n \circ \zeta' \co D \to G_n$. Since $\gamma \sim_K \gamma''$ and the decompositions $\zeta$ and $\zeta''$ correspond, the endpoints $\zeta''(0)$ and $\zeta''(1)$ are
within a uniformly bounded $G_n$-distance $K$ of $\zeta(0)$ and $\zeta(1)$
(independent of~$n$). Also recall that we assumed that the system is 
forward-expanding, so $\phi$ is Lipschitz with some constant $\lambda<1$ and
$\phi^n$ is Lipschitz with constant $\lambda^n$.
Then
\begin{align*}
  \abs{\phi^\infty(\zeta'(0))- \phi^\infty(\zeta'(1))}
    &= \abs{\phi^n(\zeta''(0))- \phi^n(\zeta''(1))} \\
    &\ge  \abs{\phi^n(\zeta(0))-\phi^n(\zeta(1))}-\abs{\phi^n(\zeta''(0))- \phi^n(\zeta(0))}\\
           &\qquad- \abs{\phi^n(\zeta(1))- \phi^n(\zeta''(1))} \\
    &\ge 1-2\lambda^{-n}K.
\end{align*}
We suppose that $n$ is large enough so that $1 - 2\lambda^{-n}K$ is
bigger than $1/2$. We have shown that each strand of $\zeta'$, when projected to $G_0$, has diameter at least $1/2$. Since $\phi^\infty$ is uniformly continuous (as a
function from a compact metric space), it follows that each strand of
$d'$ has definite diameter; we choose~$\delta$ smaller than this
diameter. Combining this with
the observation that the sum of the weights of~$D$ is comparable
to~$1$, we conclude that, after an innocuous rescaling of weights, $\zeta'$ lies in
$\Gamma_\delta^w$.

We then have
\begin{align*}
\mu^{nq} &\le \frac{1}{(E^1_q[\gamma])^q}\\
  &\lesssim \modulus_q([\gamma], \cE_n) &&\text{by Proposition~\ref{prop:energies-comp}}\\
  &= \modulus_q(\gamma,\cE_n)&&\text{since $\gamma$ is reduced} \\
  &\asymp \modulus_q(\zeta,\cE_n) &&\text{by Lemma~\ref{lem:subdivide}, subdivision}\\
  &\asymp \modulus_q(\zeta'',\cE_n) &&\text{by Lemma~\ref{lemma:eg_vs_ej}, fellow travellers}\\
  &= \modulus_q(\zeta',\cV_n) &&\text{by definition of the cover $\cV$.}
\end{align*}
For the conditions of Lemma~\ref{lem:subdivide} at the fourth step,
note that the $G_0$-length of each strand of
$\phi^n\circ \zeta$ is $1$, so by forward-expansion the $G_n$-length of
each strand is at least $\lambda^n$, and in particular for large $n$
the image of each strand is not contained in a single edge of~$G_n$.

This completes the proof of Proposition \ref{proposition:to_infinity}.
\end{proof} 

Proposition~\ref{proposition:to_infinity} and Proposition \ref{proposition:to_zero} complete the proof of
Theorem~\ref{thm:crit-sandwich}.


\section{Applications}
\label{sec:applications}

We turn now to applications. \S \ref{sec:Nbar_gtr_1} gives the proof
of Theorem~\ref{thm:Nbar_ge_1}, on Sierpiński carpets. \S
\ref{sec:barycentric_carpet} proves the estimates for the barycentric
subdivision example mentioned in the introduction. \S\S
\ref{sec:fat-devaney} and \ref{sec:skinny-devaney} give the estimates
for the fat and skinny Devaney examples. The brief \S
\ref{subsecn:uwscp} introduces Carrasco's \emph{uniformly well-spread cut point}
(UWSCP) condition. \S \ref{sec:mating-examp} applies our methods to
examples obtained by the operation of ``mating'' and concludes with a
question about the relationship between the UWSCP condition and other
properties.

For some estimates, we rely on explicit bounds on how fast
$\oE^q$ can decrease as a function of~$q$
\cite[Proposition 6.11]{Thurston20:Characterize}:
\begin{proposition}\label{prop:Eq-lower-bound}
  For $\pi, \phi \co G_1 \rightrightarrows G_0$ a virtual
  endomorphism of graphs of degree~$d\coloneqq\deg(\pi)$, if
  $1 \le p \le q \le \infty$, then
  \[
  \oE^q[\pi,\phi] \ge d^{-\frac{1}{p}+\frac{1}{q}} \cdot\oE^p[\pi,\phi].
  \]
\end{proposition}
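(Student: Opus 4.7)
The plan is to prove, at each finite~$n$, the inequality
\[
  E^q_q[\phi^n] \ge C \cdot d^{-n(1/p-1/q)}\, E^p_p[\phi^n]
\]
for some positive constant~$C$ independent of~$n$; taking $n$-th roots and letting $n \to \infty$ (so that $C^{1/n}\to 1$) then yields the desired asymptotic bound. The exponential factor $d^{-n(1/p-1/q)}$ arises because the total $q$-length of $G_n$ under the lifted conformal structure is $A_n = d^n A_0$, where $A_0$ denotes the total $q$-length of~$G_0$.

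I would first apply Theorem~\ref{thm:sf} to obtain a reduced weighted multi-curve $c^* \co C^* \to G_n$ achieving $E^p_p[\phi^n] = R_p(c^*)$, where $R_r(c) \coloneqq E^1_r(\phi^n\circ c)/E^1_r(c)$. The analogous $q$-stretch-factor characterization gives $E^q_q[\phi^n] \ge R_q(c^*)$, reducing the problem to bounding $R_q(c^*)/R_p(c^*)$ from below. Writing
\[
  \frac{R_q(c^*)}{R_p(c^*)} = \frac{E^1_q(\phi^n\circ c^*)/E^1_p(\phi^n\circ c^*)}{E^1_q(c^*)/E^1_p(c^*)}
\]
and observing that each $E^1_r$-energy is the $L^{r'}$-norm (with $r'$ the Hölder conjugate of~$r$) of an edge-crossing count~$n$ against the length-measure~$\alpha$, the problem becomes an $L^{q'}$-vs-$L^{p'}$ norm comparison on two graphs. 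Hölder's inequality on a finite-measure space of total mass~$A$ gives $\norm{n}_{q'}/\norm{n}_{p'}\le A^{1/p-1/q}$, which on~$G_n$ bounds the denominator by $(d^n A_0)^{1/p-1/q}$. A complementary lower bound $\norm{n}_{q'}/\norm{n}_{p'}\ge m_0^{1/p-1/q}$ holds on~$G_0$, where $m_0 \coloneqq \min_e \alpha_0(e) > 0$ is the smallest $q$-length. Combining,
\[
  \frac{R_q(c^*)}{R_p(c^*)} \;\ge\; \left(\frac{m_0}{A_0}\right)^{\!1/p-1/q} d^{-n(1/p-1/q)},
\]
so the finite-$n$ inequality holds with $C = (m_0/A_0)^{1/p-1/q}$.

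The one non-immediate step is the lower bound $\norm{n}_{q'}/\norm{n}_{p'}\ge m_0^{1/p-1/q}$. By positive homogeneity, normalize $\norm{n}_{p'}=1$ and minimize $\norm{n}_{q'}$ over $n \ge 0$. A Lagrange-multiplier computation shows that interior critical points have $n$ constant on their support $S \subseteq \Edge(G_0)$, and evaluating there gives the ratio $(\sum_{e \in S}\alpha_0(e))^{1/p-1/q}$.  Since $1/p-1/q \ge 0$, this is minimized by the smallest possible support $S = \{e_0\}$ with $\alpha_0(e_0) = m_0$, giving~$m_0^{1/p-1/q}$; boundary cases (where some coordinates vanish) reduce to the same analysis on a subgraph and only strengthen the bound. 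This is a routine finite-dimensional optimization, so I expect no major obstacles beyond careful bookkeeping.
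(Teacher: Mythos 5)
The paper does not actually prove this proposition: it is quoted from \cite[Proposition 6.11]{Thurston20:Characterize}, so there is no internal argument to compare yours against. Taken on its own terms, your derivation via Theorem~\ref{thm:sf} is essentially sound: realize $E^p_p[\phi^n]$ by a tight pair $C^* \xrightarrow{c^*} G_n \xrightarrow{\psi} G_0$, bound $E^q_q[\phi^n]$ below by the $q$-stretch ratio at the same curve, and compare $L^{q'}$ and $L^{p'}$ norms of the crossing function on $G_n$ (total mass $d^nA_0$, Hölder upper bound) and on $G_0$ (concentration lower bound $m_0^{1/p-1/q}$). Taking $n$-th roots kills the constant, as you say. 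Two caveats should be made explicit. First, a notational one: the ratios must be formed with homotopy-class minimizers, i.e.\ with the taut representative $\psi\circ c^*$ rather than $\phi^n\circ c^*$ itself (whose energy can exceed the class minimum); this is harmless precisely because Theorem~\ref{thm:sf} tells you $c^*$ and $\psi\circ c^*$ are reduced, hence minimize $E^1_r$ in their classes for every $r$ simultaneously. Second, you tacitly use the \emph{same} edge lengths on $G_0$ for both the $p$- and $q$-conformal structures (and unit weights when $p=1$, the $L^1$ convention when $q=\infty$); this is legitimate because $\oE^r[\pi,\phi]$ does not depend on the choice of structure, but it deserves a sentence.

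The one place where your write-up skates over a genuine issue is the lower bound on $G_0$. As stated, ``$\norm{n}_{q'}/\norm{n}_{p'}\ge m_0^{1/p-1/q}$'' is \emph{false} for arbitrary measurable densities against the (non-atomic) length measure: a density concentrated on a subsegment of length $\epsilon$ has ratio $\epsilon^{1/p-1/q}\to 0$. Your finite-dimensional optimization is only over functions constant on edges, so you must justify that $n_{\psi\circ c^*}$ is of this form. It is, because $\psi\circ c^*$ is reduced and hence an $E^1_r$-minimizer in its class, and (as the paper notes right after Eq.~\eqref{eq:E1q}) minimizers have $n_\phi$ constant on edges; equivalently, a taut closed strand crosses each edge an integer number of times. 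Once that is said, your vertex-of-the-simplex (or Lagrange) computation gives exactly $(\min_e\alpha_0(e))^{1/p-1/q}$, and the proof closes. So: correct approach, needing only these clarifications, and necessarily a different route from the paper, which simply cites the result.
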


As an easy consequence, we have the following theorem announced in the
introduction.  Recall that the quantity $\oN\coloneqq\oE^1_1$ (Definition \ref{defn:ae}) counts the asymptotic growth rate, as $n \to \infty$, of the essential number of preimages of $\phi^n_0$, minimized among maps within its homotopy class.
\begin{taggedthm}{\ref{thm:Nbar}}
For any recurrent expanding virtual graph endomorphism $[\pi,\phi]$ where
$\deg(\pi) = d$, we have
\[ \ARCdim[\pi, \phi] \geq \frac{1}{1-\log_d \oN[\pi, \phi]}.\]
\end{taggedthm}

\begin{proof}
  By Proposition~\ref{prop:Eq-lower-bound} with $p = 1$, for any~$q$ we have
  \[
    \oE^q[\pi,\phi] \ge d^{\frac{1}{q} - 1}\, \oN[\pi,\phi].
  \]
  If $q$ is less than the quantity in the theorem statement, the right-hand side
  is greater than~$1$. The result follows from
  Theorem~\ref{thm:crit-sandwich}.
\end{proof}

\subsection{Cases when \textalt{$\overline{N} > 1$}{Nbar > 1}}
\label{sec:Nbar_gtr_1}

Our proof of Theorem~\ref{thm:Nbar_ge_1} is a corollary of a more general result about certain expanding dynamical systems on the sphere:

\begin{taggedthm}{C\/$'$}
\label{taggedthm:Cprime}
Suppose $g\colon S^2 \to S^2$ is an expanding Thurston map such that each cycle in its post-critical set $P_g$ contains a critical point. If $\pi, \phi\colon G_1 \to G_0$ is any virtual graph endomorphism induced by a choice of a spine $G_0$ for $S^2-P_g$, then $\oN[\pi, \phi]>1$. 
\end{taggedthm}

A \emph{Thurston map} $g\colon S^2 \to S^2$ is an orientation-preserving branched self-cover of degree at least two such that the post-critical set $P_g\coloneqq\cup_{n>0}\, g^n(\{\text{branch points}(g)\})$ is finite. The definition of \emph{expanding} Thurston map  appearing in the hypothesis of Theorem \ref{taggedthm:Cprime} here is that of Bonk and Meyer  \cite{BM17:Expanding}.

Here is an example of an expanding Thurston map.  First, recall from \S \ref{subsec:motivation} and Figure \ref{fig:barycentric_ve} that the rational map $f$ there sends the small triangles conformally onto the large triangles.  Next, consider the Thurston map $g$ obtained from Figure \ref{fig:barycentric_ve} where now the small triangles are sent not conformally, but Euclidean-planar-affinely, to the large triangles. While the maps $f$ and $g$ are conjugate-up-to-isotopy relative to their post-critical sets, they are not topologically conjugate, since the fixed branch-point of $g$ is locally topologically repelling, whereas that of $f$ is attracting. Recall that the Julia set $J_f$ is a Sierpiński carpet. It turns out (see below) that collapsing the Fatou components of $f$ to points yields a map which is topologically conjugate to $g$.  Our proof of Theorem \ref{thm:Nbar_ge_1} proceeds by passing to such a quotient and invoking Theorem \ref{taggedthm:Cprime}.

The next few paragraphs summarize several results from \cite{BM17:Expanding} that we use in the proof.

Fix arbitrarily a metric on $S^2$ compatible with its topology. Suppose $g$ is an arbitrary Thurston map, and $\mathcal{C}\subset S^2$ is a Jordan curve containing $P_g$. This data induces a cell structure $\mathcal{T}_0$ on $S^2$, with two open $2$-cells, called \emph{tiles}, given by the components of $S^2-\mathcal{C}$. Lifting by iterates of $g$ yields a sequence of cell structures $\mathcal{T}_n$ on $S^2$ for $n=0, 1, 2, \ldots$; let $\text{mesh}(g, n, \mathcal{C})$ be the maximum diameter of an open $2$-cell, or \emph{$n$-tile}, at level $n$.  The Thurston map $g$ is said to be \emph{expanding} if $\text{mesh}(g, n, \mathcal{C}) \to 0$ as $n \to \infty$; this property is independent of the choice of metric and of $\mathcal{C}$; see  \cite[Ch. 6]{ BM17:Expanding}. There exists an iterate $k$ and a Jordan curve $\mathcal{C}'$ isotopic to $\mathcal{C}$ relative to $P_g$ such that $g^k(\mathcal{C}')\subset \mathcal{C}'$; see \cite[Thm. 15.1]{ BM17:Expanding}.  

In our applications, passing to such an iterate will be innocuous, so in this paragraph we now suppose that $g$ is an expanding Thurston map, and that $g(\mathcal{C})\subset \mathcal{C}$ for some Jordan curve $\mathcal{C}\supset P_g$. Then for each $n$, the tiling $\mathcal{T}_{n+1}$ refines the tiling $\mathcal{T}_n$ according to a subdivision rule \cite[Ch. 12]{BM17:Expanding}.  A numerical invariant is then the \emph{combinatorial expansion factor} 
\[ \Lambda_0(g)\coloneqq\lim_{n \to \infty} D_n(g,\mathcal{C})^{1/n}\]
where $D_n(g, \mathcal{C})$ is, roughly speaking, the minimum number $m$ of closed $n$-tiles in a
chain $t_0, t_1, \ldots, t_m$ with $t_j \cap t_{j+1} \neq \emptyset$, $j=0, \ldots, m-1$ such that $t_0, t_m$ each contain $1$-cells of $\mathcal{T}_0$ whose closures are disjoint, i.e. the chain  ``joins disjoint closed edges'' of $\mathcal{C}$. (A slight modification is needed in the case when
$\#P_g=3$
\cite[Sec.\ 5.7]{BM17:Expanding}.) We then have that $\Lambda_0(g)$ is independent
of~$\mathcal{C}$ and is greater than~$1$ \cite[Prop.\
16.1]{BM17:Expanding}. For each $1<\theta^{-1}<\Lambda_0(g)$, there
exists an analogously defined visual metric on $S^2$ with expansion factor $\theta^{-1}$
\cite[Thm.\ 16.3(ii)]{BM17:Expanding}. Fixing such a choice of metric with its expansion factor $\theta^{-1}$, there is a constant $K>1$ such that
the diameter of each tile~$t$ at level $n$ satisfies
$\mathrm{diam}(t) \in [\theta^n/K, K\theta^n]$
\cite[Prop.\ 8.4(ii)]{BM17:Expanding};
compare our Theorem \ref{thm:visual}(1).

\begin{proof}[Proof of Theorem~\ref{taggedthm:Cprime}]
Let $g$ be an expanding Thurston map.  The property $\oN>1$ is unchanged under passing to iterates, so we assume there is a Jordan curve $\mathcal{C}\supset P_g$ for which $g(\mathcal{C}) \subset \mathcal{C}$.  Let $\Lambda_0(g)$ be the resulting combinatorial expansion factor.  We choose a visual metric $d_{vis}$, and let $\theta^{-1}$ be its expansion factor. 

In the remainder of the proof, we show $\theta^{-1} \leq \oN$. Since
$\theta^{-1}$ can be chosen arbitrarily subject to the constraint $\theta^{-1}\leq \Lambda_0(g)$ can be arbitrary, this is enough to conclude that $\oN \geq \Lambda_0(g)>1$. (See Remark \ref{remark:nbar_versus_lambda}.)

Let $G_0 \subset S^2$ be a realization of the dual of~$\mathcal{T}_0$;
it is a spine for $S^2-P_g$ with two vertices. As usual let $G_n=g^{-n}(G_0)$,
so that $G_n$ is the dual of~$\mathcal{T}_n$. Note that the mesh of
the faces of~$G_n$ tends to zero as well with respect to~$d_{vis}$. Fix
$p \in P_g$. Let $U_0$ be the component of the complement of~$G_0$
containing~$p$ and let $C_0=\partial U_0$. Since $G_0$ is a spine for
$S^2-P_g$, any loop in~$G_0$ that is freely homotopic to a peripheral loop
about $p$ contains~$C_0$.

\begin{claim}\label{claim:disjoint-loops}
  There exists $c>0$ such that for all $n \in \mathbb{N}$ there exist
  at least $c\theta^{-n}$ pairwise disjoint loops $C_{n, i}$ in $G_n$
  freely homotopic to a peripheral loop about $p$.
\end{claim}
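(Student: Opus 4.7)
\emph{Proof plan.} The plan is to construct $\sim\theta^{-n}$ concentric peripheral loops about $p$ in~$G_n$ by taking dual ``onion rings'' of nested combinatorial balls around~$p$ in the tiling~$\mathcal{T}_n$. Fix a constant $K$ with $\diam_{d_g}(T)\le K\theta^n$ for every tile $T\in\mathcal{T}_n$, and choose $c_0>0$ small enough that $2c_0\le d_g(p,q)$ for every $q\in(P_g\setminus\{p\})\cup C_0$. Set $M_n\coloneqq\lfloor c_0/(K\theta^n)\rfloor$, so $M_n\ge c\theta^{-n}$ for $c=c_0/(2K)$ and all large~$n$. For $0\le k\le M_n$, let $D_k^{(n)}$ denote the union of all tiles of $\mathcal{T}_n$ reachable from a tile containing~$p$ by a chain of $\le k$ tiles in which consecutive tiles share an edge. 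By construction $D_k^{(n)}\subset B_{d_g}(p,(k+1)K\theta^n)\subset B_{d_g}(p,c_0)$, so $D_k^{(n)}\cap P_g=\{p\}$ and $D_k^{(n)}\cap C_0=\emptyset$.

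Next, let $V_k$ be the component of $S^2\setminus D_k^{(n)}$ containing a fixed reference point on~$C_0$. Since $D_k^{(n)}$ is connected and every other component of its complement attaches to it along part of~$\partial D_k^{(n)}$, the closed complement $S^2\setminus V_k$ is connected; hence $V_k$ is an open topological disk, and $\partial V_k$ is a Jordan curve contained in the $1$-skeleton of~$\mathcal{T}_n$. Any tile $T\in D_k^{(n)}$ meeting $\partial V_k$ has at least one neighbor lying in $V_k\not\subset D_k^{(n)}$, which forces $T$ to be at combinatorial distance exactly~$k$ from~$p$. These outer-boundary tiles, traversed in the cyclic order inherited from~$\partial V_k$, form a cyclic sequence of pairwise adjacent tiles of~$\mathcal{T}_n$; dualizing, their centers are joined consecutively by edges of~$G_n$ crossing the shared tile-edges, yielding a loop $C_{n,k}$ in~$G_n$ freely isotopic to~$\partial V_k$ in $S^2\setminus P_g$. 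Because $\partial V_k$ bounds the disk $S^2\setminus V_k$, which meets $P_g$ only in~$p$, the loop $C_{n,k}$ is freely homotopic to a peripheral loop about~$p$.

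Finally, for distinct $k,k'\in\{1,\dots,M_n\}$ the loops $C_{n,k}$ and~$C_{n,k'}$ use vertices of~$G_n$ corresponding to tiles at different combinatorial distances from~$p$, and so are vertex-disjoint and a fortiori disjoint. This produces at least $c\theta^{-n}$ pairwise disjoint peripheral loops, as required. The main delicate point will be verifying that $V_k$ is a topological disk and that $\partial V_k$ can indeed be traversed as a cyclic chain of tiles of $\mathcal{T}_n$ at distance exactly~$k$; both facts follow from connectedness of $D_k^{(n)}$, planarity of~$\mathcal{T}_n$, and the fact that $D_k^{(n)}$ contains no post-critical point other than~$p$, but this is the step that needs to be carried out with care using the cell structure of the finite subdivision rule induced by~$\mathcal{C}$.
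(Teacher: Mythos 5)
Your plan follows essentially the same route as the paper: nested combinatorial tile-neighborhoods of $p$ in $\mathcal{T}_n$, whose dual cycles in $G_n$ give pairwise disjoint loops, with the count $\asymp\theta^{-n}$ coming from the mesh bound $\diam T\le K\theta^n$ against the fixed distance from $p$ to $C_0$. However, two steps are gaps as written. First, the topological step you yourself flag is not merely ``delicate'': it is false in the generality you assert. The boundary $\partial V_k$ of the complementary component need not be a Jordan curve, and consecutive boundary tiles need not share an edge --- $D_k^{(n)}$ can pinch at a vertex $v$ where two tiles at combinatorial distance $\le k$ meet only at $v$ while the tiles between them around $v$ lie outside $D_k^{(n)}$; then the boundary walk passes through $v$ twice and the claimed cyclic chain of edge-adjacent tiles (hence the dual loop as you describe it) breaks down. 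The paper sidesteps exactly this by not constructing the loop as a boundary traversal: it only asserts the existence of \emph{some} simple cycle in the dual subgraph of the combinatorial annulus $E_{n,i+1}-\mathop{\mathrm{interior}}(E_{n,i})$ separating $C_0$ from $p$ (which one can get, e.g., by taking the dual closed walk along the boundary of $V_k$ and extracting a simple subcycle that still separates). If you repair the construction this way, note that your disjointness argument (``all vertices used lie at distance exactly $k$'') must be revisited, since the repaired cycle may use nearby tiles; the paper instead gets disjointness from the disjointness of the annular tile sets, at worst losing a bounded factor in the count.

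Second, and more substantively, your assertion that $S^2\setminus V_k$ meets $P_g$ only in $p$ does not follow from the diameter bound $D_k^{(n)}\subset B(p,c_0)$: with a visual metric (not a length metric) a small-diameter connected set can a priori \emph{enclose} a point of $P_g\setminus\{p\}$ in a complementary component other than $V_k$, even though it avoids that point, and then your loop would not be peripheral about $p$ --- which is the whole point of the claim, since the subsequent argument needs $\psi(C_{n,i})$ to contain $C_0$. This can be fixed with an argument you have all the ingredients for but did not make: $D_k^{(n)}$ is connected, contains $p\in U_0$, and avoids $C_0=\partial U_0$, hence $D_k^{(n)}\subset U_0$; and $S^2\setminus U_0$ is connected (it is the union of the connected spine $G_0$ with the other faces, each of whose closures meets $G_0$) and contains every point of $P_g\setminus\{p\}$, hence lies entirely in the single complementary component $V_k$ containing $C_0$. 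Only with this step does one know that the disk side of your cycle containing $p$ meets $P_g$ in $\{p\}$ alone, so that $C_{n,k}$ is freely homotopic to a peripheral loop about $p$ (equivalently, to $C_0$), as required.
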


Assuming the claim, we show $\theta^{-1} \leq \oN$ as follows. For any $\psi \in
[\phi^n]$, the curve
$\psi(C_{n,i})$ is freely homotopic to $C_0$ and so
contains~$C_0$. Thus any $y \in C_0$ has, for each $n$ and $i$, a
preimage in~$C_{n,i}$ and so $N[\phi^n] \gtrsim \theta^{-n}$, proving
$\oN(f) \geq \theta^{-1}$.

We now turn to the proof of Claim~\ref{claim:disjoint-loops}, using the fact that $g$ is
expanding on the whole sphere. Let $D=\min\{\,d_{vis}(p, q) \mid q \in
C_0)\,\}$, let $N_0 \in \mathbb{N}$ be chosen so $D\theta^{-n}/K>1$ for
$n \geq N_0$, and for $n \geq N_0$ let $m_n$ be greatest positive
integer less than or equal to $D\theta^{-n}/K$. Then for all such $n$,
any chain of tiles $t_1, \ldots, t_{m_n}$ in $\mathcal{T}_n$ with
$p \in t_1$ and $t_i \cap t_{i+1} \neq \emptyset$ for
$i=1, \ldots, m_n-1$ avoids $C_0$.

For $n \geq N_0$, we will define curves $C_{n,1}, \ldots, C_{n, m_n}$.
Given $E \subset S^2$ and $n \in \mathbb{N}$,
recall that
$\mathcal{T}_n(E)$ is the union of the closed tiles at level $n$
meeting~$E$.
Let $E_{n,1}=\{p\}$ and for $i=1, \ldots, m_n-1$ inductively set
$E_{n, i+1}\coloneqq \mathcal{T}_n(E_{n, i})$. Fix one such~$i$.
Then $E_{n,i}$ is contained in the interior of $E_{n, i+1}$. The
complement $E_{n,i+1} - \mathop{\mathrm{interior}}(E_{n,i})$ is tiled by
elements of $\mathcal{T}_n$. Consider the corresponding
subgraph of the dual graph $G_n$. We take $C_{n,i}$ to be a simple
cycle in this dual graph that separates $C_0$ from~$p$. The
$C_{n,i}$ are pairwise disjoint, freely homotopic to~$C_0$, and
disjoint from~$C_0$, by construction. This proves
Claim~\ref{claim:disjoint-loops} with
$c$ approximately $D/K$.  Theorem \ref{taggedthm:Cprime} is proved. 
\end{proof}

\begin{proof}[Proof of Theorem~\ref{thm:Nbar_ge_1}]
  Suppose $f$ is a hyperbolic, critically-finite map with
  carpet Julia set and post-critical set $P_f$.  

  By Moore's Theorem, since the Julia set is a Sierpiński carpet,
  the quotient space obtained by collapsing the closures of Fatou
  components of $f$ to points is a sphere; we denote the resulting
  projection by $\rho\co
  \widehat{\mathbb{C}} \to S^2$.  Then
  $\rho$ gives a semiconjugacy to an induced map
  $g\co S^2 \to S^2$.  It is shown in  \cite[Thm.\ 5.1]{GHMZ18:InvJordanCurves} that $g$ is an expanding Thurston map.

The quotient map~$\rho$ is uniformly approximable by a continuous
family of homeomorphisms. Taking any member of this family yields a
homeomorphism $h\co (\widehat{\mathbb{C}}, P_f) \to (S^2, P_g)$,
well-defined up
to isotopy relative to $P_f$.  The map $h$ induces a
conjugacy-up-to-isotopy from $f$ to~$g$, so, as in
Remark~\ref{rem:invariance}, $\oN(g) = \oN(f)$. By Theorem~\ref{taggedthm:Cprime}, we have $\oN(f)=\oN(g)>1$. 
\end{proof}

\begin{remark}
\label{remark:nbar_versus_lambda}
  The major difference between the quantities $\oN(f)$ and
  $\Lambda_0(f)$ introduced in the proof of Theorem
  \ref{thm:Nbar_ge_1} is that the former represents a
  maximum growth rate while the latter represents a minimum growth rate. 
\end{remark}

The converse to Theorem~\ref{thm:Nbar_ge_1} need not hold; there are
many examples.
We sketch two constructions.
Begin with the
quadratic carpet example of Milnor and Tan \cite[Appendix~F]{Milnor93:GeomDynQuadratic},
\[
  f(z) \approx -0.138115091\left(z + \frac{1}{z}\right) -0.303108805.
\]
The two critical
points have periods 3 and 4.
The thesis of the first author
\cite[Theorem~7.1]{Pilgrim94:Cylinders} shows that there exists a
rational map $g$ which
combinatorially is the ``tuning'' of $f$ and the basilica polynomial along
the period 3 critical orbit.  The result of tuning replaces each component of the basin of the superattracting 3-cycle with a copy of the basilica Julia set. The Julia set of $g$ is easily seen to
have two Fatou components whose closures meet, corresponding to the
immediate attracting basins of the basilica, so it is not a carpet.
Insung Park \cite[Theorem 2]{Park21:Obstructions} has proved that the
energies $\oE^p$ do not decrease under tuning, so in particular
$\oN(g) \ge \oN(f) > 1$.

One may easily construct other examples that have not just local cut points, as in the preceding case, but global cut points. The quartic rational map 
\begin{equation}\label{eq:quartic-non-carpet}
  f(z)\approx -\frac{z^3(z+1)}{(z+0.3309124475)^3(z+0.0072626575)},
\end{equation}
with Julia set shown in Figure~\ref{fig:noncarpet},
has critical points at $p, c, 0, \infty$ with orbits
\begin{gather*}
  p \overset{3}{\longrightarrow}\infty
  \overset{2}{\longrightarrow}-1\overset{1}{\longrightarrow}0 \overset{3}{\righttoleftarrow} \\
c \overset{2}{\righttoleftarrow}
\end{gather*}
where $-1 < p < c < 0$; the weights on the arrows show the local
degree. This example is obtained from taking the torus automorphism
$z \mapsto (1+i)z$ on the torus $\CC / (\ZZ \oplus i \ZZ)$ by
\begin{itemize}
\item taking its $\ZZ/4\ZZ$-quotient to get the Lattès map $g(z) =
-z(z+1)/(z+1/2)^2$, with critical orbits $-1/2\overset{2}{\longrightarrow}\infty \overset{2}{\longrightarrow}-1 \longrightarrow 0 \righttoleftarrow$;
\item blowing up an arc from the unique fixed post-critical point at $0$ to
  the critical point at~$-1/2$ to get a degree-three map~$h$ with
  carpet Julia set; and then 
\item blowing up an arc from the unique fixed super-attracting post-critical point of $h$ to
  the repelling
  fixed point of $h$ on the boundary of its basin to get the degree-four
  map $f$ above.
\end{itemize}

Direct calculation shows that the
following condition is satisfied, as shown in
Figure~\ref{fig:noncarpet}: there is a curve
$\alpha\co [0,1] \to \widehat{\mathbb{C}}$, with
$\alpha(0)=\alpha(1)=0$, and $\alpha(t) \not\in P_f$ for $0<t<1$; the
image of~$\alpha$ is an embedded loop symmetric with
respect to the real axis; the bounded component of the complement of
the image of~$\alpha$ contains $c$ and no other points of~$P_f$; some lift
$\widetilde{\alpha}$ of~$\alpha$ under~$f$ is homotopic to $\alpha$
through curves with the same properties.  
From \cite[Theorem~5.14]{Pilgrim94:Cylinders} it
follows that
the boundary of the immediate basin of the origin is not a Jordan curve,
and hence that $J_f$ is not a Sierpiński carpet.
\begin{figure}
  \[
    \centerline{\includegraphics[width=2.75in]{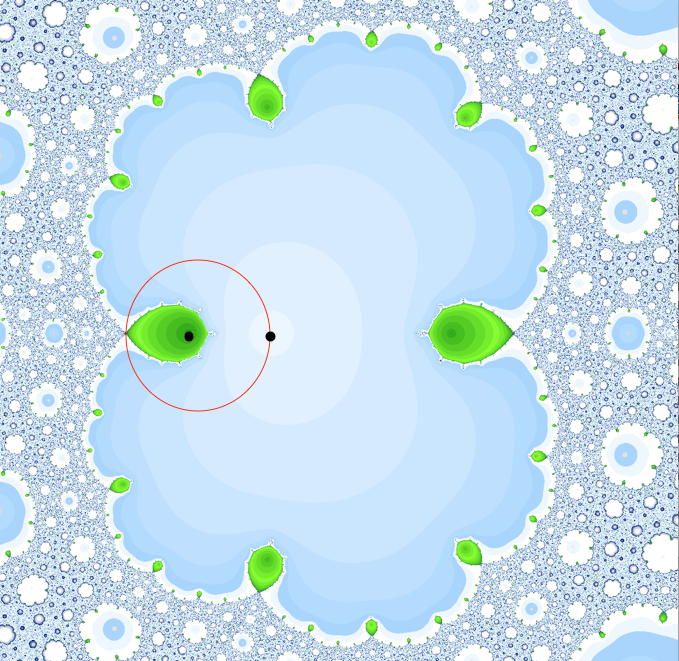}}
  \]
  \caption{Detail of Julia set for the non-carpet example $f$ in
    Eq.~\eqref{eq:quartic-non-carpet}. The post-critical points
    $c$ and $0$ are marked in black, and the curve~$\alpha$ is shown in red.}
  \label{fig:noncarpet}
\end{figure}

With a bit of work, one can show that $\oN(f) = \sqrt{2} >1$. 

\subsection{Barycentric subdivision Julia set}
\label{sec:barycentric_carpet}
We give details for the estimates for the conformal dimension of the Julia set of the map  $f(z)=\frac{4}{27}\frac{(z^2-z+1)^3}{(z(z-1))^2}$ from the introduction.

The dynamics on the set of critical points is shown below, with  $\omega = \exp(2\pi i /6)$:
\[
    \begin{tikzpicture}[y=1.6cm,x=1.5cm]
      \node (infty) at (2,0) {$\infty$};
      \node (0) at (1,0.7) {$0$};
      \node (1) at (1,-0.7) {$1$};
      \node (omega) at (0,0.85) {$\omega$};
      \node (omegainv) at (0,0.45) {$\omega^{-1}$};
      \node (-1) at (0,-0.2) {$-1$};
      \node (half) at (0,-0.7) {$\frac{1}{2}$};
      \node (2) at (0,-1.2) {$2$};
      \draw[->,loop right] (infty) to node[right,cdlabel]{2} (infty);
      \draw[->] (0) to node[above right=-2pt,cdlabel]{2} (infty);
      \draw[->] (1) to node[below right=-2pt,cdlabel]{2} (infty);
      \draw[->] (omega) to node[above=-1pt,cdlabel]{3} (0);
      \draw[->] (omegainv) to node[below=-1pt,cdlabel]{3} (0);
      \draw[->] (-1) to node[above right=-2pt,cdlabel]{2} (1);
      \draw[->] (half) to node[above=-2pt,pos=0.3,cdlabel]{2} (1);
      \draw[->] (2) to node[below right=-2pt,cdlabel]{2} (1);
    \end{tikzpicture}
  \]
\begin{figure}
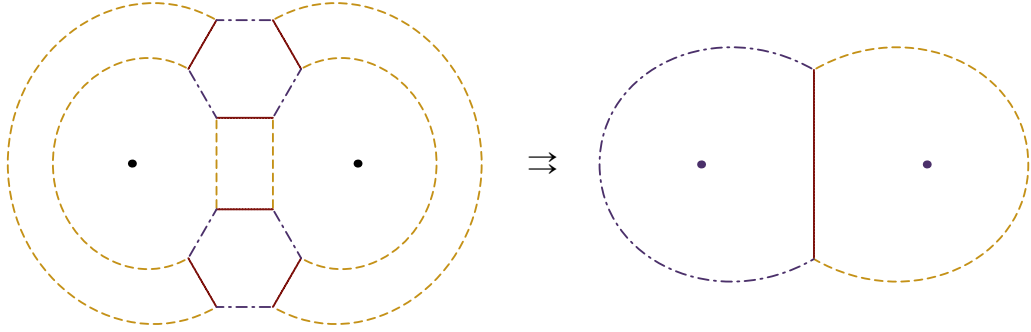

  \[
    \mfig{barycenter-1}\quad\rightrightarrows\quad\mfig{barycenter-0}
  \]
  \caption{Spines for the barycentric subdivision rational map}
  \label{fig:barycenter-G1}
\end{figure}
The map $f$ may be described as follows. Consider the
spherical ``triangle'' $T_0$ (the upper half-plane) with vertices
$(\infty,0,1)$ and angles $(\pi,\pi,\pi)$. If we give $\CCa$ the
spherical metric to make this  triangle equilateral, then
the spherical triangle $T_1$ with vertices $(\infty, \omega, 2)$
has corresponding angles
$(\pi/2, \pi/3, \pi/2)$. There is a unique
conformal map sending $T_1$ to~$T_0$ and mapping the vertices
$(\infty, \omega, 2) \mapsto (\infty, 0, 1)$. Twelve
reflected images of $T_1$ tile the sphere, and the map $f$ is
the unique extension given by Schwarz reflection. The extended real
axis is forward-invariant, and its preimage under $f$ divides the
sphere into twelve small triangles, 6 in each of the upper- and
lower-half planes. This induces a \emph{finite subdivision rule} on
the sphere, in the sense of \cite{CFP01:FiniteSubdiv}. We equip the
codomain with a cell structure with $0$-cells at $0, 1, \infty$ and
$1$-cells the corresponding segments of the extended real axis. Taking
inverse images then refines each $2$-cell in a pattern that,
combinatorially, effects \emph{barycentric subdivision}; see Figure
\ref{fig:barycentric_ve}. The function $f$ is a Galois branched
covering map; with deck group isomorphic to $S_3$ and acting by
spherical isometries. It gives the $j$-invariant of an elliptic curve
as a function of its $\lambda$-invariant; equivalently, it gives the
shape invariant of a set of 4 points on $\widehat{\mathbb{C}}$ as a
function of the cross-ratio of a corresponding list.

This map was studied by Cannon-Floyd-Parry
  \cite[Example 1.3.1]{CFP01:FiniteSubdiv}, where they showed that the
  sequence of tilings $\cT_n$ generated by iterated preimages is not
  conformal in Cannon's sense: there is no metric on the sphere
  quasi-conformally equivalent to the standard metric in which
  combinatorial moduli of curve families are comparable to analytic
  moduli. This is related to the fact that there are fixed critical
  points, which means the valence of $\cT_n$ blows up as $n \to
  \infty$. Cannon-Floyd-Kenyon-Parry investigated it further
  \cite[Figure 25]{CFKP03:Subdivision}, and proved that its Julia set
  is a Sierpiński carpet.  Haïssinsky and the second author studied
  it as well \cite[\S4.6]{kmp:ph:cxci}.

  To give an upper bound for the conformal dimension for the Julia set
  of this map, we
  estimate $\oE^2(f)$, which we know is less than one
\cite[Theorem 1]{Thurston20:Characterize}. To get a concrete estimate, we look at a finite
  stage and compute $E^2_2[\phi^n]$ for some~$n$. It turns out that
  $n=1$ is not enough to get an estimate less than~$1$, so we consider
  $\phi^2 \co G_2 \to G_0$, as shown in
  Figure~\ref{fig:barycenter-G2}. 
  \begin{figure}
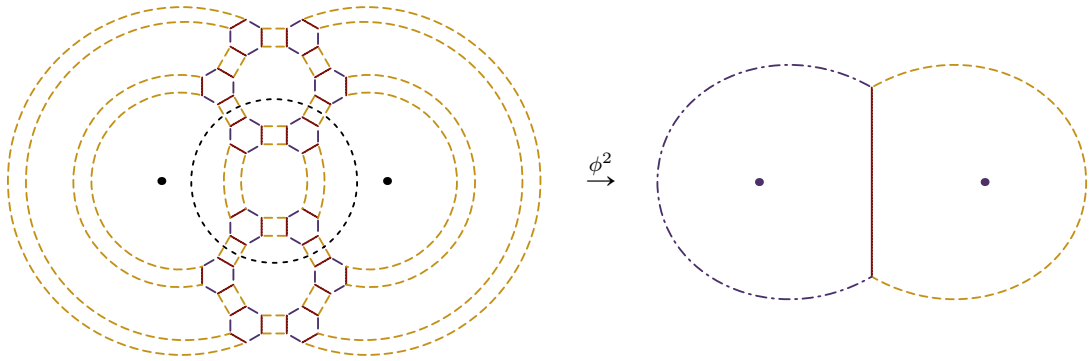

    \[
      \mfig{barycenter-3}
      \quad\overset{\phi^2}{\rightarrow}\quad
      \mfig{barycenter-0}
    \]
    \caption{The map $\phi^2 \co G_2 \to G_0$ for the barycentric
      subdivision map}
    \label{fig:barycenter-G2}
  \end{figure}

  Fix $q>1$.  We can take any $q$-conformal structure $G_0^q$ we like on~$G_0$.  It is most convenient to have $\alpha(e) = 1$
  for all three edges of~$G_0$.  We get a pulled-back
  structure $G_2^q$ on~$G_2$.  In this case, by symmetry, an optimal
  map will map the complete round dashed central circle on
  the left of Figure~\ref{fig:barycenter-G2} to the central edge on
  the right. (This circle passes through the midpoint of
  eight edges; the other halves of these edges of~$G_2$ map to
  different edges of~$G_0$.)

  Finding the map that minimizes $E^2_2$ from a graph to an interval
  (with specified boundary behavior) is equivalent to finding the
  resistance of a resistor network (or alternatively a harmonic
  function on the graph), and can be solved with standard linear
  algebra techniques. This is easily computed to be
  $E^2_2[\phi^2] = 10/13$, as shown in
  Figure~\ref{fig:barycenter-electric}.
  (The computation can be further simplified by using the
    symmetries evident in the figure.)
  Since the energy of any iterate gives an upper bound for the asymptotic
  energy \cite[Proposition 5.6]{Thurston20:Characterize}, we therefore have
  $\oE^2(f) \le \sqrt{10/13}$.

  \begin{figure}
    \[
      \mfig{barycenter-10}
    \]
    \caption{The central portion of the concrete map realizing
      $E^2_2[\phi^2]$ for the barycentric subdivision map. The edges
      running off the top and bottom are
      half-edges in the domain of the map. The numbers
      are the relative lengths of the images of each edge (or
      half-edge); to get the
      actual length, divide by $26$. The map itself is projection onto
      the vertical axis. }
    \label{fig:barycenter-electric}
  \end{figure}
  

\begin{proposition}\label{prop:barycenter-upper}
  For the barycentric carpet map $f$, we have
  \[
    \ARCdim(J_f) \le p^* \le \frac{2}{1 - \log_6(10/13)}<1.745.
  \]
\end{proposition}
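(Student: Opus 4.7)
The proof is a direct application of the two main tools already assembled in the preceding discussion: Theorem~\ref{thm:crit-sandwich}, which bounds $\ARCdim(J_f)$ above by the critical exponent $q^*[\pi,\phi]$, and Proposition~\ref{prop:Eq-lower-bound}, which controls how rapidly $\oE^q$ can decrease as $q$ decreases. The key numerical input is already in hand: the explicit minimization depicted in Figure~\ref{fig:barycenter-electric} gives $E^2_2[\phi^2] = 10/13$, and then submultiplicativity of conformal energies under iteration (\cite[Prop.~5.6]{Thurston20:Characterize}) yields
\[
  \oE^2(f) \le \bigl(E^2_2[\phi^2]\bigr)^{1/2} = \sqrt{10/13} < 1.
\]

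The plan, then, is to transfer this sub-unit upper bound at $q=2$ to an upper bound at a smaller $q$. Apply Proposition~\ref{prop:Eq-lower-bound} with the proposition's ``$p$'' equal to our variable $q \in [1,2]$ and the proposition's ``$q$'' equal to $2$, using $d = \deg(f) = 6$:
\[
  \oE^2(f) \ge 6^{-1/q + 1/2}\, \oE^q(f),
  \quad\text{so}\quad
  \oE^q(f) \le 6^{\,1/q - 1/2}\, \sqrt{10/13}.
\]
The right-hand side is strictly less than $1$ precisely when
\[
  \left(\tfrac{1}{q} - \tfrac{1}{2}\right)\log 6 < \tfrac{1}{2}\log(13/10),
\]
i.e., when $q > \dfrac{2}{1 - \log_6(10/13)}$.

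By definition of $q^*[\pi,\phi]$ as the supremum of $q$ with $\oE^q[\pi,\phi] = 1$, and since $q \mapsto \oE^q$ is non-increasing, the above shows $q^*[\pi,\phi] \le 2/(1 - \log_6(10/13))$. Theorem~\ref{thm:crit-sandwich} then gives
\[
  \ARCdim(J_f) \le q^*[\pi,\phi] \le \frac{2}{1 - \log_6(10/13)}.
\]
A routine numerical evaluation (using $\log_6(10/13) \approx -0.1464$, so the denominator is approximately $1.1464$) shows this bound is less than $1.745$, completing the proof.

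There is no real obstacle: the substantive work (the upper bound on $\oE^2$) was done explicitly by computing harmonic-function energies on the finite graph $G_2$, and the extrapolation step is purely algebraic. The only point that demands any care is choosing the cleanest form of Proposition~\ref{prop:Eq-lower-bound} and verifying that the hypothesis $1 \le p \le q$ is satisfied (here $p = q_{\text{ours}} \le 2 = q_{\text{prop}}$, which is immediate since we only care about extending below the exponent $2$).
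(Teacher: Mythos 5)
Your proof is correct and follows essentially the same route as the paper: the bound $\oE^2(f)\le\sqrt{10/13}$ from $E^2_2[\phi^2]=10/13$ plus submultiplicativity, then Proposition~\ref{prop:Eq-lower-bound} with $d=6$ and Theorem~\ref{thm:crit-sandwich}. The only cosmetic difference is that the paper applies the comparison directly at $p=p^*$ (where $\oE^{p^*}=1$) and solves for $p^*$, whereas you bound $\oE^q$ for each $q$ below $2$ and take the supremum; these are the same argument.
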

\begin{proof}
  By definition, we have $\oE^{p^*}(f) = 1$. Apply
  Proposition~\ref{prop:Eq-lower-bound} with $q = 2$ to find
  \[
    \sqrt{10/13} \ge \oE^2(f) \ge 6^{1/2-1/p^*}
  \]
  which simplifies to the desired inequality after taking logs of both
  sides.
\end{proof}

Iterating further to better estimate $\oE^2(f)$ will improve the bound in
Proposition~\ref{prop:barycenter-upper}, although it probably will not
reach the optimal result, since Proposition~\ref{prop:Eq-lower-bound}
is not, in general, sharp.

To get a lower bound on conformal dimension, we compute $\oN(f)$.

\begin{proposition}
  For the barycentric subdivision rational map, $\oN(f) = 2$.
\end{proposition}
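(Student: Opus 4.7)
The plan is to show that $N[\phi^n] = 2^n$ exactly for every $n \ge 1$, from which $\oN(f) = \lim_{n\to\infty} N[\phi^n]^{1/n} = 2$ follows immediately. The argument splits into matching upper and lower bounds.

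For the upper bound $N[\phi^n] \le 2^n$, I will exhibit the natural piecewise-linear representative $\psi_n \in [\phi^n]$ coming from the deformation retraction $\rs - f^{-n}(P_f) \to G_0$ restricted to $G_n$. Each edge of $G_n$ is dual to an edge of the level-$n$ tiling $\cT_n$, and is either a subsegment of one of the three original walls of $\cT_0$ (``wall-type'') or lies interior to one of the two level-$0$ triangles (``interior-type''). The map $\psi_n$ will send each wall-type edge homeomorphically onto the edge of $G_0$ dual to its containing wall and will collapse each interior-type edge to the vertex of $G_0$ dual to its containing triangle. Since iterated barycentric subdivision refines each edge of $\cT_0$ into exactly $2^n$ consecutive subsegments at level $n$, each edge of $G_0$ will have precisely $2^n$ preimages under $\psi_n$, yielding $N(\psi_n)=2^n$.

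For the lower bound $N[\phi^n] \ge 2^n$, I will apply Theorem~\ref{thm:sf} at $q=1$, which gives $N[\phi^n] = \sup_{[c]} E^1_1[\phi^n\circ c]/E^1_1[c]$ where the supremum runs over homotopy classes of weighted multi-curves on $G_n$. Fix an edge $e_0$ of $G_0$ dual to a wall $\alpha_0$ of $\cT_0$, and let the test multi-curve $c_n$ consist of the $2^n$ wall-type edges of $G_n$ dual to subsegments of $\alpha_0$, each treated as a unit-weight reduced simple strand. These strands will be pairwise disjoint, so $E^1_1(c_n)=1$, and under $\psi_n$ each strand will map homeomorphically onto $e_0$, so a generic interior point of $e_0$ will have $2^n$ preimages under $\psi_n\circ c_n$, giving $E^1_1[\phi^n\circ c_n]=2^n$. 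The only step needing real care will be the verification that the combinatorially natural $\psi_n$ genuinely represents the homotopy class $[\phi^n]$; this will follow immediately by unwinding the definition of $\phi^n$ as the retraction onto $G_0$, since wall-type edges of $G_n$ cross exactly one wall of $\cT_0$ and retract to the dual edge of $G_0$, while interior-type edges retract within their level-$0$ triangle to the corresponding vertex of $G_0$.
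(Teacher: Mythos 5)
Your upper bound is essentially sound: exhibiting a representative of $[\phi^n]$ whose generic fibers have $2^n$ points gives $N[\phi^n]\le 2^n$, hence $\oN(f)\le 2$ (the paper does this only for $n=1$ and then uses submultiplicativity, which is the same idea). The verification that your combinatorial $\psi_n$ really lies in $[\phi^n]$ is not quite ``immediate''---one should, e.g., compare $\psi_n$ with an explicit map $\rs-P_f\to G_0$ that collapses the complements of thin product neighborhoods of the walls and projects those neighborhoods to the dual edges, and check that this map restricts to a map homotopic to the identity on $G_0$---but this is a fixable bookkeeping point, not a gap.

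The lower bound, however, does not work as proposed. Your test multi-curve $c_n$ consists of interval strands, each a single edge of $G_n$. Homotopies of multi-curves here are free (no endpoint or basepoint constraints), so every interval strand is null-homotopic; consequently $E^1_1[c_n]=0$ and $E^1_1[\phi^n\circ c_n]=0$, and the stretch-factor formula $N[\phi^n]=\sup_{[c]}E^1_1[\phi^n\circ c]/E^1_1[c]$ (which is a supremum over \emph{non-trivial} classes) yields no information from this family. Your computation ``$E^1_1[\phi^n\circ c_n]=2^n$'' secretly uses the particular representative $\psi_n$, but $N[\phi^n]$ is an infimum over the homotopy class: since the $2^n$ wall-type edges over $\alpha_0$ form a disjoint union of trees, $\phi^n$ is homotopic to a map collapsing all of them to points, and for that representative your strands contribute nothing. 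What is needed---and what the paper supplies---is a family of \emph{essential closed} curves: $2^n$ edge-disjoint loops $\gamma^n_i$ in $G_n$ such that every $\psi\in[\phi^n]$ sends each $\gamma^n_i$ to a loop freely homotopic to a fixed essential simple loop $\gamma_0$ in $G_0$; then the image of each $\psi(\gamma^n_i)$ must contain the reduced representative of $\gamma_0$, so a generic point has at least $2^n$ preimages for \emph{every} representative, giving $N[\phi^n]\ge 2^n$. Producing these loops is the real content: the paper builds them inductively as $2^n$ edge-disjoint paths in the dual of the $n$-fold barycentric subdivision of a triangle joining two prescribed sides (replacing each path of the $n=1$ picture by $2^{n-1}$ parallel copies), and then doubles the triangle to close them up. Your arcs dual to the subsegments of one wall cannot be promoted to such a family without an argument of this kind.
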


An application of Theorem \ref{thm:Nbar} then yields $\ARCdim(J_f) \geq 1/(1-\log_6(2))$, as claimed at the end of \S \ref{subsec:motivation}.

\begin{proof}
  By examining Figure~\ref{fig:barycenter-G1}, it is easy to find a
  map in $[\phi]$ for which the inverse image of every generic point
  is two points. Thus $\oN(f) \le N[\phi] \le 2$.

  To get the opposite inequality, we will find $2^n$
  edge-disjoint curves $\gamma_i^n$ on~$G_n$ so that the curves
  $\phi^n(\gamma_i^n)$ are all homotopic to a simple loop~$\gamma_0$
  on~$G_0$; cf.\ the proof of Theorem~\ref{taggedthm:Cprime}. This will
  immediately imply that $N[\phi^n] \ge 2^n$, as
  desired.

  We find the $\gamma_i^n$ by inductively finding $2^n$ edge-disjoint
  paths connecting any two edges of the dual of the $n\th$ barycentric
  subdivision of a triangle. This is trivial for $n=0$, and $n=1$ is
  shown in Figure~\ref{fig:barycenter-sides}. This also serves as the
  inductive step: at level~$n$, in each triangle replace the concrete
  paths in Figure~\ref{fig:barycenter-sides} with the family of $2^{n-1}$
  parallel paths constructed by induction.
  \begin{figure}
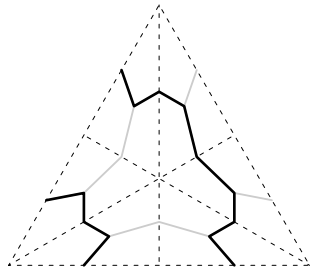

    \[
      \mfig{barycenter-23}
    \]
    \caption{Connecting sides of a triangle in the barycentric
      subdivision with edge-disjoint paths. This is the case $n=1$ and
      the inductive step.}
    \label{fig:barycenter-sides}
  \end{figure}

  The closed curves $\gamma^n_i$ are obtained by doubling the triangle
  as usual.
\end{proof}

\subsection{Fat real Devaney examples}
\label{sec:fat-devaney}
We begin by recalling some specifics concerning the Devaney family $f_\lambda(z)=z^2+\lambda/z^2$. 
The points $0$ and~$\infty$ are
always critical points satisfying 
$0 \overset{2}{\longrightarrow} \infty \overset{2}{\righttoleftarrow}$.
The other critical points
have orbits that start
\begin{equation}\label{eq:devaney-orbits}
  \lambda^{1/4} \overset{2}{\longrightarrow} \pm 2\sqrt{\lambda} \longrightarrow
  4\lambda+1/4\eqqcolon x_\lambda \longrightarrow \ldots.
\end{equation}
A sufficient condition for
$f_\lambda$ to be hyperbolic and have a Sierpiński
carpet Julia set is that $x_\lambda$ eventually iterates into the
Fatou component containing the origin. Such parameter values form a
countable collection of open disks called \emph{Sierpiński holes}.

As shown by the first author and R. Devaney
\cite{kmp:devaney:sierpinski}, given any finite word
$w=\epsilon_0\epsilon_1\ldots \epsilon_{n}$ in the alphabet $\{L,
R\}$, there exists a unique $\lambda=\lambda_w$, necessarily real and
negative, with the following property. For each $i=0, 1, \ldots, n$,
the image $f_\lambda^i(x_\lambda)$ lies to the left ($L$) or right
($R$) of the origin, according to the symbol in the $i\th$ position
of~$w$, and $f_\lambda^{n+1}(x_\lambda)=0$.
Let $\lambda_n^{\mathrm{skinny}}$ and $\lambda_n^{\mathrm{fat}}$
denote the parameter values corresponding to the words $LR^n$ and
$R^n$, respectively.

In this section we prove the second half of Theorem~\ref{thm:1_and_2}, dealing
with the fat Devaney family.
A virtual endomorphism spine for this rational map is shown in
Figure~\ref{fig:fat-spine} for $n=3$. As usual, the covering
map~$\pi$ preserves the decorations on the edges, and
the map~$\phi$ is the deformation retract onto~$G_0$ as a spine for
$S^2 - P$, where $P$ consists of the indicated points in the
diagram plus an extra point at~$\infty$.

The critical points at the fourth roots of~$\lambda$ are shown
schematically on the diagram of~$G_1$ with crosses; they map to the upper and
lower critical values on the diagram of~$G_0$, which in turn maps to the
sequence of points on the right of~$G_0$, eventually ending at the
central
critical point at~$0$, which maps to~$\infty$.
\begin{figure}
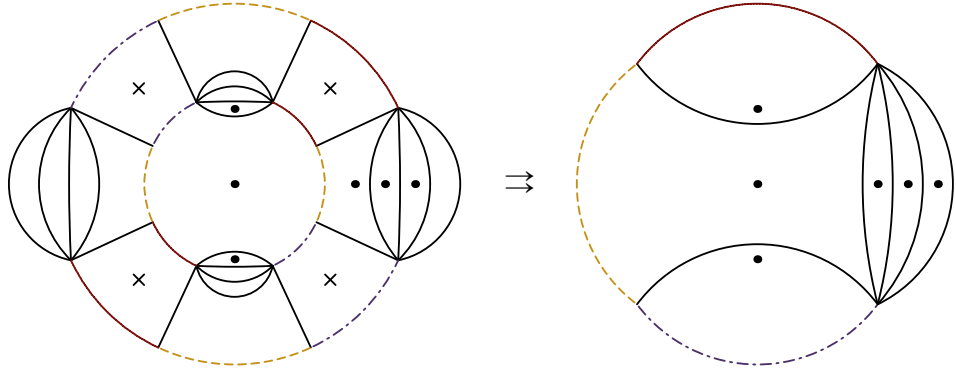

  \[
    \mfig{fat-1} \quad\rightrightarrows\quad\mfig{fat-0}
  \]
  \caption{Virtual endomorphism $G_1 \rightrightarrows G_0$ for the fat Devaney family, $R^n$, shown here with
    $n=3$. The post-critical set~$P$ is shown with bullets. The
    crosses on the left are the pre-periodic critical points.}
  \label{fig:fat-spine}
\end{figure}

To bound the conformal dimension from below, we again find $\oN(f_\lambda)$
and use bounds on how quickly $\oE^q$ decreases as a function of~$q$.
To find $\oN$, consider the $n$-component multi-curve~$C$ shown on the
right of Figure~\ref{fig:fat-curves}. The curve $f^{-1}(C)$ is shown
on the left of Figure~\ref{fig:fat-curves}. (As indicated, it is easy
to find $f^{-1}(C)$ by using the fact that $G_1$ is a cover of $G_0$.)
Each component of $f^{-1}(C)$ covers one of the components of $C$ with
degree two. One of the components of $f^{-1}(C)$ is peripheral (the
outer one), and the others are all components of~$C$.
Note that we
have the following.
\begin{itemize}
\item $C$ is \emph{completely invariant} (in the sense of Selinger
  \cite{Selinger12:AugmentedTeich}): $f^{-1}(C) = C$, up to homotopy
  in $S^2 - P_f$ and dropping inessential or peripheral components.
\item $C$ is \emph{Cantor-type} (in the sense of Cui-Peng-Tan
  \cite{CPT16:Renorm}): for some iterate,
each component of $C$ has at least two preimages homotopic to itself.
\end{itemize}

\begin{figure}
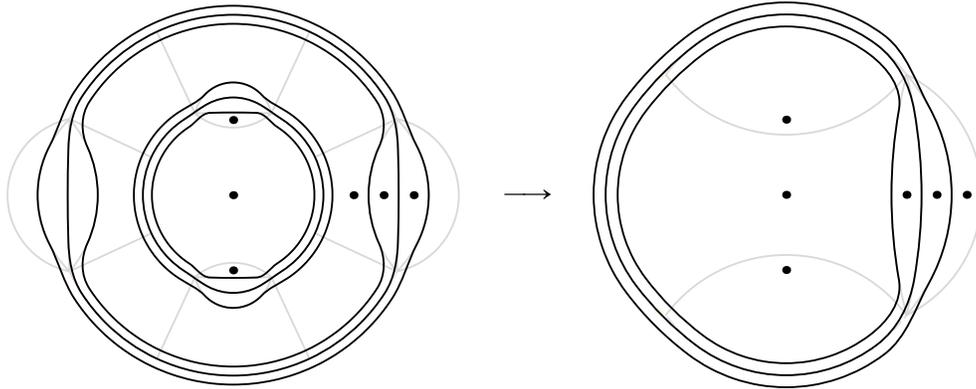

  \[
    \mfig{fat-11} \quad \longrightarrow\quad\mfig{fat-10}
  \]
  \caption{Curves in $S^2 - P$ for the fat Devaney family, and
    their inverse images. Each inverse image covers the original by
    degree~$2$.}
  \label{fig:fat-curves}
\end{figure}

\begin{lemma}\label{lem:fat-Nbar}
  For the Devaney family at parameter $\lambda_n^{\mathrm{fat}}$, let
  $r_n$ be  the largest root of
  $\lambda^{n+1}-2\lambda^n + 1$. Then
  $\oN(f_{\lambda_n^{\textrm{fat}}}) \ge r_n$.
\end{lemma}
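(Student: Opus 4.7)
The plan is to produce, from the completely invariant multi-curve~$C$ of Figure~\ref{fig:fat-curves}, a non-negative integer transition matrix~$M$ whose characteristic polynomial is $\lambda^{n+1}-2\lambda^n+1$, and then to convert the resulting Perron--Frobenius growth rate into a lower bound on~$\oN(f_{\lambda_n^{\mathrm{fat}}})$ via the edge-disjoint path characterization of~$\oN$ recorded just above Theorem~\ref{thm:Nbar}.

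First I would extract the transition matrix explicitly. Indexing rows and columns by the $n$ essential homotopy classes $C_1,\dots,C_n$ of~$C$ together with the peripheral class~$C_0$ appearing in~$f^{-1}(C)$, I set $M_{ij}$ equal to the number of components of $f^{-1}(C_j)$ homotopic in $S^2-P$ to~$C_i$. Reading off Figure~\ref{fig:fat-curves}, each~$C_j$ with $j\ge 1$ has exactly two essential preimages, each a degree-two cover of its image, and the nesting combinatorics of the fat Devaney family forces their classes to follow the pattern of a cyclic shift paired with a ``return to~$C_1$''; the peripheral class~$C_0$ has a single self-preimage, determined by the local dynamics at the fixed critical point~$\infty$. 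A cofactor expansion then identifies $M$ as a block sum of a $1\times 1$ block with eigenvalue~$1$ and an $n\times n$ block which is the companion matrix of the multibonacci polynomial $\lambda^n-\lambda^{n-1}-\dots-1$, so that
\[
  \det(\lambda I - M)\;=\;(\lambda-1)(\lambda^n-\lambda^{n-1}-\dots-1)\;=\;\lambda^{n+1}-2\lambda^n+1,
\]
whose spectral radius is~$r_n$ by definition.

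Next I would translate $\rho(M)=r_n$ into the required lower bound on~$\oN$. Iterating the transition, the count $\mathbf{1}^T M^k\mathbf{1}$ of components of $f^{-k}(C)$ that are essential in $S^2-P$ satisfies $\mathbf{1}^T M^k\mathbf{1}\gtrsim r_n^k$ by Perron--Frobenius, with a positive constant independent of~$k$. These essential preimages are pairwise disjoint simple loops in~$S^2-f^{-k}(P)$ whose interiors enclose pairwise distinct finite subsets of the growing puncture set~$f^{-k}(P)$, hence are pairwise non-isotopic in that surface; after isotoping each into the spine $G_k = f^{-k}(G_0)$, they give pairwise edge-disjoint simple loops in~$G_k$ whose $\phi^k$-images cover the corresponding non-trivial loops $C_i\subset G_0$. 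Invoking the edge-disjoint path characterization of~$\oN$ then gives $\oN(f_{\lambda_n^{\mathrm{fat}}})^k\gtrsim r_n^k$ for all~$k$, and taking the $k$-th root yields $\oN(f_{\lambda_n^{\mathrm{fat}}})\ge r_n$.

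The most delicate point will be the passage from pairwise non-isotopic simple loops in~$S^2-f^{-k}(P)$ to pairwise edge-disjoint loops in the spine~$G_k$: in principle, two reduced edge-loops representing distinct free homotopy classes could still share edges of~$G_k$. The redeeming feature in our setting is that at level~$k$ the puncture set $f^{-k}(P)$ separates distinct essential preimage components of~$C$ sharply enough that their reduced edge-paths can be arranged to be pairwise disjoint after a bounded multiplicative loss, which is absorbed into the implicit constant in the $\gtrsim$ and does not affect the Perron growth rate~$r_n$.
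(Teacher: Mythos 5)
Your overall strategy (extract the transition matrix of the invariant multi-curve, identify its Perron root with $r_n$, and convert the growth of preimages into a lower bound on $\oN$) is in the same spirit as the paper, and your characteristic-polynomial computation $(\lambda-1)(\lambda^n-\lambda^{n-1}-\cdots-1)=\lambda^{n+1}-2\lambda^n+1$ matches the matrix~$A$ used there. But the step you yourself flag as delicate is a genuine gap, and it is exactly the step the paper's argument is designed to avoid. You need, for each~$k$, roughly $r_n^k$ pairwise \emph{edge-disjoint} loops in the spine $G_k$; you propose to get them by isotoping the disjoint essential components of $f^{-k}(C)$ from $S^2-f^{-k}(P)$ into $G_k$. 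Disjoint simple closed curves on a punctured sphere do \emph{not} in general admit edge-disjoint taut representatives in a spine (already for three punctures and a rose, parallel-but-distinct classes are forced to share edges), and your claim that the loss is ``a bounded multiplicative factor'' is asserted, not proved, and is not even clearly formulated (bounded per what, uniformly in~$k$?). A second, smaller gap: the informal ``edge-disjoint path'' description of $\oN$ requires the images of your loops to pass over a \emph{common} point (or common edge) of~$G_0$, as in the proof of Theorem~\ref{thm:Nbar_ge_1}, whereas your loops cover different classes $C_i$; this is fixable by restricting to the sub-family homotopic to a single $C_i$ (a loss bounded by~$n$, independent of~$k$), but you do not address it. Incidentally, your first gap could be repaired by choosing the multi-curve $C$ to lie \emph{inside} $G_0$ from the start (as in Figure~\ref{fig:fat-curves}); then $f^{-k}(C)\subset G_k$ is literally a disjoint union of embedded cycles and no isotopy into the spine is needed.

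The paper takes a cleaner route that sidesteps both issues: it uses the Perron--Frobenius \emph{eigenvector} $w$ of $A$ to form the weighted multi-curve $W=\sum w_i[\gamma_i]$, which satisfies $f^{-1}(W)=r_n W$ up to homotopy, embeds $W$ in $G_0$ and takes its pullback $W_n$ under the covering $\pi^n\co G_n\to G_0$ (automatically embedded and disjoint), and then applies the stretch-factor interpretation of $E^1_1$ (Theorem~\ref{thm:sf}, $q=1$, following \cite{Thurston20:Characterize}) to get the exact inequality $E^1_1[\phi^k]\ge E^1_1[\phi^k(W_k)\to G_0]/E^1_1[W_k\to G_k]=r_n^k$, hence $\oN\ge r_n$ with no asymptotic counting, no disjointness lemma, and no common-point argument. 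If you want to keep your unweighted counting approach, you must either prove the edge-disjoint realization in $G_k$ with uniform constants or replace it by the covering-pullback observation above.
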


\begin{proof} Let $\{\gamma_j\}$ enumerate the components of~$C$. 
  Consider the $n$-by-$n$ non-negative matrix~$A$ whose entry~$A_{ij}$ records how
  many components of $f^{-1}(\gamma_j)$ are homotopic to $\gamma_i$
  (without accounting for the degree of the cover). Concretely, we
  have
  \[
    A =
    \begin{pmatrix}
      1 & 1 & \cdots & 1 & 1\\
      1 & 0 & \cdots & 0 & 0\\
      0 & 1 & \cdots & 0 & 0\\
      \vdots & \vdots& \ddots & \vdots & \vdots \\
      0 & 0 & \cdots & 1 & 0
    \end{pmatrix}.
  \]
  Let $w$ be Perron-Frobenius eigenvector of~$A$, with
  eigenvalue~$\lambda$; concretely, $w_i = \lambda^{n-i-1}$ and
  $\lambda$ is the positive root of
  \[
    \lambda^n = \lambda^{n-1} + \lambda^{n-2} + \dots + \lambda + 1.
  \]
  Multiplying by $\lambda-1$ shows that $\lambda$ is the root $r_n$ given in the
  lemma statement. Informally, $\lambda$ is the rate of growth of the
  number of preimages of any of the curves $[\gamma_i]$.

  To see that $\lambda$ is a lower bound for $\oE^1(f)$, we specialize
  the discussion in \cite[\S7.5]{Thurston20:Characterize} to the
  present setting. Let $W_0^1 = \sum w_i [\gamma_i]$ be the
  weighted multi-curve on $G_0$ corresponding to~$w$.
  (Recall that the superscript ``$1$'' means we are thinking of this
  as a 1-conformal graph.) Set $W_1^1 = f^{-1}(W_0^1) = \pi^{-1}(W_0^1)$ be the
  pullback weighted curve, as a curve on $G_1$. (Here the pullback is
  as weighted curves, i.e., not taking into account the degree of
  the cover as in W.~Thurston's obstruction theorem.) By the
  eigenvalue property, $\phi(W_1^1)= \lambda W_0^1$. We similarly have
  pullback embeddings $W^1_n = (\pi^n)^{-1}(W^1_0)$ on $G_n$, so
  $\phi^n(W^1_n) = \lambda^n W^1_0$.
  
  Let $G_0^1$ be $G_0$ as a weighted graph, with all weights equal
  to~$1$. Using the interpretation of $E^1_1$ via stretch
  factors---that is, a supremum of ratios of
  extremal lengths---as in \S\ref{subsec:el}, we have
  \[
    E^1_1[\phi^n] \ge \frac{E^1_1[\phi^n(W^1_n) \to G^1_0]}{E^1_1[W^1_n\to G^1_n]}
    = \frac{E^1_1[\lambda^n W^1_0 \to G^1_0]}{E^1_1[W^1_n\to G^1_n]} = \lambda^n,
  \]

  as desired.
  \end{proof}

\begin{remark}
  It is straightforward to find weights on the edges of $G_0$
  to show that the inequality in Lemma~\ref{lem:fat-Nbar} is an
  equality, but the lower bound is all we need for our
  result.
\end{remark}

\begin{lemma}\label{lem:fat-root}
There exists a unique root $r_n$ of $g_n(\lambda) = \lambda^{n+1} - 2\lambda^n + 1$ in the interval 
$(1,2)$ and, for $n \ge 1$,
  \[
    r_n \ge 2 - 2^{-n+1}.
  \]
\end{lemma}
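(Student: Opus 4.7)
The plan is to analyze $g_n(\lambda) = \lambda^{n+1} - 2\lambda^n + 1 = \lambda^n(\lambda-2) + 1$ by its derivative to isolate the root in $(1,2)$, and then verify the bound by plugging in $\mu \coloneqq 2 - 2^{-n+1}$ and using Bernoulli's inequality.

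\emph{Existence and uniqueness.} Compute
\[
  g_n'(\lambda) = (n+1)\lambda^n - 2n\lambda^{n-1} = \lambda^{n-1}\bigl((n+1)\lambda - 2n\bigr),
\]
so on $(0,\infty)$ the function $g_n$ has a unique critical point at $\lambda^* \coloneqq 2n/(n+1) = 2 - 2/(n+1)$, being strictly decreasing on $(0,\lambda^*)$ and strictly increasing on $(\lambda^*,\infty)$. Evaluation gives $g_n(1) = 0$ and $g_n(2) = 1 > 0$. For $n \ge 2$ we have $\lambda^* > 1$, hence $g_n(\lambda^*) < g_n(1) = 0$, and by monotonicity $g_n$ has a unique root $r_n \in (\lambda^*, 2) \subset (1,2)$. (For $n = 1$, $g_1(\lambda) = (\lambda-1)^2$ has double root $r_1 = 1$ and the claimed bound $r_1 \ge 2 - 1 = 1$ holds trivially; the interval statement is vacuous.)

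\emph{The lower bound $r_n \ge \mu$.} Using $\mu - 2 = -2^{-n+1}$, we get
\[
  g_n(\mu) = \mu^n(\mu - 2) + 1 = 1 - 2^{-n+1}\mu^n,
\]
so $g_n(\mu) \le 0$ is equivalent to $\mu^n \ge 2^{n-1}$. Writing $\mu = 2(1 - 2^{-n})$ and applying Bernoulli's inequality $(1-x)^n \ge 1 - nx$ with $x = 2^{-n}$,
\[
  \mu^n = 2^n(1 - 2^{-n})^n \ge 2^n(1 - n \cdot 2^{-n}) = 2^n - n \ge 2^{n-1},
\]
where the last step is equivalent to $2^{n-1} \ge n$, valid for all $n \ge 1$. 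Thus $g_n(\mu) \le 0$.

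\emph{Conclusion.} A direct check shows $\mu \ge \lambda^*$ for every $n \ge 1$: this reduces to $2^{-n+1} \le 2/(n+1)$, i.e.\ $n+1 \le 2^n$. Combining this with strict monotonicity of $g_n$ on $[\lambda^*, 2]$, the values $g_n(\mu) \le 0$ and $g_n(2) = 1 > 0$ force $\mu \le r_n$, as claimed. The only mildly delicate step is the elementary inequality $\mu^n \ge 2^{n-1}$, which Bernoulli dispatches cleanly; everything else is routine calculus on the explicit polynomial $g_n$.
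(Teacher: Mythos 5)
Your argument is correct, and it reaches the result by a genuinely different route than the paper. Both proofs ultimately rest on the same two sign evaluations, $g_n(2)=1>0$ and $g_n(2-2^{-n+1})\le 0$, but you get the second one by first simplifying $g_n(\mu)=\mu^n(\mu-2)+1=1-2^{-n+1}\mu^n$ for $\mu=2-2^{-n+1}$ and then bounding $\mu^n\ge 2^n-n\ge 2^{n-1}$ with one application of Bernoulli's inequality, which works uniformly for all $n\ge 1$; the paper instead bounds $2^{n+1}(1-2^{-n})^{n+1}-2^{n+1}(1-2^{-n})^n+1$ by a truncated binomial expansion, which only closes for $n\ge 5$ and forces separate direct checks for $n=1,2,3,4$. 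For uniqueness you use the derivative $g_n'(\lambda)=\lambda^{n-1}\bigl((n+1)\lambda-2n\bigr)$ and the decreasing-then-increasing shape of $g_n$, which moreover localizes $r_n$ to the right of $2n/(n+1)$ and is exactly what lets you convert $g_n(\mu)\le 0$ into $\mu\le r_n$; the paper invokes Descartes' rule of signs together with the known root at $\lambda=1$, which is quicker but yields no extra localization. You also treat the degenerate case $n=1$ explicitly, where $g_1=(\lambda-1)^2$ has no root in the open interval $(1,2)$ and only the bound $r_1=1\ge 2-2^0$ survives; the paper's statement and proof pass over this, so your remark is a small gain in precision. The only spot worth one more line is the uniqueness step for $n\ge 2$: note that $g_n<0$ on $(1,\lambda^*]$ (strict decrease from $g_n(1)=0$) in addition to the single crossing on $[\lambda^*,2]$, so that there is exactly one root in $(1,2)$ — implicit in your monotonicity claim, and not a gap.
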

\begin{proof}
  We have $g_n(2) = 1 > 0$.   For $n=1,2,3,4$, we can directly check
  that $g_n(2-2^{-n+1}) \le 0$. For $n \ge 5$,
  \begin{align*}
    g_n(2 - 2^{-n+1}) &= 2^{n+1}\biggl(1-\frac{1}{2^n}\biggr)^{n+1}\!
                        - 2^{n+1}\biggl(1-\frac{1}{2^n}\biggr)^n+1\\
     &\le 2^{n+1}\biggl(1-\frac{n+1}{2^n}+\frac{n(n+1)}{2^{2n+1}}\biggr)
       - 2^{n+1}\biggl(1-\frac{n}{2^n}\biggr)+1\\
     &= -1 + \frac{n(n+1)}{2^n} < 0.
  \end{align*}
  It follows that there is a root of $g_n$ in $[2-2^{-n+1},2)$. Descartes' rule of signs
 shows there is a unique root in $(1,2)$.
\end{proof}

\begin{proof}[Proof of Theorem~\ref{thm:1_and_2}, fat family]
  By Theorem~\ref{thm:Nbar}, Lemma~\ref{lem:fat-Nbar},
  Lemma~\ref{lem:fat-root}, and elementary estimates,
  \[
    \ARCdim(f_{\lambda_n^{\textrm{fat}}})
    \ge \frac{1}{1-\log_4(\oN(f_{\lambda_n^{\textrm{fat}}}))}
    \ge \frac{2}{2-\log_2 r_n}
    \ge \frac{2}{1+2^{-n}}. \qedhere
  \]
\end{proof}

We sketch two alternate proofs of the same lower bound on
$\ARCdim(J(\lambda_n^{\mathrm{fat}}))$, using the same multi-curve. Fix
$n$; we write $f=f_{\lambda_n^{\mathrm{fat}}}$, and set $J=J(f)$. We
equip $J$ with a visual metric, as in \S \ref{subsecn:visual}; it
belongs to the quasi-symmetric gauge of $f$, by Proposition
\ref{prop:elevator}.

Our first method is a variant of that employed in \cite{HP08:Obstruction}. It associates to $C$ a critical exponent for the combinatorial modulus of the family of curves homotopic to~$C$. This exponent is then a lower bound for $\ARCdim$. 
Abusing notation, let $[C]$ denote the family of curves in $J$ which
are homotopic in $\CCa-P_f$ to  some component $\gamma_j$ of the
indicated multi-curve $C=\{\gamma_j\}$.
Since each curve in the family is essential and non-peripheral, the diameters of elements of this family are bounded below, say by 
$\delta>0$, and so $[C] \subset \Gamma_\delta$.
Let $\mathcal{U}_m$, $m \in \mathbb{N}$, denote the sequence of coverings as in \S
\ref{sec:sandwich}; it is a collection of
snapshots, by Proposition \ref{prop:qp_from_stars}.
For $Q>1$ let $A_Q$  denote the matrix whose $(i,j)$ entry is the sum of
terms of the form
$d_{ijk}^{1-Q}$ where $d_{ijk}=\deg(\delta_k \to
\gamma_j)$ and the curves $\delta_k$ range over preimages of $\gamma_j$ homotopic to $\gamma_i$ in the complement of the post-critical set.  In our case these covering degrees are all equal to~$2$, 
so $A_Q=2^{1-Q}A$. Thus the
Perron-Frobenius eigenvalue $\lambda_Q$ of $A_Q$ is $2^{1-Q}r_n$ where $r_n$ is the root from Lemma \ref{lem:fat-root}; note
this is strictly decreasing in $Q$.
Setting this equal to $1$ and solving for $Q$ yields $Q_*=1+\log_2
r_n$ for the critical exponent.
Fix now $1<Q < Q_*$. Then  as $m\to \infty$
\[ \modulus_Q(\Gamma_\delta, \mathcal{U}_m) \geq \modulus_Q([C],\mathcal{U}_m) \gtrsim 1, \]
where the last inequality is the statement of \cite[Proposition
5.1]{HP08:Obstruction}.  By Corollary \ref{cor:critexp_J}, we have $ \ARCdim(J)>Q$ and hence $\ARCdim(J) \geq Q_*$. 

Our second alternate proof we present here as a sketch; the motivation
comes from \cite{kmp:ph:ex}. Associated to the multi-curve $C$ is a
holomorphic virtual endomorphism of spaces
$\pi_Y,\phi_Y \co Y_1 \rightrightarrows Y_0$ where $Y_0$ is a
collection of open Euclidean right annuli of circumference~$1$ (and geodesic
boundary) indexed by the components of~$C$ and $Y_1$ is a
collection of pairwise disjoint right Euclidean sub-annuli of~$Y_0$
indexed by the components of $f^{-1}(C)$ homotopic to~$C$. We require
that $\phi_Y$ induces an inclusion $\overline{Y_1} \hookrightarrow
Y_0$ conformal on the interior and that $\pi_Y\co Y_1 \to Y_0$ is conformal, a local expanding homothety in
the Euclidean coordinates with constant factor $2$, and with each
component mapping by degree $2$.

Associated to this conformal expanding
dynamical system is a non-escaping set $X\subset Y_0$ and a self-map $g\co X \to X$.
The set $X$ is isometric to
a product $S^1 \times \mathcal{C}$, where $S^1$ is the Euclidean
circle of circumference $1$, and $\mathcal{C}$ is a Cantor set
associated to a graph-directed iterated function system on a disjoint
union of $\# C$ copies of $S^1$, with contraction maps having factor $2$,
and where the copies map according to the combinatorics of the map
$f^{-1}(C) \to C$. The Hausdorff dimension of~$\mathcal{C}$
is equal to $\log_2 r_n$, by a variant of the well-known ``pressure
formula''.  It follows from \cite[Proposition 4.1.11]{MT10:ConfDim} that the conformal dimension of $X$ is then equal to $1+\log_2 r_n=Q_*$.
There is a natural, non-surjective semiconjugacy $X \to J$ from $g$ to $f$.  

If 
this semiconjugacy were a homeomorphism, monotonicity of conformal dimension would
imply the desired lower bound on $\ARCdim(J)$; however, it is not. 
Cui, Peng, and Tan \cite{CPT16:Renorm}
show that there is a ``thick'' subset of $X$---the components living
over ``buried'' points in the Cantor set $\mathcal{C}$---on which the
semiconjugacy is injective; this should imply the
desired lower bound. For example, passing to some high iterate, and
deleting the extreme inner and outermost branches of the interval contraction mappings 
defining the corresponding Cantor set, one obtains a
sub-system whose repellor maps injectively to~$J$ and whose
dimension is close to that of the original system~$X$.

\begin{remark}
  It is challenging, using our techniques, to give a concrete upper bound
  estimate on $\ARCdim$ that is less than~$2$. Although we know
  that $\oE^2(f) < 2$, at some iterate the actual energy
  must be less than 2, and at that iterate we could apply
  Proposition~\ref{prop:Eq-lower-bound} to get an upper bound on
  $\ARCdim$. But
  it appears that we have to iterate quite a lot
  to get to these values and get a good upper bound on $\ARCdim$.
  In some sense, since the Julia set in these examples is a Sierpiński
  carpet of Hausdorff dimension close to~$2$, it is not
  well-approximated by graphs.
\end{remark}

\subsection{Skinny Devaney examples}
\label{sec:skinny-devaney}

We now turn to the other half of Theorem~\ref{thm:1_and_2}, dealing
with the skinny Devaney family. Suitable spines in this case are shown in
Figure~\ref{fig:skinny-spine} for $n=3$ (kneading sequence $LR^3$);
again, the generalization is evident.

We will find an explicit~$q \in (1,2)$, a $q$-conformal structure $G_0^q$ given by a metric~$\alpha$ on~$G_0$, and
map $\phi \co G^q_1 \to G^q_0$ so that $E^q_q(\phi) = 1$. The
metric~$\alpha$ is shown on the right of
Figure~\ref{fig:skinny-spine}, except that we have not yet determined
the value~$x$. The map $\phi$ is indicated schematically on the left
of Figure~\ref{fig:skinny-spine}: each region surrounded by a green
loop is contracted to a point, and $\phi$ is optimized in the
remaining regions.

\begin{figure}
  \[
    \mfig{skinny-2}\quad\rightrightarrows\quad
    \mfig{skinny-10}
  \]
  \caption{Virtual endomorphism $G_1 \rightrightarrows G_0$ for the skinny Devaney family, $LR^n$, shown here
    with $n=3$. The map $\pi$ preserves color and orientation of the
    plane, as usual. The map $\phi$ collapses the regions within green
  dotted circles to points.}
  \label{fig:skinny-spine}
\end{figure}
Inspection reveals that for most points $y \in G^q_0$, there is a unique
point $x \in G_1$ with $\phi(x) = y$ and $\abs{\phi'(x)} = 1$,
compatible with $E^q_q(\phi) = 1$. The exceptions are:
\begin{enumerate}
\item\label{item:chain1} On the long left edge, there is a chain in~$G_1$ of
  $n$ ``double edges'' (two edges connecting the same vertices), with
  each edge of length 1:
  \[
    \includegraphics{skinny-20}
  \]
   (Note the right
   vertical edge gets mapped to a point.)
   This chain maps to an edge in~$G^q_0$ of length~$x$.
\item\label{item:chain2} On each the four edges of the central square,
  there a
  chain in $G^q_1$ of an edge of length~$x$, two parallel edges of
  length~$2$, another edge of length~$x$, and two parallel edges of
  length~$1$:
  \[
    \includegraphics{skinny-21}
  \]
  (Note the right-hand endpoints map to the same point.)
  These chains each map to an edge in~$G_0$ of length~$1$.
\end{enumerate}
We now solve for $x$ and~$q$ to make the supremand in the definition
of $E^q_q(\phi)$ equal to~$1$ on these edges as well. We use the
principle that two parallel edges of length $a$ and~$b$ in
a $q$-conformal graph can be replaced by a single edge of length
\[
  a \oplus_q b \coloneqq \bigl(a^{1-q} + b^{1-q}\bigr)^{1/(1-q)}
\]
without changing the optimal $E^q_q$ energy of any map. (See
\cite[Proposition 7.7]{Thurston20:Characterize}. The case $q=2$ is the
standard parallel law for resistors.)

Thus, the $n$ double edges in \eqref{item:chain1} have an effective
$q$-length of
\[
  x \coloneqq n \cdot (1 \oplus_q 1) = n\cdot 2^{1/(1-q)}.
\]
We set $x$ to this value to make $E^q_q(\phi) = 1$.

The chain of edges in \eqref{item:chain2} have an effective $q$-length
of
\[
  x + (2 \oplus_q 2) + x + (1 \oplus_q 1) = 2x + 3\cdot 2^{1/(1-q)} =
  (2n+3)\cdot 2^{1/(1-q)}.
\]
For $q = 1 + 1/\log_2(2n+3)$, this quantity is equal to~$1$, which
makes $E^q_q(\phi) = 1$, with the supremum in the definition of
$E^q_q$ achieved everywhere on~$G_0$.

\begin{proposition}
  For the skinny Devaney example $\lambda_n^{\mathrm{skinny}}$, for
  any $q > 1 + 1/\log_2(2n+3)$, we can find a metric on $G_0$ in
  Figure~\ref{fig:skinny-spine} so that $E^q_q(\phi) < 1$. Thus
  $\oE^q[\pi,\phi] < 1$ and
  \[
    \ARCdim(J_{\lambda_n^{\mathrm{skinny}}}) \le 1 + \frac{1}{\log_2(2n+3)}.
  \]
\end{proposition}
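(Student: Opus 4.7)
The discussion preceding the proposition already produces, at $q_0 \coloneqq 1 + 1/\log_2(2n+3)$, an explicit map $\phi \co G_1 \to G_0$ together with a metric $\alpha_{q_0}$ on $G_0$ such that $\Fill^{q_0}(\phi) \equiv 1$ on $G_0$, hence $E^{q_0}_{q_0}(\phi) = 1$. The plan is to upgrade this equality to a strict inequality at any $q > q_0$ by a careful choice of metric.

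First, I would hold $\phi$ and the metric $\alpha_{q_0}$ fixed and observe the effect of increasing $q$. At each generic point $y$ of a double-edge segment of the image, there are two preimages with equal derivative $\mu = 2^{1/(1-q_0)} < 1$, giving
\[
  \Fill^q(\phi)(y) = 2\mu^{q-1} = 2^{(q-q_0)/(1-q_0)},
\]
which is strictly less than~$1$ for $q > q_0$ (the exponent being negative). At generic points $y$ of the single-edge segments, however, $y$ has a unique preimage with derivative~$1$, so $\Fill^q(\phi)(y) = 1$ for every~$q$. Thus the obstruction to $E^q_q(\phi) < 1$ is concentrated on the single-edge segments.

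Second, I would perturb the metric to reduce $\Fill^q$ on the troublesome segments. The strategy is to subdivide each target edge of $G_0$ at the break points between the single-edge and double-edge segments of the chain---recording the combinatorics of where $\phi$ changes character---and assign slightly smaller lengths to the single-edge parts. This extra freedom lets us decrease the derivative of $\phi$ on the single-edge segments (driving their $\Fill^q$ contribution strictly below~$1$) while keeping the double-edge contribution strictly below~$1$ as well. Since the latter contribution at the unperturbed metric was already $2^{(q-q_0)/(1-q_0)} < 1$, uniformly over the graph, a sufficiently small perturbation preserves the inequality there. The pulled-back metric on $G_1$ also changes, but by a bounded amount tracked through the explicit covering structure of $\pi$ in the preceding construction.

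With $E^q_q(\phi) < 1$ in hand, sub-multiplicativity of energy \cite[Prop.~A.12]{Thurston19:Elastic} gives $\oE^q[\pi,\phi] \le E^q_q(\phi) < 1$ for every $q > q_0$. Since $\oE^q$ is continuous and non-increasing, with level set $\{q : \oE^q = 1\}$ equal to an interval $[q_*, q^*]$, strict inequality at every $q > q_0$ forces $q^*[\pi,\phi] \le q_0$. Theorem~\ref{thm:crit-sandwich} then yields $\ARCdim(\cJ[\pi,\phi]) \le q^*[\pi,\phi] \le q_0$, and Proposition~\ref{prop:elevator} identifies this with $\ARCdim(J_{\lambda_n^{\mathrm{skinny}}})$, completing the proof.

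The hard part will be the second step: the perturbation couples $G_0$ and $G_1$ through the covering $\pi$, so shrinking a single-edge target segment in $G_0$ rescales some edges of $G_1$ (including those in the long-edge chain). A careful finite analysis is needed to choose the perturbation so that the resulting derivative increases on those rescaled $G_1$ edges are absorbed by the slack already present in the double-edge contribution $2^{(q-q_0)/(1-q_0)} < 1$.
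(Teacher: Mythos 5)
Your overall strategy is the same as the paper's: keep the map $\phi$ from the preceding construction, observe that for $q>q_0\coloneqq 1+1/\log_2(2n+3)$ the doubled edges acquire strict slack $\Fill^q = 2^{(q-q_0)/(1-q_0)}<1$ while the singly covered edges still have $\Fill^q=1$, and then perturb the metric on~$G_0$ so that the singly covered edges drop strictly below~$1$ while the slack absorbs the disturbance; finish with sub-multiplicativity, monotonicity of $\oE^q$, Theorem~\ref{thm:crit-sandwich}, and Proposition~\ref{prop:elevator}. Your computation of the slack and your identification of where the obstruction sits are correct, and the concluding chain of implications is fine.

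The gap is that the one substantive step is only announced, not done: you end by saying ``a careful finite analysis is needed,'' which is exactly the content of the proposition. Moreover, as sketched, shrinking the $\phi$-image sub-segments is not yet a workable recipe, because the metric on $G_1$ is the $\pi$-pullback of the metric on $G_0$: if you scale each edge $e$ of $G_0$ by a factor $s_e$, then a singly covered edge of $G_1$ lying over $e'$ and mapping onto (part of) $e$ acquires derivative $s_e/s_{e'}$, so what you need is a system of scalings with $s_e<s_{e'}$ along every such singly covered transition. Such a system exists precisely because the doubled edges (which carry the slack) interrupt every cycle of these derivative-one transitions---the singly covered transitions just shift along the finite chain of loops recording the critical orbit---and one must say this, or simply exhibit a choice. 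The paper does the latter in one line: multiply the length around the $k$\th{} loop of the chain (counted from the end) by $(1+\epsilon)^k$; this is a ``potential function'' decreasing along the singly covered transitions, so every such edge picks up derivative $(1+\epsilon)^{-1}<1$, while the finitely many doubled edges change by a bounded power of $(1+\epsilon)$, which is absorbed by the slack $2^{(q-q_0)/(1-q_0)}<1$ for $\epsilon$ small. Supplying such an explicit scaling (or the acyclicity argument guaranteeing one) is what your write-up still needs; once it is in place, the rest of your argument matches the paper's proof.
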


\begin{proof}
  For $q$ bigger than $1+1/\log_2(2n+3)$, it is straightforward to
  modify the metric on~$G_0$ in Figure~\ref{fig:skinny-spine} by
  adjusting the lengths slightly to make $E^q_q[\phi] < 1$. (For
  instance, on the chain of loops on the right side, multiply the
  length around the $k\th$ dot in from the end by $(1+\epsilon)^k$ for
  small $\epsilon$.)
  Then for these $q$, we have $\oE^q[\pi,\phi] \le E^q_q[\phi] < 1$
  and $\ARCdim < q$, as desired.
\end{proof}

\begin{remark}
  One can also give a lower bound on
  $\ARCdim(J_{\lambda_n^{\mathrm{skinny}}})$ by estimating $\oN[\pi,\phi]$.
\end{remark}

\subsection{Uniformly well-spread cut points}
\label{subsecn:uwscp}

\begin{definition}
\label{def:linearly_connected}
A metric space $X$ is \emph{linearly connected} if there exists a constant $L\geq 1$ such that for each pair of points $x, y \in X$, there is a continuum $E$ containing $\{x,y\}$ such that $\diam E \leq Ld(x,y)$.
\end{definition}

If $X$ is connected and $f\co X \to X$ is metrically cxc, then $X$ is linearly connected \cite{kmp:ph:cxci}. 

\begin{definition}
  \label{def:uwscp}
A compact, connected metric space $X$ is said to satisfy the \emph{uniformly well-spread cut points condition} (UWSCP) if there exists a constant $C \geq 1$ such that for each $x \in X$ and each $r>0$, there exists a set $A \subset X$ with $\#A \leq C$ such that no component of $X - A$ meets both $B(x,r/2)$ and $X - \overline{B}(x,r)$. 
\end{definition}

The following result is \cite[Theorem 1.2]{C14:ConfDim}.
\begin{citethm}
\label{thm:confdim1}
If $X$ is doubling, compact, connected, linearly connected, and satisfies the UWSCP condition, then $\ARCdim(X)=1$. 
\end{citethm}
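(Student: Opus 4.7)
The strategy is to use the Carrasco/Keith--Kleiner paradigm in the upper-bound direction: to show $\ARCdim(X) \leq 1 + \epsilon$ for every $\epsilon > 0$ by exhibiting, for each $q > 1$, a sequence of admissible test metrics on snapshots whose $q$-volume decays to zero. The matching lower bound $\ARCdim(X) \geq 1$ is automatic for a connected, non-degenerate metric space. The first step is to set up snapshots $(\cS_n)_n$ at scale $\theta^n$, via maximal $\theta^n$-separated nets in $X$; the doubling condition provides the uniform bounded-overlap property used in Lemma~\ref{lemma:bounded_overlap}. One then needs a version of Theorem~\ref{thm:keith-kleiner} valid in this doubling, linearly connected (but not necessarily approximately self-similar) setting, which allows one to pass from combinatorial modulus decay on snapshots to an upper bound on $\ARCdim$.

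Next, I would construct the test metrics $\rho_n$ using the UWSCP condition. For each $s \in \cS_n$ centered at $x_s$, apply UWSCP at $(x_s, r_s)$ with $r_s \asymp \theta^n$ to obtain a separating set $A_s \subset X$ with $|A_s| \leq C$. Declare $\rho_n(s) > 0$ precisely when $s$ meets one of the cut sets $A_{s'}$ for $s'$ belonging to a controlled collection of coarser-scale elements; the positive values are chosen to make $\rho_n$ barely admissible for $\Gamma_\delta$. Admissibility is forced by the observation that any curve $\gamma$ with $\diam(\gamma) \geq \delta$ must, for every dyadic scale $r \in [\theta^n, \delta]$, cross from $B(x, r/2)$ to the exterior of $B(x, r)$ for some appropriately chosen center $x$, and hence pass through the corresponding cut set. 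Linear connectedness is the ingredient that permits choosing these concentric balls so that each enforces an independent scale contribution. The $q$-volume of $\rho_n$ is controlled by the total number of elements meeting any cut set at any scale, which is at most a doubling-constant multiple of $C\cdot |\cS_n|$ per scale; a geometric-series calibration in the weights then yields $V_q(\rho_n, \cS_n) \to 0$ as soon as $q > 1$.

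The main obstacle is the coherent multi-scale assembly: UWSCP controls each scale individually, but forcing every long curve to accumulate length at every scale \emph{simultaneously} requires a careful arrangement of cut sets across scales so that their $\rho_n$-length contributions stack up to a uniform positive constant. A secondary subtlety is that the modulus-versus-$\ARCdim$ equivalence invoked above is most cleanly stated under approximate self-similarity; in the purely doubling setting one either adapts the Carrasco--Keith--Kleiner proof directly, or, as Carrasco does in \cite{C14:ConfDim}, bypasses combinatorial modulus altogether by constructing the deformed Ahlfors-regular metric on $X$ by hand from UWSCP via a hyperbolic filling and a suitable visual metric, at which point one can directly estimate the Hausdorff dimension in the new metric.
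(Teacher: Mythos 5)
First, a point of orientation: the paper does not prove this statement at all. It is quoted verbatim as Carrasco's result \cite[Theorem 1.2]{C14:ConfDim} (hence the ``cited theorem'' environment), so there is no internal argument to compare yours against; what you are attempting is a reproof of Carrasco's theorem, and as written your sketch does not close.

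The gap is two-fold, and you have in fact flagged both halves without resolving them. (a) The bridge from decay of combinatorial modulus to an upper bound on $\ARCdim$ --- Theorem~\ref{thm:keith-kleiner} and Corollary~\ref{cor:critexp_J} --- is proved only for \emph{approximately self-similar} spaces, and approximate self-similarity is not among the hypotheses here (only doubling, compact, connected, linearly connected, UWSCP). ``Adapting the Carrasco--Keith--Kleiner proof directly'' is not a routine step: the hard direction, producing an Ahlfors-regular metric of small dimension from modulus estimates, is exactly the hyperbolic-filling construction of \cite{C13:Gauge}, and making it work under UWSCP alone is the content of \cite{C14:ConfDim}. (b) Independently, the proposed test metrics do not have the volume decay you claim. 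If you weight only the snapshot elements near cut sets at the single coarse scale $\asymp\delta$, admissibility for $\Gamma_\delta$ forces each weighted element to carry weight comparable to $1$ (a curve is only guaranteed to meet one cut point), so $V_q(\rho_n,\cS_n)$ is bounded away from $0$ uniformly in $n$ and $\modulus_q(\Gamma_\delta,\cS_n)$ does not tend to $0$. If instead you spread weight $1/m$ over the $m\asymp n$ scales between $\theta^n$ and $\delta$ so that contributions ``stack up,'' then at the finest scales the number of net centers is comparable to $\#\cS_n$, each carrying up to $C$ cut points, so $V_q(\rho_n,\cS_n)\gtrsim \#\cS_n\cdot n^{-q}\to\infty$. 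Even optimizing the scale-by-scale calibration, cut-point-supported weights of this kind yield only \emph{bounded} modulus for $q$ close to $1$ --- one can check this already on the Sierpiński gasket --- not modulus tending to $0$; in the self-similar setting the passage from ``bounded for every $q>1$'' to the dimension bound is exactly where Corollary~\ref{cor:critexp_J} and \cite[Corollary 3.7]{MR3019076} (hence self-similarity) are used again. So the ``coherent multi-scale assembly'' is not a technicality to be arranged carefully; it is the point at which Carrasco abandons the modulus-of-$\Gamma_\delta$ route and instead builds the low-dimensional Ahlfors-regular metrics by hand, by reweighting a hyperbolic filling using the well-spread cut points. Your closing sentence defers to precisely that construction, which means the sketch ultimately reduces to citing Carrasco --- which is exactly what the paper does. (Your lower bound $\ARCdim(X)\ge 1$ for a connected, nondegenerate space is fine.)
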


\subsection{Matings}
\label{sec:mating-examp}

In this section, we give an example showing that among Julia
sets of hyperbolic rational functions, the UWSCP condition is sufficient but
not necessary for $\ARCdim(J)=1$. To our knowledge,
this is the first result of its kind. We start by recalling
generalities on matings.

\subsubsection*{Formal mating} Here, we denote by $\mathbb{S}^1:=\mathbb{R}/\mathbb{Z}$.  For an integer $d \geq 2$, let $\tau_d: \mathbb{S}^1 \to \mathbb{S}^1$ be given by $\tau_d(\theta)=d\cdot \theta$ modulo $1$. The action of a degree $d$ monic polynomial on the complex plane may be compactified by extending it to the circle at infinity $\mathbb{S}^1_{f,\infty}$ to give an action on a closed topological 2-disk whose boundary values are given by $\tau$. 

The operation of \emph{formal mating}
takes as input two monic critically finite polynomials $f_1, f_2$ of
the same degree $d \geq 2$, and returns as output a
topological Thurston map $f_1 \amalg f_2\co S^2 \to S^2$.  The sphere $S^2$ on which $f_1 \amalg f_2\co S^2 \to S^2$ acts is 
obtained by gluing together $\mathbb{S}^1_{f_1,\infty}$ to $\mathbb{S}^1_{f_2,\infty}$
via the map $\theta \mapsto -\theta$.  Thus $f_1 \amalg f_2$ preserves a natural 'equator circle' on which it acts via $\tau_d$. 
 We refer the reader to \cite{BEKMPRL12:PolyMatings} for a survey containing facts
mentioned below. Our focus here is exclusively on the case when the
$f_i$ are hyperbolic; this assumption simplifies the discussion.
Abusing terminology, given a hyperbolic critically finite
polynomial~$f$, we call a bounded Fatou component a \emph{basin}
of~$f$.

\subsubsection*{Ray equivalence relation and geometric mating} The
sphere $S^2$ on which $f \amalg g$ acts comes equipped with a natural
invariant ``ray-equivalence''  relation, $\sim_{\mathrm{ray}}$. If $R\co
\widehat{\mathbb{C}} \to \widehat{\mathbb{C}}$ is a rational map such
that quotienting by the ray equivalence relation yields a
continuous semiconjugacy $\rho\co S^2 \to \widehat{\mathbb{C}}$ from $f \amalg g$ to $R$ that is conformal on the basins of the $f_i$, we say
$f_1, f_2$ are \emph{geometrically mateable} and that $R$ is the \emph{geometric}
mating of $f_1$ and $f_2$. See Figure
\ref{fig:mate-rabbit-basilica}.\footnote{According to J. Milnor, geometric mating is ``interesting, since it is neither well-defined, injective, surjective nor continuous.''}
The restriction $\rho\co \mathbb{S}^1_\infty \to J(R)$ is surjective; its fibers are the intersections of the ray-equivalence classes with $\mathbb{S}^1_\infty$. It is interesting to ask
how big the fibers can be.  

The following is a special case of a
general principle, to our knowledge first articulated by D. Fried
\cite{Fried87:FPDynamical}: that expanding dynamical systems on reasonable
spaces are ``finitely presented'', in the following precise sense:
they are quotients of an expanding subshift of finite type, and there
is another subshift which encodes when two points lie in the same
fiber.  Restricted to the case of matings, we were surprised not to find the following  result in the literature;
c.f.\ the survey \cite[Remark 4.13]{PM12:Mating} where the possibility of
ray equivalence classes of unbounded size is entertained. Explicit
bounds for the closely related question of the length of ray
connections are established in certain cases by W. Jung
\cite{Jung17:QuadMating}.  

\begin{proposition}
\label{prop:reqreln}
Suppose $f_1, f_2$ are monic hyperbolic critically finite polynomials of degree $d \geq 2$ that
are geometrically mateable, with geometric mating $R$. 
Then there exists a constant $C=C(f_1,f_2)$ such that fibers of $\rho\co \mathbb{S}^1_\infty \to J(R)$ have cardinality at most $C$.
\end{proposition}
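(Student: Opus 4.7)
The plan is to recognize $\rho\co\mathbb{S}^1_\infty\to J(R)$ as a continuous factor map between two positively expansive degree-$d$ self-coverings of equal topological entropy, and then invoke the general principle that such factor maps are uniformly bounded-to-one.

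First I verify the dynamical setup on both sides. On the domain, $\tau_d$ is topologically conjugate to the one-sided full shift on $d$ symbols, hence a $d$-to-one positively expansive self-covering of topological entropy $\log d$. On the codomain, since $f_1$ and $f_2$ are hyperbolic and critically finite, their geometric mating $R$ is itself a hyperbolic critically finite rational map with connected Julia set; by Theorem~\ref{thm:lc}, $R|_{J(R)}$ is a $d$-to-one positively expansive self-covering, and its topological entropy is $\log\deg R=\log d$ by the standard computation on Julia sets. By construction $\rho$ is continuous, semiconjugates $\tau_d$ to $R|_{J(R)}$, and is surjective since the angles of landing rays are dense.

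Second, I appeal to the general principle (Fried's finite-presentation, equivalently Bowen's theorem on entropy-preserving factor maps): a continuous surjective factor map $\pi\co(X,T)\to(Y,S)$ between positively expansive systems of equal topological entropy is uniformly bounded-to-one, with the bound depending only on the expansion data and a common Markov partition. Applied with $\pi=\rho$, $T=\tau_d$, $S=R|_{J(R)}$, this yields the constant $C=C(f_1,f_2)$. A concrete value can in principle be extracted via the following recipe: choose a $\tau_d$-Markov partition of $\mathbb{S}^1_\infty$ by arcs whose endpoints are angles of rays landing on $P_{f_1}\cup P_{f_2}$, note that its image under $\rho$ refines an $R$-Markov partition of $J(R)$ (for instance one coming from the open covers $\cU_n$ of Theorem~\ref{thm:lc}), and then execute the pigeonhole argument at the heart of Bowen's proof.

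The main obstacle is the quantitative, self-contained extraction of~$C$ in terms of $f_1$ and $f_2$. Although the existence of \emph{some} bound follows from the general principle once the setup is verified, controlling how the finite set of initial ray-landing identifications at $P_{f_1}\cup P_{f_2}$ propagates under backward iteration by $\tau_d\times\tau_d$ requires careful bookkeeping of the two Markov structures. The governing heuristic is that $\rho$ is injective on the dense complement of the countable set of pre-periodic angles, so all identifications arise from the pre-periodic landing combinatorics, which for critically finite polynomials is described by the finite Hubbard tree data; the bound~$C$ therefore depends on the valences and the lengths of the combinatorial cycles encoded by these trees, together with the overlap multiplicities of the chosen Markov partitions.
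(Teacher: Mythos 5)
Your argument rests entirely on the asserted ``general principle'' that a continuous surjective equal-entropy factor map between positively expansive systems is uniformly bounded-to-one, and that principle is not a theorem in this generality. The Bowen/Coven--Paul finite-to-one results you allude to require both systems to be irreducible shifts of finite type (or sofic shifts); for general positively expansive systems, equal entropy gives only, via Bowen's inequality $h(X)\le h(Y)+\sup_y h\bigl(T,\pi^{-1}(y)\bigr)$, that fibers carry zero entropy, which does not force them to be finite, let alone uniformly bounded. Indeed there are counterexamples already among subshifts: let $Z\subset(\{0,1\}\times\{0,1\})^{\NN}$ be the set of pairs $(u,v)$ with $v$ agreeing with $u$ except in at most one coordinate; then $Z$ is a compact, shift-invariant (hence positively expansive) system of entropy $\log 2$, and the projection $(u,v)\mapsto u$ onto the full $2$-shift is an equal-entropy factor map with infinite fibers. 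So the hard content of the proposition is precisely to produce a finite-type/automatic structure compatible with the \emph{given} map $\rho$; your recipe ``choose Markov partitions for $\tau_d$ and for $R$ and run Bowen's pigeonhole'' presupposes that $\rho$ intertwines such codings in a controlled way, which is not automatic and is not supplied. Even Fried's finite-presentation theorem only says that the expanding system $R|_{J(R)}$ admits \emph{some} bounded-to-one SFT presentation; it does not bound the fibers of the particular semiconjugacy $\rho$ from $(\mathbb{S}^1_\infty,\tau_d)$.

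For contrast, the paper's proof uses no entropy argument at all: it precomposes $\rho$ with the base-$d$ coding $\pi\co\Sigma_d\to\mathbb{S}^1_\infty$, realizes $\rho\circ\pi$ concretely as endpoints of iteratively concatenated lifts of finitely many arcs $\beta_k$ in $\rs-P_R$ (hyperbolicity of $R$ makes these lifts contract, so the limit exists), and then invokes Nekrashevych's nucleus theorem for the associated contracting self-similar (iterated monodromy) group: the equivalence relation on $\Sigma_d$ identifying sequences with the same image in $J(R)$ is encoded by paths in the finite nucleus, so its classes have cardinality bounded by the size of the nucleus, and fibers of $\rho$ are no larger than fibers of $\rho\circ\pi$. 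Some device of this kind---making the fiber relation itself recognizable by a finite automaton---is exactly what your proposal is missing.
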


For the proof, we apply a general result of Nekrashevych.  To motivate the technique, and because we need it anyway, we begin by presenting a construction of a semiconjugacy $\pi\co \Sigma_d \to \mathbb{S}^1_\infty$ from the full 1-sided shift on $d$ symbols to the map $\tau_d$. We then analyze the composition 
\[ \Sigma_d \overset{\pi}{\longrightarrow}\mathbb{S}^1_\infty \overset{\rho}{\longrightarrow}J(R). \]

\begin{proof}
We take $x_0\coloneqq\infty e^{2\pi i 0}$ as a basepoint, and denote by $x_k$
for $k=0, \ldots, d-1$ its $d$ preimages under the map $\tau_d$. There is a canonical positively
oriented (counterclockwise) arc $\alpha_k \subset \mathbb{S}^1_\infty$  joining the
basepoint $x_0$ to each $x_k$.  Fix an iterate $n \geq 1$.
Lifting the $\alpha_k$ under $\tau_d, \tau_d^{\circ 2}, \ldots, \tau_d^{\circ n}$ and concatenating the lifts gives an identification of $\tau_d^{-n}(x_0)$ with $\{0, 1, \ldots, d-1\}^n$ given by taking endpoints of iteratively concatenated lifts. The lengths of the concatenations are uniformly bounded, since the $n$th stage has length bounded by a convergent geometric series with ratio $1/d$, and passing to the limit we obtain the desired semiconjugacy $\pi\colon \Sigma_d \to \mathbb{S}^1_\infty$ from the shift to $\tau_d$.
(More simply, we can also see this semiconjugacy a writing an element in
$\RR/\ZZ$ in base~$d$.)

Our strategy is to now ``push'' this construction down to the
dynamical plane of $R$ via the semiconjugacy $\rho$. Denote by $P$ the
post-critical set of $R$. For each $k=0, 1, \ldots, d-1$ let $\beta_k$
be a smooth path in $\rs - P$ homotopic relative to endpoints to the
arc $\rho(\alpha_k)$. Note that since $R$ is hyperbolic, it is
expanding outside of a neighborhood of $P$. Applying the same
iteratively-lifting-and-concatenating construction from the previous
paragraph using the
$\beta_k$ and $R$ in place of the $\alpha_k$ and $\tau_d$, we get a
well-defined  composition
$\rho\circ\pi\co \Sigma_d \to J(R)$ induced by taking endpoints of
infinitely iterated concatenated lifts of the $\beta_k$.

Nekrashevych \cite[Proposition
3.6.2]{nekrashevych:book:selfsimilar} shows that in this setting, there is a finite automaton,
called the \emph{nucleus}, whose one-sided infinite paths encode the
equivalence relation on $\Sigma_d$ identifying points in the fiber of the composition $\rho \circ \pi$. It follows that the size of these equivalence classes are uniformly bounded by the size of the nucleus. The fibers of $\rho$ are no larger than those of $\rho \circ \pi$, yielding the result. 
 \end{proof}

See also \cite[\S6.13]{nekrashevych:book:selfsimilar}, in which details for a specific example of mating are presented.

\subsubsection*{Quadratic matings}

The following theorem is due to Tan Lei \cite{Tan92:Matings} and M.
Shishikura \cite{Shishikura00:ReesMatings}, and was proven using ideas
of M. Rees.
\begin{citethm}
\label{thm:r-s-tl}
Two critically finite quadratic polynomials $f_1, f_2$ are mateable if and only if they do not lie in conjugate limbs of the Mandelbrot set.
\end{citethm}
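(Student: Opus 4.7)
The plan is to apply Thurston's characterization theorem \cite{DH1} for critically finite topological branched coverings. The formal mating $f_1 \amalg f_2 \co S^2 \to S^2$ is a critically finite topological branched self-cover of degree~$2$. By Thurston's theorem, it is combinatorially (Thurston) equivalent to a rational map if and only if it admits no \emph{Thurston obstruction}---an invariant multi-curve on $S^2 - P_{f_1 \amalg f_2}$ whose associated transition matrix has spectral radius $\ge 1$. Since both $f_i$ are hyperbolic, the post-critical set of $f_1 \amalg f_2$ is finite and, as Rees and Tan Lei observed, any Thurston obstruction reduces to a \emph{Levy cycle}: a collection of disjoint essential non-peripheral simple closed curves $\gamma_1, \dots, \gamma_k$ cyclically permuted, up to homotopy, by pullback, with each $\gamma_i$ having a preimage mapping to $\gamma_{i-1}$ by degree~$1$.

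For the \emph{only if} direction, suppose $f_1$ lies in the $p/q$-limb of the Mandelbrot set and $f_2$ lies in the conjugate $-p/q$-limb. Each polynomial then has a repelling $\alpha$-fixed point to which $q$ external rays land, with angles forming a single cycle under $\tau_2$, and these angles for $f_2$ are the negatives of those for $f_1$. Under the identification $\theta \sim -\theta$ defining $f_1 \amalg f_2$, these rays pair up to form a family of $q$ ray-equivalence classes whose topological realization is a multi-curve~$\Gamma$ on~$S^2$. A direct combinatorial check shows $\Gamma$ is a Levy cycle: each curve is essential and non-peripheral (the classes separate the critical orbits), and each has a single preimage mapping homeomorphically onto the next. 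This is a Thurston obstruction, so the formal mating is not Thurston-equivalent to a rational map, and hence no geometric mating exists.

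For the \emph{if} direction, assume $f_1$ and $f_2$ are \emph{not} in conjugate limbs. The main combinatorial input, due to Tan Lei and Shishikura, is that the only possible Levy cycles in $f_1 \amalg f_2$ arise from matched $\alpha$-fixed-point ray portraits, and these are absent precisely when the limbs are not conjugate. Verifying this requires a careful classification of ray-equivalence classes that could support an invariant multi-curve, using the Yoccoz inequality or the Atela-Hu ray-lamination machinery to rule out other periodic ray identifications. With no Levy cycle, and hence no Thurston obstruction, Thurston's theorem produces a rational map $R$ and homeomorphisms $\phi_0, \phi_1 \co S^2 \to \widehat{\mathbb{C}}$ realizing the combinatorial equivalence. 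One then promotes this to a genuine geometric mating: by iteratively lifting $\phi_0$ along the dynamics, one constructs a semiconjugacy $\rho \co S^2 \to \widehat{\mathbb{C}}$ that collapses precisely the ray-equivalence classes, is conformal on each basin, and realizes $R$ as the geometric mating. Finiteness of the collapsing relation is guaranteed by Proposition~\ref{prop:reqreln}, applied in reverse.

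The main obstacle is the \emph{if} direction: the combinatorial classification of invariant multi-curves in $f_1 \amalg f_2$ is delicate, and passing from the abstract existence of a Thurston-equivalent rational map to an actual continuous semiconjugacy $\rho$ collapsing exactly the ray-equivalence relation requires convergence arguments, such as Rees's ``slow mating'' construction or Shishikura's quasiconformal surgery, together with a proof that the ray-equivalence quotient $S^2/{\sim_{\mathrm{ray}}}$ is a topological sphere on which $f_1 \amalg f_2$ descends to a branched self-cover homeomorphic to~$R$.
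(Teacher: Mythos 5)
First, note that the paper does not prove this statement at all: it is quoted as an external result (a \emph{cited} theorem), attributed to Tan Lei \cite{Tan92:Matings} and Shishikura \cite{Shishikura00:ReesMatings}, building on ideas of Rees, so there is no internal argument to compare yours against. Judged on its own terms, your write-up is a correct \emph{outline} of the standard route (Thurston's characterization, reduction of obstructions for degree-two matings to Levy cycles, the conjugate-limb Levy cycle in the ``only if'' direction, and the Rees--Shishikura upgrade from Thurston equivalence to a genuine semiconjugacy), but it is not a proof: the two genuinely hard steps are exactly the ones you defer to the literature. In the ``if'' direction, the assertion that ``the only possible Levy cycles arise from matched $\alpha$-fixed-point ray portraits'' \emph{is} the content of Tan Lei's theorem, not an input to it; invoking it as a known combinatorial fact makes the argument circular. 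Likewise, passing from the unobstructed Thurston-equivalent rational map $R$ to the geometric mating --- i.e., showing that $S^2/\!\sim_{\mathrm{ray}}$ is a sphere (via Moore's theorem), that ray classes are closed and the quotient map is a semiconjugacy conformal on basins --- is the substance of Rees's and Shishikura's work, which you name but do not carry out.

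A concrete flaw beyond the deferrals: you claim ``finiteness of the collapsing relation is guaranteed by Proposition~\ref{prop:reqreln}, applied in reverse.'' That proposition \emph{assumes} $f_1,f_2$ are geometrically mateable with geometric mating $R$ (its proof pushes paths down to the dynamical plane of $R$ and invokes Nekrashevych's nucleus for the already-existing expanding map), so it cannot be used to establish properties of the ray-equivalence relation while you are still trying to construct~$R$'s mating structure. In the Rees--Shishikura approach, control of ray-equivalence classes (e.g., that no class contains a loop and that classes are uniformly bounded) must be established directly from the laminations of $f_1$ and $f_2$ under the no-conjugate-limb hypothesis, before any quotient is taken. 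So the proposal correctly identifies the architecture of the known proof but leaves its load-bearing components as citations, one of which is invoked circularly.
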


To a hyperbolic critically finite quadratic polynomial $f$ is
associated a unique nontrivial interval
$[a,b] \subset \mathbb{R}/\mathbb{Z}$: the external rays of angles $a$
and $b$ land at a common periodic point on the boundary of the
immediate basin $U$ containing the critical value of $f$, and separate
$U$ from all other periodic attracting basins. For the basilica,
$[a,b]=[1/3, 2/3]$, while for the airplane, $[a,b]=[3/7, 4/7]$. The
denominators of $a$ and~$b$ take the form $2^n-1$ where $n$ is the period of the
finite critical point.

\subsubsection*{Generalized rabbits}

If the hyperbolic component containing $f$ has closure meeting the
main cardioid component containing $z^2$, we call $f$ a
\emph{generalized rabbit}. In this case, one may encode $f$ by
rational numbers in a different way.

For each $p/q \in \mathbb{Q}/\mathbb{Z} - \{0\}$, there is a unique hyperbolic critically finite quadratic polynomial $f=f_{p/q}$ such that there are $q$ periodic basins meeting at a common repelling fixed-point $\alpha$, and such that the dynamics on the set of these $q$ periodic basins, when equipped with the natural local cyclic ordering near $\alpha$, is given by a rotation with angle $p/q$. This fact can be deduced from the classification of critically finite hyperbolic quadratic polynomials via their so-called invariant laminations; see \cite{MR2508255}.  The basilica polynomial is $f_{1/2}$, while the rabbit polynomial is $f_{1/3}$. 

A special feature of generalized rabbit polynomials is the following.
If $f$ is a generalized rabbit polynomial and $\theta_1 \sim \theta_2$
is any nontrivial ray-equivalent pair of angles, so that the
corresponding rays land on a common point $z$ in the Julia set of $f$,
then the point~$z$ is on the boundary of a
basin of~$f$. This fact need not hold for other pcf polynomials like
the airplane.

\begin{proposition}
\label{prop:not_touch}
  For $i=1,2$ suppose $p_i/q_i \in \mathbb{Q}/\mathbb{Z}-\{0\}$ and
  suppose $q_1, q_2$ are coprime. Let $f_i$ be the corresponding
  generalized rabbit quadratic polynomials, and $R$ the geometric
  mating of $f_1$ and~$f_2$.
  Then the basins of $f_1$ and $f_2$  do not touch in the Julia set of $R$.
\end{proposition}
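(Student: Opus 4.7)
The plan is to argue by contradiction using the semiconjugacy $\rho\co S^2\to\widehat{\mathbb{C}}$ from the formal mating to the geometric mating. A touching of a basin $U_1$ of~$f_1$ with a basin $U_2$ of~$f_2$ in $J(R)$ would produce, via $\rho$, a nontrivial ray-equivalence class in the formal mating that contains both a boundary angle of the $f_1$-basin (in $J(f_1)$) and a boundary angle of the $f_2$-basin (in $J(f_2)$). This equivalence relation on the two copies $\mathbb{S}^1_{(1)}\sqcup\mathbb{S}^1_{(2)}$ of the circle at infinity is generated by the internal identifications $\sim_{f_i}$ (external rays of~$f_i$ sharing a landing point of $J(f_i)$) together with the external gluing $\theta\sim_f-\theta$.

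The next step is to exploit the special structure of generalized rabbits: for $f_{p/q}$, the pinch points of $J(f_{p/q})$ with multi-ray landings are exactly the $\alpha$-fixed point and its iterated pre-images under~$f$, at each of which exactly $q$ rays land with angles of the form $a/(2^n(2^q-1))$ for some $n\geq 0$. Combined with the fact that $R$ acts as the doubling map on external angles on each side and sends basin touchings to basin touchings, I plan to push the hypothetical identification forward along the orbit of~$R$ until the identified pair lies in the grand orbit of the $\alpha_i$-fixed points on both sides. This reduces the problem to an identification of the form
\[
  \gamma=\frac{a}{2^{n_1}(2^{q_1}-1)}\sim_f-\gamma=\frac{b}{2^{n_2}(2^{q_2}-1)},
\]
with $a\not\equiv 0\pmod{2^{q_1}-1}$ and $b\not\equiv 0\pmod{2^{q_2}-1}$; the nonvanishing conditions use that purely dyadic angles land on the $\beta$-fixed point, which is not on any bounded basin boundary of a generalized rabbit with $q\geq 2$.

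The contradiction is then immediate. From $\gamma+(-\gamma)\equiv 0\pmod 1$ one obtains
\[
  a\cdot 2^{n_2}(2^{q_2}-1)+b\cdot 2^{n_1}(2^{q_1}-1)\equiv 0\pmod{2^{n_1+n_2}(2^{q_1}-1)(2^{q_2}-1)}.
\]
Reducing modulo $2^{q_1}-1$ and invoking the standard identity $\gcd(2^{q_1}-1,2^{q_2}-1)=2^{\gcd(q_1,q_2)}-1=1$, both $2^{n_2}$ and $2^{q_2}-1$ are units modulo~$2^{q_1}-1$, forcing $a\equiv 0\pmod{2^{q_1}-1}$ and contradicting the assumption on~$a$.

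The main obstacle will be the lamination-theoretic reduction: guaranteeing that after finite iteration of~$R$ the identified angles simultaneously land in the $\alpha_i$-grand-orbit on both sides. This rests on the fact that, for a generalized rabbit~$f_{p/q}$, the bounded Fatou components form a single $q$-cycle meeting at~$\alpha$, the critical basin boundary is a Jordan curve on which the first return map acts as $z\mapsto z^2$ via Böttcher, and every multi-ray ray-equivalence class on~$J(f_{p/q})$ is a pullback of the central $q$-gon. Carrying this out rigorously uses the dynamical/lamination description of generalized rabbits as in~\cite{MR2508255} and careful bookkeeping of how the mating dynamics pushes the two-sided ray equivalence forward.
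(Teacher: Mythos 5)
Your setup (contradiction via the semiconjugacy $\rho$, finiteness of ray classes as in Proposition~\ref{prop:reqreln}, the fact that nontrivial lamination classes of a generalized rabbit are pullbacks of the central $q$-gon at~$\alpha$) matches the paper, but the step you yourself flag as ``the main obstacle'' is a genuine gap, and it cannot be filled in the form you propose. A hypothetical touching point of a basin of~$f_1$ with a basin of~$f_2$ can a priori be realized by a \emph{single} ray-pair: one angle~$\theta$ whose $f_1$-ray lands at a point of $\partial B_1$ that is \emph{not} a pinch point of $J(f_1)$, while the $f_2$-ray of angle $-\theta$ lands at a non-pinch point of $\partial B_2$. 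The $q$-gon/pullback structure of generalized rabbits only forces the \emph{intermediate} vertices of a longer ray chain into the grand orbits of the $\alpha_i$; it imposes nothing on the two terminal landing points, and forward iteration (doubling of angles) maps the grand orbit of $\alpha_i$ into itself but never moves any other angle into it. So there is no reduction to angles of the form $a/(2^{n_i}(2^{q_i}-1))$: your congruence argument rules out only that special subfamily of hypothetical touchings, says nothing when $\theta$ is irrational (the set of angles landing on a basin boundary is a Cantor set, almost all of whose points are irrational), and is fragile even for eventually periodic touchings, since an angle landing at a periodic non-$\alpha$ point of $\partial U_1$ has denominator of the shape $2^{n}(2^{kq_1}-1)$ with $k>1$, where $\gcd(2^{kq_1}-1,2^{lq_2}-1)=2^{\gcd(kq_1,lq_2)}-1$ need not be~$1$.

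The paper's proof shows what is needed in place of the rationality reduction. After extracting, exactly as you do, a single angle~$\theta$ whose $f_1$-ray lands on a basin boundary of $f_1$ and whose $f_2$-ray (angle $-\theta$) lands on a basin boundary of $f_2$, one iterates forward, using coprimality of $q_1,q_2$ only to synchronize the two basin cycles, so that both landing points lie on the boundaries of the \emph{critical value} basins $U_1$ and $U_2$ simultaneously. Then one uses a statement valid for \emph{all} angles, rational or not: every angle of a ray landing on $\partial U_i$ lies in the wake interval $[a_i,b_i]$ cut out by the ray-pair at~$\alpha_i$, and since coprime $q_1,q_2\ge 2$ forces $f_1,f_2$ to lie in non-conjugate limbs, the interval $[a_1,b_1]$ and the reflected interval for $f_2$ are disjoint, which is the contradiction. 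If you want to salvage your argument, you must replace the grand-orbit/denominator step by an argument of this kind that constrains arbitrary angles landing on basin boundaries; the number theory alone cannot carry the proposition.
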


Note that the mating exists by Theorem~\ref{thm:r-s-tl}.
In the proof below applied to the mating of the basilica and rabbit,
the key observation is that the intervals $[1/3, 2/3]$ and $[1-2/7,
1-1/7]$ are disjoint.

\begin{proof}
We argue by contradiction. Let $U_i$ be the immediate basin containing
the finite critical value of $f_i$. Suppose $z \in J(R)$ is a point
that is simultaneously on the boundary of a basin for~$f_1$ and a
basin for $f_2$. Since, by Proposition~\ref{prop:reqreln}, the ray
equivalence classes are finite and the
$f_i$ are generalized rabbits, this implies there exists an angle
$\theta$ such that the ray of angle $\theta$ for $f_1$ lands on a
point $z_1$ on the boundary of a basin of $f_1$, and the ray of angle
$-\theta$ for $f_2$ lands on a point $z_2$ on the boundary of a basin
of~$f_2$.
By iterating this ray-pair forward under the formal mating, we may assume that $z \in \partial U_1$. Since $q_1$ and $q_2$ are co-prime, by passing to a further iterate, we may assume that $z \in \partial U_1 \cap \partial U_2$. 

In the dynamical plane of $f_i$, let $(a_i,b_i)$ be the ray-pair of angles
landing at the point $\alpha_i$ separating $U_i$ from the remaining
immediate attracting basins of~$f_i$, as recalled above.
In giving coordinates for the circle at infinity $\RR/\ZZ$, we
parameterize to agree
with the usual counterclockwise orientation on~$f_1$ and disagree
for~$f_2$, so that the coordinates match when we mate.
For example, if $f_1$ is the
basilica and $f_2$ the rabbit, then $(a_1, b_1)=(1/3, 2/3)$ and
$(a_2, b_2)=(1-2/7, 1-1/7)$.
Then the set of angles of rays
landing on $\partial U_i$ is a subset of $[a_i, b_i]$. The condition
that $q_1$ and~$q_2$ are coprime implies that $f_1$ and $f_2$ are not
in conjugate limbs
of the Mandelbrot set, and hence that $[a_1, b_1]$ and
$[a_2,b_2]$ are disjoint. But the previous paragraph shows
$\theta$ belongs to both of these intervals. This is impossible.
\end{proof}

Proposition~\ref{prop:rmb} is then an immediate consequence of the
following two Propositions.

\begin{proposition}
\label{prop:not_uwscp}
Suppose $f_1$, $f_2$, and~$R$ are as in Proposition \ref{prop:not_touch}.
Then the Julia set $J_R$ of $R$ does not satisfy UWSCP. In particular, the Julia set of the basilica mated with the rabbit does not satisfy UWSCP. 
\end{proposition}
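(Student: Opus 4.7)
The plan is to argue by contradiction. Suppose $J_R$ satisfies UWSCP with some constant $C$; we exhibit a point $x \in J_R$ and a radius $r>0$ for which every separating set must have cardinality exceeding~$C$.

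I begin with the structural input. Since $R$ is hyperbolic and critically finite with connected Julia set, every Fatou component of $R$ is a Jordan domain, so its boundary is a Jordan curve lying in~$J_R$. Proposition~\ref{prop:not_touch} then tells us that for every basin $U$ of $f_1$ and every basin $V$ of~$f_2$, the Jordan curves $\partial U$ and $\partial V$ are disjoint in $J_R$. Since the Fatou components of $R$ are dense in~$J_R$, the $f_2$-basins in particular accumulate on every point of any $\partial U$, with their closures remaining disjoint from $\partial U$.

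Fix an $f_1$-basin $U_1$ and a point $x \in \partial U_1$, and choose $r>0$ small enough that $\partial U_1$ exits $\overline{B}(x,r)$. Then $\partial U_1$ meets both $B(x,r/2)$ (at $x$) and $J_R \setminus \overline{B}(x,r)$. Any set $A$ separating $B(x,r/2)$ from $J_R \setminus \overline{B}(x,r)$ in $J_R$ must contain at least two points of $\partial U_1$: removing a single point from a simple closed curve leaves it connected and produces an arc in $\partial U_1\setminus A$ from the inner region to the outer. The same argument applies to every $f_2$-basin $V$ whose boundary $\partial V$ meets both $B(x,r/2)$ and $J_R\setminus \overline{B}(x,r)$, requiring at least two further points of~$A$ on each such Jordan curve. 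Since these $\partial V$'s are disjoint from $\partial U_1$, if I can produce $N$ such $f_2$-basins whose boundary curves span the annulus and are pairwise disjoint, then $|A| \geq 2(N+1)$, contradicting UWSCP once $N > (C-2)/2$.

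The remaining, and principal, task is therefore to produce arbitrarily many pairwise disjoint $f_2$-basin boundaries that all span the annulus $B(x,r) \setminus \overline{B}(x,r/2)$ for some single choice of $x$ and~$r$. For this I would use the expansion of $R$ as a hyperbolic cxc system in the sense of \S\ref{sec:gauge}: starting from any spanning configuration at one scale, iterating preimages under $R$ creates, at a suitable accumulation point and scale, many disjoint copies of the spanning configuration that can be superimposed around a common center. Two $f_2$-basins touch only at preimages of the $\alpha$-fixed point of~$f_2$, a countable combinatorially sparse set, so an antichain of pairwise non-touching $f_2$-basins can be extracted from any large collection of spanning ones. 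The hard part will be executing the amplification and antichain extraction with enough bookkeeping that the resulting basin boundaries genuinely span the fixed annulus; organized in the quotient picture from the formal mating (as in the proof of Proposition~\ref{prop:reqreln}), this reduces to a combinatorial statement about how $f_2$-basin boundaries accumulate on $\partial U_1$ near~$x$.
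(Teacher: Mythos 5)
Your reduction ``each $f_2$-basin boundary spanning the annulus forces two points of $A$'' is fine as far as it goes, but the step you yourself flag as the principal task is not just missing---it cannot be carried out. For a Jordan curve $\partial V$ to meet both $B(x,r/2)$ and $J_R\setminus\overline{B}(x,r)$ it must have diameter at least $r/2$, and for a rational map only finitely many Fatou components exceed any given diameter; so for a \emph{fixed} $x$ and $r$ there are only finitely many candidate spanning $f_2$-basin boundaries, and ``arbitrarily many for a single choice of $x$ and $r$'' is impossible. Worse, even if you let $(x,r)$ vary with $C$, the $f_2$-basins of $R$ are preimages of finitely many periodic Jordan components of a hyperbolic map and hence uniformly quasiround, so any basin of diameter $\gtrsim r$ meeting $B(x,r)$ contains a round ball of radius comparable to $r$; disjointness then bounds the number of spanning basin boundaries by a constant independent of $x$ and $r$. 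A counting argument of this shape can therefore never contradict UWSCP once $C$ exceeds that constant. Separately, your opening assertion that every Fatou component of a hyperbolic critically finite rational map with connected Julia set is a Jordan domain is false in general---the paper's own quartic example in \S\ref{sec:Nbar_gtr_1} is hyperbolic and pcf with a non-Jordan immediate basin boundary---so even the ``two points per curve'' step would need a map-specific justification for $R$.

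The paper's argument avoids counting altogether and shows that \emph{no} finite cut set can exist: from UWSCP one extracts a Jordan curve $\gamma$ around $x$ meeting $J_R$ only in the finite set $A$; since by Proposition~\ref{prop:not_touch} closures of $f_1$- and $f_2$-basins never share a point, $\gamma$ must run through closures of basins of a single polynomial, say $f_1$; but each complementary component of $\gamma$ contains an $f_2$-basin, and two such basins can be joined by a path lying in closures of finitely many $f_2$-basins and meeting $J_R$ only finitely often (as in the $f_2$-dynamical plane), so this path must cross $\gamma$ at a point of $A$, exhibiting a touching point of an $f_1$-closure and an $f_2$-closure and contradicting Proposition~\ref{prop:not_touch}. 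If you want to salvage your approach, replace ``many disjoint spanning basin boundaries'' by this connectivity-through-basin-closures obstruction; the quantitative amplification you propose is the wrong mechanism.
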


\begin{proof}
  Suppose $J_R$ satisfies UWSCP, and $C$, $x$, $r$, and~$A$ be any
  data as in Definition~\ref{def:uwscp}. Then there exists a
Jordan curve $\gamma$ surrounding $x$ meeting $J_R$ only in the finite
set~$A$. Since the basins of the $f_1$ and $f_2$ do not touch, the
loop
$\gamma$ lies in the closure of finitely many immediate basins of one
of the $f_i$, say $f_1$.

Since $J_R$ is the common boundary of the  basins of $f_1$
and of $f_2$, each component of
$\overline{\mathbb{C}} - \gamma$ contains a basin of $f_2$, say
$U$ and $V$. On the one hand, $U$ and $V$ are joined by a path $\beta$
lying in the closure of finitely many basins of $f_2$ and meeting
$J(R)$ in finitely many points, since this is true in the dynamical
plane of $f_2$. On the other hand, any such path intersects~$\gamma$
and thus passes through a point of~$A$. It
follows that the basins
of $f_1$ and $f_2$ touch in this point of~$A$. By Proposition
\ref{prop:not_touch}, this is impossible.
\end{proof}

\begin{proposition}\label{prop:rabbit-basilica}
  The virtual endomorphism~$F_{RB}$ of the mating of the rabbit and
  the basilica satisfies $\overline{N}[F_{RB}] = 1$ and, for $q > 1$,
  $\oE^q[F_{RB}] < 1$, and so $\ARCdim(\cJ_{RB}) = 1$.
\end{proposition}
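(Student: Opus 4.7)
The plan is as follows. I will build an explicit virtual graph endomorphism $F_{RB}=(\pi,\phi)\co G_1\rightrightarrows G_0$ modeling~$R$, and then establish $\oE^q[F_{RB}]<1$ for every $q>1$ using the topology of the mating. Since any non-degenerate continuum has $\ARCdim\ge 1$, Theorem~\ref{thm:crit-sandwich} will then give $\ARCdim(\cJ_{RB})\le q$ for every $q>1$ and hence $\ARCdim(\cJ_{RB})=1$. The equality $\oN[F_{RB}]=\oE^1[F_{RB}]=1$ follows from the universal bound $\oE^1\ge 1$ combined with continuity of $q\mapsto \oE^q[\pi,\phi]$ from Proposition~\ref{prop:energy}.

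The spine is set up as follows. The post-critical set $P_R$ has five elements: the three-cycle coming from the rabbit and the two-cycle coming from the basilica, which do not collide under the mating semiconjugacy. Using Proposition~\ref{prop:not_touch}, the closures of the rabbit basins and of the basilica basins are disjoint in $J_R$, so I can choose a spine $G_0\subset\CCa- P_R$ realized as a union $G_0=T_{\mathrm{rab}}\cup T_{\mathrm{bas}}$ of two subtrees, each playing the role of the Hubbard tree of the corresponding polynomial factor, joined along a small number of arcs that cross $J_R$ transversally. Setting $G_1\coloneqq R^{-1}(G_0)$, the covering $\pi$ and the spine retraction $\phi$ are then determined combinatorially by the two polynomial factors.

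The main step is to exhibit, for each $q>1$, a $q$-conformal structure $\alpha$ on $G_0$, an iterate $n=n(q)$, and a piecewise-linear representative $\psi\in[\phi^n]$ with $E^q_q(\psi,\alpha)<1$. The starting observation is that $R$ is a \emph{crochet map}: any two Fatou points can be joined by an arc meeting $J_R$ in a countable set, essentially because of Proposition~\ref{prop:not_touch} applied iteratively. Consequently the graphs $G_n$ are organized around invariant subtrees, and the preimage of each edge of $G_0$ under $\phi^n$ consists of many arcs that run in parallel along these tree-directions. The $q$-parallel law of \cite[Prop.~7.7]{Thurston20:Characterize} then aggregates each bundle into a single effective edge whose $q$-length shrinks strictly faster than $\lambda^{n/q}$ for some fixed $\lambda<1$ that is uniform across $q>1$. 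Weighting $G_0$ so that most $q$-length is concentrated on the core tree arcs and little on the transverse arcs, one obtains $E^q_q(\psi)<1$, and submultiplicativity of energy yields $\oE^q[F_{RB}]\le E^q_q(\psi)^{1/n}<1$.

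The principal obstacle is the quantitative verification in the last paragraph, namely that the energy bound survives as $q\downarrow 1$ rather than deteriorating into equality on a nontrivial interval (as happens e.g.\ for carpet Julia sets). This is exactly where the crochet structure is essential: the tree-like combinatorics forced by Proposition~\ref{prop:not_touch} allows the parallel-collapse at every exponent $q>1$ with savings bounded uniformly away from zero, instead of only at large~$q$. Once this estimate is in hand, the three conclusions of the proposition follow: $\oE^q[F_{RB}]<1$ for $q>1$ is the estimate; $\oN[F_{RB}]=1$ by continuity and $\oE^1\ge 1$; and $\ARCdim(\cJ_{RB})=1$ by Theorem~\ref{thm:crit-sandwich} with the trivial lower bound.
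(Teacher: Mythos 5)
There is a genuine gap: the central claim, $\oE^q[F_{RB}]<1$ for every $q>1$, is never actually established. You do not exhibit $G_1=R^{-1}(G_0)$, the map $\phi$, or any concrete representative whose energy you can compute; the step you yourself flag as ``the principal obstacle'' is exactly the content of the proposition, and it is supported only by two heuristics, both problematic. First, the appeal to $R$ being a crochet map is not available here (in this paper that characterization is Park's \emph{later} result, proved using the present methods, so invoking it is circular, and in any case ``crochet'' by itself does not produce an energy bound without the actual combinatorics of the preimage graph). Second, the quantitative form you assert --- effective $q$-lengths shrinking like $\lambda^{n/q}$ with $\lambda<1$ \emph{uniform in} $q>1$, ``savings bounded uniformly away from zero'' --- is not just unproved but impossible: by Proposition~\ref{prop:Eq-lower-bound} with $p=1$ it would give $\oN[F_{RB}]\le 2^{1-1/q}\oE^q[F_{RB}]\le 2^{1-1/q}\lambda^{1/q}\to\lambda<1$ as $q\downarrow 1$, contradicting $\oN\ge 1$ for a recurrent endomorphism. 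Any correct estimate must degenerate as $q\downarrow 1$, which is precisely why the argument cannot be waved through uniformly. (A smaller point: you cite Proposition~\ref{prop:energy} for continuity in $q$, but that proposition only gives monotonicity; the clean way to get $\oN=1$ from the energy bounds is the Proposition~\ref{prop:Eq-lower-bound} limit above.)

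For comparison, the paper's proof is a short explicit computation. The spine is the equator together with two colored rays (Figure~\ref{fig:spinse-rabbit-basilica}), not a union of Hubbard-tree-like pieces, and $\phi$ is taken to push the extra colored edges onto the equator. With this choice one counts directly that $N[\phi^n]=2n+1$, so $\oN[F_{RB}]=1$ with no limiting argument. For fixed $q>1$ one puts length $L$ on the colored edges: then $\Fill^q(\phi)=1$ on the colored edges, while on the equator the equatorial preimage contributes $2^{1-q}<1$ and the two colored preimages contribute $o(1)$ as $L\to\infty$, so $\Fill^q(\phi)\approx 2^{1-q}<1$ there; a small homotopy pulling the equatorial vertices slightly along the colored edges then makes $\Fill^q<1$ everywhere, giving $E^q_q(\phi)<1$ already at the first iterate and hence $\oE^q[F_{RB}]<1$. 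Your reduction of the proposition to such an energy bound (via Theorem~\ref{thm:crit-sandwich} and $\ARCdim\ge 1$) is fine; what is missing is the bound itself, which requires an explicit spine and map of the kind the paper provides.
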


\begin{figure}
  \centerline{%
    \begin{tikzpicture}[x=2.5in]
      \node at (0,0) (G1) {$\mfig{matings-21}$};
      \node at (1,0) (G0) {$\mfig{matings-20}$};
      \draw[bend left,->] (G1.5) to node[above,cdlabel]{\pi} (G0.175);
      \draw[bend right,->] (G1.-5) to node[below,cdlabel]{\phi} (G0.-175);
    \end{tikzpicture}}
  \caption{Virtual endomorphism $G_1 \rightrightarrows G_0$ for the mating of the rabbit and the basilica}
  \label{fig:spinse-rabbit-basilica}
\end{figure}

\begin{proof}
  The spines for this mating are shown in
  Figure~\ref{fig:spinse-rabbit-basilica}, with a black equator and
  colored rays. We may take the map~$\phi$ in its homotopy class to be
  the map that ``pushes'' the extra colored edges towards the equator.
  If we do this suitably, each colored point in $G_0$ has one
  (colored) preimage, and each point on the equator has at most three
  preimages, one on the equator and two colored. It follows by
  iteration that $N(\phi^n) = 2n + 1$ and so
  $\overline{N}[F_{RB}] = 1$. For the statement about $\oE^q$, we
  proceed as in \cite[Example 2.4]{Thurston16:RubberBands}. Fix
  $q > 1$, and consider a metric on $G_0$ where the colored edges have
  equal length~$L$, and the equator has length $1$. Lifting this
  metric under the covering to a metric on $G_1$, and lifting the
  colors as well, each colored edge of $G_1$ has length $L$, and the
  equator has length $2$. For the map $\phi$ described above,
  $\Fill^q(\phi) = 1$ on the colored edges, and, for $L$ sufficiently
  large, $\Fill^q(\phi) \approx 2^{1-q} < 1$ by Eq.~\eqref{eq:Fillq}.
  We can homotop~$\phi$ to
  make $\Fill^q$ less than one everywhere by pulling the images of the
  vertices on the equator very slightly in along the colored edges;
  this decreases the derivative on the colored edge, while increasing
  the derivative on the equator (but keeping it less than $1$). Then
  $\oE^q[F_{RB}] \le E[\phi] < 1$, as desired.
\end{proof}

\begin{remark}
  The proof in Proposition~\ref{prop:rabbit-basilica} applies to the mating of any pair of rabbit-type polynomials as in the hypothesis of Proposition
  \ref{prop:not_touch}: all such matings have
  $\overline{N}[\pi,\phi]=1$.
  However, if we mate two polynomials that
  are far out in the limbs, we may end up with a Sierpiński carpet.
  Figure~\ref{fig:amk} gives empirical evidence for this assertion. It
  shows the Julia set of the result of mating of the airplane
  polynomial~$f_1$ and another polynomial~$f_2$, with
  corresponding angles $(a_1, b_1)=(3/7, 4/7)$ and
  $(a_2, b_2)=(3/31, 4/31)$. The mating appears to be a Sierpiński carpet.
  (This example was found by Insung Park and Caroline
  Davis.)
\end{remark}

\begin{figure}
\centerline{\includegraphics[width=2.75in]{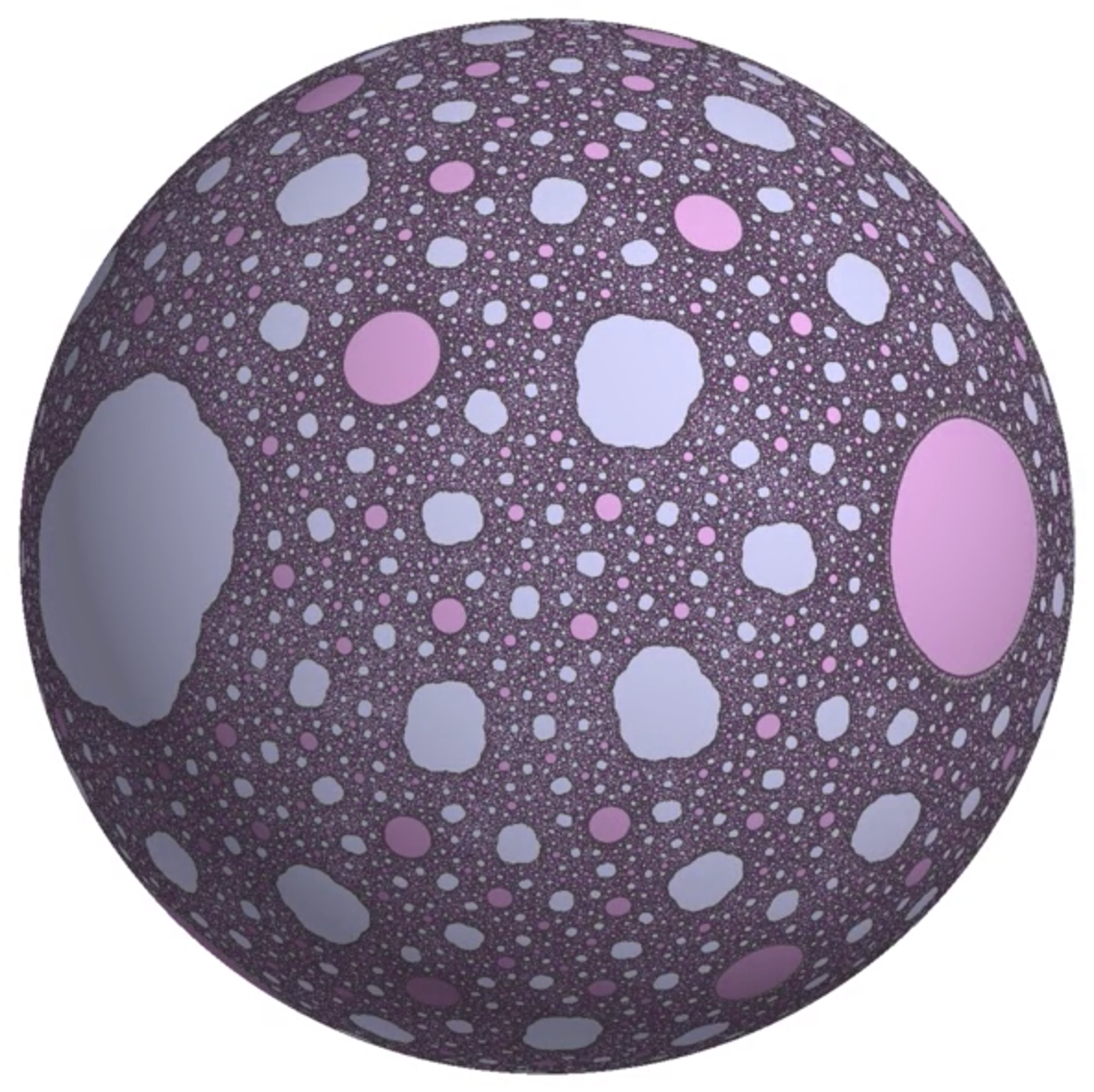}}
\caption{The Julia set of airplane mate a kokopelli-like polynomial.
  Picture by Arnaud Chéritat.}
\label{fig:amk}
\end{figure}

\begin{conjecture}
  The Julia set of a hyperbolic rational map satisfies
  UWSCP iff its virtual endomorphism has uniformly bounded $N[\phi^n]$
  (independent of~$n$).
\end{conjecture}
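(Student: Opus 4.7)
The plan is to translate both UWSCP and boundedness of $N[\phi^n]$ into a common combinatorial condition on the graphs~$G_n$, then establish the equivalence on the combinatorial side. Equip $\cJ$ with a visual metric (Theorem~\ref{thm:visual}) and use the snapshots $\cV_n$ by stars of edges of $G_n$ from Proposition~\ref{prop:qp_from_stars}, together with the homotopy sections $\sigma^n_\infty \co G_n \to \cJ$ of Corollary~\ref{cor:recurrrent_graph_ve}. Since visual balls $B(x,r)$ are comparable, for $\theta^n \asymp r$, to unions of boundedly many cells of $\cV_n$, both conditions can be recast as bounds on ``separating vertices'' in~$G_n$.

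For the forward direction, assume UWSCP with constant $C$. Fix an edge $e_0$ of $G_0$ and a taut representative of $\phi^n$; let $\wt e_1, \ldots, \wt e_{N[\phi^n]}$ be its preimage edges in $G_n$. The cells $V(\wt e_i) \in \cV_n$ have visual diameter $\asymp \theta^n$ and their $\phi^\infty_0$-images all lie in a small neighborhood of $e_0$ in $G_0$. Pick a post-critical endpoint $p$ of $e_0$ and a point $\tilde p$ in its $\phi^\infty_0$-fiber, and consider the visual ball $B(\tilde p, \theta^n)$. The cells $V(\wt e_i)$ are arranged in a cyclic order around $\tilde p$, and UWSCP applied to $B(\tilde p, \theta^n)$ yields a separating set of size at most $C$ which, by planarity of the sphere and the cyclic attachment, bounds $N[\phi^n]$ in terms of $C$ and $\#E(G_0)$.

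For the converse, assume $N[\phi^n] \leq M$ for all $n$. Given $x \in \cJ$ and $r > 0$, choose $n$ with $\theta^n \asymp r$. The ball $B(x,r)$ meets only a uniformly bounded number of cells of $\cV_n$, corresponding to a subgraph $H_n \subset G_n$; its combinatorial boundary $\partial H_n$ consists of vertices of~$G_n$ whose number is bounded by $M \cdot \#E(G_0)$ times the size of $H_n$, hence uniformly bounded. A cut set $A \subset \cJ$ separating $B(x,r/2)$ from $\cJ \setminus B(x,r)$ can then be constructed by choosing, for each vertex $v \in \partial H_n$, a bounded collection of representatives in the fiber $(\phi^\infty_n)^{-1}(v)$ that sit between the adjacent cells. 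Approximate path lifting (Proposition~\ref{prop:apl}) and local injectivity of $\phi^\infty_n$ near vertices of bounded valence then guarantee that any path from $B(x,r/2)$ to $\cJ \setminus B(x,r)$ must pass through $A$.

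The main obstacle is to make precise the final step: the fibers $(\phi^\infty_n)^{-1}(v)$ over vertices $v \in G_n$ are in general Cantor sets rather than finite sets, so a finite separating set in $\cJ$ cannot be obtained by taking the entire fiber. The correct construction must pick one representative cut point per adjacent pair of cells in $\cV_n$, using planarity of the embedding $\cJ \hookrightarrow S^2$ to ensure these representatives genuinely separate. Equivalently, one must show that the local structure of $\cJ$ near a vertex of $G_n$ of bounded combinatorial valence is itself of bounded cut-complexity, which should follow from local connectivity of $\cJ$ (Theorem~\ref{thm:lc}) combined with the combinatorial description of $\cJ$ near such a vertex as a limit of bounded-valence graphs. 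A secondary point is reducing UWSCP at arbitrary scales $r$ to scales of the form $\theta^n$, which follows from approximate self-similarity of the visual metric (Corollary~\ref{cor:ss}).
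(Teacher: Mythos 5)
This statement is posed in the paper as a \emph{conjecture}; the paper offers no proof of it, so there is nothing to compare your argument against, and what you have written is a sketch rather than a proof. The closest the paper comes is Theorem~\ref{thm:uwscp_implies_Nbar1}, which derives from UWSCP only the asymptotic statement $\oN(f)=\lim_n N[\phi^n]^{1/n}=1$ (via Carrasco's Theorem~\ref{thm:confdim1} and Theorem~\ref{thm:Nbar}), and that is strictly weaker than the uniform bound on $N[\phi^n]$ the conjecture asks for: the rabbit--basilica mating has $N[\phi^n]=2n+1$, unbounded, while $\oN=1$. Your forward direction does not close this gap, and moreover the configuration it relies on is wrong: the preimage edges $\wt e_1,\dots,\wt e_{N[\phi^n]}$ of a fixed edge $e_0$ under a taut representative of $\phi^n$ are distributed over all of $G_n$, so the cells $V(\wt e_i)$ are spread throughout $\cJ$ at mutual distances of order $1$; they are not arranged cyclically inside a single visual ball $B(\tilde p,\theta^n)$, and UWSCP applied to one small ball cannot see them all. (Also, edges of $G_0$ have no post-critical endpoints: $G_0$ is a spine of $\rs - P_f$, so $P_f$ is disjoint from $G_0$.) The situation you seem to have in mind---many disjoint loops encircling a post-critical point---is the one used in the proof of Theorem~\ref{thm:Nbar_ge_1}, and it runs in the opposite direction (producing a lower bound on $\oN$ from carpet structure), not an upper bound on $N[\phi^n]$ from UWSCP.

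For the converse, the step you yourself flag as the ``main obstacle'' is in fact the entire content of the implication. Bounded valence and the snapshot property already bound the number of cells of $\cV_n$ meeting $B(x,r)$ and hence the size of $\partial H_n$, with no use of the hypothesis $N[\phi^n]\le M$; the hypothesis must enter precisely when you replace each vertex fiber $(\phi^\infty_n)^{-1}(v)$---generically a Cantor set---by finitely many points that still separate, and nothing in your sketch (approximate path lifting, local connectivity, ``bounded cut-complexity near a vertex'') establishes that the homotopy-theoretic count $N[\phi^n]=E^1_1[\phi^n]$ controls the number of local accesses/cut points of $\cJ$ across such a vertex. Making that link precise is exactly what would be needed to prove the conjecture, and it is missing. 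The one reduction that is fine is passing from arbitrary scales $r$ to scales $\theta^n$ via approximate self-similarity (Corollary~\ref{cor:ss}). As it stands, both directions remain open, consistent with the paper's decision to state this as a conjecture.
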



\bibliographystyle{hamsalpha}
\bibliography{ConfDim}

\end{document}